\numberwithin{equation}{section}
\def\R{\mathbb R}
\def\C{\mathbb C}
\def\N{\mathbb N}
\def\R{\mathbb{R}}
\def\C{\mathbb C}
\def\N{\mathbb N}
\def\Z{\mathbb Z}
\def\A{\mathbb{A}}
\def\T{\mathbb{T}}
\def\be{\begin{equation}}
\def\ee{\end{equation}}
\def\bea{\begin{eqnarray}}
\def\eea{\end{eqnarray}}
\def\beas{\begin{eqnarray*}}
\def\eeas{\end{eqnarray*}}
\def\supp{\mathrm{supp}\,}
\def\pa{\partial }
\def\S{\mathcal S}
\def\Ga{\Gamma}
\def\supp{\mathrm{supp}\,}
\def\T{{\mathcal T}}
\def\P{{\mathcal P}}
\def\J{{\mathcal J}}
\def\pa{\partial}
\def\l{\lambda}
\def\al{\alpha}
\def\de{\delta}
\def\ga{\gamma}
\def\la{\lambda}
\def\F{\mathcal F}
\def\H{\mathcal H}
\def\lv{\left\vert}
\def\rv{\right\vert}
\def\Emin{E_{\text{min}}}
\def\k{\varepsilon}
\def\dE{\mathcal E}
\def\Rmax{R_{\textup{max}}}
\def\Rmin{R_{\textup{min}}}
\def\la{\lambda}
\def\Rem{\mathcal{R}^{\textup{Ta}}}
\def\Poly{\mathcal{P}^{\textup{Ta}}}
\def\Ade{\mathbb{A}_\de}
\def\Ide{\I_\de}
\def\Jde{\J_{R,\de}}
\def\T{\mathcal{T}}
\def\C{\mathcal C}
\def\al{\alpha}
\def\de{\delta}
\def\ga{\gamma}
\def\la{\lambda}
\def\dif{\textup{d}}
\newcommand{\fhat}{\widehat{f}}
\newcommand{\ghat}{\widehat{g}}
\let\th\relax
\newcommand{\th}{\theta}
\newcommand{\eps}{\varepsilon}
\newcommand\om{\omega}
\def\Lmax{L_{\mathrm{max}}}
\def\EminL{E^L_{\min}}
\def\Res{\textup{Res}}
\newcommand{\beq}{\begin{equation}}
\newcommand{\eeq}{\end{equation}}
\newcommand{\beqs}{\begin{equation*}}
\newcommand{\eeqs}{\end{equation*}}
\newcommand{\beqa}{\begin{equation}\begin{aligned}}
\newcommand{\eeqa}{\end{aligned}\end{equation}}
\newcommand{\beqas}{\begin{equation*}\begin{aligned}}
\newcommand{\eeqas}{\end{aligned}\end{equation*}}
\let\originalleft\left
\let\originalright\right
\renewcommand{\left}{\mathopen{}\mathclose\bgroup\originalleft}
\renewcommand{\right}{\aftergroup\egroup\originalright}
\newcommand*\diff{\mathop{}\!\mathrm{d}} 
\newtheorem{theorem}{Theorem}[section]
\newtheorem{definition}[theorem]{Definition}
\newtheorem{prop}[theorem]{Proposition}
\newtheorem{corollary}[theorem]{Corollary}
\newtheorem{lemma}[theorem]{Lemma}
\newtheorem{remark}[theorem]{Remark}
\def\bcr{\begin{color}{red}}
\def\bcb{\begin{color}{blue}}
\def\bcc{\begin{color}{violet}}
\def\ec{\end{color}}
\def\be{\begin{equation}}
\def\ee{\end{equation}}
\def\L{\mathcal{L}}
\def\Om{\Omega}
\def\Ueps{U^\pm_{\k}}
\def\S{\mathbb S}
\def\dif{\textup{d}}
\def\fin{f_0}
\def\Lmax{L_{\text{max}}}
\def\I{\mathcal I}
\def\Pl{\text{{\bf Pl}}}
\def\n{k}
\def\Jvac{\pa{\mathcal J}_R^{\text{vac}}}
\def\plog{p^\eps_{\l,m}}
\def\bI{{\bf I}}
\def\wUeps{\widehat{U^\pm_{\eps}}}
\def\Uepsplus{{U_\k^{+}}}
\def\Uepsminus{{U_\k^{-}}}
\def\wUepsm{\widehat{U^{\pm}_{\eps}}\hspace{-1.6mm}{}_{,m}}
\def\wUepsplusm{\widehat{U^{+}_{\eps}}\hspace{-1.6mm}{}_{,m}}
\def\wUepsminusm{\widehat{U^{-}_{\eps}}\hspace{-1.6mm}{}_{,m}}
\def\Vhat{\widehat{V}}
\def\Ude{\Upsilon_\delta}
\def\PL{\Pl_{m}^{\pm,\l,\eps}}
\def\PLplus{\Pl_{m}^{+,\l,\eps}}
\def\PLminus{\Pl_{m}^{-,\l,\eps}}
\def\Uhatplus\widehat{\Uepsplus}
\def\Uhatminus\widehat{\Uepsminus}
\def\FPM{\mathcal F_\l^{\pm,\eps}}
\def\Fplus{\mathcal F_\l^{+,\eps}}
\def\Fminus{\mathcal F_\l^{-,\eps}}
\newcommand{\set}{\@ifstar{\setb}{\seti}}
\newcommand{\seti}[1]{\{#1\}}
\newcommand{\setb}[1]{\left\{ #1\right\}}
\def\Rplus{\mathcal R_{1,b}^{+,\l,\eps}}
\def\Rplusa{\mathcal R_{a,b}^{+,\l,\eps}}
\def\Rminus{\mathcal R_{1,b}^{-,\l,\eps}}
\def\Rminusa{\mathcal R_{a,b}^{-,\l,\eps}}
\def\fmn{f_\eta^{\mu,\nu}}
\def\lmin{\lambda_{\text{min}}}
\def\LL{\mathbb L}
\def\r{k}
\def\reg{K}
\def\ommin{\omega_{\text{min}}}
\def\ommax{\omega_{\text{max}}}
\def\PP{{\bf P}}
\def\FPT{f^{\text{PT}}}
\def\chardelta{\mathbbm{1}_{\J_{R,\de}}}
\def\oom{\overline{\Omega}}
\def\fhatm{\widehat{f}_{0,m}}
\def\sL{\sigma_{\mathcal{L}}} 
\def\C{\mathcal{C}}
\definecolor{ubtgreen}{RGB}{3,138,94}
\begin{document}

\title{On quantitative linear gravitational relaxation}
\author{Mahir Had\v zi\'c}%
\address{Department of Mathematics, University College London, UK}
\email{m.hadzic@ucl.ac.uk}

\author{Matthew Schrecker }%
\address{Department of Mathematics, University of Bath, UK}
\email{mris21@bath.ac.uk}

\begin{abstract}

We prove quantitative decay rates for the linearised Vlasov-Poisson system around compactly supported equilibria. More precisely, we prove decay of the gravitational potential induced by the radial dynamics of this system in the presence of a point mass source. Our result can be interpreted as the gravitational version of linear Landau damping in the radial setting and hence the first  linear asymptotic stability result around such equilibria. 
We face fundamental obstacles to decay caused by the presence of stable trapping in the problem. To overcome these issues we introduce several new ideas.   
We use different tools, including the Birkhoff-Poincar\'e normal form, action-angle type variables, and delicate resolvent bounds to prove a suitable version of the limiting absorption principle and obtain the decay-in-time.
\end{abstract}

\maketitle

\tableofcontents


\section{Introduction}


The basic astrophysical model of galaxy evolution is given by the gravitational Vlasov-Poisson system. 
This system possesses a continuum of steady states representing static compactly supported galaxies whose dynamic stability is one of the 
central topics in the kinetic theory of self-gravitating gases (e.g.~\cite{BiTr}). Since the seminal works of Antonov~\cite{An1961} and Lynden-Bell~\cite{LB1962,LB1967} in the 1960s, the asymptotic stability  of a large family of radially symmetric steady states has been expected. Such a gravitational relaxation mechanism has also been termed gravitational Landau damping~\cite{LB1962,BiTr}  by analogy to the well-known stabilisation mechanism 
from plasma physics~\cite{Landau1946}. The first rigorous mathematical results supporting this claim have appeared only recently due to some central analytic difficulties that distinguish spatially homogeneous and inhomogeneous equilibria. For most of the physically interesting gravitational steady states, the associated Hamiltonian flow in the phase-plane contains an elliptic point (actually a smooth curve of such points), which leads to the phenomenon of \emph{trapping}, a major obstruction to the phase mixing that one hopes will produce decay. 

The main result of this work is the first proof of quantitative decay for the linearised Vlasov-Poisson dynamics around a family of 
nontrivial equilibria, which is also an essential step towards the full nonlinear asymptotic stability theory. We address the fundamental new challenges that arise from the interaction of stable trapping with the resonant structures 
associated with the Hamiltonian geometry of the problem. In addition, frequency analysis is naturally phrased with respect to the angle parametrising the radial motion of the stars inside the steady galaxy,
which leads to the mixing of all frequencies in the effective dispersion relation. These problems cannot be addressed by a straightforward adaptation of available techniques, for example from
the setting of Landau damping around space-homogeneous equilibria or inviscid damping around shear flows.
To handle these difficulties we develop a new approach using a delicate combination of techniques from spectral theory and
dynamical systems, adapted to the specific geometry of this problem. We believe that our theory will be applicable to other problems featuring similar qualitative difficulties.

We consider the 3-d radially symmetric gravitational Vlasov-Poisson (VP) system with a fixed point mass potential (also known as the Kepler potential) at the origin.
The basic unknown is the phase-space density $f(t,r,w,L)$\footnote{In general a phase-space density is called radially symmetric if $f(t,Ax,Av)=f(t,x,v)$ for all rotation matrices $A\in SO(3)$.
This reduces the phase-space to a 3-dimensional space parametrised by coordinates $r=|x|$, $w=\frac{x\cdot v}{r}$, $L=|x\times v|^2$.} 
which solves
\begin{align}
\pa_tf+ w\pa_r f - \pa_r (U(t,r)-\frac{M}{r}+\frac L{2r^2})\pa_w f & = 0, \label{E:VLASOV}\\
\Delta U & = 4\pi \rho, \ \ \lim_{r\to\infty}U(t,r)=0, \label{E:POTENTIAL}\\
\rho(t,r) & = \frac{\pi}{r^2}\int_0^\infty\int_{\mathbb R} f(t,r,w,L)\diff w\diff L.\label{E:DENSITY}
\end{align}

The point mass models the presence of a heavy attractive body at the centre of a galaxy; such potentials play an important role in galaxy dynamics~\cite{BiTr}.  
We point out that global-in-time nonlinear well-posedness for the VP-system, with or without point mass, is well-known and goes back to~\cite{Pf1992,LiPe1991}.


\subsection{Steady states}

Jeans' theorem states that stationary solutions $\bar f$ of the VP-system~\eqref{E:VLASOV}--\eqref{E:DENSITY} 
are in general given as functions of $L$ and
the local particle energy $E$, given by
\begin{align}
E=E(r,w,L) = \frac12 w^2 + \Psi_L(r), \ \ \Psi_L(r) = \bar U(r) -\frac Mr + \frac{L}{2r^2},\label{DEF:EPSIL}
\end{align}
where $\bar U$ is the gravitational potential induced by $\bar f$. $\Psi_L(\cdot)$ is referred to as the {\em effective potential}.

Although not essential, for specificity we
consider a class of {\em polytropic} steady states of the form
\begin{align}\label{E:SS}
\fmn(r,w,L) = \eta \varphi(E,L):= \eta \, (E_0-E)^\mu_+(L-L_0)^\nu_+,
\end{align}
where $0\le \eta\ll1$ is a small constant and we assume
\begin{align}\label{E:SS2}
\mu>2, \ \nu>1. 
\end{align}
The constants $E_0<0$ and $L_0>0$ represent the cut-off energy and cut-off angular momentum. An ansatz of the form~\eqref{E:SS} is
referred to as a polytrope~\cite{BiTr}. The ansatz function $\varphi$ corresponds to a microscopic equation of state;
clearly, the steady state regularity is limited by the exponents $\mu,\nu$. As shown, for example, in~\cite{HRSS2023,Straub24}, for any $\eta>0$
there exists a 1-parameter family of steady states of the  form~\eqref{E:SS}, where the parameter is given by
\begin{align}
\kappa:=E_0-\bar U(0)<0,\label{E:KAPPA}
\end{align}
which represents the relative gravitational potential at the origin.

The steady states $\fmn$  correspond to spherical shells supported away from the origin. 
Their  phase-space support is fixed by the choice of $\kappa$, and we denote it  by 
\beq\label{E:SUPPORT}
\Om:=\supp \fmn\subset [0,\infty)\times \mathbb R\times [0,\infty),
\eeq 
where we neglect the explicit dependence on $\kappa$.
We denote the spatial extent of the steady galaxy
by $[\Rmin,\Rmax]$ and the support of the angular momentum by $[L_0,\Lmax]$.  From~\cite[Proposition 6.2.6]{Straub24}, as $\eta\to0$, the support converges   $(\Rmin,\Rmax)\to(\Rmin^0,\Rmax^0)$, $E_0\to\kappa$, and $\Lmax\to\Lmax^0$, where $0<\Rmin^0<\Rmax^0$ and $\Lmax^0>L_0$ are defined in Lemma~\ref{L:PUREPOINT}.
Moreover, the associated macroscopic density $\bar\rho$ is of characteristic size $O_{\eta\to0}(\eta)$. This and other uniform-in-$\eta$ properties are stated in more detail in Section~\ref{S:AA}. The dependence on $\kappa$ will from now on be omitted 
as we fix $\kappa$ to be any value satisfying the {\em single-gap} assumption 
\begin{align}\label{E:NOGAP}
-2^{-\frac23}\frac{M^2}{2L_0}<\kappa<0.
\end{align}
As shown in~\cite{HRSS2023,Straub24}, assumption~\eqref{E:NOGAP} ensures that the spectrum of the linearised operator around above equilibria possesses a single, so-called principal gap, see Proposition~\ref{P:SPECTRUM}.

To better describe the influence of the steady state regularity on
gravitational relaxation, we introduce a regularity parameter given by
\beq\label{def:N}
N=\max\{n\in\N\,|\,n<\mu+\nu+\frac32\}.
\eeq
Due to~\eqref{E:SS2} we necessarily have $N\ge4$.



\subsection{Main result}


The linearised VP-system takes the form (e.g.,~\cite{HRS2022,HRSS2023})
\be\label{E:VLASOVLIN}
\pa_tf + w\pa_r f - \Psi_L'(r)\pa_wf - \eta w\pa_r U_{\varphi'f} = 0,
\ee
where $U_g$ solves the Poisson equation $\Delta U_g=4\pi \int g\diff v$. Here $\varphi'(E,L):=\pa_E\varphi(E,L)<0$ from the polytropic ansatz~\eqref{E:SS}.
From~\cite{An1961}, the natural Hilbert space for analysis is the weighted space
\begin{align}
\mathscr H : = \Big\{f:\Om\to\mathbb R\,\Big|\, \int_{\Omega} f(r,w,L)^2 |\varphi'(E(r,w,L),L)|\diff(r,w,L)<\infty\Big\},
\end{align}
equipped with the corresponding weighted inner product.  It is not hard to see,~\cite{HRSS2023}, that the
 associated initial value linearised problem can be recast in the form
\begin{align}
\pa_t f + \mathscr L f & = 0, \label{E:FULLLIN}\\
f(0)&=f_0\in\mathscr H, \label{E:FULLLININITIAL}
\end{align}
where the operator $\mathscr L$ factorises into a product of the form
\begin{align}
\mathscr L = \T \LL. 
\end{align}
Here $\mathcal T$ is a skew-adjoint  transport operator induced by the steady state $\fmn$:
\begin{align}
\T & : =  w\pa_r - \Psi_L'(r)\pa_w, \\
D(\T) & : = \{f\in\mathscr H\,\big|\, \T f \ \text{is weakly in } \mathscr H \}.
\end{align}
The self-adjoint bounded operator $\LL:\mathscr H\to\mathscr H$ takes the form
\begin{align}
\LL f = f -  \eta U_{\varphi'f}.\label{E:BBLOPERATOR}
\end{align}

To  speak meaningfully of decay we must choose an initial datum $f_0$ that is orthogonal to the kernel of $\mathscr L$ in a suitable sense. This condition (see Section~\ref{S:HAMILTON})  reads
\begin{align}\label{E:FINORTH}
(f_0,\phi)_{\mathscr H}=0
\ \ \forall \ \phi(E,L)\in\mathscr H.
\end{align}

\begin{theorem}\label{T:MAIN}
Let $\mu,\nu$ satisfy~\eqref{E:SS2}. Then there exists $0<\eta_0\ll1$  such that the following holds. Let $\fmn$ be a steady state satisfying~\eqref{E:SS} with $\eta\in[0,\eta_0)$.
Let $1\leq \n\leq N+1$ and $f_0\in C^{\n}(\overline{\Om})$ be an initial datum satisfying the orthogonality condition~\eqref{E:FINORTH}. Let $f(t,\cdot,\cdot)$ be the 
unique solution to the initial value problem~\eqref{E:FULLLIN}--\eqref{E:FULLLININITIAL}. Then there exists a constant $C=C(\n,\mu,\nu)>0$ such that 
\begin{align}
\|\pa_RU_{|\varphi'|f}(t,\cdot)\|_{C^{\reg-1-b}([\Rmin,\Rmax])} \le \frac {C\|f_0\|_{C^{\reg}(\oom)}}{(1+t)^{b}},  \ \ 0\le b \le \reg-1,
\end{align}
where 
\begin{align}\label{E:REGDEFMAIN}
\reg:= \min\{\mu-1,\nu,\n\}.
\end{align}
\end{theorem}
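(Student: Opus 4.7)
The plan is to diagonalise the transport operator $\T$ using action-angle variables and Fourier analysis, then represent the semigroup $e^{-t\mathscr L}$ as a contour integral of the resolvent, and finally extract the polynomial decay rate via a limiting absorption principle combined with repeated integration by parts in the spectral parameter.

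First I would introduce action-angle coordinates $(\th,J)$ adapted to the one-dimensional Hamiltonian flow of the effective potential $\Psi_L$ for each fixed $L\in[L_0,\Lmax]$. In these coordinates $\T$ acts as $\omega(J,L)\pa_\th$, where $\omega(J,L)>0$ is the radial-oscillation frequency. Expanding $f=\sum_{k\in\Z}f_k(t,J,L)e^{ik\th}$ reduces the problem to a family of equations indexed by $k$; the orthogonality condition~\eqref{E:FINORTH} is precisely what eliminates the $k=0$ contribution, which would otherwise survive forever in the kernel of $\mathscr L$. Under the principal-gap hypothesis~\eqref{E:NOGAP}, $\LL$ is bounded, self-adjoint and positive, so $\mathscr L=\T\LL$ is similar to a skew-adjoint operator and its spectrum lies on $i\R$.

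Next I would use a Stone-type spectral representation
\begin{equation*}
e^{-t\mathscr L}f_0=\frac{1}{2\pi i}\int_{\R}e^{-i\la t}\bigl[(\mathscr L-i\la-0)^{-1}-(\mathscr L-i\la+0)^{-1}\bigr]f_0\,d\la,
\end{equation*}
apply $U_{|\varphi'|\,\cdot}$ to both sides, and evaluate. The central analytic input is a quantitative limiting absorption principle: the boundary values of the resolvent on either side of the spectrum should lie in $C^{\reg-1-b}$ in $\la$, with norm controlled by $\|f_0\|_{C^\reg(\oom)}$. To prove this I would solve the resolvent equation mode by mode, reducing it to an integral equation $(I-K_k(\la))\widehat U_k=g_k$ whose kernel is built from $\varphi'$, $\omega$ and the Fourier coefficients of the radial-orbit parametrisation. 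Invertibility on the spectrum follows from the single-gap hypothesis, while regularity in $\la$ is inherited from the smoothness of the Fourier symbols, which in turn reflects the $C^{\mu-1}$ regularity in $E$ and $C^\nu$ regularity in $L$ of $\varphi'$; this is the origin of the exponent $\reg=\min\{\mu-1,\nu,\n\}$.

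Finally, integrating by parts $b$ times in $\la$ converts $e^{-i\la t}$ into a factor of $(it)^{-b}$ at the cost of $b$ derivatives on the resolvent boundary values, yielding the claimed $(1+t)^{-b}$ decay. The principal obstacle, in my view, is the limiting absorption principle itself: stable trapping implies the existence of a curve of elliptic periodic orbits where $\omega(J,L)$ degenerates and resonances between Fourier modes cluster, so the kernels $K_k(\la)$ generate near-critical oscillatory phases and potential small-divisor issues. This is where the Birkhoff--Poincar\'e normal form indicated in the abstract intervenes: it trivialises the dynamics in a neighbourhood of the elliptic curve, allowing one to extract uniform, $\la$-differentiable resolvent bounds from a careful treatment of the resonant geometry, and thereby closes the argument.
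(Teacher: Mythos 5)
Your high-level skeleton matches the paper's: action-angle variables diagonalise $\T$, Fourier expansion in $\th$ removes the kernel via~\eqref{E:AVERAGEZERO}, Stone's formula~\eqref{eq:spectral} gives a spectral representation, and integration by parts in $\la$ converts $e^{i\la t}$ into $(it)^{-b}$ provided one has a quantitative limiting absorption principle in $C^{\reg-b}$. The paper also splits off the pure-transport contribution (Lemma~\ref{L:LAP0}) and proves its decay separately (Theorem~\ref{thm:PTdecay}), which you do not mention but which is compatible with your plan. However, two of the specific steps you outline are wrong, and these are precisely the places where the real difficulty lives.

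First, you claim the resolvent equation reduces mode-by-mode to $(I-K_k(\la))\widehat U_k=g_k$. It does not: because the steady state is inhomogeneous, the $m$-th Fourier coefficient $\widehat{U_{|\varphi'|f}}(m,\bI)$ depends on the \emph{entire} vector $\langle\fhat(\ell,\bI)\rangle_{\ell\in\Z_*}$, so the frequencies do not separate. The paper makes this point explicitly (after~\eqref{E:LINVPINTRO} and~\eqref{E:FORCEINTRO2}) and must instead study a genuinely frequency-delocalised operator $\FPM$~\eqref{E:FDEF} acting on the potential as a function of the radial variable $R$, whose invertibility (Proposition~\ref{P:INVERT}) is a smallness-in-$\eta$ statement, not a mode-by-mode Fredholm argument. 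This changes the architecture of the LAP in a material way: your $K_k(\la)$ objects do not exist.

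Second, you attribute the trapping difficulty to a degeneracy of $\omega(J,L)$. In this problem the frequency function does \emph{not} degenerate: it satisfies $\ommin\le\omega\le\ommax$ with uniform monotonicity $\om'\le -c_0$, which is essential to the non-stationary phase argument. The obstruction is instead the blow-up of the angle map near the elliptic curve, $|\pa_E\th(R,\bI)|\sim(E-\EminL)^{-1/2}(E-\Psi_L(R))^{-1/2}$, and the consequent singularities of the Green's function $S_m(\th(R,\bI))$. The Birkhoff--Poincar\'e normal form is used in the paper not to ``trivialise the dynamics'' as such, but to prove the precise factorisation $S_m(\th(R,\bI))=\dE^{-|m|/2}H_m$ (Proposition~\ref{prop:Greensdecomp}) and the dual factorisation $\fhat_0(m,\bI)\sim\dE^{|m|/2}\cdot(\textup{regular})$ for $C^\n$ data, so that the product $\fhat_0 S_m$ is tame (Proposition~\ref{P:KEY}). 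This cancellation, together with the $(y,z)=(\omega(\bI),E-\Psi_L(R))$ foliation that lets one exchange $\pa_\la$, $\pa_y$, $\pa_R$ and $\pa_z$ derivatives without hitting worse singularities (Lemmas~\ref{L:RYZ},~\ref{L:PARPRODUCTSPLITTING}), is what makes the repeated integration by parts in $\la$ close. Your proposal does not identify this cancellation and so has no mechanism to obtain decay rates beyond $(1+t)^{-2}$, which is the limit of naive integrations by parts in the action support.
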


The proof of this theorem is contained in Section~\ref{S:PROOF}.


\begin{remark}
Since $4\pi \rho_f = \Delta_R U_f + \frac2R\pa_R U_f $, Theorem~\ref{T:MAIN} implies, when $k\geq 2$, that the spatial density also decays-in-$t$ with the bound
\[
\|\rho_{|\varphi'|f}(t,\cdot)\|_{C^{\reg-2-b}([\Rmin,\Rmax])} \le \frac {C\|f_0\|_{C^{\reg}(\oom)}}{(1+t)^{b}},  \ \ 0\le b \le \reg-2.
\]
\end{remark}


\begin{remark}
Our theory works for a general class of equilibria of the form $\eta \varphi(E,L)$, $0\le\eta\ll1$, which are assumed to behave like~\eqref{E:SS} to the leading order
at the vacuum boundary $\{E=E_0\}\cup\{L=L_0\}$, smooth in the interior, and satisfy the Antonov stability condition $\varphi'(E,L)<0$. The latter plays a role analogous to the 
Penrose stability condition in plasma physics~\cite{MoVi2011}.
\end{remark}

\begin{remark}
An easy consequence of our theorem is that there exists a scattering profile $f_\infty\in\mathscr H$ such that 
$f$ scatters to $f_\infty$ as $t\to\infty$ in a suitable sense. For details, see Corollary~\ref{C:SCATTERING}.
\end{remark}


This result offers the first quantitative decay estimates for the full linearised flow~\eqref{E:FULLLIN}--\eqref{E:FULLLININITIAL} around nontrivial equilibria. 
Previously, Rein, Straub, and the authors,~\cite{HRSS2023,Straub24}, showed a non-quantitative version of decay for such steady states, based on absence of eigenvalues and the RAGE theorem. More recently in~\cite{We2025}, the author studied so-called plane-symmetric equilibria  of the gravitational VP-system and proved pointwise decay-in-time of macroscopic quantities (with no rates) for data supported in the absolutely continuous subspace of the linearised operator. In the physics literature, the discussion of damping usually takes place in the framework of the Kalnajs matrix method~\cite{BiTr,Kalnajs1977}, which is tied to ideas from scattering theory -- see for example the numerical work~\cite{BaYa}.

Our result captures several essential difficulties associated with the problem of stability of the Schwarzschild black hole viewed as a solution of the massive Einstein-Vlasov system. In particular, imparting nontrivial mass to a Vlasov perturbation of the Schwarzschild solution is expected to drive the dynamics to a ``nearby" equilibrium, which entails the same conceptual issues, such as elliptic particle trapping and a need for new resolvent estimates.

We note that our result applies to and is new already in the special case of a trivial background $\fmn\equiv0$ (i.e. $\eta=0$), where linearised dynamics collapse to pure transport dynamics.
For the pure transport part of the dynamics, $\pa_t f + \T f=0$, limited algebraic decay rates for gravitational force and density were obtained in~\cite{HRSS2024} allowing data supported near elliptic points and steady states that do not have to feature a central point mass. This pure transport equation was also studied in~\cite{CL2024} with data supported away from elliptic points, a property that is preserved for the pure transport flow but not the full linearised flow.  For this special data, both~\cite{HRSS2024,CL2024} show arbitrary improvement to algebraic decay rates by taking sufficiently smooth data. In~\cite{CL2024} the authors also show that the phase-mixing mechanism for such small data around the trivial equilibrium leads to  existence  for the nonlinear dynamics on an improved time scale longer than the naive scale predicted by the local existence arguments. We also note that certain (non-macroscopic) action-averages of solutions to pure transport are known to  decay in time; this observation essentially goes back to Lynden-Bell~\cite{LB1962}, see also more recent works~\cite{RiSa2020,MoRiVa2022}. 

Theorem~\ref{T:MAIN} shows that the obstructions to decay come from two sources: the regularity of the steady state and the regularity of the initial data. In particular, for arbitrarily smooth steady states, the linearised dynamics 
give
arbitrarily fast decay with sufficiently regular initial data. Surprisingly, this derivative-to-decay relation is consistent with the philosophy of Landau damping in plasmas~\cite{MoVi2011}, where the decay rate improves with the regularity of data. In this gravitational setting, this effect  reflects the 3D nature of the problem as well as the presence of the point mass source.
Our results for the simpler pure transport problem~\cite{HRSS2024} suggest that this regularity-decay improvement mechanism is unlikely to be true for $(1+1)$-D problems (see also~\cite{ChLu2022}) and it is also unclear whether the $t^{-2}$ rate proved in the radial case~\cite{HRSS2024} can be improved in the situation without the point mass.

Much more is known about the {\em orbital} stability of radial equilibria (with or without the point mass potential). Under the Antonov stability condition $\varphi'<0$ (and modulo some technical assumptions),
the equilibria are known to be nonlinearly orbitally stable; we refer the reader to key contributions~\cite{GuRe2007,LeMeRa2012} and references therein.  
However, already at the linear level, orbital stability does not necessarily imply asymptotic stability. In~\cite{HRS2022}, the first author and collaborators proved the existence of steady states for which the linearised operator contains a purely oscillatory eigenvalue, producing oscillations that obstruct decay in the linear dynamics (see also~\cite{HRSS2023,Straub24,MK}). Such oscillatory modes exist only if the steady state is not regular enough at the vacuum boundary~\cite{HRSS2023,Straub24}, which is excluded in our setting by imposing sufficient regularity, as in~\eqref{E:SS2}.  


Nonlinear Landau damping around spatially homogeneous equilibria for the VP-system was shown by Mouhot and Villani in the seminal
work~\cite{MoVi2011}. Following this work, there has been a lot of progress in this subject, see for example~\cite{BeMaMo2016,GrNgRo2020} for shorter proofs.
In the setting of homogeneous equilibria on the whole of $\mathbb R^3$ we refer to~\cite{GlSc94,BeMaMo2018,HKNgRo2021,HKNgRo2022,BeMaMo2022,IoPaWaWi2022,Ng2024}
and, for the case of massless electrons,~\cite{GaIa}. We also mention the stability results~\cite{PaWi2021,PaWiYa2022} in the presence of a repulsive point charge, which make use of action-angle formalism
to prove asymptotic stability of the vacuum steady state for the plasma VP-system. The linearised problem in particular allows for explicit formulas for the action-angles. For further works on stability and scattering
around the vacuum, see~\cite{Sm2016,FlOuPaWi2023}.

By contrast there are very few results about damping around spatially inhomogeneous plasmas, the so-called BGK waves.
Guo and Lin~\cite{GuLi2017} constructed examples of spectrally stable BGK waves with constant ion backgrounds and no embedded eigenvalues in the continuous spectrum.
A nonquantitative damping result for so-called Boltzmannian BGK waves with a trapping region and non-constant ion backgrounds was shown by Despr\'es~\cite{Despres2019}. In a recent work,~\cite{HaMo2024},
by the first author and Moreno,
a nonquantitative version of damping was shown for a large class of equilibria solving the VP-system with non-constant ion backgrounds, using ideas developed in the gravitational case,~\cite{HRSS2023}.
We mention a very nice result by Faou, Horsin, and Rousset~\cite{FaHoRo2021}, wherein the authors prove a  quantitative version of linear Landau damping in a $1+1$-dimensional setting around inhomogeneous equilibria  
of the Vlasov--Hamiltonian Mean Field (HMF) model. This is a toy model for the gravitational interactions allowing for explicit formulas for the action-angle variables, which play an important role in the proof.


\subsection{Strategy of the proof}


A fundamental challenge in the analysis of the linearised decay is the inhomogeneity of the steady galaxies in both $x$ and $v$. This precludes 
any direct use of Fourier techniques in the derivation of the dispersion relation, by contrast to the well-understood setting
of the linear Landau damping around space-homogeneous steady plasmas. We therefore switch to a more geometric frequency variable, the angle $\th\in\mathbb S^1$ which can be used 
together with the conserved energy $E$ and momentum $L$  as a new set of coordinates to parametrise the phase-space, working now for $(E,L)\in\I :=\supp\varphi$. 
In these coordinates, the transport operator $\mathcal T$ is parallel to $\pa_\th$ and takes the form
\begin{align}
\T = \om(E,L) \pa_\th, \quad \om(E,L):=\frac1{T(E,L)},\label{E:TRANSPINTRO}
\end{align}
where $(E,L)\mapsto T(E,L)$ is the period function associated with the periodic trajectory of a star with a given energy-momentum pair $(E,L)$. Passing to the Fourier
variables in $\th$ and letting $\bI:=(E,L)$, the linearised VP-system takes the form
\begin{align}\label{E:LINVPINTRO}
\pa_t \fhat(t,m,\bI) + 2\pi i m\om(\bI)\big(\fhat(t,m,\bI) + \eta\widehat{U_{|\varphi'|f}}(t,m,\bI)\big)=0.
\end{align}
A fundamental difficulty is that the $m$-th Fourier mode of the gravitational potential $\widehat{U_f}(t,m,\bI)$ depends on  the full vector $\langle \fhat(t,\ell,\bI)\rangle_{\ell\in\mathbb Z}$ and
therefore does not separate frequencies, unlike the analogous situation in Landau damping for plasmas around homogeneous steady states.
This is a manifestation of the inhomogeneity of the steady state and highlights an antagonism between the angle coordinate and the gravitational response, which causes many difficulties in
our analysis. 

In order to understand the decay mechanism for~\eqref{E:LINVPINTRO} we must first understand dispersion induced by the leading order dynamics
\begin{align}\label{E:PTINTRO0}
\pa_t f + \mathcal T f = 0,
\end{align}
which we refer to as {\em pure transport}. Solving this equation on the Fourier side, using~\eqref{E:TRANSPINTRO},
 leads to a general solution formula 
\begin{align}\label{E:FPTINTRO}
\fhat(t,m,\bI) = e^{-2\pi i m t \om(\bI)} \widehat{f_0}(m,\bI),
\end{align}
for an initial profile $f_0$. 
For any smooth test function $g:\I\to\mathbb R$ a  simple nonstationary-phase argument then gives decay-in-time of action averages $g\mapsto \int_\I \fhat(t,m,\bI) g(\bI) \diff \bI =  \int_\I e^{-2\pi i m t \om(\bI)} \widehat{f_0}(m,\bI) g(\bI) \diff \bI$,
under the key monotonicity assumption $\om'(\bI)\neq0$~\cite{RiSa2020,MoRiVa2022}.
However, such averages bear unclear  physical relevance, as they do not correspond to  {\em macroscopic} quantities, such as the gravitational field or spatial density.

A second severe limitation of the argument for higher decay-in-$t$ is that it requires the assumption that $f_0$  vanishes at the trapping portion of the boundary $\pa\I$. While such an assumption
is propagated by the pure transport flow~\eqref{E:PTINTRO0}, it is {\em not} propagated by the linearised flow~\eqref{E:FULLLIN}, nor by the full nonlinear flow. In particular, any linear theory that aims to address the
asymptotic convergence to a ``nearby" nontrivial steady state must take into account initial data supported around the elliptic points. 

The first key result of this work is to prove that the gravitational field (and its derivatives) along the solutions of the pure transport~\eqref{E:PTINTRO0} satisfies the decay bound
\begin{align}\label{E:PTDECAYINTRO}
|\pa_R U_{|\varphi'|\FPT}(t,R)| \lesssim \|f_0\|_{C^{\reg}(\oom)} (1+t)^{-\reg}.
\end{align} 
We view this as a remarkable feature of the presence of the point mass potential and simultaneously the 3-D character of the problem; the decay rate improves with the regularity of initial data, assuming for example that the steady state is $C^\infty$ at the vacuum boundary.
This is a subtle effect and stands in contrast to $1+1$-dimensional problems, where the decay bound appears limited irrespective of the initial data/boundary regularity, compare~\cite{HRSS2024,ChLu2022}.

To address the decay of the gravitational force, we first derive a (remarkably succinct) formula that expresses it in terms of action-angle variables:
\begin{align}
\pa_R U_{|\varphi'|\FPT}(t,R) = \frac{4\pi}{R^2} \sum_{m\in\Z_\ast}\frac1m \int_{\I\cap\{E\geq \Psi_L(R)\}} e^{-2\pi mi \om(\bI) t} |\varphi'(\bI)| \fhat_0(m,\bI) S_m(\th(R,\bI))  T(\bI)\diff\bI,
\end{align}
where $\Z_\ast: = \Z\setminus\{0\}$,  we have written 
$S_m(\th) : = \sin(2\pi m\th)$, for $(m,\th)\in\mathbb Z\times\mathbb S^1,$ $\th(R,\bI)$ is the angle written in the mixed $(R,E,L)$ coordinates, and $T(\bI)$ is the period of radial motion for star with a given energy-momentum pair $\bI$. Considering a non-stationary phase argument, recalling $\om'(\bI)\neq 0$, one would like to integrate by parts in $E$ to derive decay. However, with only a little work, one finds that
\[ |\pa_E\th(R,\bI)| \lesssim (E-\EminL)^{-\frac12} (E-\Psi_L(R))^{-\frac12},\]
with higher derivatives picking up stronger singularities.  Here $\EminL$ is the minimum energy attained by the steady state at each $L$. 
Integrating out the singularities using the angular momentum variable $L$, which is a strategy implemented in~\cite{HRSS2024}, 
hits its limit after two integrations-by-parts, thus leading to the decay rate $(1+t)^{-2}$ of~\cite{HRSS2024}, irrespective of the regularity of 
the initial data and the vacuum boundary.

{\em A new foliation.}
To improve upon the above decay rate, one is forced to look for cancellation structures inside the product $\fhat_0(m,\bI) \sin(2\pi m \th(R,\bI))$, which is very challenging 
due to the lack of explicit expressions for the angle function $\th(R,\bI)$. Our first key idea in this direction is to introduce a new foliation of the action support. 
For any given $R\in[\Rmin,\Rmax]$, we consider the map
\begin{align}
\I\cap \{E\ge\Psi_L(R)\} \ni (E,L) \ \mapsto (y,z)\in\J_R, \ \ \ (y,z) = (\om(\bI), E-\Psi_L(R));
\end{align}
see Figure~\ref{F:ELtriangle}.
This map is locally invertible -- to prove this we use the smallness of $\eta\ll1$ in the choice of our steady state. At this stage we use the presence of the 
point mass potential in a fundamental way. In particular, to the leading order in $\eta$, the
frequency function $\om(\bI)$ coincides with the particle frequency in a potential well induced by the pure point mass potential, i.e. $\om(E,L)\sim |E|^{\frac32}$.

\begin{figure}[h!]
\begin{center}
		\begin{tikzpicture}
			\tikzmath{\xscale = 2.5; \yscale = 3.;}
			
			\draw[-{>},black] (0.,-.1*\yscale) -- (0.,1.15*\yscale) node[right] {$L$};
			\draw[-{>},black] (-2.6*\xscale,0.) -- (.3*\xscale,0.) node[below] {$E$};

			\fill [gray, domain=.2:1.,smooth, variable=\x, samples=20, very thin]
			(-2.5*\xscale,.2*\yscale)
			-- plot ({-\xscale*.5/\x}, {\yscale*\x})
			-- (-.5*\xscale,.2*\yscale)
			-- cycle;
			
			\node[white,align=center] at (-.75*\xscale,.4*\yscale) {$\I$};
			
			\draw[ubtgreen, densely dotted, thick] (-.5*\xscale,1.*\yscale) -- (-.5*\xscale,-.05*\yscale) node[below right,ubtgreen,align=center,xshift=-4.5*\xscale] {$E_0<0$};
			\draw[ubtgreen, densely dotted, thick] (-2.5*\xscale,.2*\yscale) -- (.05*\xscale,.2*\yscale) node[right,ubtgreen] {$L_0\geq0$}; 
			\draw[ubtgreen, densely dotted, thick] (-.5*\xscale,\yscale) -- (.05*\xscale,\yscale) node[right,ubtgreen] {$\Lmax$}; 
			
			\draw[magenta,domain=.2:1.,smooth, variable=\x, samples=20, ultra thick]  plot ({-\xscale*.5/\x}, {\yscale*\x});
			
			\node[purple,align=center] at (-1.9*\xscale,.4*\yscale) {$\pa \I_{0}$};
			
			\draw[ubtgreen, ultra thick] (-2.5*\xscale,.2*\yscale) -- (-.5*\xscale,.2*\yscale);
			\draw[ubtgreen, ultra thick] (-.5*\xscale,.2*\yscale) -- (-.5*\xscale,\yscale);
			\node[ubtgreen,align=center] at (-.275*\xscale,.4*\yscale) {$\pa \I_{\text{vac}}$};

			
			\draw[-{>},black] (8.,-.1*\yscale) -- (8.,1.15*\yscale) node[right] {$L$}; 
			\draw[-{>},black] (8+-2.6*\xscale,0.) -- (8+.3*\xscale,0.) node[below] {$E$};
			\fill [lightgray, domain=.2:1.,smooth, variable=\x, samples=20, very thin] 
			(8-2.5*\xscale,.2*\yscale)
			-- plot ({8-\xscale*.5/\x}, {\yscale*\x})
			-- (8-.5*\xscale,.2*\yscale)
			-- cycle;
			\draw[magenta,domain=.2:1.,smooth, variable=\x, samples=20, ultra thick]  plot ({8-\xscale*.5/\x}, {\yscale*\x});

			\node[magenta,align=center] at (8-1.9*\xscale,.4*\yscale) {$\pa \I_{0}$};
			\draw[ubtgreen, ultra thick] (8-2.5*\xscale,.2*\yscale) -- (8-.5*\xscale,.2*\yscale);
			\draw[ubtgreen, ultra thick] (8-.5*\xscale,.2*\yscale) -- (8-.5*\xscale,\yscale);
			\node[ubtgreen,align=center] at (8-.275*\xscale,.4*\yscale) {$\pa \I_{\text{vac}}$};

			\draw[red] (8-1.55*\xscale,.2*\yscale) .. controls +(1,0.95) and +(-.4,0.05) .. (8-0.5*\xscale,.59*\yscale); 
			\draw[red] (8-1.3*\xscale,.2*\yscale) .. controls +(1,1) and +(-.6,0.05) .. (8-0.5*\xscale,.49*\yscale); 
			\draw[red] (8-1.1*\xscale,.2*\yscale) .. controls +(0.08,0.12) and +(-1,0.4) .. (8-0.5*\xscale,.39*\yscale); 
			\draw[red] (8-0.8*\xscale,.2*\yscale) .. controls +(0.1,0.2) and +(-0.1,0.08) .. (8-0.5*\xscale,.3*\yscale); 
			
			\draw[blue] ({8-\xscale*.5/0.25},{\yscale*0.2}) .. controls +(0.05,0.10) and +(-0.05,-0.10) .. ({8-\xscale*.5/0.255},{\yscale*0.255}); 
			\draw[blue] ({8-\xscale*.5/0.3},{\yscale*0.2}) .. controls +(0.1,0.15) and +(-0.1,-0.15) .. ({8-\xscale*.5/0.3},{\yscale*0.3}); 
			\draw[blue] ({8-\xscale*.5/0.35},{\yscale*0.2}) .. controls +(0.1,0.2) and +(-0.1,-0.2) .. ({8-\xscale*.5/0.35},{\yscale*0.35}); 
			\draw[blue] ({8-\xscale*.5/0.4},{\yscale*0.2}) .. controls +(0.1,0.25) and +(-0.2,-0.2) .. ({8-\xscale*.5/0.42},{\yscale*0.42}); 
			\draw[blue] ({8-\xscale*.5/0.46},{\yscale*0.2}) .. controls +(0.1,0.25) and +(-0.2,-0.2) .. ({8-\xscale*.5/0.48},{\yscale*0.48}); 
			\draw[blue] ({8-\xscale*.5/0.54},{\yscale*0.2}) .. controls +(0.1,0.25) and +(0.2,-0.2) .. ({8-\xscale*.5/0.57},{\yscale*0.57}); 
			\draw[blue] ({8-\xscale*.5/0.54},{\yscale*0.2}) .. controls +(0.1,0.25) and +(0.2,-0.2) .. ({8-\xscale*.5/0.57},{\yscale*0.57}); 
			\draw[blue] ({8-\xscale*.5/0.75},{\yscale*0.2}) .. controls +(0.1,0.25) and +(0.2,-0.2) .. ({8-\xscale*.5/0.72},{\yscale*0.72}); 

		\end{tikzpicture}
	\end{center}
	     \caption{Left: the $(E,L)$-support~$\I $ of a spherically symmetric steady state. The magenta and green parts of the boundary of~$\I$ depict~$\pa \I_{0}$ and $\pa \I_{\text{vac}}$, respectively (see~\eqref{E:IBDRYDEF}). Right: schematic depiction of $(y,z)$-level sets for some fixed value of $R\in[\Rmin,\Rmax]$. Blue lines are the $y$-level sets and are almost vertical, the red lines are the $z$-level sets.}
	\label{F:ELtriangle}
\end{figure}
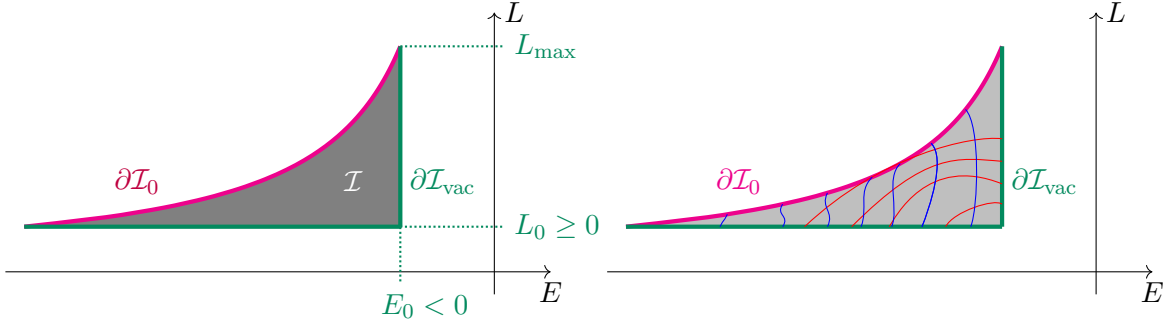


This new coordinate system effectively separates derivatives on the `very bad' singular powers $E-\Psi_L(R)$ from the singularities at the trapping set, $E=\EminL$. This is still insufficient, however, to achieve arbitrarily high decay, and so we need to find a more sophisticated cancellation structure in the product $\fhat_0(m,\bI)S_m(\th(R,\bI))$. To see  this cancellation structure, we need a second ingredient coming from
dynamical systems.

{\em Canonical Normal Form.} 
The Birkhoff-Poincar\'e normal form allows us to state that 
\begin{align}\label{E:CNFINTRO1}
r(\th,\bI)-r_L = G_1(\dE^{\frac12} \cos\th,L), \quad w(\th,\bI) = G_2 (\dE^{\frac12} \sin\th,L),
\end{align}
for some analytic functions $G_1$ and $G_2$, locally around the elliptic point $(r_L,\EminL,L)$, where we have written 
$\dE=E-\EminL$ for short. This identity gives a building block for the Fourier coefficients $\fhat_0(m,\bI)$ for `regular' $f_0$. In particular, we prove that $\fhat_0(m,\bI)$ factorises into the product of $\dE^{\frac{|m|}{2}}$ with a regular function of $E$ and $L$. In addition, inverting~\eqref{E:CNFINTRO1}, we also obtain stronger control over the Green's function, as we derive
\begin{align}\label{E:CNFINTRO2}
\sin(2\pi m \th) = \dE^{-\frac{|m|}{2}} g(r-r_L,w,L)
\end{align}
for some analytic function $g$, locally around $(r_L,0,L)$.  Thus, for $\th=\th(R,\bI)$, we find that the Green's function $S_m(\th(R,\bI))$ takes the form
\begin{align}
\dE^{\frac{|m|}{2}} \sin(2\pi m \th(R,\bI)) =  g(r(\th(R,\bI),\bI)-r_L,w(\th(R,\bI),\bI),L) = g (R-r_L, \sqrt{2z}, L).
\end{align}
This relation is remarkable, as it points to an effective separation of variables: after the change to $(y,z)$ coordinates, the right-hand side
depends on $y$ only through the momentum $L=L(R,y,z)$. In particular, the singular behaviour 
of $\sin(2\pi m \th(R,\bI))$ is well-approximated by  $\dE^{-\frac{|m|}{2}}$, which cancels with the corresponding factor in $\fhat_0(m,\bI)$. 
 In fact, in Proposition~\ref{P:KEY}, we prove a 
quantitative estimate of the form (for $\ell\le\n$)
\begin{align}\label{E:HORINTRO}
\big|\frac{d^\ell}{dy^\ell} \big(\fhat_0(m,\bI) \sin(2\pi m \th(R,\bI))\big)\big| \leq C\|f_0\|_{C^{\n}(\oom)}\big(C^{|m|}|m|^{\ell}\dE^{\max\{\frac{|m|-\ell}{2},0\}}+|m|^{-\n+\ell}\dE^{\frac{\n-\ell}{2}}\big),
\end{align}
which is the basic ingredient leading to~\eqref{E:PTDECAYINTRO}. Bound~\eqref{E:HORINTRO} relies on a splitting of $\fhat_0$ into a polynomial part and a Taylor remainder (see~\eqref{eq:gTaylorsplitting}), about the point $(R,0,r_L)$; this splitting
is responsible for the two contributions on the right-hand side of~\eqref{E:HORINTRO}.  It however leads to an artificial ``loss" of regularity in $R$, which a priori causes difficulties in upgrading~\eqref{E:HORINTRO} to handle higher derivatives in $R$. To get around this difficulty, we make the crucial observation that, for functions such as $\fhat(m,\bI(R,y,z))$, the $\pa_R$ and $\pa_z$ derivatives are parallel, enabling us to exchange $R$ derivatives for $z$ derivatives while still avoiding application of $\pa_z$ to $S_m(R,\bI)$, which would generate further singularities. This necessitates the careful development of a product rule (Lemma~\ref{L:PARPRODUCTSPLITTING}), which due to the algebraic structures in this problem necessitates the use of Bell polynomials to correctly count all the cancellations near trapping. The final argument is then given
in Proposition~\ref{P:KEY}.

{\em Resolvent Estimates.}
Having obtained a satisfactory decay result for the pure transport dynamics, we now discuss the strategy to
obtain quantitative decay for the true linearised problem~\eqref{E:FULLLIN} or equivalently~\eqref{E:LINVPINTRO}.
To that end we prove a version of the limiting absorption principle; our starting point is  Stone's formula, which reads
\begin{align}\label{E:RESOLVENTINTRO1}
\fhat(t,m,\bI) = \lim_{\eps\to0} \int_{\sigma_{\mathcal L}} e^{i\l t}(\widehat{f}_\eps^{-} - \widehat{f}_\eps^+) \diff \l, \text{ where }\widehat{f}^\pm_\eps:=(\L+\la\pm i\eps)^{-1}\widehat{f}_0
\end{align}
 and $\L$ is defined through~\eqref{E:LINVPINTRO}, see~\eqref{DEF:L} below.  Using the Green's function representation of the gravitational force,~\eqref{E:RESOLVENTINTRO1}, and the notation $\Ueps:=U_{|\varphi'| f_\eps^\pm}$, we obtain the following formula (compare Lemma~\ref{L:LAP0}):
\begin{align}
&\pa_R U_{|\varphi'|f}(t,R)  = \pa_RU_{|\varphi'|\FPT}(t,R)\notag\\
& -\frac{2i\eta}{R^2}\sum_{m\in\Z_\ast}\frac1{m}\lim_{\eps\to0} \int_{\sigma(\mathcal L)} \int_{\I} e^{i\la t}   |\varphi'(\bI)|\Big(\frac{\widehat{\Uepsplus}(m,\bI;\lambda)}{\om(\bI)+\frac{\la+ i\epsilon}{2\pi m}}-\frac{\widehat{\Uepsminus}(m,\bI;\lambda)}{\om(\bI) +\frac{\lambda- i\epsilon}{2\pi m}}\Big)S_m(\th(R,\bI)) \diff \bI\dif \lambda. \label{E:FORCEINTRO}
\end{align}

To obtain time-decay, we must integrate-by-parts  in~\eqref{E:FORCEINTRO} with respect to $\l$. It is immediately obvious that such derivatives can either hit the singularity in the singular integral, or they can land on $\Ueps$. Recalling $y=\om(\bI)$, we observe that 
\[ \pa_\la^\ell\big(y+\frac{\la\pm i\eps}{2\pi m}\big)^{-1}=\frac{1}{(2\pi m)^\ell}\pa_y^\ell\big(y+\frac{\la\pm i\eps}{2\pi m}\big)^{-1}\]
in order to integrate by parts again in $y$ and then seek to derive good bounds on $\pa_y^\ell\pa_\la^j \Ueps$. The integration by parts with respect to $y$ produces only vanishing boundary terms due to the structure of the $(y,z)$ coordinates as long as  $\pa_y^\ell \varphi'$  vanishes at the vacuum boundary.

At the heart of the proof of uniform-in-$\eps$ bounds for $\pa_\l^j\Ueps$
is the formula~\eqref{E:PARTIALRUEPSOP}, which gives the important relation
\begin{align}
\FPM[\Ueps] = \mathcal S^\pm[f_0].
\end{align}
Here the frequency-delocalised operator $\FPM$ is given by
\begin{align}\label{E:FORCEINTRO2}
\FPM[V](R) = V(R) -\eta\sum_{m\in\Z_\ast}\frac1m\int_R^{\Rmax}\frac{4\pi}{r^2} \int_{\J_R}\frac{\frac{|\varphi'|}{|\P_R\om|} \widehat V_m S_m}{y+\frac{\la\pm i\eps}{2\pi m}} \diff(y,z) \diff r,
\end{align}
whereas the source operator $\mathcal S^\pm$ acts only on the initial datum $f_0$. We stress that the operator $\FPM$ depends on the {\em full} frequency vector $\langle\Vhat(\ell,y,z)\rangle_{\ell\in\Z_*}$, and therefore
reflects the frequency-mixing feature that is not present in damping around space-homogeneous structures. We show in Proposition~\ref{P:INVERT} that for small steady states $\eta\ll1$ the operator $\FPM:C^1\to C^1$
is invertible, which is a building stone for the inductive procedure leading to bounds on $\pa_\l^j\Ueps$.

The main remaining ingredient is to understand the generalisation of the classical Plemelj operator, which we write 
\begin{align}\label{E:PLEMELJINTRO}
 \int_{\J_R} \frac{Q(R,y,z)\pa_y^j\pa_R^k\big(\Vhat(m,y,z) \sin(2\pi m \th(R,y,z))\big)}{y+\frac{\la\pm i\epsilon}{2\pi m}} \diff (y,z),
\end{align}
which already arises in~\eqref{E:FORCEINTRO} and~\eqref{E:FORCEINTRO2} once we perform integrations by parts.

This is a variant of the classical Plemelj singular integral, which contains potentially dangerous resonant interactions in the frequency region where $|y+\frac{\la}{2\pi m}|\ll1$. Inspired by the approach 
developed in~\cite{HRSS2023}, wherein the absence of embedded eigenvalues for the linearised dynamics was shown, we separate the estimates into a near-resonant region (where $|y+\frac{\l}{2\pi m}|<\delta$) and a non-resonant region. For the more dangerous near-resonant frequencies, we re-write the singular integrand as the derivative of a logarithm, integrating by parts and applying the cancellation structure of~\eqref{E:HORINTRO} to close estimates. By carefully tracking summability in $m$, we finally obtain the key estimates
  contained in Theorems~\ref{T:RESOLVENT1} and~\ref{T:RESOLVENT2},
which for any $\l\in\sigma(\mathcal L)$ and  $b\le\reg-1$ respectively give the a priori bounds
\begin{align}
\|\pa_\l^b\Ueps(\cdot;\l)\|_{C^{\reg-b}([\Rmin,\Rmax])} & \le C\eta |\l|^{-1} \|f_0\|_{C^{\reg}(\oom)}, \\
\|\pa_\l^b(\Uepsplus-\Uepsminus)(\cdot;\l)\|_{C^{\reg-b}([\Rmin,\Rmax])}&\le C\eta |\l|^{-2}  \|f_0\|_{C^{\reg}(\oom)}.
\end{align}
These bounds are then sufficient to pass to the limit in Stone's formula as $\eps\to0$ and conclude the 
decay claimed in Theorem~\ref{T:MAIN}.

{\bf Plan of the paper.} Section~\ref{S:AA} sets the stage for the rest of the paper: we introduce the action-angle variables, period and area functions, and state their basic regularity properties. In
Section~\ref{S:HAMILTON} we explain basic spectral properties of the linearised operator, and in Section~\ref{SS:STONE} we write down
the Stone formula. Section~\ref{S:CNF} introduces the Birkhoff-Poincar\'e normal form and a new foliation of the steady state support. This is then used in Section~\ref{S:REGULARITY} to develop
an effective regularity theory near the trapping set. These two ingredients are used to prove the decay of the gravitational potential along the solutions of the pure transport flow in Section~\ref{S:PT}. 
Finally, Section~\ref{S:RESOLVENTBOUNDS}
is devoted to the uniform resolvent estimates, which rely heavily on the regularity theory developed in Sections~\ref{S:CNF}--\ref{S:REGULARITY}, and a sharp use of 
ellipticity of the Poisson equation in the action-angle variables. Finally, the main theorem is proved in Section~\ref{S:MAIN}. 

\medskip

{\bf Acknowledgments.}
M. Had\v zi\'c acknowledges the support of the EPSRC Early Career Fellowship EP/S02218X/1.
M. Schrecker acknowledges the support of the EPSRC Post-doctoral Research Fellowship EP/W001888/1. 
The authors thank G. Rein and C. Straub for fruitful discussions.


\section{Action-angle variables and the limiting absorption principle}\label{S:AA}


\subsection{Action-angle variables and trapping}

To parametrise the phase-space, we construct the action angle variables associated to the completely integrable particle motion traced by the  stars 
inside the steady state support:
\begin{align}
\dot r & =w, \qquad \dot w  = - \Psi_L'(r),\label{E:RWDOT}
\end{align}
where $\Psi_L$ is defined in~\eqref{DEF:EPSIL} and satisfies the following lemma.

\begin{lemma}\label{lemma:rL}
Let $\mu,\nu$ satisfy~\eqref{E:SS2}. There exists an $\eta_0(\mu,\nu)>0$ such that for all steady states $\fmn$ as in~\eqref{E:SS}, the following holds, uniformly for $\eta\in[0,\eta_0]$.
For each $L\in[L_0,\Lmax]$, the map $r\mapsto \Psi_L(r)\in C^2(0,\infty)$ has a unique, non-degenerate minimum $r_L\in(\Rmin,\Rmax)$ with $\Psi_L'(r_L)=0$, $\Psi_L''(r_L)=\al_L\geq c_0>0$. Moreover, the map $L\mapsto r_L$ is $C^1$ and uniformly monotone increasing. Thus the set $\{r_L\,|\,L\in[L_0,\Lmax]\}=[r_{L_0},r_{\Lmax}]$ is compactly contained within $(\Rmin,\Rmax)$. Finally, as $\eta\to0$, the following convergences hold locally uniformly in $L>0$: $\Psi_L\to\Psi_L^0$ in $C^2$, $r_L\to r_L^0$, where the limiting functions are defined in Lemma~\ref{L:PUREPOINT}.
\end{lemma}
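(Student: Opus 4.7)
The plan is to treat the unperturbed Kepler case $\eta=0$ explicitly and then upgrade to small $\eta>0$ via a perturbative application of the implicit function theorem, working at the level of the scalar equation $\Psi_L'(r)=0$.

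At $\eta=0$ the induced potential vanishes identically, so $\Psi_L(r)$ reduces to $\Psi_L^0(r):=-M/r+L/(2r^2)$. A direct computation gives the unique critical point $r_L^0=L/M$ with $(\Psi_L^0)''(r_L^0)=M^4/L^3>0$, and since $\Psi_L^0\to+\infty$ as $r\to 0^+$ while $\Psi_L^0\to 0^-$ as $r\to\infty$, this critical point is in fact the global minimum on $(0,\infty)$. That the resulting endpoints $L_0/M$ and $\Lmax^0/M$ lie strictly inside $(\Rmin^0,\Rmax^0)$ will be read off from the explicit characterisation of the Kepler support given in Lemma~\ref{L:PUREPOINT}; this is where the single-gap assumption~\eqref{E:NOGAP} enters.

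For $\eta>0$ small, I would first invoke the $C^2$ smallness of the induced potential: the polytropic ansatz~\eqref{E:SS} together with~\eqref{E:SS2} yields a density $\bar\rho$ of size $O(\eta)$ in $L^\infty$ with sufficient regularity at the vacuum boundary, so that the Poisson equation~\eqref{E:POTENTIAL} with $\bar U(\infty)=0$ implies $\bar U\to 0$ in $C^2_{\mathrm{loc}}((0,\infty))$; consequently $\Psi_L\to\Psi_L^0$ in $C^2$ locally uniformly in $L\in[L_0,\Lmax]$. Then I would apply the implicit function theorem to $F(r,L,\eta):=\Psi_L'(r)$ at $(r_L^0,L,0)$, using the non-degeneracy $\pa_r F|_0=(\Psi_L^0)''(r_L^0)>0$, to produce for $\eta\leq\eta_0$ a unique $C^1$ branch $L\mapsto r_L$ of critical points near $r_L^0$, with $\Psi_L''(r_L)=\al_L\geq c_0>0$ by continuity. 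For global (as opposed to local) uniqueness on $(0,\infty)$, I would factor $\Psi_L'(r)=r^{-3}h(r)$ with $h(r):=r^3\bar U'(r)+Mr-L$ and show that $h'(r)=M+O(\eta)>0$ on a large compact region containing $[\Rmin,\Rmax]$, while outside this region the sign of $\Psi_L'$ is controlled by the Kepler terms (with $\bar U'\equiv 0$ in the inner hole by Newton's shell theorem, and $\bar U'(r)\sim M_{\mathrm{gal}}/r^2$, $M_{\mathrm{gal}}=O(\eta)$, in the exterior).

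Strict monotonicity of $L\mapsto r_L$ then follows by implicit differentiation of $\Psi_L'(r_L)=0$: using $\pa_L\Psi_L'=-1/r^3$ one obtains $\pa_L r_L=1/(\al_L r_L^3)>0$, uniformly in $\eta$. This identifies $\{r_L:L\in[L_0,\Lmax]\}$ with the closed interval $[r_{L_0},r_{\Lmax}]$, whose compact containment in $(\Rmin,\Rmax)$ follows from the $\eta=0$ case together with the joint convergence $r_L\to r_L^0$, $\Rmin\to\Rmin^0$, $\Rmax\to\Rmax^0$, after shrinking $\eta_0$ if needed. The main obstacle, I expect, is making the perturbative step uniform in $L\in[L_0,\Lmax]$ all the way up to the vacuum boundary of the steady state; this relies on the regularity encoded in~\eqref{E:SS2} being sufficient to ensure the $C^2$ convergence $\bar U\to 0$, and at this step I would lean on the continuous-dependence results proved in~\cite{Straub24}.
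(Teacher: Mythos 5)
Your proposal is correct, and it reaches the same conclusions as the paper but via a more self-contained route. The paper's proof is terse: it cites~\cite{Straub24} for existence, uniqueness, non-degeneracy, and the $\eta\to0$ convergence, and then establishes only the $C^1$ regularity and monotonicity of $L\mapsto r_L$ by implicit differentiation of $\Psi_L'(r_L)=0$, using $\pa_L\Psi_L'=-1/r^3$ to get $\pa_L r_L=1/(\al_L r_L^3)$. You rederive what the paper delegates to~\cite{Straub24}: the explicit Kepler computation ($r_L^0=L/M$, $(\Psi_L^0)''(r_L^0)=M^4/L^3$), the $C^2$ closeness of $\bar U$ to zero from the $O(\eta)$ density and elliptic regularity, the local IFT branch, and crucially a clean global-uniqueness argument via the factorisation $\Psi_L'(r)=r^{-3}\big(r^3\bar U'(r)+Mr-L\big)$ together with shell-theorem control of $\bar U'$ inside and outside the support. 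Your monotonicity computation agrees with the paper's. Two small remarks: first, the factorisation argument actually gives strict monotonicity of $r\mapsto r^3\Psi_L'(r)$ for \emph{all} $\eta$, not only small $\eta$, since $h'(r)=M+M_{\mathrm{inside}}(r)+4\pi r^3\bar\rho(r)>M>0$; second, for the compact containment $[r_{L_0},r_{\Lmax}]\subset\subset(\Rmin,\Rmax)$ you only need $\kappa>-M^2/(2L_0)$ (so that $M^2+2\kappa L_0>0$ and the turning points are genuine), which is weaker than, but implied by, the single-gap assumption~\eqref{E:NOGAP} — it is slightly misleading to say the gap assumption ``enters'' as if it were sharp here.
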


\begin{proof}
The existence, uniqueness, and non-degeneracy of $r_L$ has been shown previously, for example  in~\cite{Straub24}. The convergence as $\eta\to0$ follows from~\cite[Proposition 6.2.6]{Straub24}. From the relation
$\Psi_L'(r_L)=0$ and the at least $C^2$  regularity of $\Psi_L$, we see immediately that $r_L$ is differentiable and
$$\Psi_L''(r_L)\pa_Lr_L+(\pa_L\Psi_L'(r_L))=0,$$
from which we deduce 
$\pa_Lr_L=\frac{1}{\al_Lr_L^3}\geq c_0>0.$
\end{proof}



\subsubsection{Energy-momentum support and notation.}


 The set of particle energy-angular momenta pairs supported by the steady state is compact and corresponds to the set 
 \begin{align}\label{def:I}
 \mathcal I = \cup_{L\in[L_0,\Lmax]}[\EminL,E_0],
 \end{align} 
 where $E_0<0$ is the maximal particle energy and $L=L_0$ the minimal (modulus square of the) angular momentum.
For each $L\in[L_0,\Lmax]$ the value $\EminL=\Psi_L(r_L)$ is the minimal particle energy with a fixed momentum $L$ attained at the critical point $r=r_L$ in the interior of the galaxy, see Figure~\ref{F:PWL}. We note from~\cite[Proposition 6.2.6]{Straub24} that $\EminL\to E_{\min}^{L,0}$ as $\eta\to0$, where $E_{\min}^{L,0}$ is defined in Lemma~\ref{L:PUREPOINT}.

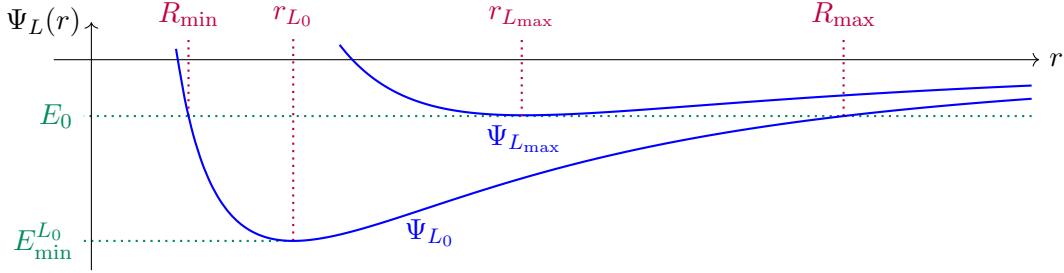
\begin{figure}[h!]
	\begin{tikzpicture}
				
		\draw[purple,  dotted,thick] (2.6833,-2.4112) -- (2.6833,.3) node[above,purple] {$r_{L_0}$};
		\draw[ubtgreen,  dotted,thick] (2.6833,-2.4112) -- (-.1,-2.4112) node[left,ubtgreen] {$\Emin^{L_0}$};
		
		\draw[ubtgreen, dotted,thick] (12.5,-.75) -- (-.1,-.75) node[left,ubtgreen] {$E_0$};
		\draw[purple,  dotted, thick] (1.29099,-.75) -- (1.29099,.3) node[above,purple] {$\Rmin$};
		\draw[purple,  dotted, thick] (10,-.75) -- (10,.3) node[above,purple] {$\Rmax$};
		
		\draw[-{>},black] (-.5,0.) -- (12.6,0.) node[right] {$r$};
		\draw[-{>},black] (0.,-2.8) -- (0.,.5) node[left] {$\Psi_L(r)$};

		\draw[domain=0.225:2.5, smooth, variable=\x, blue, samples=100,thick] plot ({5*\x}, {-4./(1+\x*\x) + .2/(\x*\x)});	
		
		\draw[domain=.66:2.5, smooth, variable=\x, blue, samples=100,thick] plot ({5*\x}, {-4./(1+\x*\x) + 1.3/(\x*\x)});
		\draw[purple,  dotted,thick] (5.7215,-.75) -- (5.7215,.3) node[above,purple] {$r_{\Lmax}$};
		\node[blue] at (4.5,-2.3) {$\Psi_{L_0}$};
		\node[blue] at (5.75,-1.05) {$\Psi_{\Lmax}$};
		
	\end{tikzpicture} 
	\caption{The $L$-dependent family of effective potentials~$\Psi_L$. 
	}
	\label{F:PWL}
\end{figure}

In fact, for the purposes of regularity statements, it is convenient also to define
\beq\label{DEF:A}
\mathbb{A}:=\big\{(E,L)\in(-\infty,0)\times(0,\infty)\,|\,E>\EminL\big\},
\eeq
which includes the action support $\I$ of the steady state with the exception of the curve $E=\EminL$. To any pair $(E,L)\in\A$, we associate the turning points of the flow, $r_-(E,L)<r_+(E,L)$, defined through the condition 
\beq\label{def:rpm(E)}
\Psi_L(r_\pm(E,L))=E,
\eeq 
see Figure~\ref{F:PWL}. Note that these points are defined even outside the support of the steady state and that, for each $L$, as $E\searrow \EminL$, we have $r_\pm(E,L)\to r_L$ (uniformly in $\eta$).

To simplify notation later on, we here define a function
\beq\label{DEF:EDIFF}
\dE=\dE(E,L):=E-\EminL.
\eeq

We further split the boundary of $\I$ into two portions, one along which the energy achieves its minimum (the trapping region) and the other portion corresponding to the vacuum boundary:
\beq\label{E:IBDRYDEF}
\pa\I_0=\pa\I\cap\{E=\EminL\},\qquad \pa\I_{\text{vac}}=\pa\I\cap\big(\{E=E_0\}\cup\{L=L_0\}\big).
\eeq


\subsubsection{The angle coordinate}


To denote the energy-momentum pairs $(E,L)$ inside the support of the steady state, we use the notation
\begin{align*}
\bI=(E,L)\in \mathcal I.
\end{align*}
To each energy-momentum pair $\bI=(E,L)\in \A$, we then associate the period function 
 \begin{equation}\label{E:PERIODDEF}
	T(\bI):= 2\int_{r_-(\bI)}^{r_+(\bI)}\frac{\diff r}{\sqrt{2E-2\Psi_L(r)}},\qquad \bI\in \A.
\end{equation}
We shall see below in Lemma~\ref{lemma:regularity} that $T$ is at least $C^3(\A)$. It follows from the fact that, for each $L$, the Hamiltonian defining the flow is non-degenerate (due to $\Psi_L''(r_L)>0$), that the period function extends continuously onto the curve $E=\EminL$, and hence is defined on the whole support $\I$, see, for example~\cite{Manosas02}. The standard theory of Hamiltonian dynamics also gives the existence of the area function, $A(\bI)$, such that $\pa_EA(\bI)=T(\bI)$:
\beq\label{E:AREADEF}
A(\bI):=2\int_{r_-(\bI)}^{r_+(\bI)}\sqrt{2E-2\Psi_L(r)}\diff r,\qquad \bI\in \A.
\eeq

  We parametrise the motion of the Hamiltonian flow by the angle $\theta\in\mathbb S^1$
defined through 
\begin{equation}\label{thetadef}
  \theta(r,w,L) = \frac1{T(\bI)}\int_{r_-(\bI)}^r\frac{1}{\sqrt{2E(r,w,L)-2\Psi_L(s)}} \diff s, \ \ (r,w,L)\in\Omega.
\end{equation}
When $w<0$, we simply set $\theta(r,w,L)=1-\theta(r,-w,L)$. 

We refer to the set of variables $(\th,\bI)$ as the \emph{action-angle variables}. We note that our definition of action-angle variables
is normalised in such a way that the phase-space volume element is not exactly preserved, but satisfies
instead 
\be
\diff (r ,w,L) = T(\bI) \diff (\theta,\bI). 
\ee


\subsubsection{Period and frequency}


Associated to the period function is the {\em frequency function} 
\begin{align}
\om(\bI):=\frac1{T(\bI)}, \ \ \bI\in\I.
\end{align}
It is shown in~\cite{HRSS2023} that the period function is strictly increasing with respect to $E$ when $\eta\leq\eta_0(\mu,\nu)\ll1$ and is bounded above and below on $\I$. Due to the disparity in significance for the $E$ and $L$ derivatives of the period function, we will consistently abuse notation by writing $'$ for $E$ derivatives. In particular, there exist positive constants $T_{\max}$, $T_{\min}$ and  $c_0$ such that, for all $\bI\in\I$, uniformly in $\eta\in[0,\eta_0]$,
\beq
0<T_{\min} \leq T(\bI)\leq T_{\max}
\eeq
and
\beq
T'(\bI)\geq \frac1{c_0},\ \ \ \om'(\bI)\leq -{c_0}.
\eeq
Thus $\om$ inherits the regularity of $T$ also and,
since $T(\cdot,L)$ is strictly increasing, $\om(\cdot,L)$ is strictly decreasing. Finally, we let
\begin{align}\label{E:OMEGAMINMAX}
\ommin : = \min_{\I} \om, \ \ \ommax:=\max_{\I} \om.
\end{align}


\subsection{Regularity properites}


The regularity of many of the quantities just defined will be crucial in all that follows. A priori, there are two possible obstructions to the regularity of each of these quantities, the first due to the limited regularity of the steady states at the vacuum boundary (when either $E=E_0$ or $L=L_0$) and the second at the trapping set, $\{(E,L)\,\big| E=\EminL\}$. It will turn out below that in fact only the former obstruction is really felt, and so the global regularity properties are constrained only by the exponents $\mu$ and $\nu$ appearing in~\eqref{E:SS}. The following lemma collects several of the key regularity properties in one place.

\begin{lemma}[Regularity lemma I]\label{lemma:regularity}
Let $\fmn$ be a steady state as in~\eqref{E:SS}--\eqref{E:SS2}, inducing a potential $\Psi_L(r)$ and let $\eta\in[0,\eta_0]$, where $\eta_0$ is the small constant introduced in Lemma~\ref{lemma:rL}. Then the potential enjoys the regularity, uniformly in $\eta$,
\beq
\Psi_L\in C^{N+2}(0,\infty),
\eeq
where we recall $N\geq 4$ is defined in~\eqref{def:N}. Moreover, the turning points of the flow satisfy
\beqa
r_\pm\in C^{N+2}(\mathbb A),
\eeqa
and, if $(R,W)$ are the solutions to the ODE system
\beqa\label{eq:RW}
\dot R=W,\ \ \ \dot W=-\Psi_L(R),\ \ \ \ (R,W)(0,\bI)=(r_-(\bI),0),
\eeqa
then also $(R,W)\in C^{N+1}(\R\times\mathbb A)$. Finally, the period function $T$ and area function $A$ satisfy, uniformly in $\eta$,
\beq
T,A\in C^{N+1}(\overline{\mathbb A}).
\eeq
\end{lemma}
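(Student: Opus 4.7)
The four claims are treated in order; only the last presents a genuine obstacle.

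\emph{Regularity of $\Psi_L$.} Since $-M/r + L/(2r^2)$ is manifestly smooth on $(0,\infty)$, the whole question reduces to $\bar U$. From the polytropic ansatz one writes
\[
\bar\rho(r) = \frac{\pi}{r^2}\iint (E_0 - \tfrac12 w^2 - \Psi_L(r))_+^\mu (L-L_0)_+^\nu \diff w\diff L.
\]
Each integration gains regularity through the standard Beta-function identity: the $w$-integral yields a factor $(E_0 - \Psi_L(r))_+^{\mu+\frac12}$ (times constants), contributing $\mu+\tfrac12$ orders, and the subsequent $L$-integration contributes a further $\nu+1$, producing $\bar\rho(r)\sim A_0(r)^{\mu+\nu+\frac32}$ with $A_0$ smooth where positive. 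Inverting the radial Poisson equation $(r^2\bar U')' = 4\pi r^2\bar\rho$ then promotes this to $\bar U \in C^{\mu+\nu+\frac72}$. With $N<\mu+\nu+\frac32$ this gives $\Psi_L \in C^{N+2}((0,\infty))$; uniformity in $\eta$ follows because $\bar U = O(\eta)$ with constants depending only on $\mu,\nu$, and the convergence as $\eta\to 0$ already recorded in Lemma~\ref{lemma:rL}.

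\emph{Turning points and flow.} On $\A$ one has $E>\EminL$, hence $\Psi_L'(r_\pm)\neq 0$ by the uniqueness and non-degeneracy of $r_L$ from Lemma~\ref{lemma:rL}. The implicit function theorem applied to $\Psi_L(r)-E=0$, with $\Psi_L$ of class $C^{N+2}$, immediately yields $r_\pm \in C^{N+2}(\A)$. The Hamiltonian ODE~\eqref{eq:RW} has right-hand side of class $C^{N+1}$ in $(R,L)$ and initial condition $(r_-(\bI),0)$ of class $C^{N+2}$ in $\bI$; classical smooth-dependence theory then gives $(R,W)\in C^{N+1}(\R\times \A)$, globally in $t$ since the orbits are bounded and periodic.

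\emph{Period and area.} This is the main point. In the interior of $\A$ the standard endpoint-regularising substitution
\[
r = \tfrac{r_+ + r_-}{2} + \tfrac{r_+ - r_-}{2}\sin\psi,\qquad \psi\in[-\tfrac\pi2,\tfrac\pi2],
\]
removes the inverse-square-root singularity: writing $2(E-\Psi_L(r)) = (r_+-r)(r-r_-)\,h(r,\bI)$ with $h>0$ of class $C^{N+1}$, one finds $T = 2\int_{-\pi/2}^{\pi/2} \diff\psi/\sqrt{h}$, manifestly in $C^{N+1}(\A)$. The real work is the smooth extension onto the trapping boundary $\{E=\EminL\}$, where $r_+,r_-\to r_L$ and the interval of integration collapses. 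To handle this, I would exploit the non-degeneracy $\Psi_L''(r_L) = \al_L\geq c_0 > 0$: a Morse-lemma construction, obtained by applying the implicit function theorem to $\Psi_L(r)-\EminL = \tfrac12 u^2$, produces a $C^{N+1}$ change of coordinates $u=u(r,L)$, jointly in $(r,L)$, with $u(r_L,L)=0$ and $\pa_r u(r_L,L)=\sqrt{\al_L}$. Expressing the period in the $u$ variable and then substituting $u=\sqrt{2\dE}\sin\phi$ gives
\[
T(\bI) = 2\int_{-\pi/2}^{\pi/2} \frac{\diff\phi}{\pa_r u\bigl|_{r = r(\sqrt{2\dE}\sin\phi,L)}},
\]
which is visibly $C^{N+1}$ in $(\dE,L)\in[0,\infty)\times(0,\infty)$, hence on $\overline\A$. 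The identical argument, with $(\cdot)^{-1/2}$ replaced by $(\cdot)^{+1/2}$, yields $A\in C^{N+1}(\overline\A)$.

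The main obstacle I anticipate is the bookkeeping for the Morse chart: one must verify that the implicit function theorem produces $u(r,L)$ jointly $C^{N+1}$ in its arguments with $\pa_r u$ bounded and bounded away from zero \emph{uniformly in $\eta$}, and that the composition with $\sqrt{2\dE}\sin\phi$ preserves $C^{N+1}$-regularity up to and including $\dE=0$. Both reduce to standard but delicate arguments; the uniform-in-$\eta$ aspect follows from the uniform bound $\al_L\geq c_0$ and the locally uniform $C^{N+2}$-convergence $\Psi_L\to\Psi_L^0$ recorded in Lemma~\ref{lemma:rL}.
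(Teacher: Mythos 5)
Your treatment of $\Psi_L$, $r_\pm$, and $(R,W)$ is correct and follows the standard route that the paper delegates entirely to \cite{Straub24}: the Beta-function gain in $(w,L)$-integration producing $\bar\rho \sim (E_0-\Psi_{L_0}(r))_+^{\mu+\nu+3/2}$, elliptic regularity for Poisson, the implicit function theorem at the simple zeros $r_\pm$ of $E-\Psi_L$, and smooth dependence on parameters for the ODE flow. You also correctly flag uniformity in $\eta$. Your first endpoint-regularizing substitution $r = \frac{r_++r_-}{2}+\frac{r_+-r_-}{2}\sin\psi$ is also the right tool on the interior of $\A$, and the factorization $2(E-\Psi_L)=(r_+-r)(r-r_-)h$ does give $h\in C^{N+1}$ there (the two simple zeros are separated, so each division costs a derivative only locally and the losses do not compound).

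The genuine gap is in the extension to the trapping boundary $\{E=\EminL\}$, and it is not merely bookkeeping: the $C^k$-category Morse argument you sketch cannot deliver $C^{N+1}$ there. Your final formula
\[
T(\bI) = 2\int_{-\pi/2}^{\pi/2}\frac{\diff\phi}{\pa_r u\big|_{r=r(\sqrt{2\dE}\sin\phi,L)}}
\]
is not ``visibly $C^{N+1}$'' in $(\dE,L)$ up to $\dE=0$. Viewed before the substitution, the integrand $G(u,L):=1/\pa_r u\big|_{r=r(u,L)}$ is at best $C^{N}$; the $\phi$-integral over the symmetric interval annihilates the $u$-odd part, so only the $u$-even part contributes, and by the classical Whitney result an even $C^N$ function of $u$ is a $C^{\lfloor N/2\rfloor}$ function of $u^2=2\dE\sin^2\phi$. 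This route therefore gives at most $T\in C^{\lfloor N/2\rfloor}$ in $\dE$ at $\dE=0$, well short of $C^{N+1}$. The ingredient you are missing is the one the paper supplies in Lemma~\ref{lemma:analyticity}: $\Psi_L$ is real-\emph{analytic} on the open interval $(\Rmin,\Rmax)$ (because the polytropic density $\bar\rho\sim(E_0-\Psi_{L_0}(r))_+^{\mu+\nu+3/2}$ is analytic wherever the base is bounded below, which holds away from $r=\Rmin,\Rmax$, and elliptic regularity then bootstraps $\bar U$). Since the elliptic point $r_L$ is interior, the Hamiltonian is analytic near it, and by~\cite[Theorem A]{Manosas02} the period and area functions extend analytically across $\{E=\EminL\}$. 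The $C^{N+1}$ constraint in the lemma then comes only from the vacuum boundary, where your endpoint-regularizing substitution already suffices. So the conclusion of the lemma is fine, but the extension step needs analyticity rather than a $C^k$ normal form.
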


\begin{proof}
The regularity claims on $\A$ are the content of \cite[Remark A.3.5]{Straub24}. To extend the regularity properties of the period and area functions up to the trapping set $E=\EminL$, one follows the argument of~\cite[Appendix A.4]{Straub24}.
\end{proof}

We will require much stronger regularity statements than this close to the trapping set (elliptic point). 

\begin{lemma}[Regularity lemma II]\label{lemma:analyticity}
Let the steady state $\fmn$ satisfy~\eqref{E:SS}--\eqref{E:SS2} and let $\eta\in[0,\eta_0]$, where $\eta_0$ is the small constant introduced in Lemma~\ref{lemma:rL}.
\begin{itemize}
\item[(i)] For each $L\in[L_0,\Lmax]$, the effective potential $\Psi_L(r)$ is real analytic on the open interval $(\Rmin,\Rmax)$. In particular, for every $L\in[L_0,\Lmax]$, $\Psi_L(r)$ is jointly analytic in $(r,L)$ on a uniform in $L$ neighbourhood around $(r_L,L)$ and, moreover, $r_L$ is analytic in $L$.
\item[(ii)] The functions $r_\pm(\bI)$ are locally real-analytic for $\bI\in\I\setminus\big(\{(E_0,L_0)\}\cup\pa\I_0\big)$.  The solution $(R,W)$ to the ODE system~\eqref{eq:RW} is locally real-analytic in $(s,\bI)$ for $s\in\R$, $\bI\in\I\setminus\big(\{(E_0,L_0)\}\cup\pa\I_0\big)$.
\item[(iii)] The period function $T(\bI)$ and area function $A(\bI)$ are analytic on $\I\setminus\{(E_0,L_0)\}$. 
\item[(iv)] The functions $(r,w)(\th,\bI)$ are locally analytic in $(\th,\bI)$ for $\th\in\R$, $\bI\in\I\setminus\big(\{(E_0,L_0)\}\cup\pa\I_0\big)$.
\end{itemize}
\end{lemma}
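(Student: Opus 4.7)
The overall strategy is to propagate real-analyticity through the problem starting from the Poisson equation, using the explicit polytropic structure.

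For (i), I first observe that the point-mass and centrifugal terms in $\Psi_L(r)=\bar U(r)+L/(2r^2)-M/r$ are jointly real-analytic in $(r,L)$ for $r>0$, so it suffices to show $\bar U$ is real-analytic on $(\Rmin,\Rmax)$. I would compute the density explicitly: carrying out the $w$-integral yields a factor $C_\mu(E_0-\Psi_L(r))^{\mu+1/2}_+$, and the ensuing $L$-integration (after substituting $L-L_0=s(L_{\max}(r)-L_0)$, where $L_{\max}(r):=2r^2(E_0-\bar U(r)+M/r)$) produces the closed-form expression
\[ \bar\rho(r)=C_{\mu,\nu}\,\eta\,\frac{(L_{\max}(r)-L_0)^{\mu+\nu+3/2}}{r^{2\mu+3}}. \]
Since $L_{\max}(r)>L_0$ on $(\Rmin,\Rmax)$, this identifies $\bar\rho(r)=G(r,\bar U(r))$ with $G$ jointly real-analytic near the graph of $\bar U|_{(\Rmin,\Rmax)}$. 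The Poisson equation $\bar U''=-2r^{-1}\bar U'+4\pi G(r,\bar U)$ is thus a second-order ODE with real-analytic right-hand side, and by uniqueness its $C^{N+2}$ solution from Lemma~\ref{lemma:regularity} must coincide with the real-analytic one guaranteed by classical analytic ODE theory. Joint analyticity of $\Psi_L(r)$ on an $L$-uniform neighbourhood of $(r_L,L)$ then follows from the compactness of $\{r_L:L\in[L_0,\Lmax]\}\subset(\Rmin,\Rmax)$ (Lemma~\ref{lemma:rL}), and the analytic implicit function theorem applied to $\Psi_L'(r_L)=0$ with non-degeneracy $\Psi_L''(r_L)=\al_L>0$ yields analyticity of $L\mapsto r_L$.

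For (ii), I plan to apply the analytic implicit function theorem to $\Psi_L(r_\pm)-E=0$. The required transversality $\Psi_L'(r_\pm(\bI))\neq 0$ excludes $\pa\I_0$, while joint analyticity of $\Psi_L$ in $(r,L)$ requires $r_\pm$ to lie in $(\Rmin,\Rmax)$, which excludes only the corner $(E_0,L_0)$. With $r_-$ real-analytic, the ODE~\eqref{eq:RW} has real-analytic coefficients and real-analytic initial data, so the classical analytic dependence theorem for ODEs delivers $(R,W)\in C^\omega$ on the stated domain.

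For (iii), on the set $\A\setminus\{(E_0,L_0)\}$ the identity $W(T/2,\bI)=0$ combined with $\pa_s W(T/2,\bI)=-\Psi_L'(r_+(\bI))\neq 0$ reduces analyticity of $T$ to the analytic implicit function theorem; an analogous representation handles $A$. The main obstacle I anticipate is extending analyticity across $\pa\I_0$, where both the inverse-function setup and the ODE representation degenerate. To handle this I plan to invoke an analytic Morse reduction with parameter $L$: factoring $\Psi_L(r)-\EminL=(r-r_L)^2 g(r,L)$ with $g$ analytic and $g(r_L,L)=\al_L/2>0$, the map $u:=(r-r_L)\sqrt{2g(r,L)}$ defines an analytic diffeomorphism $r\leftrightarrow\psi(u,L)$ satisfying $\Psi_L(\psi(u,L))-\EminL=u^2/2$. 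Substituting into~\eqref{E:PERIODDEF} and~\eqref{E:AREADEF} and then setting $u=\sqrt{2(E-\EminL)}\sin\tau$ gives
\[ T(E,L)=2\int_{-\pi/2}^{\pi/2}\pa_u\psi\bigl(\sqrt{2(E-\EminL)}\sin\tau,L\bigr)\,d\tau, \]
with an analogous formula for $A$; both are visibly real-analytic in $(E-\EminL,L)$, yielding the desired extension across $\pa\I_0$.

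Finally, (iv) follows by writing $(r,w)(\th,\bI)=(R,W)(\th T(\bI),\bI)$, a composition of real-analytic functions by parts (ii) and (iii). The principal technical point is the Morse reduction in (iii), which allows us to bypass the Birkhoff normal-form machinery of Section~\ref{S:CNF} that has not yet been developed at this stage of the paper.
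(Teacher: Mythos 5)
Your proof is correct, and parts (i), (ii), (iv) follow essentially the same logic as the paper's: reduce $\bar U$-analyticity on $(\Rmin,\Rmax)$ to the analytic right-hand side of the radial Poisson ODE (the paper cites the density formula from the literature rather than deriving it, and invokes elliptic regularity rather than analytic ODE theory, but these are interchangeable here), then propagate via the analytic implicit function theorem and analytic dependence of ODE solutions on parameters. The genuine divergence is in part (iii): the paper treats the interior of $\I$ via the derivative identities of Straub's thesis and then cites \cite[Theorem A]{Manosas02} (the Birkhoff--Poincar\'e normal form for analytic planar Hamiltonians) to get analyticity of $T$ and $A$ across the trapping set $\pa\I_0$, whereas you give a direct, self-contained argument via an $L$-parametrised analytic Morse reduction $\Psi_L(\psi(u,L))-\EminL=u^2/2$, followed by the substitution $u=\sqrt{2(E-\EminL)}\sin\tau$ to exhibit $T$ and $A$ as integrals of functions analytic in $(E-\EminL,L)$. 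This is the one-dimensional core of the normal-form argument extracted by hand, and it has the advantage of avoiding a forward reference to the machinery of Section~\ref{S:CNF}; the trade-off is that the paper's citation is shorter and Theorem~A of \cite{Manosas02} is needed later in any case. One small point worth spelling out in a write-up: after the change of variables, even powers of $u$ contribute $(E-\EminL)^k$ terms after $\tau$-integration and odd powers vanish by symmetry, which is why the result is analytic in $E-\EminL$ rather than merely in $\sqrt{E-\EminL}$; and the Hadamard factorisation $\Psi_L(r)-\EminL=(r-r_L)^2g(r,L)$ with $g$ jointly analytic and $g(r_L,L)=\al_L/2>0$ uniformly in $L$ uses the joint analyticity and non-degeneracy established in (i) and Lemma~\ref{lemma:rL}.
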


\begin{proof}
(i) We recall (for example from~\cite[Equations (2.2.27), (2.2.29)]{Straub24}; \cite[Lemma A.1]{HRSS2023}) that the density of the steady state is of the form
\beq
\rho_0(r)=\eta r^{2\ell}c_{k,\ell}\big(E_0-\Psi_{L_0}(r)\big)_+^{k+\ell+\frac32}.
\eeq
Observe that the mapping $x\mapsto x_+^{k+\ell+\frac32}$ is analytic except at $x=0$. On the other hand, $\Psi_{L_0}(r)=E_0$ only for $r=\Rmin,\Rmax$. Thus, for any  $r_0\in(\Rmin,\Rmax)$, there exists a neighbourhood $V$ of $r_0$ such that $E_0-\Psi_{L_0}(r)\geq c_0>0$ on $V$, and so locally $\rho_0=g(U_0(r))$ for an analytic function $g$, where we recall $U_0(r)$ is the potential of the steady state. Elliptic regularity theory for the Poisson equation $\Delta U_0=4\pi \rho_0=4\pi g(U_0(r))$  yields that $U_0$ (hence also $\Psi_L(r)$) is locally real analytic around $r_0$. The remaining claims in (i) follow easily.

(ii) We recall first that $r_\pm(\bI)\in(\Rmin,\Rmax)$ for all $\bI\in\I\setminus\{(E_0,L_0)\}$, so that $\Psi_L(\cdot)$ is analytic at $r_\pm(\bI)$ for all such $\bI$. From the defining relation $\Psi_L(r_\pm(\bI))=E$, whenever $E>\EminL$, we have   $\Psi_L'(r_\pm(\bI))\neq 0$  and conclude the claimed analyticity properties of $r_\pm(\bI)$.

We hence deduce the analyticity of the solution to the ODE system~\eqref{eq:RW}: as $(E,L)$ remain constant along the ODE flow, whenever $E>\EminL$ and $(E,L)\neq (E_0,L_0)$, the analyticity of the resulting flow is guaranteed by standard ODE theory for analytic nonlinearities with analytic dependence on parameters.

(iii) The analyticity of the period function on the interior of the domain $\I$ follows now from the identities~\cite[(A.3.8)--(A.3.9)]{Straub24}, which relate the derivatives of the period function to the functions $(R,W)$ whose analyticity was shown in (ii).
The only remaining claim concerns the regularity of the period function up to the line $E=\EminL$ on the support set $\I$. This follows from the properties of period functions for analytic Hamiltonians as in~\cite[Theorem A]{Manosas02}.

(iv) Finally, from the relation $r(\th,\bI)=R(\th T(\bI),\bI)$, $w(\th,\bI)=W(\th T(\bI),\bI)$ and (ii)--(iii) above, we conclude the proof of the lemma.
\end{proof}

In fact, the solution to the ODE system~\eqref{eq:RW} gives an alternative (equivalent) definition of the action-angle change of variables: for $(\th,\bI)\in\S^1\times\I$,
\beq
(r,w,L)(\th,\bI)=(R(\th T(\bI),\bI),W(\th T(\bI),\bI),L).
\eeq
In order to fix a region of $\bI$ on which we may employ the analytic regularity statements above, we define the tubular neighbourhood, for some $\de>0$ to be chosen later,
\beq\label{def:Ade}
\Ade:=\{\bI\in \mathbb{A}\,|\,E\in(\EminL,\EminL+\de),\,L\in[L_0,\Lmax]\}.
\eeq
Similarly, we define 
\beq\label{def:Ide}
\Ide:=\{\bI\in \Ade\,|\,\bI\in\I\},\qquad \Ide^c:=\I\setminus\Ide.
\eeq


\begin{lemma}\label{L:ELNONTRAPPINGREG}
Let the steady state $\fmn$ satisfy~\eqref{E:SS}--\eqref{E:SS2} and let $\eta\in[0,\eta_0]$, where $\eta_0$ is the small constant introduced in Lemma~\ref{lemma:rL}.
 Let $\de>0$, $g\in C^{\n}(\overline{\Om})$ and suppose $j_1,j_2,j_3\in\N$ are such that $j_1+j_2+j_3\leq \min\{\n,N+1\}$. Then there exists $C=C(\de,k,\mu,\nu)>0$,  independent of $\eta\in[0,\eta_0]$, such that, for all $(\th,\bI)\in \S^1\times \Ide^c$,
\beq
\big|\pa_\th^{j_1}\pa_E^{j_2}\pa_L^{j_3}g(\th,\bI)\big|\leq C.
\eeq
\end{lemma}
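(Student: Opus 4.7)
The plan is to express $g$ as a composition with the action-angle change of variables and apply the chain rule, exploiting the uniform regularity of the Hamiltonian flow on the set $\Ide^c$, where we are uniformly bounded away from the trapping curve $\pa \I_0$. Concretely, by the identification established at the end of the proof of Lemma~\ref{lemma:analyticity}, we write
\[ g(\th,\bI) = g\bigl(R(\th T(\bI),\bI),\, W(\th T(\bI),\bI),\, L\bigr),\]
where $(R,W)$ is the solution to the ODE system~\eqref{eq:RW} and $T$ is the period function.

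Applying Fa\`a di Bruno's formula to this composition, each derivative $\pa_\th^{j_1}\pa_E^{j_2}\pa_L^{j_3}g(\th,\bI)$ can be expanded as a finite sum of products whose factors are of two types: (i) mixed partial derivatives of $g$ evaluated at $(r,w,L)(\th,\bI)\in\oom$ of total order at most $j_1+j_2+j_3$, and (ii) mixed partial derivatives of $R$, $W$, and $T$ evaluated at $(\th T(\bI),\bI)$ of total order at most $j_1+j_2+j_3$. Factors of type (i) are bounded by $\|g\|_{C^{\n}(\oom)}$ since $j_1+j_2+j_3\leq \n$. For factors of type (ii), Lemma~\ref{lemma:regularity} guarantees $(R,W)\in C^{N+1}(\R\times\A)$ and $T\in C^{N+1}(\overline{\A})$, both uniformly in $\eta\in[0,\eta_0]$. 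On $\Ide^c$, the condition $E\geq\EminL+\de$ keeps us uniformly separated from $\pa\I_0$; combined with the boundedness $|\th T(\bI)|\leq T_{\max}$ and the compactness of the set on which we are evaluating, this yields uniform $C^{N+1}$ bounds on the type (ii) factors that depend only on $\de,\mu,\nu$.

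The main technical point, which I expect to be the primary obstacle, is obtaining these constants uniformly in $\eta\in[0,\eta_0]$. This relies on two already-established facts. First, Lemma~\ref{lemma:rL} provides the $\eta$-uniform non-degeneracy $\Psi_L''(r_L)\geq c_0>0$ and the $C^2$-convergence $\Psi_L\to \Psi_L^0$; together with the $C^{N+2}$ regularity $\Psi_L\in C^{N+2}(0,\infty)$ uniformly in $\eta$ from Lemma~\ref{lemma:regularity}, this gives uniform control on the derivatives of the Hamiltonian generating the flow $(R,W)$. Second, the uniform convergence $\EminL\to E_{\min}^{L,0}$ ensures that the $\de$-tubular neighbourhoods $\Ade$ depend continuously on $\eta$, so compactness bounds on $\Ide^c$ can be taken uniform in $\eta\in[0,\eta_0]$.

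Combining these two inputs via the product rule and Fa\`a di Bruno's formula, and taking the maximum of the (finitely many) resulting bounds for multi-indices with $j_1+j_2+j_3\leq\min\{\n,N+1\}$, yields the claim. I note that the restriction $j_1+j_2+j_3\leq N+1$ is precisely what prevents us from requiring derivatives of $(R,W,T)$ of order higher than their guaranteed regularity, while $j_1+j_2+j_3\leq\n$ likewise matches the regularity of $g$; both cut-offs are encoded in the hypothesis $j_1+j_2+j_3\leq\min\{\n,N+1\}$, making the bookkeeping clean.
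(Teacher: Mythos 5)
Your proof is correct and fleshes out in detail exactly what the paper's one-line proof ("direct from Lemma~\ref{lemma:regularity} and the definition of $\Ide$") leaves implicit: the action-angle composition, the Fa\`a di Bruno expansion, and the uniform-in-$\eta$ control of the flow on the compact set $\Ide^c$ bounded away from $\pa\I_0$. This is the same approach; you have simply supplied the bookkeeping the authors omit.
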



\begin{proof}
This is direct from Lemma~\ref{lemma:regularity} and the definition of $\Ide$.
\end{proof}



\subsection{Hamiltonian geometry, initial data, and the spectrum} \label{S:HAMILTON}


In the action-angle variables, the pure transport operator $\T$ transforms into $\om(\bI)\pa_\th$.
As a result, the initial value problem~\eqref{E:FULLLIN}--\eqref{E:FULLLININITIAL} takes the form
\begin{align}
\pa_t f+\om(\bI)\pa_\th \left(f+\eta U_{|\varphi'|f}\right)& =0 ,\label{E:LINEARFLOW} \\
f(0) & = \fin,\label{E:LINEARFLOWINITIAL}
\end{align}
where we retain the same notation for $f$ expressed as a function of $(\theta,\bI)$. 
For any $f\in L^2(\mathbb S^1\times \I)$,
we define the corresponding Fourier coefficients
\begin{align}
\widehat f(m,\bI) : = \int_{\mathbb S^1} f(\th,\bI)e^{-2\pi i m \theta}\diff\th, \ \ m\in\Z, \ \bI\in \I.
\end{align}
When no confusion can arise, we shall commonly suppress the other arguments, and simply write
$\fhat_m$. When other subscripts are present, for example with the initial data $f_0$, we place the Fourier mode after a comma, $\fhat_{0,m}$.

By the well-known Antonov coercivity (see e.g.~\cite{HRSS2023}), we have $(f,f)_{\mathscr H}\lesssim (\LL f,f)_{\mathscr H} \lesssim (f,f)_{\mathscr H}$, where $\LL$ is as in~\eqref{E:BBLOPERATOR}. The steady states $\fmn$ are spectrally stable,
and the operator $\mathscr L = \T \LL$ is formally antisymmetric with respect to the distance defined by $\langle f,f\rangle_{\LL} := (\LL f,f)_{\mathscr H}$.
Thus $\text{ran}\,\T $ is contained in the $\langle \cdot,\cdot\rangle_{\LL}$-orthogonal complement of the kernel of $\T \LL$  and it is not hard to see
that they are in fact the same.
Moreover, $\text{ran}\,\T$ is exactly the $\mathscr H$-orthogonal complement of $\text{ker} \, \T$ by~\cite{HRS2022}. 
To speak meaningfully of gravitational damping, we must therefore restrict our initial data to $\text{ran}\,\T$, which is equivalent to the statement that 
the angular average over the invariant torus $\mathbb S^1$
vanishes, i.e. 
\begin{align}\label{E:AVERAGEZERO}
\widehat f_0(0,\bI) = \int_{\mathbb S^1}\fin(\theta,\bI) \diff \theta = 0, \ \ \bI\in \I.
\end{align}
By integrating~\eqref{E:LINEARFLOW} over $\mathbb S^1$, it is easily seen that the property~\eqref{E:AVERAGEZERO} is dynamically conserved along the flow, i.e.
$\widehat f(t,0,\bI) = 0$, for all $t\ge0$,  $\bI\in \I$, so
$\text{ran}\,\T$ is an invariant subspace for the linearised flow.

A trivial but important consequence of the assumption~\eqref{E:AVERAGEZERO} is that the total mass $M[F_0]$ of the perturbation associated to $F_0$ vanishes, i.e.
\begin{align}\label{E:ZEROMASS}
M[f_0]: = \int_\Omega f_0 \diff(r,w,L) = \int_{\mathbb S^1\times \I} f_0 T \diff(\theta,\bI) = 0.
\end{align}


\begin{prop}\label{P:SPECTRUM}
Let $\mu,\nu$ satisfy~\eqref{E:SS2}. There exists $\eta_0(\mu,\nu)>0$ (which we take less than or equal to that of Lemma~\ref{lemma:rL}) such that, for all $\fmn$ satisfying~\eqref{E:SS} and $\eta\in[0,\eta_0]$,  the associated linearised operator $\mathscr L: D(\mathscr L)=D(\T)\to \H$ is skew-adjoint and its spectrum is purely essential, i.e.
\begin{align}
\sigma(\mathscr L)=\sigma_{\text{ess}}(\mathscr L)=\sigma(\T) = i(-\infty,-\lmin] \cup\{0\}\cup i[\lmin,\infty),
\end{align}
where, recalling~\eqref{E:OMEGAMINMAX} we let
\be\label{E:LAMBDAMINDEF}
\lmin := 2\pi \min_{\bI\in\I}\om(\bI) = 2\pi \ommin.
\ee
Here $0$ is an eigenvalue of infinite multiplicity.
\end{prop}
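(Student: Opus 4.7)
The plan is three steps: skew-adjointness of $\mathscr{L}$, computation of $\sigma(\T)$, and transfer to $\sigma(\mathscr{L})$ via essential-spectrum invariance plus exclusion of embedded eigenvalues. For the first, I equip $\mathscr{H}$ with the equivalent Antonov inner product $\langle f, g\rangle_{\LL} := (\LL f, g)_{\mathscr{H}}$; by Antonov coercivity $\LL$ is a bounded self-adjoint positive isomorphism and the two norms are equivalent. The computation
\[
\langle \mathscr{L} f, g\rangle_{\LL} = (\LL\T\LL f, g)_{\mathscr{H}} = (\T\LL f, \LL g)_{\mathscr{H}} = -(\LL f, \T\LL g)_{\mathscr{H}} = -\langle f, \mathscr{L} g\rangle_{\LL}
\]
on $D(\T)$ shows skew-symmetry; skew-adjointness is then inherited from that of $\T$ via the isomorphism $f \mapsto \LL f$, which preserves $D(\T)$ since $\K : f\mapsto U_{\varphi'f}$ is smoothing.

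The spectrum of $\T$ is computed in action-angle coordinates, where $\T = \om(\bI)\pa_\theta$ by~\eqref{E:TRANSPINTRO}. Fourier decomposition in $\theta$ diagonalises $\T$ into the direct sum of multiplication operators $M_m : \fhat(m,\bI) \mapsto 2\pi i m\om(\bI)\fhat(m,\bI)$ on each Fourier slice. The $m=0$ block is the zero operator on the infinite-dimensional subspace of functions $\phi(E,L)$, yielding the claimed kernel. For $m\neq 0$, continuity and strict monotonicity of $\om$ in $E$ give $\sigma(M_m) = 2\pi i m[\ommin,\ommax]$, purely continuous (no level sets of $\om$ have positive measure). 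The single-gap condition~\eqref{E:NOGAP} is precisely the quantitative statement that $(m+1)\ommin \leq m\ommax$ already at $m=1$, so consecutive intervals overlap for every $|m|\ge 1$ and the union collapses to $i[\lmin,\infty)\cup i(-\infty,-\lmin]$.

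For the transfer to $\mathscr{L}$, write $\LL = \id - \eta\K$ and observe that $\K$ factors through the one-dimensional macroscopic density $\rho_f$, so $\K$ is smoothing and compact on $\mathscr{H}$; consequently $\T\K$ is relatively $\T$-compact and a Weyl-sequence argument adapted to the skew-adjoint setting yields $\sigma_{\text{ess}}(\mathscr{L}) = \sigma_{\text{ess}}(\T) = \sigma(\T)$. What remains is to exclude embedded eigenvalues on $i[\lmin,\infty)\cup i(-\infty,-\lmin]$: a candidate eigenfunction $\mathscr{L} f = i\la f$ can be inverted on the non-resonant Fourier modes, reducing the problem to a dispersion-type identity for $U_{|\varphi'|f}$ whose non-vanishing under the vacuum-boundary regularity~\eqref{E:SS2} and the smallness of $\eta$ is the content of~\cite{HRSS2023,Straub24}. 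I expect this final exclusion---together with the technical verification that $\T\K$ is indeed relatively $\T$-compact despite the non-locality of $\T\K$---to be the main obstacle, since it is here that the regularity constraint $\mu > 2$, $\nu > 1$ enters decisively to rule out the oscillatory-mode eigenfunctions constructed in~\cite{HRS2022} for less regular equilibria.
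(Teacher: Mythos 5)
Your proposal correctly reconstructs the structure that the paper compresses into a citation: action-angle diagonalisation of $\T$ on Fourier slices, the single-gap condition~\eqref{E:NOGAP} forcing the intervals $2\pi m[\ommin,\ommax]$ to overlap, Weyl's theorem for the essential spectrum via compactness of the perturbation, and reliance on~\cite{HRSS2023,Straub24} to exclude embedded eigenvalues — which is exactly what the paper itself does. One small point: the relative compactness of $\T\K$ is not actually a serious obstacle, since $\T\K f = w\,\pa_r U_{\varphi' f}$ has an explicit, bounded integral kernel (recall $\varphi'$ is bounded under~\eqref{E:SS2} and $\pa_w U_{\varphi' f}\equiv 0$), so $\T\K$ is Hilbert--Schmidt on $\mathscr H$ by a direct computation; the genuinely hard content is the no-embedded-eigenvalue statement, for which you and the paper cite the same works.
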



\begin{proof}
The action-angle representation of the operator $\mathscr L$ allows us to compute the essential spectrum; this result is contained in~\cite{HRSS2023,HRS2022}. Moreover, the strategy developed in~\cite{HRSS2023} was generalised to this 3-D setting in~\cite[Theorems 6.4.1 and 6.5.5]{Straub24} to show that $0$ is the only eigenvalue, and there are in particular no embedded eigenvalues.
\end{proof}



\subsection{Frequency formulation and Stone's formula}\label{SS:STONE}

By taking the Fourier transform in $\th$, system~\eqref{E:LINEARFLOW}--\eqref{E:LINEARFLOWINITIAL} transforms into
\begin{align}
&\pa_t\widehat{f} (m,\bI)+2\pi im\om(\bI)\Big(\widehat{f}(m,\bI)+\eta \widehat{U_{|\varphi'|f}}(m,\bI)\Big) =0, \ \ (m,\bI)\in\Z_*\times \I \label{E:LINFREQ1}\\
&\widehat f(0,m,\bI) = \widehat \fin(m,\bI), \ \ (m,\bI)\in\Z_*\times \I,\label{E:LINFREQ2}
\end{align}
where we restrict to  $m\in\Z_*$ due to~\eqref{E:AVERAGEZERO}.
We remark here that unlike the corresponding situation for so-called shear flows in fluid mechanics (see e.g~\cite{BeMa2015,Jia20}), the frequencies do not separate i.e.~the nonlocal term $\widehat{U_{|\varphi'|f}}(m,\bI)$ depends on the full ensemble $\fhat(\bI)=\langle\widehat f(m,\bI)\rangle_{m\in\Z_*} \in\ell^2(\Z_*)$.
We interpret the linearised system~\eqref{E:LINFREQ1}--\eqref{E:LINFREQ2} as a flow on the Hilbert space
$$
\mathcal{H}=\Big\{\fhat:\Z_*\times \I\to \mathbb C\,|\,\|\fhat\|_{\mathcal{H}}=\sum_{m\in\Z_*} \int_\I|\varphi'(\bI)||\fhat(m,\bI)|^2\diff \bI<\infty\Big\}.
$$
For $\ghat\in\mathcal{H}$, we define the operator $\L:D(\L)\to \mathcal H$ as
\begin{align}
(\L \ghat)(m,\bI)&= 2\pi m\om(\bI)\Big(\ghat(m,\bI)+\eta \widehat{U_{|\varphi'|g}}(m,\bI)\Big), \label{DEF:L}\\
D(\L) = \H^1 & := \Big\{\fhat:\Z_*\times \I\to \mathbb C\,|\,\|\fhat\|_{\mathcal{H}^1}=\sum_{m\in\Z_*} \int_{\I}|m|^2|\varphi'(\bI)|\fhat(m,\bI)|^2\diff \bI<\infty\Big\}.
\end{align}
Here $\widehat{U_{g}}$ is the $\th$-Fourier transform of the gravitational potential generated by $\mathcal{F}^{-1}\ghat$, i.e.
\beq
\Delta U_{g}=\frac{4\pi^2}{r^2} \int (\mathcal{F}^{-1}\ghat)(r,w,L)\,\diff (w,L), \ \ \lim_{r\to\infty} U_g(r) = 0;
\eeq
(note the change of coordinates from $(\th,\bI)$ to $(r,w,L)$ following the inversion of the Fourier transform). It is not hard to
check that $\mathcal L:\mathcal H^1\to\mathcal H$ is a self-adjoint operator
with respect to the distance
\begin{align}
(\fhat,\ghat)_L : = \int_{\I}|\varphi'(\bI)|(\LL\fhat,\ghat)_{\ell^2(\Z_*)}\diff\bI = \int_{\I}|\varphi'(\bI)|\sum_{m\in\Z_*}(\LL\fhat)(m,\bI) \overline{\ghat(m,\bI)}\diff \bI .
\end{align}

The linearised problem~\eqref{E:LINFREQ1}--\eqref{E:LINFREQ2} can now be recast in the form
\beq\label{E:FEQN}
\pa_t\widehat{f}+i\L\widehat{f}=0, \ \ \widehat f(0) = \widehat\fin\in \text{ran}\,\T.
\eeq
Therefore, by Stone's theorem the operator $\mathcal L\big|_{\text{ran}\,\T}$ generates a strongly continuous semigroup $e^{-i t \mathcal L}: \mathcal H^1\cap\text{ran}\,\T\to\mathcal H\cap\text{ran}\,\T$
and the following fundamental formula holds:
\beq\label{eq:spectral}
\widehat{f}(t,m,\bI)=\frac{1}{2\pi i}\lim_{\eps\to0+}\int_{\sigma_{\L}}e^{i\la t}\Big(\big(\L+\la-i\eps)^{-1}-\big(\L+\la+i\eps)^{-1}\Big)\widehat{f}_{0}(m,\bI)\,\diff \la,
\eeq
where we let
\beq\label{E:SIGMALDEF}
\sigma_{\L}:=\sigma(\L\big|_{\text{ran}\,\T})=(-\infty,-\lmin] \cup [\lmin,\infty).
\eeq



\begin{lemma}[Representation formula for the gravitational potential]\label{L:REPRESENTATIONS}
To any $f\in L^1(\Om)$ we associate the gravitational potential $U_f:[0,\infty)\to\mathbb R$ given as the unique solution of~\eqref{E:POTENTIAL}--\eqref{E:DENSITY}, where 
$\rho(R)=\frac{\pi}{R^2}\int_{\Om}f(R,w,L)\diff (w,L)$ is trivially extended by zero outside its spatial support $[\Rmin,\Rmax]$. The gravitational force $\pa_R U_f$
satisfies the formula
\begin{align}
\pa_R U_f(R) & = \frac{4\pi^2}{R^2} \int_{\Om} f(s,w,L) h_R(s)\diff(s,w,L), \label{E:PARTIALRUFFORMULA}
\end{align}
where the Green's function $ h_R $ is given by the characteristic function
\begin{align}
h_R(s) & : = \chi_{s\le R}(s) \label{E:HRDEF}.
\end{align}
Moreover, for any  $(m,R)\in\Z_\ast\times[\Rmin,\Rmax]$, we have the identity
\begin{align}\label{E:GREENFOURIER}
\widehat{h_R}(m,\bI) = \frac1{\pi m} \sin(2\pi m \th(R,\bI))\chi_{E\geq \Psi_L(R)}(\bI), \ \ \bI\in \I,
\end{align}
where $\th(R,\bI)\in[0,\frac12)$ is the unique angle with the property $r(\th(R,\bI),\bI)=R$ for each $\bI\in\I$.
\end{lemma}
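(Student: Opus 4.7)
The statement decomposes into two essentially independent parts, neither of which presents a genuine obstacle.

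For~\eqref{E:PARTIALRUFFORMULA}, I would rewrite the Poisson equation in spherical coordinates as $R^{-2}\pa_R(R^2\pa_RU_f)=4\pi\rho$ and integrate once from $0$ to $R$. Since $\rho$ is bounded and supported on the compact set $[\Rmin,\Rmax]\subset(0,\infty)$, the boundary term at the origin vanishes and one obtains $R^2\pa_RU_f(R)=4\pi\int_0^Rs^2\rho(s)\,\diff s$. Substituting~\eqref{E:DENSITY} in the form $s^2\rho(s)=\pi\int f(s,w,L)\,\diff(w,L)$ and encoding the cutoff $s\le R$ via the characteristic function $h_R$ from~\eqref{E:HRDEF} yields~\eqref{E:PARTIALRUFFORMULA} immediately.

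For the Fourier identity~\eqref{E:GREENFOURIER}, my plan is to compute directly
\[\widehat{h_R}(m,\bI)=\int_{\mathbb S^1}\chi_{\{r(\th,\bI)\le R\}}\,e^{-2\pi im\th}\,\diff\th\]
by splitting into three cases according to the position of $R$ relative to the turning points $r_\pm(\bI)$. The condition $R\in[r_-(\bI),r_+(\bI)]$ is equivalent to $E\ge\Psi_L(R)$, since $\Psi_L$ attains its minimum at $r_L$ and $\Psi_L(r_\pm(\bI))=E$. If $R<r_-(\bI)$ the integrand vanishes identically; if $R\ge r_+(\bI)$ the integrand is identically $1$ and integrates to $0$ against $e^{-2\pi im\th}$ for $m\in\Z_\ast$. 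Both of these regimes correspond exactly to the vanishing of the indicator $\chi_{E\ge\Psi_L(R)}$ on the right-hand side of~\eqref{E:GREENFOURIER}.

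In the remaining case $R\in[r_-(\bI),r_+(\bI)]$ the key input is the time-reversal symmetry built into~\eqref{thetadef}: since we set $\th(r,w,L)=1-\th(r,-w,L)$ for $w<0$, the map $\th\mapsto r(\th,\bI)$ is an even function on $\mathbb S^1=\R/\Z$ about $\th=0$, strictly increasing on the outgoing branch $[0,1/2]$ from $r_-$ to $r_+$. Letting $\th(R,\bI)\in[0,1/2)$ denote the unique angle on this outgoing branch with $r=R$, the sublevel set $\{\th:r(\th,\bI)\le R\}$ is the symmetric interval $[-\th(R,\bI),\th(R,\bI)]$ modulo $1$, and a direct evaluation
\[\int_{-\th(R,\bI)}^{\th(R,\bI)}e^{-2\pi im\th}\,\diff\th=\frac{\sin(2\pi m\th(R,\bI))}{\pi m}\]
produces~\eqref{E:GREENFOURIER}. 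The only point requiring care is the bookkeeping of the two crossings of the level $r=R$, coming from the outgoing ($w>0$) and ingoing ($w<0$) branches, whose symmetry about $\th=0$ is precisely what converts the complex exponential integral into a sine.
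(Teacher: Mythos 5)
Your proof is correct. For~\eqref{E:PARTIALRUFFORMULA} you take exactly the paper's route: integrate the radial Poisson equation once and use that the support of $\rho$ is bounded away from the origin (the paper integrates from $\Rmin$ rather than $0$, which amounts to the same thing since $\rho$ vanishes on $[0,\Rmin)$ and hence $\pa_R U_f$ vanishes there as well).

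For~\eqref{E:GREENFOURIER} the paper simply cites \cite[Lemma~2.2]{HRSS2024}, so you have supplied a self-contained argument that the paper omits. Your computation is sound: the case split on the position of $R$ relative to $r_\pm(\bI)$ correctly matches the indicator $\chi_{E\ge\Psi_L(R)}$; for $R>r_+(\bI)$ the integrand is identically $1$ and annihilates $e^{-2\pi i m\th}$ for $m\neq0$, while for $R<r_-(\bI)$ it is identically $0$. In the intermediate case, the reflection symmetry $r(\th,\bI)=r(1-\th,\bI)$ that follows from $\th(r,w,L)=1-\th(r,-w,L)$ makes the sublevel set the symmetric interval $[-\th(R,\bI),\th(R,\bI)]\bmod 1$, and the resulting integral evaluates to $\frac{\sin(2\pi m\th(R,\bI))}{\pi m}$. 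One tiny cosmetic point: at $R=r_\pm(\bI)$ the characteristic function on the right of~\eqref{E:GREENFOURIER} equals $1$ but $\th(R,\bI)\in\{0,\tfrac12\}$ so the sine vanishes, consistent with both of your other case analyses; worth a remark to confirm the identity holds also at these boundary values.
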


\begin{proof}
We integrate~\eqref{E:POTENTIAL} over $[\Rmin,R)$ to get 
\begin{align}\label{E:UPOT}
\pa_RU_f(R) = \frac{4\pi^2}{R^2} \int_{\Rmin}^R \int f(s,w,L) \diff (w,L) \diff s, 
\end{align}
which is precisely~\eqref{E:PARTIALRUFFORMULA}. Formula~\eqref{E:GREENFOURIER} is shown in~\cite[Lemma 2.2]{HRSS2024}.
\end{proof}


\begin{remark}\label{R:BDRYPOT}
Since $U(r)=\frac cr$ for all $r\ge\Rmax$ and some constant $c\in\mathbb R$, it follows from~\eqref{E:UPOT}
that 
\[
c= - 4\pi^2 \int_{\Rmin}^{\Rmax} \int f(s,w,L) \diff (w,L)\diff s.
\]
\end{remark}


\subsubsection{Function spaces}\label{SS:SPACES}


For any $j\in\mathbb N_0$ we denote by $C^j(\oom)$ the usual space of $j$-times continuously differentiable functions equipped with the usual norm. If a complex-valued function $V$ is defined on $[\Rmin,\Rmax]$
we will abbreviate $\|V\|_{C^j_R} = \|V\|_{C^j([\Rmin,\Rmax])}$.

In order to develop an effective limiting absorption principle, we introduce notation for function spaces with mixed-derivative regularity in $\la$ and $(r,w,L)$, which we call $\C^k(\overline{\Om}\times\sL)$. 
These spaces are defined via the following norms:
\begin{align}\label{E:CKDEF}
\|F\|_{\C^k(\oom\times\sL)}&:=\sum_{j=0}^{k-1}\sum_{|\al|\leq k-j}\|\pa_\la^jD^\al F(\cdot;\la)\|_{L^\infty_\la C^{0}(\oom)}  
\end{align}
for multi-indices $\al$ in $(r,w,L)$ and $k\in\mathbb N$. When $F:[\Rmin,\Rmax]\times\sL\to\mathbb{C}$, we  write
\begin{align}
\label{E:CKRDEF}
\|F\|_{\C^k_R([\Rmin,\Rmin]\times\sL)} &:=\sum_{j=0}^{k-1}\sum_{\ell\leq k-j}\|\pa_\la^j\pa_R^\ell F(\cdot;\la)\|_{L^\infty_\la C^{0}([\Rmin,\Rmax])} 
\end{align}
and we typically abuse notation by suppressing the domain and simply writing $\mathcal{C}^k_R$.



\section{Canonical Normal Form and a new foliation of the action support}\label{S:CNF}


In order to understand the dynamics driven either by the full linearised operator or the pure transport operator near the elliptic (trapping) set, it is essential to gain a better understanding of the structure of the action-angle change of variables close to this set.  

{\em Standing assumption.} We assume throughout this section that we are working with the Hamiltonian flow induced by a steady state $\fmn$ satisfying~\eqref{E:SS}--\eqref{E:SS2}. Moreover, we assume that $\eta\in[0,\eta_0]$, where $\eta_0$ is the sufficiently small constant given by Proposition~\ref{P:SPECTRUM}.


\subsection{Canonical Normal Form}


In this section we introduce the Birkhoff-Poincar\'e normal form in order to derive precise regularity statements close to the elliptic point. We first recall that, foliating the support of the steady state by surfaces of constant $L$, the characteristic flow, which leaves $L$ invariant, is the Hamiltonian flow given  by 
\beqa\label{eq:Hamiltonian}
\dot r = w,\quad
\dot w =-\Psi_L'(r),
\eeqa
with Hamiltonian
 \beq
H(r,w,L)=\frac12 w^2+\Psi_L(r)-\EminL=E(r,w,L)-\EminL.
\eeq
From Lemma~\ref{lemma:rL}, it is easy to see that, for each $L$, the point $(r_L,0)$ is an isolated, non-degenerate elliptic point, uniformly in $L$. 

The goal of this section is the following theorem, which provides an analytic change of variables on a suitable region to put the Hamiltonian system into normal form. In order to ensure that there is no degeneracy of the radius of analyticity as $L\to\Lmax$, we consider the Hamiltonian system defined on the whole cylinder
\[\Om'=\{(r,w,L)\,|\,E(r,w,L)\leq \min\{\Psi_L(\Rmin),\Psi_L(\Rmax)\},\,L\in[L_0,\Lmax]\}. \]
We note in particular that $r\in[\Rmin,\Rmax]$ on $\Om'$ and, for each $\bar L>L_0$, we have $\overline{\Om\cap\{L=\bar L\}}\subset \textup{int}(\Om'\cap\{L=\bar L\})$ (uniformly when $\bar L$ is uniformly away from $L_0$). In the action-angle variables, this induces a corresponding neighbourhood $\I'$ of $\I\cap\A$ within $\mathbb{A}$. We recall the definitions~\eqref{def:I}--\eqref{DEF:A} of $\I$ and $\mathbb{A}$ so that $E>\EminL$ on $\I'$.

\begin{theorem}\label{T:NORMALFORM}
There exists a map $\Gamma\in C^{N+1}(\Omega';\R^3)$, diffeomorphic onto its image and bi-analytic on $\Omega'\cap\{(r,w,L)\,|\,r\in(\Rmin,\Rmax)\}$, that transforms the Hamiltonian flow~\eqref{eq:Hamiltonian} into the flow of the Hamiltonian $\widetilde{H}(u,v,L)=(A(\cdot,L))^{-1}(\pi(u^2+v^2))$, where the area function is given by~\eqref{E:AREADEF}.

Moreover, in action-angle variables, the map $\Gamma$ induces a map $\ga:\mathbb{S}^1\times\I'\to\Gamma(\Omega)$ which satisfies the identity
\beq\label{eq:gammazero}
\gamma(\th,\bI)=\sqrt{A(\bI)/\pi}\big(\cos(2\pi\th),\sin(2\pi\th)\big)
\eeq
and $\ga\in C^{N+1}(\mathbb{S}^1\times\I')$ and $\ga$ is analytic on $(\mathbb{S}^1\times\I')\cap\{(\th,\bI)\,|\,r(\th,\bI)\in(\Rmin,\Rmax)\}$.
\end{theorem}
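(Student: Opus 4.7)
The plan is to build $\Gamma$ by gluing together two pieces: an explicit action--angle formula, which is valid (and analytic) away from the elliptic curve $\{(r_L,0,L):L\in[L_0,\Lmax]\}$, and the classical parametric Birkhoff--Poincar\'e normal form, which supplies the smooth/analytic extension across that curve. Off the elliptic curve, I would take
\[
\Gamma(r,w,L):=\bigl(\gamma_1(\th(r,w,L),E(r,w,L),L),\gamma_2(\th(r,w,L),E(r,w,L),L),L\bigr),
\]
with $\gamma=(\gamma_1,\gamma_2)$ as in~\eqref{eq:gammazero}. On this open set the action--angle map is locally analytic by Lemma~\ref{lemma:analyticity}, and since $A>0$ strictly off the elliptic curve, both $\sqrt{A/\pi}$ and the trigonometric factors are analytic; hence $\Gamma$ inherits the regularity of the action--angle map. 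A direct computation (using $\pa_E A=T$ together with $dr\wedge dw=T\,dE\wedge d\th=dA\wedge d\th$) shows $\Gamma^*(du\wedge dv)=dr\wedge dw$, so $\Gamma$ is symplectic; combined with the defining identity $E(r,w,L)=A^{-1}(\pi(u^2+v^2),L)$, this proves that $\Gamma$ conjugates the flow of $H$ to that of $\widetilde{H}$ (the constant shift $\EminL$ between $\widetilde H$ and $H$ is irrelevant at the level of the vector field).

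To extend $\Gamma$ across the elliptic curve I would invoke the parametric analytic Birkhoff--Poincar\'e normal form theorem. The required hypotheses are supplied directly by Lemmas~\ref{lemma:rL} and~\ref{lemma:analyticity}: $\Psi_L(r)$ is jointly real-analytic in $(r,L)$ on a uniform-in-$L$ neighbourhood of $(r_L,L)$, $(r_L,0)$ is a uniformly non-degenerate elliptic minimum of $H_L$ with $\al_L\ge c_0>0$, and $r_L$ stays uniformly inside $(\Rmin,\Rmax)$. The theorem then produces, for each $L$, an analytic symplectic transformation $\Xi_L$ on a uniform-in-$L$ disk around $(r_L,0)$, depending analytically on $L$, conjugating $H_L$ to a function of $u^2+v^2$ alone. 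Since the area enclosed by an orbit is a symplectic invariant of the flow, this function must coincide with $A_L^{-1}(\pi(u^2+v^2))-\EminL$. Birkhoff uniqueness modulo the rotation corresponding to the choice of angle origin---fixed here by the convention $\th=0\Leftrightarrow(r,w)=(r_-(\bI),0)$ in~\eqref{thetadef}---then forces $\Xi_L$ to agree with the action--angle construction of the previous step on their common domain, producing a single map $\Gamma$ defined on all of $\Omega'$ and bi-analytic on $\Omega'\cap\{r\in(\Rmin,\Rmax)\}$.

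Global $C^{N+1}$ regularity up to the boundary $r\in\{\Rmin,\Rmax\}$, where analyticity of $\Psi_L$ in $r$ fails, follows from the $C^{N+1}$ regularity of the period and area functions and of the characteristic flow $(R,W)$ stated in Lemma~\ref{lemma:regularity}. Diffeomorphism onto the image is clear in action--angle coordinates, where $\gamma$ is just the polar transformation, combined with the fact that the Birkhoff change of variables is a local diffeomorphism across the elliptic point (its linearisation at the origin being invertible by the non-degenerate Hessian $\al_L\ge c_0>0$). The main technical obstacle is the parametric step: the Birkhoff normal form must be produced with analytic dependence on $L$, and the radius of the disk on which it is valid must be bounded below uniformly for $L\in[L_0,\Lmax]$. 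This is supplied by the uniform analyticity neighbourhood of $\Psi_L$ in Lemma~\ref{lemma:analyticity}(i) and by $\al_L\ge c_0$; the standard proof via a convergent sequence of canonical transformations (generating functions) then goes through with uniform-in-$L$ estimates, using in an essential way the one-degree-of-freedom structure, which excludes small-divisor obstructions and guarantees convergence of the normalising series.
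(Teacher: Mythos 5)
Your overall strategy is the same as the paper's: invoke the parametric analytic Birkhoff--Poincar\'e normal form (as in \cite[Theorem A]{Manosas02}) near the elliptic curve, identify the radial coordinate $\rho=\sqrt{u^2+v^2}$ with $\sqrt{A/\pi}$ via area invariance, and match the normal-form coordinates to the explicit action--angle formula $\sqrt{A/\pi}(\cos 2\pi\theta,\sin 2\pi\theta)$ on the overlap. However, the step where you glue the two constructions has a genuine gap. You write that ``Birkhoff uniqueness modulo the rotation corresponding to the choice of angle origin \dots then forces $\Xi_L$ to agree with the action--angle construction.'' But the uniqueness result \cite[Proposition 4.4]{Manosas02} is modulo an \emph{analytic twist} $(\rho,\vartheta)\mapsto(\rho,\vartheta+F(\rho^2))$, not merely a rotation (constant $F$). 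Requiring $\vartheta=0$ at $\theta=0$ certainly picks out a unique $F$ on each orbit, but the crux is showing that this $F$ is \emph{analytic in $\rho^2$}, i.e., that the required re-normalization of the angle stays within the analytic category and does not reintroduce half-integer powers of $\delta E$. That is precisely what the paper's Lemma~\ref{lemma:twist} establishes: in action--angle variables the normal-form map $\gamma$ from \cite{Manosas02} has the form $(a\cos 2\pi\theta+b\sin 2\pi\theta,\,a\sin 2\pi\theta-b\cos 2\pi\theta)$ with $a,b$ analytic in $\delta E^{1/2}$ and $a_0=b_0=0$ (Lemma~\ref{L:CNF}), and then an inductive construction kills the coefficients $b_{2j+1}$ one at a time while the even ones $b_{2j}$ are shown to vanish by a symmetry argument using the $w\mapsto -w$ reflection and the analyticity of the image of the axis $\{w=0\}$ in~\eqref{axisimage}. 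Without this step, your proposal does not prove that the explicit map $\sqrt{A/\pi}(\cos 2\pi\theta,\sin 2\pi\theta)$ extends analytically across $\{E=\EminL\}$, which is the main point of the theorem.

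A secondary issue: you assert agreement of $\Xi_L$ with the action--angle construction ``on their common domain,'' but a priori they agree only modulo a twist that could be non-analytic; you cannot conclude the explicit formula is analytic on the overlap and then hope continuity forces analyticity inward. The logical order must be: start from an analytic normal form, show the corrective twist (which realizes your angle-origin normalization) is itself analytic in $\rho^2$, and only then conclude that the explicit formula equals an analytic normal form. Your treatment of the rest---the symplectic computation, the area identification, the $C^{N+1}$ extension up to $\{\Rmin,\Rmax\}$ via Lemma~\ref{lemma:regularity}, and the uniform-in-$L$ radius via Lemma~\ref{lemma:analyticity}(i) and $\al_L\ge c_0$---is sound and matches the paper.
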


Clearly the regularity properties of $\ga$ are inherited from those of $A$, and so can be taken uniformly in $\eta$ by Lemma~\ref{lemma:regularity}. The remainder of this subsection will be devoted to the proof of this theorem which will follow directly from Lemmas~\ref{L:CNF}--\ref{lemma:twist}.

From Lemma~\ref{lemma:analyticity}, the Hamiltonian $H$ is locally analytic around the curve of elliptic points $(r_L,0,L)$, so that also the area and period functions $A(\bI)$ and $T(\bI)$ are analytic on a uniform neighbourhood $\{\bI\,|\,E\in[\EminL,\EminL+\de],\,L\in[L_0,\Lmax]\}$ of the curve $(\EminL,L)$ by, for example,~\cite{Manosas02}. Observe that this neighbourhood remains inside the closure of $\I'$ and its image in $(r,w,L)$ coordinates remains inside $\Om'$.

We may therefore derive the change of variables to Birkhoff-Poincar\'e Canonical Normal Form (CNF), \cite[Theorem A]{Manosas02} (observe that the normal form change of coordinates may also be taken to depend continuously on $\eta$). More precisely, we obtain the analytic map 
\beq\label{def:Gamma}
\Ga(r,w,L)=(G_1(r,w,L),G_2(r,w,L),L)
\eeq
 from a tubular neighbourhood of $(r_L,0,L)$,  analytic with respect to $(r,w,L)$, into a tubular neighbourhood of $(0,0,L)$ in $(u_1,u_2,L)$-space such that the image of the Hamiltonian flow under this map is the flow 
\beqa\label{eq:CNFflow}
\dot u_1=&\,-u_2h(u_1^2+u_2^2,L),\quad
\dot u_2 = u_1 h(u_1^2+u_2^2,L),\quad
\dot L =  0,
\eeqa
where $h(\xi)=2\pi (A^{-1})'(\pi \xi)$. \cite[Theorem A]{Manosas02} also gives $A(E(r,w,L),L)=\pi\big(G_1(r,w,L)^2+G_2(r,w,L)^2\big)=\pi (u_1^2+u_2^2)$. The Hamiltonian of the normal form system~\eqref{eq:CNFflow} is given by
\beq
\widetilde H(u_1,u_2,L)=\frac12\int_0^{u_1^2+u_2^2}h(\xi,L)\,\dif \xi=(A(\cdot,L))^{-1}(\pi(u_1^2+u_2^2)),
\eeq
where the inverse function is defined for each fixed $L$.
Moreover, there exist analytic maps $\phi$ and $\tilde\phi$ with $\phi'(0),\tilde\phi'(0)\neq 0$ such that
\beq\label{eq:dEinuv}
\dE=H(r,w,L)=\phi(\widetilde H(\Ga(r,w,L)))=\tilde\phi(G_1(r,w,L)^2+G_2(r,w,L)^2).
\eeq
To show that the change of coordinates is bi-analytic, using~\eqref{eq:Hamiltonian},~\eqref{def:Gamma}, and the chain rule it is easy to see that 
\begin{align}
\dot u_1 \pa_w G_2 - \dot u_2 \pa_w G_1 = w (\pa_r G_1\pa_w G_2-\pa_rG_2\pa_wG_1) = w \det(D\Gamma). 
\end{align}
On the other hand, from~\eqref{eq:CNFflow} we can rewrite the left-hand side above as $-h(u_1^2+u_2^2)(u_2\pa_wu_2+u_1\pa_wu_1)$. Putting these two observations together we obtain the identity
\begin{align}
w \det(D\Gamma) = -\frac12 h(u_1^2+u_2^2)\pa_w(u_1^2+u_2^2).
\end{align}
Since $u_1^2+u_2^2=\frac1{\pi}A(E(r,w,L),L)$, it follows that $\frac1{w}\pa_w(u_1^2+u_2^2) = A'(\bI)\frac1{\pi w}\pa_w E(r,w,L)=\frac{1}{\pi}T(\bI)\neq0$.
In particular, as  $h(0)\neq0$, the determinant $D\Gamma(r_L,0,L)$ is non-degenerate and we may apply the inverse function theorem to conclude that there exists 
an inverse map $\Gamma^{-1}(u_1,u_2,L)=((\Gamma^{-1})_1(u_1,u_2,L),(\Gamma^{-1})_2(u_1,u_2,L),L)$ which is locally analytic in a neighbourhood of the curve $\{(0,0,L)\,|\,L\in[L_0,\Lmax]\}$, where  the diameter of this neighbourhood, restricted to the $(u_1,u_2)$-plane, may be taken bounded below, uniformly in $L$.

The analytic change of variables into CNF is rigid in the sense that it is unique up to a twist of the form
\beq\label{DEF:TWIST}
(\varrho,\vartheta)\mapsto (\varrho,\vartheta+F(\varrho^2)),
\eeq
where $(\varrho,\vartheta)$ are polar coordinates in the $(u_1,u_2)$ plane,~\cite[Proposition 4.4]{Manosas02}. We call any such change of coordinates an \textit{analytic twist}.

As in the statement of Theorem~\ref{T:NORMALFORM}, we define the map $\ga$, which is $\Ga$ expressed via action-angle variables:
\beq\label{def:gamma}
\Ga(r(\th,\bI),w(\th,\bI),L)=:\ga(\th,\bI)=(g_1(\th,\bI),g_2(\th,\bI),L).
\eeq

\begin{lemma}\label{L:CNF}
The map $\ga$, defined in~\eqref{def:gamma}, takes the form
\beqa
g_1(\th,\bI)=&\,a(\bI)\cos(2\pi\th)+b(\bI)\sin(2\pi\th),\\
g_2(\th,\bI)=&\,a(\bI)\sin(2\pi\th)-b(\bI)\cos(2\pi\th), \label{E:G1G2}
\eeqa
where $a(\bI)$ and $b(\bI)$ are analytic functions of $\dE^{\frac12}$, vanishing at $\dE=0$.
\end{lemma}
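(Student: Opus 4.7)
The plan is to pass to complex coordinates in the $(u_1,u_2)$-plane, where the CNF flow~\eqref{eq:CNFflow} becomes pure rotation on each level set $\{u_1^2+u_2^2=\text{const}\}$ at a frequency that exactly matches the action-angle frequency $\omega(\bI)=1/T(\bI)$. This will force $\gamma$ to be a rigid rotation in $\theta$ on each such level set, and the analytic structure will then be inherited from the analytic dependence of the inner turning point $r_-(\bI)$ on $\dE^{\frac12}$.

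First I would set $z:=g_1+ig_2$ and combine the equations~\eqref{eq:CNFflow} into the complex ODE $\dot z = i\,h(|z|^2,L)\,z$. Since $|z|^2$ is conserved, this integrates explicitly to $z(t) = z(0)\,e^{i h(|z|^2,L)\,t}$. Using $\pi|z|^2 = A(\bI)$ (established in the discussion after~\eqref{def:Gamma}) together with $h(\xi,L) = 2\pi(A(\cdot,L)^{-1})'(\pi\xi)$ and $A'(\bI)=T(\bI)$, one reads off $h(|z|^2,L) = 2\pi/T(\bI)$. Along the action-angle parametrisation $\theta(t) = \theta_0 + t/T(\bI)$, substituting $t = T(\bI)(\theta-\theta_0)$ yields
\[
z(\theta,\bI) \;=\; z(\theta_0,\bI)\, e^{2\pi i(\theta-\theta_0)}.
\]
Taking $\theta_0 = 0$, which by~\eqref{thetadef} corresponds to $(r,w)=(r_-(\bI),0)$, I would define
\[
a(\bI) := G_1(r_-(\bI),0,L),\qquad b(\bI) := -G_2(r_-(\bI),0,L),
\]
so that $z(0,\bI) = a(\bI) - i b(\bI)$. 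The identity $(a-ib)(\cos 2\pi\theta + i\sin 2\pi\theta) = (a\cos 2\pi\theta + b\sin 2\pi\theta) + i(a\sin 2\pi\theta - b\cos 2\pi\theta)$ then delivers exactly the formulas for $g_1$ and $g_2$ claimed in the lemma.

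It then remains to establish that $a$ and $b$ are analytic in $\dE^{\frac12}$ and vanish at $\dE = 0$. Vanishing is immediate: when $\dE = 0$, $r_-(\bI) = r_L$ by Lemma~\ref{lemma:rL}, and the CNF normalisation~\eqref{def:Gamma} sends the elliptic point $(r_L,0,L)$ to $(0,0,L)$, hence $G_1(r_L,0,L) = G_2(r_L,0,L) = 0$. For analyticity, since $\Gamma$ is analytic on a neighbourhood of the elliptic curve $\{(r_L,0,L)\}$ by construction, it suffices to show $\bI \mapsto r_-(\bI)$ is analytic in $(\dE^{\frac12},L)$ near $\dE = 0$. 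By Lemma~\ref{lemma:analyticity}(i) and the non-degeneracy $\Psi_L''(r_L) = \alpha_L > 0$ from Lemma~\ref{lemma:rL}, I would factorise
\[
\Psi_L(r) - \EminL \;=\; (r-r_L)^2\, \phi(r,L)
\]
with $\phi$ jointly analytic in $(r,L)$ and $\phi(r_L,L) = \alpha_L/2 > 0$. Setting $\Phi(r,L) := (r-r_L)\sqrt{\phi(r,L)}$, the definition~\eqref{def:rpm(E)} on the branch $r_-(\bI) < r_L$ becomes $\Phi(r_-,L) = -\dE^{\frac12}$, and since $\pa_r\Phi(r_L,L) = \sqrt{\alpha_L/2} \neq 0$, the analytic implicit function theorem produces $r_-(\bI)$ analytic in $(\dE^{\frac12},L)$, uniformly for $L\in[L_0,\Lmax]$. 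Composition with the analytic $\Gamma$ then transfers this analyticity to $a$ and $b$. The main subtlety is precisely this square-root branching at the elliptic point; once the Morse-type factorisation is in hand, the rest is an explicit computation with the CNF flow.
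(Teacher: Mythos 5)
Your proof is correct and takes essentially the same route as the paper: both derive that $g_1,g_2$ solve a harmonic oscillator in $\theta$ at frequency $2\pi$ by using the CNF flow~\eqref{eq:CNFflow}, the relation $h(\pi^{-1}A)=2\pi\om$, and the time-to-angle substitution along trajectories, then read off $a,b$ from the value at $\theta=0$, i.e.\ at $(r_-(\bI),0,L)$, and invoke analyticity of $G_i$ together with analyticity of $r_-$ in $\dE^{1/2}$. The only differences are cosmetic (you work with $z=g_1+ig_2$ rather than the real pair, and you spell out the Morse factorisation $\Psi_L-\EminL=(r-r_L)^2\phi$ to justify the $\dE^{1/2}$-analyticity of $r_-$, a step the paper leaves as a remark).
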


\begin{proof}
Since $u_i=G_i(r(s),w(s),L(s))$, $i=1,2$,  applying the $s$-derivative to this relation we conclude from~\eqref{eq:Hamiltonian} and the definition of $\T$ that
\[
\dot u_i(s) = \T G_i = \om(\bI)\pa_\th g_i, \ \ i=1,2.
\]
Therefore from~\eqref{eq:CNFflow},
\[
\pa_\th g_1 = - g_2 \frac{h(\pi^{-1}A(\bI))}{\om(\bI)}, \ \ \pa_\th g_2 = g_1 \frac{h(\pi^{-1}A(\bI))}{\om(\bI)}.
\]
Since $A'(\bI)=T(\bI)$ and $h(\xi) = 2\pi A^{-1}(\cdot,L)'(\pi\xi)$, it follows that $h(\pi^{-1}A) = \frac{2\pi}{A'(\bI)} =2\pi \om$. Plugging this into the above system, we see that the $g_i$ are harmonic oscillators in $\th\in\mathbb S^1$ for any fixed $\bI\in\I$ and thus~\eqref{E:G1G2} follows.
 To see that the coefficients $a(\bI)$ and $b(\bI)$ expand analytically in half powers of $\dE$, we note that 
 $g_i(0,\bI) = G_i(r_-(\bI),0,L)$, $i=1,2$. Since each $G_i$ is analytic and $r_-(\bI)$ expand analytically in  half powers of $\dE$ (this is easy to see from the analyticity of $\Psi_L$ and the defining relation~\eqref{def:rpm(E)}), the claim follows.
\end{proof}

To fix notation, for the coefficients $a(\bI)$ and $b(\bI)$ of Lemma~\ref{L:CNF}, we set
\beq
a(\bI)=\sum_{j=0}^\infty a_j(L)\dE^{\frac j2},\qquad b(\bI)=\sum_{j=0}^\infty b_j(L)\dE^{\frac j2},
\eeq
where $a_0=b_0=0$. As the area function $A$ is analytic, we write 
\beq
A(\bI)=\sum_{j=0}^\infty A_j(L)\dE^{\frac j2},
\eeq
where again $A_0=0$ and also $A_{2k-1}=0$ for every $k\in\N$.

\begin{lemma}\label{lemma:twist}
There exists an analytic twist function $F(\rho^2)$ as in~\eqref{DEF:TWIST} such that, after applying the twist, we may take the function $b(\bI)\equiv 0$ and the coefficients $a_{2j}(L)=0$ for all $j\geq 1$. In particular, $a(\bI)=\sqrt{A(\bI)}$ and  we obtain that $\ga$ is given explicitly on $\Ade$ by 
\beq\label{eq:gamma}
\ga(\th,\bI)=\Big(\sqrt{A(\bI)/\pi}\cos(2\pi\th),\sqrt{A(\bI)/\pi}\sin(2\pi\th),L\Big).
\eeq
Thus $\ga$ extends to the neighbourhood $\I'$, is analytic for $\{(\th,\bI)\,|\,r(\th,\bI)\in(\Rmin,\Rmax)\}$ and $\ga\in C^{N+1}(\S^1\times\I')$.
\end{lemma}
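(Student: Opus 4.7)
The strategy is to exploit the time-reversal symmetry of the Hamiltonian $H=\frac12 w^2+\Psi_L(r)-\EminL$, which is invariant under the anti-symplectic involution $\sigma:(r,w,L)\mapsto(r,-w,L)$. The plan is to use the twist~\eqref{DEF:TWIST} to arrange that the induced involution $\tilde\sigma:=\Ga\circ\sigma\circ\Ga^{-1}$ on the normal-form side reduces to the standard reflection $(u_1,u_2)\mapsto(u_1,-u_2)$; once this is achieved, the formula~\eqref{eq:gamma} falls out of the intertwining relation.

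First I would analyse the structure of $\tilde\sigma$. Being the conjugate by $\Ga$ of $\sigma$, it is an analytic, anti-symplectic involution of a neighbourhood of the curve $\{(0,0,L)\}$ which preserves $\widetilde H$, hence the concentric circles $\{u_1^2+u_2^2=\mathrm{const}\}$. Writing $\tilde\sigma$ in polar coordinates as $(\varrho,\vartheta,L)\mapsto(\varrho,g(\varrho,\vartheta,L),L)$, the anti-symplectic condition $-\dif u_1\wedge\dif u_2=\varrho\,\dif\varrho\wedge\dif g$ yields $\pa_\vartheta g\equiv-1$, and thus $g=2\phi(\varrho,L)-\vartheta$ for some function $\phi$. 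Real-analyticity of $\tilde\sigma_1+i\tilde\sigma_2=e^{2i\phi(\varrho,L)}(u_1-iu_2)$ at the origin in $(u_1,u_2)$ then forces $\phi$ to be an analytic function of $\varrho^2$ (and of $L$). I would then apply the twist~\eqref{DEF:TWIST} with $F(\xi,L):=-\phi(\xi,L)$; a direct substitution shows that in the new coordinates $\tilde\sigma$ becomes precisely $(u_1,u_2)\mapsto(u_1,-u_2)$.

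With this normalisation, writing $\Ga=(G_1,G_2,L)$, the intertwining relation $\Ga\circ\sigma=\tilde\sigma\circ\Ga$ gives $G_1(r,-w,L)=G_1(r,w,L)$ and $G_2(r,-w,L)=-G_2(r,w,L)$; in particular $G_2(r,0,L)\equiv 0$. Since $-b(\bI)=g_2(0,\bI)=G_2(r_-(\bI),0,L)$ from Lemma~\ref{L:CNF}, we obtain $b\equiv 0$. Combined with $a^2+b^2=A(\bI)/\pi$ (the specialisation of $\pi(u_1^2+u_2^2)=A$ from Theorem~\ref{T:NORMALFORM}), this forces $a=\pm\sqrt{A/\pi}$; the positive sign can be fixed by a further constant twist by $\pi$ if necessary, consistent with the orientation convention for $\th$. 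Since $A(\bI)/\pi=\alpha_2(L)\dE+\alpha_4(L)\dE^2+\cdots$ is analytic in $\dE$ with $\alpha_2(L)>0$, the expansion $\sqrt{A/\pi}=\dE^{1/2}\sqrt{\alpha_2(L)+\alpha_4(L)\dE+\cdots}$ contains only odd powers of $\dE^{1/2}$, i.e.~$a_{2j}=0$ for $j\geq 1$. Inserting $b=0$ and $a=\sqrt{A/\pi}$ into the formulas of Lemma~\ref{L:CNF} produces~\eqref{eq:gamma} on $\Ade$; by Lemma~\ref{lemma:regularity}, the right-hand side is well-defined, $C^{N+1}$ on $\mathbb S^1\times\I'$, and analytic where $r\in(\Rmin,\Rmax)$, which yields the required extension.

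I expect the main technical hurdle to be the structural claim that $\tilde\sigma$ in polar coordinates takes the form $(\varrho,\vartheta)\mapsto(\varrho,2\phi(\varrho^2,L)-\vartheta)$ with $\phi$ analytic in $\varrho^2$ at the origin. This amounts to a real-analytic, equivariant refinement of Bochner's linearisation for anti-symplectic involutions preserving concentric circles, and it fits naturally into the twist-rigidity framework of \cite{Manosas02} used throughout Section~\ref{S:CNF}.
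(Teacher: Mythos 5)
Your argument is correct and takes a genuinely different route from the paper's. The paper proceeds constructively: it observes that the image under $\Ga$ of the axis $\{w=0\}$ is an analytic curve $\ell=\{(a,-b)\}\cup\{(-a,b)\}$ through the origin, and builds the twist $\tilde F(\dE)$ iteratively, killing $b_{2j+1}$ at step $j$ and invoking ``symmetry considerations'' of $\ell$ to conclude $b_{2j+2}=0$, then sums the resulting power series. You instead use the time-reversal involution $\sigma:(r,w,L)\mapsto(r,-w,L)$ directly: since the Ma\~nosas--Villadelprat normal form $\Ga$ is area-preserving for each $L$, the conjugate $\tilde\sigma=\Ga\circ\sigma\circ\Ga^{-1}$ is an analytic anti-symplectic involution preserving the level circles of $\widetilde H$, and you normalise it to $(u_1,u_2)\mapsto(u_1,-u_2)$ by a single twist $F=-\phi$. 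The two proofs are geometrically equivalent --- the axis $\{w=0\}$ is precisely the fixed-point set of $\sigma$, so flattening $\ell$ and straightening $\tilde\sigma$ are the same normalisation --- but your version is cleaner: it identifies the twist in one stroke, avoids tracking the Taylor coefficients of $\ell$, and makes the ``symmetry considerations'' of the paper explicit (they are exactly the $\sigma$-equivariance $G_1\circ\sigma=G_1$, $G_2\circ\sigma=-G_2$). What it costs is that you must explicitly invoke the symplecticity of $\Ga$ (implicit in the paper but not stated in the proof of this lemma) and supply the argument that a rotationally covariant analytic map of the form $u\mapsto e^{2i\phi(\varrho,L)}\bar u$ forces $\phi$ to be analytic in $\varrho^2$; the latter is standard (expand $\tilde\sigma(u)\,u$, which is analytic and rotation-invariant, so it is a power series in $u\bar u=\varrho^2$), but deserves a line in a polished write-up. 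One small point: rather than leaving the sign of $a$ to ``a further constant twist by $\pi$ if necessary,'' note that the orientation of the angle $\th$ is already fixed by the definition~\eqref{thetadef} and by the requirement that the CNF flow~\eqref{eq:CNFflow} be the image of~\eqref{eq:Hamiltonian}; this pins down $a>0$ once the antipodal ambiguity is fixed, but stating the constant twist, as you do, is also correct.
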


We note that, transferring the regularity statements back into the map $\Ga$, this concludes the proof of Theorem~\ref{T:NORMALFORM}.

\begin{proof}
Note first that, under the map $\Gamma$, the boundary of the analytic, open lower half-plane, which is the axis $(r,0)=\{\th=0\}\cup\{\th=\frac12\}$, is mapped to the concatenation of the two curves
\beq\label{axisimage}
\ell:=\{(a(\bI),-b(\bI))\}\cup\{(-a(\bI),b(\bI))\}
\eeq
which is analytic due to the bi-analyticity of $\Gamma$. 

We note that any analytic function of $\varrho^2=(u_1^2+u_2^2)$ is also an analytic function of $\dE$, where we consider $\dE$ as defined on the $(u_1,u_2)$-plane by $\dE=\tilde\phi(\varrho^2)$ from~\eqref{eq:dEinuv}. We therefore define the twist map $F$ inductively as an analytic function $\tilde F(\dE)$. 
We set the first iteration of the twist map as the constant
$$\tilde F^{(0)}(\dE) =  \tilde F_0,$$
where $\tilde F_0$ is chosen such that the twist applies a constant rotation in $(u_1,u_2)$ mapping the tangent to $\ell$ at the origin to the $u$-axis. Applying this twist leaves us in an analytic CNF and alters $b(E)$ to set $b_1=0$. The constraint $\pi(a(\bI)^2+b(\bI)^2)=A(\bI)$ then implies $a_1=(\pi^{-1}A_2)^{\frac12}\neq 0$ and so, from the analyticity of~\eqref{axisimage}, it is then straightforward to see $b_2=0$ by symmetry considerations.

Suppose now that the twist $\tilde{F}^{(j-1)}(\dE)$ has been defined  so that we have $b_1,\ldots, b_{2j}=0$. Observe that the angle made by the image of the half axis $(r_-(E),0)$ with the $u$ axis in the $(u,v)$-plane is given by
\beq
\arctan\Big(-\frac{b(\bI)}{a(\bI)}\Big)=-\frac{b(\bI)}{a(\bI)}+O\Big(\frac{b(\bI)^3}{a(\bI)^3}\Big)=-\frac{b_{2j+1}(\dE)^{j}+O(\dE)^{j+\frac12}}{a_1 + O(\dE)^{\frac12}} + O\big(\dE\big)^{3j}.
\eeq
We therefore update the twist function as
$$\tilde F^{(j)}(\dE)= \tilde F^{(j-1)}(\dE)+\frac{b_{2j+1}}{a_1}\dE^{j},$$
which ensures that $b_{2j+1}=0$ and makes no change to the coefficients of $a$ and $b$ of orders less than $2j$. Again from~\eqref{axisimage} and symmetry considerations, we deduce $b_{2j+2}=0$.

In total, we therefore arrive at the twist 
\beq
\tilde F(\dE)=\tilde F_0+\sum_{j=1}^\infty \frac{b_{2j+1}}{a_1}\dE^j,
\eeq
which converges as an analytic function of $\dE$ due to the convergence of the power series for $b$, and hence is also analytic as a function of $\rho^2$ due to~\eqref{eq:dEinuv}.

Moreover, the coefficients $a_{2j+1}$ are all specified, as $b\equiv 0$, by the relation $\pi a(\bI)^2=A(\bI)$.
The specific form~\eqref{eq:gamma} of $\ga$ now follows directly from the relation $\pi (g_1^2+g_2^2)=A$, and the remaining claims of the lemma follow from this formula and Lemmas~\ref{lemma:regularity} and~\ref{lemma:analyticity}.
\end{proof}


\subsection{Fourier expansions}


The goal of this section is to establish the following proposition relating the phase-plane variables $(r,w,L)$ to the action-angle variables $(\th,\bI)$.


\begin{prop}\label{prop:rwexpansions}
There exists $\de>0$ such that, for each $\bI\in\Ade\cap\I'$, defined above Theorem~\ref{T:NORMALFORM}, we have the action-angle expansions
\beqa\label{eq:rwexpansions}
r(\th,\bI)-r_L=&\,r_0(\bI)\dE+\sum_{m=1}^\infty r_m(\bI)\dE^{\frac{|m|}{2}}\cos(2\pi m\th),\\
w(\th,\bI)=&\,\sum_{m=1}^\infty w_m(\bI)\dE^{\frac{|m|}{2}}\sin(2\pi m\th),
\eeqa
where the coefficients $r_m(\bI)$ and $w_m(\bI)$ are analytic in $\bI$ on $\Ade\cap\I'$ and satisfy the estimates
\beq\label{ineq:rmwmbound}
|r_m(\bI)|+|w_m(\bI)|\leq M C^{|m|}
\eeq
for some $M,C>0$, independent of $m$, $\eta\in[0,\eta_0]$, and derivative estimate
\beq
|\pa_L^j\pa_E^kr_m|+|\pa_L^j\pa_E^kw_m|\leq M(k+j)!C^{|m|+k+j}.
\eeq
\end{prop}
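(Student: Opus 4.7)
The starting point is the explicit normal-form representation from Lemma~\ref{lemma:twist}: inverting~\eqref{eq:gamma} yields
\begin{align*}
r(\th,\bI) &= (\Ga^{-1})_1\big(\sqrt{A(\bI)/\pi}\cos(2\pi\th),\sqrt{A(\bI)/\pi}\sin(2\pi\th),L\big), \\
w(\th,\bI) &= (\Ga^{-1})_2\big(\sqrt{A(\bI)/\pi}\cos(2\pi\th),\sqrt{A(\bI)/\pi}\sin(2\pi\th),L\big).
\end{align*}
By the bi-analyticity of $\Ga$ in a neighbourhood of the curve $\{(r_L,0,L):L\in[L_0,\Lmax]\}$ and compactness in $L$, there exists $r_0>0$, uniform in $L$ and $\eta\in[0,\eta_0]$, such that $\Ga^{-1}$ admits a convergent power series expansion in $(u_1,u_2)$ on the polydisk $\{|u_1|,|u_2|<r_0\}$, with coefficient bounds $M r_0^{-(j+k)}$ coming from Cauchy's inequality. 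The time-reversal symmetry $\th\mapsto 1-\th$, which by~\eqref{thetadef} acts as $(r,w)\mapsto(r,-w)$ and by~\eqref{eq:gamma} acts as $(g_1,g_2)\mapsto(g_1,-g_2)$, forces $(\Ga^{-1})_1$ to be even in $u_2$ and $(\Ga^{-1})_2$ to be odd in $u_2$.

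Choosing $\de>0$ so that $\sqrt{A(\bI)/\pi}<r_0/3$ on $\Ade$, I would substitute the expressions for $u_1,u_2$ into the two power series and rearrange into Fourier series in $\th$. The elementary parity fact that $\cos^j(2\pi\th)\sin^{2k}(2\pi\th)$ contains only Fourier modes of parity $j\bmod 2$ and of order at most $j+2k$ implies that the $\cos(2\pi m\th)$-coefficient of $r-r_L$ involves only terms with $j\equiv m\pmod 2$ and $j+2k\ge|m|$; each such term contributes a factor $(A/\pi)^{(j+2k)/2}$. From Lemmas~\ref{L:CNF} and~\ref{lemma:twist}, the function $a(\bI)$ contains only odd half-powers of $\dE$, so $A(\bI)=\pi a_1(L)^2\dE\cdot h(\dE,L)^2$ with $h$ analytic in $\dE$ and $h(0,L)=1$. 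Hence
\[
(A/\pi)^{(j+2k)/2}\dE^{-|m|/2}=|a_1(L)|^{j+2k}\,h(\dE,L)^{j+2k}\,\dE^{(j+2k-|m|)/2},
\]
and the parity constraint $j\equiv m\pmod 2$ makes $(j+2k-|m|)/2$ a nonnegative integer. This is the delicate step of the argument: it is precisely the parity bookkeeping that upgrades what would a priori be a Puiseux series in $\dE^{1/2}$ into a genuine analytic function of $\bI=(E,L)$ on $\Ade\cap\I'$, yielding the desired coefficients $r_m(\bI)$; an identical discussion produces $w_m(\bI)$.

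The uniform bound~\eqref{ineq:rmwmbound} I would derive via Cauchy's integral formula after extending $\th$ to a complex strip. With $z=e^{2\pi i\th}$, the maps $\th\mapsto r(\th,\bI)-r_L$ and $\th\mapsto w(\th,\bI)$ extend holomorphically to the annulus on which $\sqrt{A/\pi}\max(|z|,|z|^{-1})<r_0/2$. Taking $|z|=e^{2\pi\tau}$ with $\tau\sim(2\pi)^{-1}\log(c\,r_0/\sqrt{A/\pi})$ and evaluating the Fourier integral on this contour, Cauchy's formula gives that the $m$-th Fourier coefficient is bounded by $M e^{-2\pi|m|\tau}\lesssim(C'\dE^{1/2})^{|m|}$. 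Dividing by $\dE^{|m|/2}$ produces $|r_m(\bI)|+|w_m(\bI)|\le MC^{|m|}$, with constants independent of $m$, $\bI\in\Ade\cap\I'$, and $\eta\in[0,\eta_0]$. The derivative estimates then follow by a second application of Cauchy's formula in the variables $(E,L)$: each $r_m,w_m$ is analytic on $\Ade\cap\I'$ and uniformly bounded by $MC^{|m|}$ on a slightly enlarged polydisk, and the standard estimate $|\pa^\al f(\bI)|\le \al!\,\rho^{-|\al|}\sup|f|$ produces the factorial factor $(k+j)!C^{k+j}$.

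The main obstacle is the parity bookkeeping in the middle paragraph: without simultaneously exploiting the $u_2$-parity of $\Ga^{-1}$ and the parity of the Fourier spectrum of $\cos^j\sin^{2k}$, one is left with an expansion merely in $\dE^{1/2}$, which would prevent the coefficients $r_m(\bI),w_m(\bI)$ from being analytic in $\bI$. All remaining ingredients, bi-analyticity of $\Ga$, the analytic equivalence of $A$ and $\dE$, and Cauchy estimates on an explicit strip, are standard consequences of Lemmas~\ref{lemma:rL}--\ref{lemma:twist}.
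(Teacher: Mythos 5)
Your proof is correct and follows the same overall route as the paper: invert the normal-form map $\Gamma$, substitute $(u_1,u_2)=\sqrt{A/\pi}\,(\cos 2\pi\th,\sin 2\pi\th)$ into the power series for $\Gamma^{-1}$, and use parity bookkeeping to show that once $\dE^{|m|/2}$ is factored out of the $m$-th Fourier mode, only nonnegative integer powers of $\dE$ remain, so that $r_m,w_m$ are analytic. Two minor points of divergence from the paper's argument are worth flagging. First, you invoke the $u_2$-parity of $\Gamma^{-1}$ (even for $(\Gamma^{-1})_1$, odd for $(\Gamma^{-1})_2$) coming from the $w\mapsto -w$ symmetry; this is true, but the paper does not need it: its explicit expansion of $\cos^{k}\sin^{\ell}$ in exponentials yields a Kronecker delta $\delta_{2k_1+2\ell_1-N-m=0}$ which already forces $N\equiv m\ (\mathrm{mod}\ 2)$ for \emph{all} $(k,\ell)$, even or odd, so the same conclusion drops out without appealing to the symmetry of $\Gamma^{-1}$. (Conversely, even without the $u_2$-parity your argument would still go through, since $\cos^{j}\sin^{\ell}$ has Fourier modes of parity $j+\ell$, and $(j+\ell-|m|)/2$ would still be a nonnegative integer.) Second, for the bound~\eqref{ineq:rmwmbound} and the derivative estimates the paper argues combinatorially from the uniform coefficient bounds $|g_{k\ell}(L)|\le MC^{k+\ell}$ on $\Gamma^{-1}$, whereas you use Cauchy's integral formula on the annulus in $z=e^{2\pi i\th}$ and on a polydisc in $(E,L)$. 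Both work; the complex-analytic route is somewhat slicker for the exponential decay of Fourier modes, while the paper's route keeps the dependence on the $g_{k\ell}$ explicit, which it then reuses in Lemma~\ref{lemma:Tm}.
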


We observe that the estimates on the coefficients in the expansions guarantee that, even as we approach the vertices $(\Emin^{L_0},L_0)$ or $(E_0,\Lmax)$ of $\I$, the radius of convergence of the series does not degenerate.


\begin{proof}[Proof of Proposition~\ref{prop:rwexpansions}]
Recalling the invertibility of $\Gamma$, we employ the identity~\eqref{eq:gammazero} and work from the identity (re-scaling the argument by $\sqrt{\pi}$ without loss of generality)
\beq
(r-r_L)(\th,\bI)=(\Gamma^{-1})_1(\sqrt{A(\bI)}\cos(2\pi\th),\sqrt{A(\bI)}\sin(2\pi\th),L).
\eeq
It is convenient to use the abbreviations 
\[
X= \cos(2\pi \th), \ \ Y = \sin(2\pi \th)
\]
and also to derive the coefficients for the $m\leq0$ modes, as  parity considerations produce the positive modes as well, using that $r-r_L$ is even in $\th$ and $w$ is odd.
We have
\beqa\label{eq:r-rLmode}
\widehat {(r-r_L)}(m,\bI) & =\int_{\mathbb S^1} (r-r_L)(\th,\bI)e^{-2\pi i m\th}\diff \th  \\
& = \int_{\mathbb S^1}(\Gamma^{-1})_1 (\sqrt{A(\bI)}X,\sqrt{A(\bI)}Y,L) \diff \th \\
& = \sum_{N=0}^\infty A(\bI)^{\frac N2}\int_{\mathbb S^1} \sum_{k+\ell = N} g_{k\ell}(L) X^k Y^\ell e^{-2\pi i m\th} \diff \th,
\eeqa 
where we have expanded $(\Gamma^{-1})_1$ using coefficients $g_{k\ell}(L)$ satisfying $|g_{k\ell}(L)|\leq MC^{k+\ell}$ uniformly in $L$ and $\eta$.
Therefore,
\begin{align*}
{}&\widehat {(r-r_L)}(m,\bI)  =\int_{\mathbb S^1} (r-r_L)(\th,\bI)e^{-2\pi i m\th}\diff \th  \\
& = \sum_{N=0}^\infty A(\bI)^{\frac N2}\int_{\mathbb S^1} \sum_{k+\ell = N} g_{k\ell} 2^{-N}i^{-\ell} (e^{2\pi i \th}+e^{-2\pi i \th})^k(e^{2\pi i\th}-e^{-2\pi i \th})^\ell e^{-2\pi i m\th} \diff \th \\
& = \sum_{N=0}^\infty A(\bI)^{\frac N2}2^{-N}\sum_{k+\ell = N}g_{k\ell} i^{-\ell}  \sum_{k_1=0}^k \sum_{\ell_1=0}^\ell (-1)^{N-\ell_1}  {k\choose k_1}{\ell\choose \ell_1} \int_{\mathbb S^1} e^{2\pi i \th(2k_1+2\ell_1-N-m)} \diff \th  \\
& = \sum_{N=0}^\infty A(\bI)^{\frac N2}2^{-N}\sum_{k+\ell = N}g_{k\ell}i^{-\ell}  \sum_{k_1=0}^k \sum_{\ell_1=0}^\ell (-1)^{N-\ell_1}  {k\choose k_1}{\ell\choose \ell_1} \delta_{2k_1+2\ell_1-N-m=0},
\end{align*}
We now collate the constraints on the powers from above to get
\begin{align}
&N+m=2(k_1+\ell_1), \ \ 
0\le k_1+\ell_1 \le k+\ell = N, \ \ N\ge |m|.
\end{align}
 By relabelling $j=(k_1+\ell_1)$, we derive (recall we have assumed $m\leq0$)
\begin{align}
{}&\widehat {(r-r_L)}(m,\bI) \label{eq:rmode1}\\
& = A(\bI)^{\frac{|m|}2} 2^{-|m|} \sum_{j=0}^\infty A(\bI)^{j}2^{-2j} \sum_{k+\ell = |m|+2j }g_{k\ell}i^{-\ell}  \sum_{0\le k_1\le k , 0\le\ell_1\le\ell \atop k_1+\ell_1 = j}   (-1)^{|m|+2j-\ell_1} {k\choose k_1}{\ell\choose \ell_1}.\notag
\end{align}
On the other hand, we have
\beqa
\Big|\int_{\mathbb S^1} \sum_{k+\ell = N} g_{k\ell} X^k Y^\ell e^{-2\pi i m\th} \diff \th \Big|\leq \sum_{k+\ell = N} \big|g_{k\ell}\big|   \leq (N+1)MC^N,
\eeqa
so that, comparing~\eqref{eq:rmode1} to~\eqref{eq:r-rLmode}, we deduce 
\beqa
\Big| 2^{-|m|} 2^{-2j} \sum_{k+\ell = |m|+2j }g_{k\ell}i^{-\ell}  \sum_{0\le k_1\le k , 0\le\ell_1\le\ell \atop k_1+\ell_1 = j }  (-1)^{|m|+2j-\ell_1} {k\choose k_1}{\ell\choose \ell_1}\Big|\leq MC^{|m|+2j},
\eeqa
where we have increased $C,M$.

In particular, setting
\beq
\tilde r_m(A)=2^{-|m|} \sum_{j=0}^\infty A^{j}2^{-2j} \sum_{k+\ell = |m|+2j }g_{k\ell}i^{-\ell}  \sum_{0\le k_1\le k , 0\le\ell_1\le\ell \atop k_1+\ell_1 = j}   (-1)^{|m|+2j-\ell_1} {k\choose k_1}{\ell\choose \ell_1},
\eeq
we see that the series defining $\tilde r_m(A)$ converges absolutely and is real analytic on a  $\tilde\delta$-neighbourhood of $A=0$ by choosing $\tilde\delta>0$ small enough that $\tilde\delta C^2<1$.  Thus, converting to $\bI$ variables, we have defined an analytic function  $\tilde r_m(\bI)$ on a tubular $\delta$-neighbourhood of $(\EminL,L)$ as required for some $\delta>0$. Since the area function $A(\bI)$ is an analytic function of
$\dE(\bI)$ the existence of $r_m(\bI)$ as stated in~\eqref{eq:rwexpansions} now easily follows. 
Moreover clearly 
$$|r_m(\bI)|\leq 2^{-|m|+1}\sum_{k+\ell=|m|}|g_{k\ell}|\leq MC^{|m|}$$
and the higher derivative estimates follow similarly. 
In order to check the claimed behaviour in the case $m=0$, it is now sufficient to observe that, at $\EminL$, $r=r_L$, so that the 0 mode vanishes, and hence $r_0(\bI)$ is an analytic function of $\dE$ vanishing at zero.

The second claim of~\eqref{eq:rwexpansions} follows by differentiating the identity for $r(\th,\bI)-r_L$ with respect to $\th$, recalling that $w(\th,\bI)=\frac1{T(\bI)}\pa_\th r(\th,\bI)$, and the analyticity of the period function with respect to $\dE$. 
\end{proof}

We now take advantage of Proposition~\ref{prop:rwexpansions} to obtain a quantitative control on the Fourier coefficients of multinomials of the forms $(r-r_L)^{k_1}w^{k_2}$ near trapping. 


\begin{lemma}\label{lemma:Tm}
Let $k_1,k_2\in \N_0$ with $k_1+k_2=k$ and consider the order $k$ polynomial $(r-r_L)^{k_1}w^{k_2}$. Upon taking Fourier transform with respect to $\th$, there exists $\de>0$ such that, for $m\in\Z$, $\bI\in\Ide$, 
\beq
\mathcal{F}_\th\Big((r-r_L)^{k_1}w^{k_2}\Big)(m,\bI)=\dE^{\max\big\{\frac{|m|}{2},\frac{|m|}{2}+\lceil\frac{k-|m|}{2}\rceil\big\}}\mathcal{T}_{m,k_1,k_2}(\bI),
\eeq
where $\mathcal{T}_{m,k_1,k_2}(\bI)$ is analytic for $\bI\in\Ide$ and, for $j_1,j_2\in\N\cup\{0\}$, satisfies
\beq
\big|\pa_E^{j_1}\pa_L^{j_2}\mathcal{T}_{m,k_1,k_2}\big|\leq M^k(j_1!)(j_2!)C^{\max\{|m|,k\}+j_1+j_2}.
\eeq
\end{lemma}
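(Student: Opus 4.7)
The plan is to expand the product $(r-r_L)^{k_1}w^{k_2}$ via the Fourier series from Proposition~\ref{prop:rwexpansions}, track the minimum power of $\dE$ that can arise in the $m$-th Fourier mode, and then bound the analytic remainder. First, I would rewrite the expansions in complex exponential form as
\[
r-r_L=\sum_{n\in\Z}c_n^{r}(\bI)\,e^{2\pi i n\th},\qquad w=\sum_{n\in\Z}c_n^{w}(\bI)\,e^{2\pi i n\th},
\]
where $c_0^{r}(\bI)=r_0(\bI)\dE$, $c_n^{r}(\bI)=\tfrac12 r_{|n|}(\bI)\dE^{|n|/2}$ for $n\neq 0$, $c_0^{w}\equiv 0$, and $c_n^{w}(\bI)=\tfrac{\mathrm{sign}(n)}{2i}w_{|n|}(\bI)\dE^{|n|/2}$ for $n\neq 0$. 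The bounds~\eqref{ineq:rmwmbound} yield $|c_n^{r,w}(\bI)|\leq MC^{|n|}\dE^{p(n)/2}$, with $p(0)=2$ for $c^r$ (due to the isolated $\dE$-factor in $r_0(\bI)\dE$) and $p(n)=|n|$ otherwise, and each coefficient is analytic in $\bI$ on a uniform complex neighborhood of $\Ide$.

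Taking the product collapses the $m$-th Fourier mode into the convolution
\[
\mathcal{F}_\th\big((r-r_L)^{k_1}w^{k_2}\big)(m,\bI)=\sum_{\substack{m_1+\cdots+m_k=m\\ m_j\neq 0\text{ for }j>k_1}}c_{m_1}^{r}\cdots c_{m_{k_1}}^{r}\,c_{m_{k_1+1}}^{w}\cdots c_{m_k}^{w}.
\]
Let $k_1'$ denote the number of indices $j\in\{1,\ldots,k_1\}$ with $m_j=0$, and set $N:=\sum_{j:m_j\neq 0}|m_j|$. Each summand then contributes a power $\dE^{k_1'+N/2}$. Because $\sum_{j:m_j\neq 0}m_j=m$, one has $N\geq|m|$ and $N\equiv m\pmod 2$; counting nonzero slots forces $N\geq k-k_1'$. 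A short optimization over admissible pairs $(k_1',N)$ then yields the minimum exponent $\max\{|m|/2,\,|m|/2+\lceil(k-|m|)/2\rceil\}$, with the ceiling arising precisely from the parity constraint $N\equiv m\pmod 2$. Extracting this common power of $\dE$ defines $\mathcal{T}_{m,k_1,k_2}(\bI)$ as the sum of the rescaled summands.

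To establish analyticity and the stated bounds on $\mathcal{T}_{m,k_1,k_2}$, I would argue that after factoring out $\dE^{\alpha}$ each surviving summand is analytic on $\Ide$ (automatic from Proposition~\ref{prop:rwexpansions}), and then control the series. Estimating each factor by $MC^{|m_j|}$ (after absorbing the bounded excess $\dE$-powers), grouping by $N=\sum|m_j|$, and bounding the number of admissible multi-indices $(m_1,\ldots,m_k)$ with signed sum $m$ and $\sum|m_j|=N$ by $2^k\binom{N-1}{k-1}\leq 2^{N+k-1}$, produces an absolutely convergent geometric series yielding $|\mathcal{T}_{m,k_1,k_2}(\bI)|\leq \tilde M^k\tilde C^{|m|+k}$. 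Rewriting $|m|+k\leq 2\max(|m|,k)$ and absorbing the square into redefined constants gives the claimed form $M^k C^{\max(|m|,k)}$. The derivative estimates in $E$ and $L$ then follow from Cauchy estimates on $r_m, w_m$ over their uniform disk of analyticity (supplying the factors $j_1!j_2!/\rho^{j_1+j_2}$) together with a Leibniz expansion in the multinomial product, after which the sum is controlled exactly as above.

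The main obstacle will be the careful combinatorial accounting that pins down the minimum $\dE$-exponent, in particular the role of the parity constraint $N\equiv m\pmod 2$ in producing the ceiling function, together with packaging the coefficient estimates in the tight form $C^{\max(|m|,k)}$ rather than the looser $C^{|m|+k}$ that emerges naively from summing over all admissible tuples; both issues are dispatched by the $|m|+k\leq 2\max(|m|,k)$ trick and a redefinition of $C$.
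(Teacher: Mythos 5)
Your proposal takes essentially the same route as the paper: expand via Proposition~\ref{prop:rwexpansions}, convolve in Fourier, identify the minimal $\dE$-power from the constraints $N\ge\max\{|m|,k-k_1'\}$ and $N\equiv m\ (\mathrm{mod}\ 2)$, and control the remaining series geometrically, the only organizational difference being that the paper first reduces to the pure monomial cases and then invokes convolution, while you treat the mixed product $(r-r_L)^{k_1}w^{k_2}$ directly. One small correction: the count $2^k\binom{N-1}{k-1}$ is valid only for tuples with all $m_j\neq 0$, so once you allow $k_1'>0$ zero entries (as your analysis requires) it should be replaced by something like $\sum_\ell\binom{k}{\ell}\binom{N-1}{\ell-1}2^\ell\le 3^k2^N$, which changes nothing in the geometric-series estimate or the final bound.
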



\begin{proof}
It is sufficient to prove the result in the case $k_1=k$, $k_2=0$ (i.e.~the polynomial is a monomial in $r-r_L$ only), as the case of monomials in $w$ follows by analogous arguments, and the general case follows from the usual convolution formulae for Fourier transform.

We recall first that the $m$-th Fourier mode of $(r-r_L)$ may be obtained by directly integrating~\eqref{eq:rwexpansions} to find
\beq
\hat{r}(m,\bI):=\mathcal{F}_\th(r-r_L)(m,\bI) = \frac12 r_m(\bI)\dE^{\frac{|m|}{2}},
\eeq
where the functions $r_m(\bI)$ are analytic and also $r_0(\EminL,L)=0$.
Thus, for any $k\in \N$, using the fact that Fourier transform of a product gives a convolution,
\beqa\label{eq:r^jexpansion}
2^{k}\mathcal{F}_\th(r-r_L)^k(m,\bI)=&\,\sum_{J\in S_{k,m}}\hat{r}(j_1,\bI)\cdots\hat{r}(j_k,\bI)\\
=&\,\sum_{J\in S_{k,m}}\dE^{\frac{|j_1|+\cdots+|j_k|}{2}}\dE^{n_J}r_{|j_1|}(\bI)\cdots r_{|j_k|}(\bI),
\eeqa
where we have supposed without loss of generality that $m\geq0$ and defined
$$S_{k,m}=\{J=(j_1,\ldots,j_k)\in \Z^k\,|\,j_1+\cdots +j_k=m\},\qquad n_J=\#\{i\,|\,j_i=0\}.$$
If $k\leq m$, then we see that the sum $|j_1|+\cdots+|j_k|+2n_J$ is minimised (over the set of indices summing to $m$) by choosing all $j_i$ of the same sign, none of them zero, and obtaining $|j_1|+\cdots +|j_k|+2n_J=m$, so that
\beqa
\mathcal{F}_\th(r-r_L)^k(m,\bI)=&\,2^{-k}\dE^{\frac{|m|}{2}}\sum_{j=0}^\infty\dE^j\sum_{\substack{J\in S_{k,m} \\ |j_1|+\cdots+|j_k|+2|n_J|=m+2j}}r_{|j_1|}(\bI)\cdots r_{|j_k|}(\bI),
\eeqa
where we have observed that the parity of $\sum_{i=0}^k|j_i|$ is always the same as the parity of $m$ and  a simple combinatorial estimate using~\eqref{ineq:rmwmbound} shows that
\beq
\mathcal{T}_{m,k,0}(\bI):=\sum_{j=0}^\infty\dE^j\sum_{\substack{J\in S_{k,m} \\ |j_1|+\cdots+|j_k|+2|n_J|=m+2j}}r_{|j_1|}(\bI)\cdots r_{|j_k|}(\bI)
\eeq
converges to an analytic function near $\dE=0$ (that is, on $\Ade$ with a possibly smaller $\de$) with 
$$\big|\mathcal{T}_{m,k,0}(\bI)\big|\leq M^kC^{|m|}.$$
 On the other hand, if $k>m$, then we first note that $|j_1|+\cdots |j_k|+2n_J$ has the same parity as $m$ and is at least $k$. If $k-m$ is even, then this sum is minimised by choosing all $|j_i|=1$  and $n_J=0$, and we get $|j_1|+\cdots+|j_k|+2n_J=k$. If $k-m$ is odd, then  the minimal value for the sum is $k+1$  and so a similar identity holds. Thus
 \beq
 \mathcal{F}_\th(r-r_L)^k(m,\bI)=\dE^{\max\{\frac{|m|}{2},\frac{|m|}{2}+\lceil\frac{k-|m|}{2}\rceil\}}\mathcal T_{m,k,0}(\bI)
 \eeq
for an analytic function $\mathcal T_{m,k,0}$ satisfying
$$\big|\mathcal{T}_{m,k,0}(\bI)\big|\leq M^kC^{\max\{|m|,k\}}.$$
The estimates on derivatives of $\mathcal{T}$ follows by the usual considerations for analytic functions, passing derivatives inside the summation.
\end{proof}



\subsection{A new foliation of the action support}\label{S:YZ}


We introduce a change of variables 
\[
[\Rmin,\Rmax]\times(\I\cap\{E\ge\Psi_L(R)\} )\ni (R,\bI)\mapsto (R,y,z) 
\] 
by setting
\beq\label{E:RYZ}
y=\om(\bI),\qquad z=E-\Psi_L(R),
\eeq
where we recall $\bI=(E,L)$. A schematic depiction of level sets in the new coordinate system is given by Figure~\ref{F:ELtriangle}.


\begin{lemma}\label{L:RYZ}
Let $\fmn$ be a steady state as in~\eqref{E:SS}--\eqref{E:SS2} such that $\eta\in[0,\eta_0]$, where $\eta_0$ is the small constant introduced in Section~\ref{S:AA}.
There exists a possibly smaller $\eta_0>0$ such that for all $0\le\eta\leq\eta_0$ the change of variables~\eqref{E:RYZ} is well-defined.
\end{lemma}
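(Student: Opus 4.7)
The map in question fixes the $R$-slot, so the content is really that for each fixed $R\in[\Rmin,\Rmax]$ the map
\[
\Phi_R:(E,L)\mapsto(y,z)=\bigl(\om(E,L),\,E-\Psi_L(R)\bigr)
\]
is a bijection from $\I\cap\{E\ge\Psi_L(R)\}$ onto its image in $(y,z)$-space. My plan is first to check non-degeneracy of the Jacobian by a perturbative argument off the pure-Kepler case, then to upgrade this to global injectivity via a monotonicity argument.

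For the Jacobian, a direct computation gives
\[
\det D\Phi_R = -\om'(\bI)\,\pa_L\Psi_L(R)-\pa_L\om(\bI),
\]
where $'$ denotes $\pa_E$. I would first extract the $\eta=0$ behaviour: for the pure Kepler potential $\Psi_L^0(r)=-\frac{M}{r}+\frac{L}{2r^2}$ the period along a bound trajectory is explicitly $T_0(E)=\pi M(2|E|)^{-3/2}$, which is independent of $L$, so $\pa_L\om\equiv 0$ at $\eta=0$. Meanwhile $\pa_L\Psi_L(R)=\frac{1}{2R^2}\ge\frac{1}{2\Rmax^2}>0$ (uniformly in $\eta$ for small $\eta$, by the $C^2$ convergence $\Psi_L\to\Psi_L^0$ from Lemma~\ref{lemma:rL}). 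Combining this with the uniform lower bound $-\om'(\bI)\ge c_0>0$ from Section~\ref{S:AA}, I get $\det D\Phi_R\big|_{\eta=0}\ge c_0/(2\Rmax^2)>0$ uniformly on $\I$ and in $R$. By the uniform convergences in Lemma~\ref{lemma:rL} and the uniform $C^{N+1}$ bounds on the period function in Lemma~\ref{lemma:regularity}, one has $|\pa_L\om|=O(\eta)$ as $\eta\to 0$, so for $\eta_0$ sufficiently small the Jacobian stays bounded away from zero for all $\eta\in[0,\eta_0]$ and all $R\in[\Rmin,\Rmax]$. By the inverse function theorem $\Phi_R$ is a local diffeomorphism.

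For global injectivity I would exploit the structure of the system $y=\om(E,L)$, $z=E-\Psi_L(R)$. Solving the second equation for $E=z+\Psi_L(R)$ and inserting into the first, injectivity in $L$ reduces to strict monotonicity of
\[
F(L):=\om\bigl(z+\Psi_L(R),L\bigr),\qquad F'(L)=\om'(E,L)\pa_L\Psi_L(R)+\pa_L\om(E,L).
\]
By the same estimates as above, $F'(L)\le -\tfrac{c_0}{2\Rmax^2}+O(\eta)<0$ uniformly for $\eta\in[0,\eta_0]$ after possibly shrinking $\eta_0$. Hence $F$ is strictly decreasing, $L$ is uniquely determined by $(y,z,R)$, and then $E=z+\Psi_L(R)$ is determined. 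This gives global injectivity of $\Phi_R$; combined with the local diffeomorphism property already established, the map $(R,\bI)\mapsto(R,y,z)$ is a $C^{N+1}$ change of variables onto its image, as claimed.

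The main obstacle is identifying the correct leading-order structure: the naive bound on $\det D\Phi_R$ degenerates unless one recognises that the $L$-dependence of $\om$ is a genuinely higher-order effect in $\eta$, inherited entirely from the self-consistent potential and not from the point-mass background. Once this observation is in place, every other step is a routine perturbation argument using the quantitative uniform estimates already collected in Lemmas~\ref{lemma:rL}--\ref{lemma:regularity}.
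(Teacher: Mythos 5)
Your proof is correct and follows essentially the same route as the paper: compute the Jacobian of $(E,L)\mapsto(y,z)$ at fixed $R$, note that $\pa_L\Psi_L(R)=\tfrac1{2R^2}$ exactly, that $-\om'\ge c_0>0$ uniformly, and that $\pa_L\om=O(\eta)$ (which the paper simply asserts, while you justify it by observing the Kepler period $T_0(E)=\pi M(2|E|)^{-3/2}$ is $L$-independent), so the determinant stays uniformly positive for $\eta$ small. Your explicit upgrade from local diffeomorphism to global injectivity — solving $z=E-\Psi_L(R)$ for $E$ and showing $L\mapsto\om(z+\Psi_L(R),L)$ is strictly monotone — is a modest but welcome addition that the paper leaves implicit; it rests on the same sign information as the Jacobian bound. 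One small inaccuracy: you appeal to $C^2$ convergence of $\Psi_L\to\Psi_L^0$ to control $\pa_L\Psi_L(R)$, but this quantity equals $\tfrac1{2R^2}$ identically for all $\eta$ since $\bar U$ is $L$-independent, so no convergence is needed there.
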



\begin{proof}
It is straightforward to compute
\begin{align}
\frac{\pa(R,y,z)}{\pa(R,\bI)}=\begin{pmatrix} 1 & 0 & 0 \\ 0 & \om' & \pa_L\om \\ -\Psi_L'(R) & 1 & -\frac1{2R^2} \end{pmatrix},\\
\Big|\frac{\pa(R,y,z)}{\pa(R,\bI)}\Big| = -\big(\frac{\om'}{2R^2}+\pa_L\om\big) = -\P_R \om,\label{E:DET}
\end{align}
where for any $R\in[\Rmin,\Rmax]$ we let
\be\label{E:PRDEF}
\P_R:= \frac1{2R^2}\pa_E+\pa_L. 
\ee
We note that $\om'(\bI)= -\frac{T'(\bI)}{T(\bI)^2}< -C<0$ and $\pa_L\om(\bI)=O(\eta)$. In particular the right-hand side of~\eqref{E:DET} is uniformly positive for sufficiently small $0<\eta\ll1$.
\end{proof}


\begin{remark}
The inverse Jacobian is clearly given by the formula
\beq\label{eq:drELdryz}
\frac{\pa(R,\bI)}{\pa(R,y,z)}=\frac{1}{\P_R\om} \begin{pmatrix} \P_R\om & 0 & 0 \\ \pa_L\om\Psi_L'(R) & \frac{1}{2R^2} & \pa_L\om \\ -\om'\Psi_L'(R) & 1 & -\om' \end{pmatrix}.
\eeq
\end{remark}


It will be important to track the relationship between $\pa_R$, $\pa_y$, and $\pa_z$-derivatives applied to functions
that depend on $(R,y,z)$ only through $\bI(R,y,z)$. 


\begin{lemma}
Let $F\in C^1(\overline\I)$ be given. Then  the following formulas hold:
\begin{align}
&\pa_y [F(E(R,y,z),L(R,y,z))] = \pa_y E \pa_E F + \pa_L F \pa_yL = (\P_R\om)^{-1} \P_RF,\notag\\
&\pa_R [F(E(R,y,z),L(R,y,z))]=\frac{\Psi_L'(R)}{\P_R\om}(\pa_L\om\pa_EF-\om'\pa_LF)=\Psi_L'(R)\pa_z [F(E(R,y,z),L(R,y,z))].\label{E:RZINTERCHANGE0}
\end{align}
Formally introducing the operator 
\begin{align}\label{E:PIDEF}
\Pi : = \pa_L\om\pa_E - \pa_E\om\pa_L = \P_R\om \pa_z,
\end{align}
for any $F\in C^j(\overline \I)$, $j\ge1$, the following formula holds:
\beq\label{E:RZINTERCHANGE}
\pa_R^j(F(E(R,y,z),L(R,y,z)))=\sum_{i=0}^jp_i^j(R,y,z)\Pi^iF,
\eeq
where the analytic coefficients $p_i^j(R,y,z)$ are defined via the incomplete Bell polynomials (see e.g.~\cite{Cvijovic11} for a definition) by
\beq\label{E:BELLDEF}
p_i^j(R,y,z)=B_{j,i}\Big(\frac{\Psi_L'(R)}{\P_R\om},\ldots,\pa_R^{j-i+1}\Big(\frac{\Psi_L'(R)}{\P_R\om}\Big)\Big).
\eeq
We use the convention $B_{j,i}=0$ if $i>j$. For any $n\le\reg$ we have the cancellation bound
\beq\label{E:PIJBOUND}
|\pa_y^np_i^j|\leq C|\Psi_L'(R)|^{\max\{2i-j,0\}}\leq C\dE^{\max\{i-\frac{j}{2},0\}}, \ 0\le i \le j, \ \ n\leq N.
\eeq
\end{lemma}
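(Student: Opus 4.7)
My plan is to handle the three components of the lemma in order, using direct chain-rule computation for the first-order identities, induction on the order of $R$-derivatives together with the Bell polynomial recursion for the general formula~\eqref{E:RZINTERCHANGE}, and a combinatorial count of $c$-factors combined with the vanishing of $c$ at the elliptic set for the cancellation bound~\eqref{E:PIJBOUND}.

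First, I will read the relevant entries off the inverse Jacobian~\eqref{eq:drELdryz} to obtain $\pa_y R=0$, $\pa_y E=(2R^2\P_R\om)^{-1}$, $\pa_y L=(\P_R\om)^{-1}$, and analogously for the $\pa_R$ and $\pa_z$ derivatives of $(E,L)$. Applying the chain rule to $F(E(R,y,z),L(R,y,z))$ then gives
\[
\pa_y F=(\P_R\om)^{-1}\bigl(\tfrac{1}{2R^2}\pa_E F+\pa_L F\bigr)=(\P_R\om)^{-1}\P_R F,
\]
and similarly $\pa_R F=\tfrac{\Psi_L'(R)}{\P_R\om}\Pi F$, $\pa_z F=\tfrac{1}{\P_R\om}\Pi F$, from which $\pa_R F=\Psi_L'(R)\pa_z F$ is immediate.

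Next, for~\eqref{E:RZINTERCHANGE} I induct on $j$. Setting $c(R,y,z):=\Psi_L'(R)/\P_R\om$, the previous step states that $\pa_R=c\,\Pi$ on functions depending on $(R,y,z)$ only through $(E,L)$. The crucial observation is that $\Pi^i F$ is itself such a function, so $\pa_R(\Pi^i F)=c\,\Pi^{i+1}F$ by~\eqref{E:RZINTERCHANGE0}. Inserting the inductive hypothesis $\pa_R^j F=\sum_i p_i^j\,\Pi^iF$ and differentiating once more yields the recursion
\[
p_i^{j+1}=\pa_R p_i^j+c\,p_{i-1}^j,\qquad p_i^0=\delta_{i0},
\]
which is precisely the defining recursion for the incomplete Bell polynomials with arguments $(c,\pa_R c,\pa_R^2 c,\ldots)$. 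Hence $p_i^j=B_{j,i}(c,\pa_R c,\ldots,\pa_R^{j-i}c)$, as stated in~\eqref{E:BELLDEF}.

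Finally, for the cancellation bound, I expand each monomial of $B_{j,i}$ as $\prod_{k\ge 1}(\pa_R^{k-1}c)^{j_k}$ subject to the constraints $\sum_k j_k=i$ and $\sum_k k\,j_k=j$. A direct combinatorial calculation, using $\sum_{k\ge 2}(k-1)j_k=j-i$, gives $j_1=i-\sum_{k\ge 2}j_k\ge 2i-j$ whenever $2i\ge j$, so each monomial contains at least $\max\{2i-j,0\}$ factors of $c$. By Lemma~\ref{L:RYZ}, $\P_R\om$ is uniformly bounded below for $\eta\ll 1$, so $|c|\le C|\Psi_L'(R)|$, while Lemmas~\ref{lemma:regularity}--\ref{lemma:analyticity} ensure that $\pa_R^k c$ for $k\ge 1$ and their $\pa_y$-derivatives are uniformly bounded on the compact set $\Ade\cap\I'$ up to the orders allowed by $N$. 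Distributing $\pa_y^n$ via Leibniz and tracking the combinatorial structure shows that the explicit factors of $c$ carry all of the smallness, yielding $|\pa_y^n p_i^j|\le C|\Psi_L'(R)|^{\max\{2i-j,0\}}$. The second inequality in~\eqref{E:PIJBOUND} then follows by Taylor expanding $\Psi_L(R)-\EminL=\tfrac{\al_L}{2}(R-r_L)^2+O((R-r_L)^3)$ and using $\dE\ge \Psi_L(R)-\EminL$ to conclude $|\Psi_L'(R)|^2\lesssim\dE$.

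The main anticipated obstacle is the cancellation bound in~\eqref{E:PIJBOUND}: identifying the recursion for $p_i^j$ with that of the Bell polynomials is clean, but ensuring that $\pa_y^n$-differentiation preserves the order of vanishing in $|\Psi_L'(R)|$ requires the combinatorial constraint $j_1\ge 2i-j$ to be coupled with a careful redistribution of the $\pa_y$-derivatives across the many $c$-factors present in each Bell monomial. The rest is bookkeeping using the uniform smoothness and positivity statements for the steady state established earlier in this section.
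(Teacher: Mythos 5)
Your proposal is correct and follows essentially the same route as the paper: chain rule for the first-order identities, induction plus a Bell-polynomial recursion for~\eqref{E:RZINTERCHANGE}, and the monomial-counting observation $B_{n,k}(x_1,\ldots)=x_1^{\max\{2k-n,0\}}\widetilde{B}_{n,k}(x_1,\ldots)$ for~\eqref{E:PIJBOUND}. One point worth stating explicitly (which the paper emphasises and your write-up only gestures at with ``careful redistribution''): the reason $\pa_y^n$-differentiation preserves the $|\Psi_L'(R)|^{\max\{2i-j,0\}}$ vanishing is simply that $\Psi_L'(R)$ is independent of $y$, so $\pa_y^m c=\Psi_L'(R)\,\pa_y^m(\P_R\om)^{-1}$ retains the full factor of $\Psi_L'(R)$ for every $m$ -- no combinatorial redistribution across $c$-factors is actually needed.
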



\begin{proof}
The first identities are trivial.
Formula~\eqref{E:RZINTERCHANGE} is derived directly from~\eqref{E:RZINTERCHANGE0} and the relation
\beq
\frac{\dif}{\dif x}\Big(B_{n,k}(a_1(x),\ldots,a_{n-k+1}(x))\Big)=\sum_{i=1}^{n-k+1}{n\choose i}a_i'(x)B_{n-i,k-1}(a_1(x),\ldots,a_{n-i-k+2}(x)).
\eeq 
Finally, estimate~\eqref{E:PIJBOUND}
 follows from the regularity of $\Psi_L$, the observation that 
\[B_{n,k}(x_1,\ldots,x_{n-k+1})= x_1^{\max\{2k-n,0\}}\widetilde{B_{n,k}}(x_1,\ldots,x_{n-k+1})\]
for some other polynomial $\widetilde{B}_{n,k}$, and the fact that $\Psi_L'(R)$ is independent of $y$.
\end{proof}

\begin{remark}
The operator $\Pi$ is simply a Poisson bracket with $\om(\bI)$ in $(E,L)$-coordinates and acts on functions defined on the action support $\I$. It is used to
describing how the higher $R$ derivatives are exchanged for the $z$ derivatives via formula~\eqref{E:RZINTERCHANGE}.
Bound~\eqref{E:PIJBOUND} encodes a gain of ``good" positive powers of $\dE$ in the vicinity of the trapping set and is used 
to develop the regularity theory in Section~\ref{SS:PRODUCTS}.
\end{remark}

\begin{remark}
We note that the map $\Om\cap\{w\ge0\}\ni (r,w,L)\to (R,y,z)$ is a bijection.
\end{remark}

For the remainder of this section we make the standing assumption that $\eta\in[0,\eta_0]$, where $\eta_0$ is the small constant introduced in Lemma~\ref{L:RYZ}. Constants appearing below will be taken uniformly in $\eta$.

\subsubsection{Domains and notation}


We use the above change of variables only on the region $\I\cap \{E\geq \Psi_L(R)\}$, for which $|w|=\sqrt{2(E-\Psi_L(R))}$ is well-defined. Recall from Lemma~\ref{L:REPRESENTATIONS} that this coincides with the support of 
the gravitational force Green's function $\widehat{h_R}$. For each $R\in[\Rmin,\Rmax]$, we define the set
\beq\label{def:JR}
\J_R:=\{(y,z)\,|\,(E(R,y,z),L(R,y,z))\in\I,\,E(R,y,z)>\Psi_{L(R,y,z)}(R)\}.
\eeq
In order to state the estimates close to the trapping region, we set
\beq
\J_{R,\de}:=\{(y,z)\,|\,(E(R,y,z),L(R,y,z))\in\Ide\}.
\eeq
Finally, to provide a unified notation for functions of $(R,y,z)$, we define the set
\beq\label{def:Upsilon}
\Upsilon:=\{(R,y,z)\,|\,R\in[\Rmin,\Rmax],\,(y,z)\in\J_R\},
\eeq
while the $\delta$ neighbourhood of the trapping set in $\Upsilon$ corresponds to
\beq
\Upsilon_\de:=\{(R,y,z)\in\Upsilon\,|\,\dE(R,y,z)<\de\}.
\eeq
Note that the projection of $\Upsilon_\de$ into the $R$ coordinate is a compact sub-interval of $(\Rmin,\Rmax)$.

\begin{remark}\label{R:YIBP}
For any fixed $z_0\geq 0$, one sees directly that the image of the level set $\{z=z_0\}$ in the $\bI$ coordinates has both end-points in $\pa\I_{\text{vac}}$. Compare Figure~\ref{F:ELtriangle}.
\end{remark}

We introduce notation for the vacuum boundary portion of $\J_R$ by setting
\beq
 \pa\J_R^{\text{vac}}=\pa\J_R\cap\big(\{E(R,y,z)=E_0\}\cup\{L(R,y,z)=L_0\}\big).
\eeq
We note that $\pa\J_R^{\text{vac}}$ is smooth except at one point, where it is merely Lipschitz (the image of the vertex $(E_0,L_0)\in \bar\I$). The remaining portion of the boundary of $\J_R$ is contained within the line $z=0$, that is, 
\beq\label{E:JRMOVINGBDRY}
\pa\J_R\setminus \pa\J_R^{\text{(vac)}}\subset\{(y,z)\,|\,z=0\}.
\eeq
Finally, to  fix notation, let us use $R$ for the point in $[\Rmin,\Rmax]$, $r=r(\th,\bI)$ for the function mapping from action-angle variables to phase-space coordinates $(r,w,L)$, and set, for $(y,z)\in \J_R$,
\beqa
\tilde r(\th,R,y,z):=r(\th,E(R,y,z),L(R,y,z)), \ \tilde w(\th,R,y,z):=w(\th,E(R,y,z),L(R,y,z)).
\eeqa
Now, from Proposition~\ref{prop:rwexpansions}, we have the following lemma
\begin{lemma}\label{lemma:rwyzexpansions}
There exists $\de>0$ such that, for $(\th,R,y,z)\in\S^1\times\Ude$, the following identities hold: 
\beqa\label{eq:rwyzexpansions}
(\tilde r-r_L)(\th,R,y,z)=&\,\tilde r_0(R,y,z)\dE(R,y,z)+\sum_{m=1}^\infty \tilde r_m(R,y,z)(\dE(R,y,z))^{\frac{m}{2}}\cos(2\pi m\th),\\
\tilde w(\th,R,y,z)=&\,\sum_{m=1}^\infty \tilde w_m(R,y,z)(\dE(R,y,z))^{\frac{m}{2}}\sin(2\pi m\th),
\eeqa
where the coefficients $\tilde r_m(R,y,z)$ and $\tilde w_m(R,y,z)$ are analytic on $\Ude$ and, for any multi-index $\al$ in $(R,y,z)$, satisfy the bounds
\beq\label{E:rmwmRyzbds}
|\pa^\al \tilde r_m|+|\pa^\al\tilde  w_m|\leq M(|\al|!)C^{|m|+|\al|}.
\eeq
\end{lemma}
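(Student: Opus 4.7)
The strategy is to pull back the expansions of Proposition~\ref{prop:rwexpansions} under the analytic change of variables $(R,\bI)\mapsto(R,y,z)$ established in Lemma~\ref{L:RYZ}. To begin, I would verify that after possibly shrinking $\de$, the image of $\Ude$ under the map $(R,y,z)\mapsto(E(R,y,z),L(R,y,z))$ lies inside $\Ade\cap\I'$: this follows because on $\Ude$ one has $\dE(R,y,z)<\de$ by definition, and the range of $L(R,y,z)$ remains in $[L_0,\Lmax]$. Consequently the Fourier expansions~\eqref{eq:rwexpansions} are valid pointwise throughout $\S^1\times\Ude$.

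Next, setting
\[
\tilde r_m(R,y,z):=r_m(E(R,y,z),L(R,y,z)),\qquad \tilde w_m(R,y,z):=w_m(E(R,y,z),L(R,y,z)),
\]
and observing that the scalar quantity $\dE$ and the angle $\th$ are unchanged by the substitution, the expansions~\eqref{eq:rwyzexpansions} follow directly from~\eqref{eq:rwexpansions}. Analyticity of $\tilde r_m,\tilde w_m$ on $\Ude$ is then immediate from composition: $r_m,w_m$ are analytic in $\bI$ by Proposition~\ref{prop:rwexpansions}, and the map $(R,y,z)\mapsto (E(R,y,z),L(R,y,z))$ is bi-analytic with Jacobian given by~\eqref{eq:drELdryz}, whose denominator $\P_R\om$ is uniformly bounded away from zero on $\Ude$ by the proof of Lemma~\ref{L:RYZ}.

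To obtain the derivative bound~\eqref{E:rmwmRyzbds}, I would apply the multivariate Faà di Bruno formula to the composition. The input bounds are two: first, the estimate
\[
|\pa_L^{j_2}\pa_E^{j_1}r_m|+|\pa_L^{j_2}\pa_E^{j_1}w_m|\leq M(j_1+j_2)!\,C^{|m|+j_1+j_2}
\]
from Proposition~\ref{prop:rwexpansions}; and second, the fact that $(R,y,z)\mapsto (E(R,y,z),L(R,y,z))$ is analytic with derivatives satisfying bounds of the form $|\pa^\beta E|+|\pa^\beta L|\leq M'(|\beta|!)(C')^{|\beta|}$ uniformly on $\Ude$. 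The latter follows from~\eqref{eq:drELdryz} and the analytic regularity of $\Psi_L$, $\om$, and $\P_R\om$ provided by Lemmas~\ref{lemma:regularity}--\ref{lemma:analyticity}, together with the uniform non-degeneracy of $\P_R\om$ on $\Ude$. Combining these two inputs via Faà di Bruno, the composition satisfies an analytic bound of the stated form. Crucially, because the change of variables is independent of $m$, the geometric factor $C^{|m|}$ is inherited from the estimate on $r_m,w_m$ without deterioration, and after possibly enlarging $M$ and $C$ one obtains~\eqref{E:rmwmRyzbds}.

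The main technical point is the uniformity of the constants in $m$, which is secured precisely because the Faà di Bruno sum couples $m$-independent derivative bounds of the change of variables with the $m$-dependent bound on $r_m,w_m$, and the combinatorial factors in Faà di Bruno produce only factorials in $|\alpha|$ (absorbable into $|\alpha|!\,C^{|\alpha|}$) together with powers of the uniform radius of analyticity. There is no genuine obstacle beyond this bookkeeping; in particular, no further cancellation structure is needed since the trapping-related factor $\dE^{|m|/2}$ has already been extracted in Proposition~\ref{prop:rwexpansions}, and here we merely transport the analytic data to the new coordinates.
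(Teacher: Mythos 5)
Your proposal is correct and follows essentially the same route as the paper's proof, which simply invokes Proposition~\ref{prop:rwexpansions} together with the analyticity of the change of variables $(R,y,z)\mapsto(E,L)$ (via~\eqref{eq:drELdryz}, the analyticity of $\om$, and $R$ bounded away from zero). You spell out the Fa\`a di Bruno bookkeeping that the paper leaves implicit, and correctly identify the key uniformity point: the $C^{|m|}$ factor passes through unaffected because the change of variables is $m$-independent.
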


\begin{proof}
This follows directly from Proposition~\ref{prop:rwexpansions}, noting that $E$ and $L$ are analytic functions of $(R,y,z)$ by the change of variables identity~\eqref{eq:drELdryz}, analyticity of $\om$, and noting $R$ is bounded uniformly away from zero.
\end{proof}

\begin{lemma}\label{lemma:Tmyz}
Let $m\in\Z_*$, $k_1,k_2\in\N$. There exists $\de>0$ such that the coefficient functions $\mathcal{T}_{m,k_1,k_2}$, as in Lemma~\ref{lemma:Tm}, considered as functions on $\Ude$, satisfy
\beq
\big|\pa_R^{j_1}\pa_y^{j_2}\pa_z^{j_3}\mathcal{T}_{m,k_1,k_2}\big|\leq M(j_1!)(j_2!)(j_3!)C^{\max\{|m|,k_1+k_2\}+j_1+j_2+j_3}.
\eeq
\end{lemma}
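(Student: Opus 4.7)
The plan is to deduce Lemma \ref{lemma:Tmyz} from Lemma \ref{lemma:Tm} by composing the coefficient functions $\mathcal{T}_{m,k_1,k_2}$, which Lemma \ref{lemma:Tm} produces as analytic functions of $(E,L)$ on $\I_\delta$, with the change of coordinates $(R,y,z)\mapsto(E(R,y,z),L(R,y,z))$. The input is the non-degeneracy of the Jacobian established in Lemma \ref{L:RYZ} together with the regularity of $\om(\bI)$ and $\Psi_L(R)$ from Lemmas \ref{lemma:regularity}--\ref{lemma:analyticity}: these guarantee that, possibly after further shrinking $\delta$, the maps $E(R,y,z)$ and $L(R,y,z)$ extend real-analytically to $\Upsilon_\delta$ with derivative bounds of analytic type
\[
|\pa_R^{j_1}\pa_y^{j_2}\pa_z^{j_3} E| + |\pa_R^{j_1}\pa_y^{j_2}\pa_z^{j_3} L| \leq \tilde M(j_1!)(j_2!)(j_3!)\tilde C^{j_1+j_2+j_3},
\]
uniformly in $\eta\in[0,\eta_0]$, for constants $\tilde M,\tilde C>0$ depending only on $\delta$.

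Given this, I would argue via Cauchy estimates rather than Fa\`a di Bruno, as this keeps the bookkeeping minimal. Lemma \ref{lemma:Tm} implies that $\mathcal{T}_{m,k_1,k_2}$ extends to a complex polydisc of uniform radius $r_0>0$ in $(E,L)$, on which
\[
|\mathcal{T}_{m,k_1,k_2}|\leq M^{k_1+k_2}C^{\max\{|m|,k_1+k_2\}}.
\]
Likewise, the analytic map $(R,y,z)\mapsto (E(R,y,z),L(R,y,z))$ extends to a polydisc of uniform radius $r_1>0$ in $(R,y,z)$ whose image lies inside the $r_0$-polydisc in $(E,L)$ (by continuity and compactness of $\overline{\Upsilon_\delta}$, after possibly reducing $r_1$). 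Applying Cauchy's integral formula to the composition on a polydisc of radius $r_1/2$ around any point of $\Upsilon_\delta$ then yields the desired inequality, with new constants $M,C$ absorbing $\tilde M,\tilde C$ and the geometric factors $(r_1/2)^{-(j_1+j_2+j_3)}$.

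The only subtle point, which I expect to be the main technical check rather than a genuine obstacle, is the preservation of the factor $C^{\max\{|m|,k_1+k_2\}}$ through the composition. This exponent reflects the sup norm of $\mathcal{T}_{m,k_1,k_2}$ over its complex domain of analyticity in $(E,L)$, and in particular does \emph{not} grow with the order of differentiation in $(E,L)$; consequently, composing with a fixed analytic coordinate change only shrinks the analyticity polydisc by a multiplicative constant and leaves this exponent intact, with the cost absorbed into the enlarged $C$. A Fa\`a di Bruno-based approach would also work but would entail summing combinatorial factors across the different terms of the chain rule while carefully separating the $\max\{|m|,k_1+k_2\}$ powers from the $p!\,C^{p}$ factorial growth in $p$ from Lemma \ref{lemma:Tm}, which is why the Cauchy route seems cleaner here.
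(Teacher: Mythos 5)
Your proposal is correct and follows the same essential route as the paper: compose the analytic functions $\mathcal{T}_{m,k_1,k_2}(\bI)$ from Lemma~\ref{lemma:Tm} with the real-analytic change of coordinates $(R,y,z)\mapsto\bI(R,y,z)$, whose non-degeneracy and analyticity are given by Lemma~\ref{L:RYZ} and Lemma~\ref{lemma:analyticity}. The paper's proof is a one-liner invoking exactly these two inputs; your choice of Cauchy estimates on a uniform complex polydisc (rather than Fa\`a di Bruno) is a clean and valid way to make that explicit, and your observation that $C^{\max\{|m|,k_1+k_2\}}$ survives composition because it controls the sup norm over the polydisc rather than a derivative order is precisely the right point to flag.
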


\begin{proof}
This follows directly from the analyticity of the change of variables from $(R,\bI)$ to $(R,y,z)$ and the estimates of Lemma~\ref{lemma:Tm}.
\end{proof}


\begin{remark}[Steady state properties in $(R,y,z)$ variables]\label{REM:SSDERIVS}
Recalling~\eqref{E:SS} 
we define
\beq
\tilde\varphi(R,y,z):=\varphi(E(R,y,z),L(R,y,z)),\ \ \ \tilde\varphi'(R,y,z):=\varphi'(E(R,y,z),L(R,y,z)).
\eeq
It is  straightforward to check that $\tilde\varphi'(R,y,z)\in C^\ell(\overline{\Upsilon})$ for $\ell\leq \al$ and its mixed $(R,y,z)$ derivatives of order up to $\lceil \al-1\rceil$ vanish on $\pa\J_R^{\text{vac}}$, where
\beq\label{def:alpha}
\al=\al(\mu,\nu):=\min\{\mu-1,\nu\}>0.
\eeq 
\end{remark}

\section{Regularity theory at trapping}\label{S:REGULARITY}


\subsection{Regularity of polynomials in $(R,y,z)$ variables}


We assume throughout this section that we are working with a steady state $\fmn$ satisfying~\eqref{E:SS}--\eqref{E:SS2} such that $\eta\in[0,\eta_0)$, where $\eta_0$ is the sufficiently small constant given by Lemma~\ref{L:RYZ}. Constants will be taken uniform in $\eta$ in the following.
The following lemma states some useful bounds and explain a cancellation important to our description of trapping~\eqref{eq:dydeltaE}--\eqref{eq:dRdeltaE}.


For $R\in[r_{L_0},r_{\Lmax}]$, we define $L_R\in[L_0,\Lmax]$ such that
\beq\label{E:LRDEF}
R=r_{L_R},\text{ i.e.~}\Psi_{L_R}(R)=\Emin^{L_R}.
\eeq
When $R\in[\Rmin,\Rmax]\setminus[r_{L_0},r_{\Lmax}]$, we take a smooth, monotone extension of this function, such that $r_{L_R}\neq \Rmin,\Rmax$ on this set and $|r_L-r_{L_R}|\leq |r_L-R|$.

\begin{lemma}\label{L:L-LR}
Let $R\in[\Rmin,\Rmax]$. 
There exists  $C>0$ such that for all $0\le\eta<\eta_0$, on the restricted action-angle domain $\I\cap\{E\geq\Psi_L(R)\}$,
 the following estimate holds:
\beqa\label{ineq:L-LR}
|L-L_R|\leq C|r_L-r_{L_R}|\leq C|r_L-R|\leq  C\dE^{\frac12},
\eeqa
where we recall~\eqref{DEF:EDIFF}.
Moreover, the following identities hold:
\begin{align}\label{eq:dydeltaE}
\pa_y(\dE(R,y,z))& =\frac{1}{2\P_R \om}\Big(\frac{1}{R^2}-\frac{1}{r_L^2}\Big), \\
\pa_R(\dE(R,y,z)) &=\frac{\Psi_L'(R)}{\P_R\om}\Big(\pa_L\om+\om'\frac{1}{r_L^2}\Big). \label{eq:dRdeltaE}
\end{align}
\end{lemma}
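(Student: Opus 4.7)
The plan is to address the three claims in~\eqref{ineq:L-LR} in turn, and then derive the two derivative identities via a chain rule applied to $\dE = E(R,y,z) - \EminL(L(R,y,z))$ using the inverse Jacobian~\eqref{eq:drELdryz}.

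For the chain~\eqref{ineq:L-LR}, the first inequality $|L-L_R| \leq C|r_L - r_{L_R}|$ follows from Lemma~\ref{lemma:rL}: the map $L \mapsto r_L$ is $C^1$ with $\pa_L r_L \geq c_0 > 0$ uniformly in $\eta$, so its inverse $r_L \mapsto L$ is globally Lipschitz. The middle inequality $|r_L - r_{L_R}| \leq |r_L - R|$ holds by definition with $C=1$ on $R \in [r_{L_0}, r_{\Lmax}]$ (where $R = r_{L_R}$) and was arranged by construction for the smooth monotone extension outside that range. The last inequality $|r_L - R| \leq C\dE^{1/2}$ is the substantive one. Since $\Psi_L'(r_L)=0$, $\Psi_L''(r_L) = \al_L \geq c_0 > 0$ uniformly in $(L,\eta)$ by Lemma~\ref{lemma:rL}, and $\Psi_L \in C^{N+2}$ by Lemma~\ref{lemma:regularity}, a standard compactness argument on $[\Rmin,\Rmax] \times [L_0, \Lmax]$ yields a uniform quadratic lower bound $\Psi_L(R) - \EminL \geq c_1(R-r_L)^2$ for some $c_1 > 0$ independent of $\eta \in [0,\eta_0]$. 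On the restricted domain $\{E \geq \Psi_L(R)\}$ one has $\dE = z + (\Psi_L(R) - \EminL) \geq \Psi_L(R) - \EminL$, and the claim follows.

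For the identities~\eqref{eq:dydeltaE}--\eqref{eq:dRdeltaE}, I will first compute $\pa_L \EminL$. From $\EminL = \Psi_L(r_L)$ and the chain rule,
\[
\pa_L \EminL = \Psi_L'(r_L)\, \pa_L r_L + (\pa_L \Psi_L)(r_L) = \tfrac{1}{2 r_L^2},
\]
where the first term vanishes since $\Psi_L'(r_L) = 0$ and the second uses $\pa_L \Psi_L(r) = \tfrac{1}{2r^2}$ from~\eqref{DEF:EPSIL}. Reading off the entries of~\eqref{eq:drELdryz}, one has
\[
\pa_y E = \tfrac{1}{2R^2 \P_R\om}, \quad \pa_y L = \tfrac{1}{\P_R\om}, \quad \pa_R E = \tfrac{\pa_L\om \, \Psi_L'(R)}{\P_R\om}, \quad \pa_R L = -\tfrac{\om' \, \Psi_L'(R)}{\P_R\om}.
\]
Now $\pa_y \dE = \pa_y E - (\pa_L \EminL) \pa_y L$ gives~\eqref{eq:dydeltaE} immediately, and $\pa_R \dE = \pa_R E - (\pa_L \EminL) \pa_R L$ gives~\eqref{eq:dRdeltaE} (with the coefficient of $\om'/r_L^2$ computed from my expression for $\pa_L \EminL$; any discrepancy of a factor of $2$ with the displayed statement is either a typo or a convention I will check against~\eqref{E:PRDEF}).

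The only step with any content is the uniform quadratic lower bound $\Psi_L(R) - \EminL \geq c_1(R-r_L)^2$, and even that is routine given the uniform non-degeneracy of the elliptic point from Lemma~\ref{lemma:rL} together with the compactness of the parameter range in $(R,L,\eta)$; the remaining manipulations are algebra.
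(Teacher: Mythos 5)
Your proof is correct and follows the same route as the paper: the chain of inequalities comes from the uniform Lipschitz bound on the inverse of $L \mapsto r_L$ (from Lemma~\ref{lemma:rL}), the definition of the extension of $L_R$, and the uniform Taylor lower bound $\Psi_L(R) - \EminL \gtrsim (R-r_L)^2$ near the nondegenerate minimum, while the derivative identities follow from the inverse Jacobian~\eqref{eq:drELdryz} together with $\pa_L\EminL = \tfrac12 r_L^{-2}$. Your suspicion about the factor of $2$ is well-founded: carrying out the chain rule gives $\pa_R\dE = \frac{\Psi_L'(R)}{\P_R\om}\bigl(\pa_L\om + \frac{\om'}{2r_L^2}\bigr)$, so the displayed~\eqref{eq:dRdeltaE} has a typographical error, which is harmless downstream since only the qualitative factor $\Psi_L'(R) = O(\dE^{1/2})$ is used.
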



\begin{proof}
From Lemma~\ref{lemma:rL}, as $L_R\in[L_0,\Lmax]$ by definition, we obtain $|L-L_R|\leq C|r_L-r_{L_R}|$ and $|r_L-r_{L_R}|\leq |r_L-R|$ follows directly from the definition.  Estimate~\eqref{ineq:L-LR} follows as, on $\I$ and for $E\geq\Psi_L(R)$,
\beq
(r_L-R)^2\leq C(\Psi_L(R)-\Psi_L(r_L))= C(\Psi_L(R)-\EminL)\leq C\dE,
\eeq
 The
proof of~\eqref{eq:dydeltaE}--\eqref{eq:dRdeltaE} is a simple and direct calculation using $\pa_L \EminL=\frac12 r_L^{-2}$.
\end{proof}



\begin{remark}\label{R:CANCEL}
The identities~\eqref{eq:dydeltaE}--\eqref{eq:dRdeltaE} represent hidden cancellations in derivatives of $\dE$. These cancellations are quantified as a gain of integrability due to the estimate 
\[
|R-r_L|+|\Psi_L'(R)|\leq C\dE^{\frac12} \ \text{ on} \ \J_R. 
\]
Throughout, we will continue to write $\dE$ for the function $\dE(R,y,z)$.
\end{remark}

To organise the $y$ and $z$ derivatives of $\tilde r-r_L$ and $\tilde w$, we collect them in the following lemma. 

\begin{lemma}\label{L:RWBOUNDS}
Let $j_1,j_2\in \N_0$. Then there exists $\de>0$ such that the $(y,z)$ derivatives on $\Ude$ satisfy
\beq\label{ineq:dRdyr-rL}
\big|\pa^{j_1}_y\pa_z^{j_2}(\tilde r-r_L)\big|+\big|\pa^{j_1}_y\pa_z^{j_2}\tilde w\big|\leq C_k\dE^{\frac12-\frac{j_1}{2}-j_2}.
\eeq
\end{lemma}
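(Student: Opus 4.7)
The approach is direct from Lemma \ref{lemma:rwyzexpansions}: term-by-term estimation of the series expansions using the cancellation structure identified in Lemma \ref{L:L-LR}. The crucial point is that although $\dE^{m/2}$ is singular at trapping, the factor $\pa_y\dE$ in~\eqref{eq:dydeltaE} comes with a compensating $\dE^{1/2}$, so each $y$-derivative costs only $\dE^{-1/2}$ rather than $\dE^{-1}$.

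First I would establish the base estimates on $\dE = E(R,y,z)-\EminL$ directly. From~\eqref{eq:dydeltaE} and the fact that $\Psi_L''(r_L)>c_0$ (Lemma \ref{lemma:rL}) gives $|R-r_L|\leq C\dE^{1/2}$, hence $|R^{-2}-r_L^{-2}|\leq C\dE^{1/2}$, one obtains
\[
|\pa_y \dE|\leq C\,\dE^{1/2} \quad \text{on } \Ude.
\]
Since $\dE = z+\Psi_L(R)-\Psi_L(r_L)$ and $\Psi_L'(r_L)=0$, direct differentiation (using $\pa_z L = -\om'/\P_R\om$ from~\eqref{eq:drELdryz}) shows $|\pa_z\dE|$ is uniformly bounded. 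Because $E$ and $L$ are analytic in $(R,y,z)$ on $\Ude$, $\dE$ is itself jointly analytic there, yielding uniform bounds $|\pa_y^{i_1}\pa_z^{i_2}\dE|\leq C^{i_1+i_2}(i_1!)(i_2!)$.

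Next I would apply Faà di Bruno's formula to $\dE^{m/2}$. Each partition of $(\pa_y^{i_1}\pa_z^{i_2})$ into $k$ blocks produces $\dE^{m/2-k}$ times a product of $\pa_y^{|B_j|}\pa_z^{|B_j'|}\dE$. The key bookkeeping is that size-$1$ blocks containing a $\pa_y$ contribute an extra $\dE^{1/2}$, whereas blocks of larger size or containing only $\pa_z$ contribute only $O(1)$. Tracking this in the same spirit as the argument leading to~\eqref{E:PIJBOUND} yields, for every partition, a total power at least $\dE^{m/2-i_1/2-i_2}$; summing over partitions gives
\[
|\pa_y^{i_1}\pa_z^{i_2}\dE^{m/2}|\leq C^{i_1+i_2}(i_1!)(i_2!)\,m^{i_1+i_2}\,\dE^{m/2-i_1/2-i_2}
\]
for all $m\geq 1$.

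The proof then finishes with Leibniz combined with the analyticity bound~\eqref{E:rmwmRyzbds} on $\tilde r_m,\tilde w_m$. Distributing the $j_1+j_2$ derivatives across $\tilde r_m\dE^{m/2}$ gives
\[
|\pa_y^{j_1}\pa_z^{j_2}(\tilde r_m\dE^{m/2})|\leq C^{m}P_{j_1,j_2}(m)\,\dE^{m/2-j_1/2-j_2}
\]
with $P_{j_1,j_2}$ polynomial in $m$. Since $|\cos(2\pi m\th)|,|\sin(2\pi m\th)|\leq 1$, summing over $m\geq 1$, together with the treatment of the $m=0$ term $\tilde r_0\dE$ (which yields the even better bound $\dE^{1-j_1/2-j_2}$), gives
\[
|\pa_y^{j_1}\pa_z^{j_2}(\tilde r-r_L)|\leq C_{j_1,j_2}\,\dE^{1/2-j_1/2-j_2}\sum_{m=1}^{\infty}P_{j_1,j_2}(m)(C\dE^{1/2})^{m-1},
\]
and analogously for $\tilde w$. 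The geometric series converges once $\de$ is chosen small enough that $C\de^{1/2}<1$, leaving the worst power $\dE^{1/2-j_1/2-j_2}$ (attained at $m=1$), which is the claimed bound.

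The main obstacle is the combinatorial bookkeeping in Faà di Bruno: one must carefully identify which blocks acquire the $\dE^{1/2}$ gain from~\eqref{eq:dydeltaE} and which do not, so that the polynomial-in-$m$ factor is tame enough to be absorbed by $C^m\dE^{m/2}$ after summation. Once that counting is in place, convergence of the resulting series and the matching of exponents at $m=1$ are routine.
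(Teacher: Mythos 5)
Your proof is correct and essentially matches the paper's: both rest on the expansion of Lemma~\ref{lemma:rwyzexpansions}, the cancellation $|\pa_y\dE|\lesssim\dE^{1/2}$ from~\eqref{eq:dydeltaE}, and combinatorial bookkeeping of how $y$-derivatives land on $\dE^{m/2}$ to produce the exponent $\frac{m}{2}-\frac{j_1}{2}-j_2$. The only difference is that you run the bookkeeping through Fa\`a di Bruno / Bell polynomial partitions in one shot, whereas the paper builds an explicit inductive identity~\eqref{eq:pakyr} for $\pa_y^k(\tilde r-r_L)$ and then applies $\pa_z^{j_2}$ separately; the combinatorial content (a block of a single $\pa_y$ contributes a $\dE^{1/2}$ gain, everything else $O(1)$) is the same, and your verification that $\frac{a}{2}+c+d\leq\frac{i_1}{2}+i_2$ for every partition does the work that the $(\frac{1}{R^2}-\frac{1}{r_L^2})^{\max\{k-2j,0\}}$ factor does in the paper's formula.
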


\begin{proof}
We focus on the estimate for $\tilde r-r_L$ as the proof for $\tilde w$ follows similar lines. We begin by proving that  for any $(R,y,z)\in\Ude$, $\th\in\S^1$, $k\in\mathbb N$, the following identity holds:
\beqa\label{eq:pakyr}
{}&\pa_y^k(\tilde r-r_L)(\th,R,y,z)\\
&=\tilde r_{0,y}^{(k)}(R,y,z)+\sum_{j=0}^{\lceil\frac{k}{2}\rceil}\Big(\frac{1}{R^2}-\frac{1}{r_L^2}\Big)^{\max\{k-2j,0\}}\sum_{m=1}^\infty \tilde r_{m,y}^{(k,j)}(R,y,z)\dE^{\frac{m}{2}-k+j}\cos(2\pi m\th),
\eeqa
 where the functions $\tilde r_{m,y}^{(k,j)}$
  are analytic on $\Ude$ and inherit bounds from~\eqref{E:rmwmRyzbds}.
 
We proceed by induction. Using~\eqref{eq:dydeltaE}, we differentiate the expansion for $\tilde r-r_L$ in~\eqref{eq:rwyzexpansions} to obtain
\beqa\label{eq:dyr}
\pa_y&\,(\tilde r-r_L)(\th,E(y,z),L(y,z))\\
=&\,\Big(\frac{1}{R^2}-\frac{1}{r_L^2}\Big)\frac{1}{2\P_R\om}\sum_{m=1}^\infty \frac{m}{2}\tilde r_m(R,y,z)\dE^{\frac{m}{2}-1}\cos(2\pi m\th)\\
&+\frac{1}{\P_R\om}\frac12\Big(\frac{1}{R^2}-\frac{1}{r_L^2}\Big)\tilde r_0(R,y,z)+ \pa_y\tilde r_0(R,y,z)\dE+\sum_{m=1}^\infty \pa_y\tilde r_m(R,y,z)\dE^{\frac{m}{2}}\cos(2\pi m\th),
\eeqa
where we observe that the last term on the on the right of the last line is more regular than the middle line. By defining the coefficients in the obvious manner, this demonstrates~\eqref{eq:pakyr} in the case $k=1$.

To proceed, suppose that the identity~\eqref{eq:pakyr} holds for some $k$. Differentiating each term in the main sum with respect to $y$, if $k-2j>0$ (i.e.~$j<\lceil\frac{k}{2}\rceil$), we obtain
\begin{align*}
\pa_y&\,\bigg[\Big(\frac{1}{R^2}-\frac{1}{r_L^2}\Big)^{\max\{k-2j,0\}}\Big(\sum_{m=1}^\infty \tilde r_m^{(k,j)}(R,y,z)\dE^{\frac{m}{2}-k+j}\cos(2\pi m\th)\Big)\bigg]\\
=&\,\Big(\frac{1}{R^2}-\frac{1}{r_L^2}\Big)^{k+1-2(j+1)}\frac{2(k-2j)}{\P_R\om}\frac{\pa_Lr_L}{r_L^3}\Big(\sum_{m=1}^\infty \tilde r_m^{(k,j)}(R,y,z)\dE^{\frac{m}{2}-(k+1)+(j+1)}\cos(2\pi m\th)\Big)\\
&+\Big(\frac1{R^2}-\frac{1}{r_L^2}\Big)^{k+1-2j}\frac{1}{2\P_R\om}\Big(\sum_{m=1}^\infty \big(\frac{m}{2}-k+j\big)\tilde r_m^{(k,j)}(R,y,z)\dE^{\frac{m}{2}-(k+1)+j}\cos(2\pi m\th)\Big)\\
&+\Big(\frac1{R^2}-\frac{1}{r_L^2}\Big)^{k+1-2(j+1)}\Big(\sum_{m=1}^\infty \Big(\frac{1}{R^2}-\frac{1}{r_L^2}\Big)\pa_y\tilde r_m^{(k,j)}(R,y,z)\dE^{\frac{m}{2}-(k+1)+(j+1)}\cos(2\pi m\th)\Big).
\end{align*}
We note that the first and third lines on the right may be grouped together, as the coefficients remain analytic. 
In the case that $k-2j\leq 0$ (i.e.~$j=\lceil\frac{k}{2}\rceil$), the term corresponding to the first line on the right in the previous equation does not arise, and a slightly simpler formula ensues.
 It is then a simple exercise to group the coefficients and check the expansion~\eqref{eq:pakyr}.

From the identity~\eqref{eq:pakyr}, the  estimate~\eqref{ineq:dRdyr-rL} in the case $j_2=0$ follows directly as, on $\Jde$, we employ $|R-r_L|\leq C\dE^{\frac12}$ (from Lemma~\ref{L:L-LR}) and the convergence of the series, and on the complement region, where $\dE\geq \de$, we note all derivatives are bounded from Lemma~\ref{L:ELNONTRAPPINGREG} and analyticity of the change of variables to $(R,y,z)$.  

Now applying $\pa_z^{j_2}$, we observe simply that $|\pa_z\dE(R,y,z)|\leq C$ on $\Upsilon_\de$ to conclude.
\end{proof}


\subsection{Regularity of $C^k$ Taylor remainder functions in $(y,z)$ variables}
In this section, we collect estimates on the Fourier transform of Taylor remainders and their $y$ and $z$ derivatives. Throughout this section, we suppose that the function $g\in C^{\n}(\overline{\Om})$ is given, where, for simplicity of notation, we suppose $\n\leq N+1$, where we recall~\eqref{def:N}.

For a given $R\in[\Rmin,\Rmax]$, we define 
a Taylor polynomial and remainder as 
\beqa\label{eq:gTaylorsplitting}
g(r,w,L)=&\,\sum_{j_1+j_2+j_3\leq \n-1}\frac{\pa_r^{j_1}\pa_w^{j_2}\pa_L^{j_3}g(r_{L_R},0,L_R)}{j_1!j_2!j_3!}(r-r_{L_R})^{j_1}w^{j_2}(L-L_R)^{j_3}+\tilde g(r,w,L,R)\\
=:&\,\Poly_{\n}[g](r,w,L,R)+\Rem_{\n}[g](r,w,L,R),
\eeqa
where, for multi-indices $\al$ in $(r,w,L)$ with $|\al|\leq \n$,
\beq\label{ineq:dalphagtilde1}
\big|\pa^\al\Rem_{\n}[g](r,w,L,R)\big|\leq C\|g\|_{C^{\n}(\oom)}\big(|r-r_{L_R}|+|w|+|L-L_R|\big)^{\n-|\al|}
\eeq
and we recall~\eqref{E:LRDEF}. At any point $(r,w,L)$ such that $E(r,w,L)\geq 
\Psi_L(R)$, this estimate can be simplified by  using the triangle inequality to estimate $|r-r_{L_R}|\leq |r-r_L|+|r_L-r_{L_R}|$ and applying the inequalities~\eqref{ineq:L-LR} to obtain
\beq\label{ineq:dalphagtilde}
\big|\pa^\al\Rem_{\n}[g](r,w,L,R)\big|\leq C\|g\|_{C^{\n}(\oom)}\dE(r,w,L)^{\frac{\n-|\al|}{2}}.
\eeq

\begin{lemma}\label{L:RWLBOUND}
Let $g\in C^{\n}(\overline{\Om})$ and $\al$ a multi-index in $(r,w,L)$ such that $|\al|\leq \n$. Then there exists $C>0$ such that for all $R\in [\Rmin,\Rmax]$ and $(r,w,L)\in\oom$ such that $E(r,w,L)\geq \Psi_L(R)$, we have the estimate
\beq\label{ineq:gtildeest}
\big|\pa^\al\pa_\th^{\n-|\al|}\Rem_{\n}[g](r,w,L,R)\big|\leq C\|g\|_{C^{\n}(\oom)}\dE(r,w,L)^{\frac{\n-|\al|}{2}}.
\eeq
\end{lemma}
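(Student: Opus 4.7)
The plan is to reduce the angle derivatives to iterated applications of the Hamiltonian vector field $X := w\pa_r - \Psi_L'(r)\pa_w$ acting on functions of $(r,w,L)$. Since $r(\th,\bI) = R(\th T(\bI),\bI)$ and $w(\th,\bI) = W(\th T(\bI),\bI)$ (see~\eqref{eq:RW}), the chain rule yields the operator identity $\pa_\th = T(\bI)\,X$. Because $X$ is Hamiltonian it annihilates $E$ and $L$, so $T(\bI)$ and $X$ commute and therefore $\pa_\th^{\n-|\al|} = T(\bI)^{\n-|\al|}\,X^{\n-|\al|}$. It then suffices to estimate $\pa^\al\bigl[T(\bI)^{\n-|\al|}\,X^{\n-|\al|}\Rem_\n[g]\bigr]$ in $(r,w,L)$ on the region $E \geq \Psi_L(R)$.

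The heart of the argument is the following recursive bound: for every $h\in C^m(\oom)$ satisfying $|\pa^\beta h|\leq K\dE^{(m-|\beta|)/2}$ for all $|\beta|\leq m$, and every multi-index $\beta$ and integer $j\geq 0$ with $|\beta|+j \leq m$,
\begin{equation}\label{E:KEYIND}
|\pa^\beta X^j h|\leq C_j K \dE^{(m-|\beta|)/2}\qquad \text{on }\{E(r,w,L)\geq \Psi_L(R)\}.
\end{equation}
The critical fact is that the coefficients of $X$ themselves gain a half-power of $\dE$ on this region: $|w|\leq\sqrt{2\dE}$ is immediate from $w^2/2 = E - \Psi_L(r) \leq E - \EminL$, while $|\Psi_L'(r)|\leq C|r-r_L|\leq C\dE^{1/2}$ follows from $\Psi_L''(r_L)\geq c_0>0$ and $(r-r_L)^2\leq C(\Psi_L(r)-\EminL)\leq C\dE$ (cf.~Lemma~\ref{L:L-LR}). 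We prove~\eqref{E:KEYIND} by induction on $j$; the step $j\to j+1$ proceeds by expanding $\pa^\beta X^{j+1} h = \pa^\beta\bigl(w\,\pa_r X^j h - \Psi_L'(r)\,\pa_w X^j h\bigr)$ via the Leibniz rule. Only finitely many terms arise, since $\pa^{\beta'} w = 0$ for $|\beta'|\geq 2$ and $\pa^{\beta'}\Psi_L'(r)$ vanishes as soon as $\beta'$ contains a $\pa_w$. In each surviving term either a factor $w$ or $\Psi_L'(r)$ remains (contributing $\dE^{1/2}$) multiplying a derivative of $X^j h$ of total order $|\beta|+1$ (for which the induction supplies $\dE^{(m-|\beta|-1)/2}$); or the Leibniz rule differentiates the coefficient, producing a bounded factor (e.g.~$1$, $\Psi_L''(r)$, or $\pa_L\Psi_L'(r)$, all bounded by the $C^{N+2}$-regularity of $\Psi_L$ from Lemma~\ref{lemma:regularity}) multiplying a derivative of $X^j h$ of total order $\leq |\beta|$. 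In either case the bound $CK \dE^{(m-|\beta|)/2}$ results and the induction closes.

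To conclude, apply~\eqref{E:KEYIND} with $h = \Rem_\n[g]$, $m = \n$, $K = C\|g\|_{C^\n(\oom)}$ (justified by~\eqref{ineq:dalphagtilde}), and $j = \n - |\al|$, obtaining $|\pa^\beta X^{\n-|\al|}\Rem_\n[g]|\leq C\|g\|_{C^\n(\oom)}\dE^{(\n-|\beta|)/2}$ for every $|\beta|\leq |\al|$. A final Leibniz expansion of $\pa^\al\bigl[T(\bI)^{\n-|\al|}\,X^{\n-|\al|}\Rem_\n[g]\bigr]$, using $T\in C^{N+1}(\bar{\mathbb A})$ from Lemma~\ref{lemma:regularity} to bound $(r,w,L)$-derivatives of $T^{\n-|\al|}$ of orders up to $|\al|\leq\n\leq N+1$, together with the boundedness of $\dE$ on the support so that excess positive powers $\dE^{(|\al|-|\beta|)/2}$ can be absorbed into the constant, delivers the stated estimate. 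The main obstacle is the combinatorial book-keeping within~\eqref{E:KEYIND}: one must verify that the $\dE^{1/2}$-gain from the coefficients of $X$ exactly compensates the loss from one extra derivative on $h$ at every step of the induction, with commutator terms between $\pa^\beta$ and $X$ never disturbing this balance. This cancellation is built into the Hamiltonian structure of $X$ and the vanishing properties of $w$ and $\Psi_L'(r)$ near the elliptic locus $r=r_L$, $w=0$.
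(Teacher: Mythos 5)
Your proof is correct, and it takes a genuinely different route from the paper's. The paper proves this lemma by applying the generalised Fa\`a di Bruno formula~\eqref{eq:genFdB} to $\pa_\th^{\n-|\al|}\pa^\al\Rem_\n[g]$ viewed as a composition with $(r(\th,\bI),w(\th,\bI))$, then feeding in two ingredients: the Taylor remainder bound~\eqref{ineq:dalphagtilde} for $\pa^{\al+\la}\Rem_\n[g]$ and the bound $|\pa_\th^\ell(r-r_L)|+|\pa_\th^\ell w|\lesssim\dE^{1/2}$, the latter coming from the Birkhoff normal-form expansions of Proposition~\ref{prop:rwexpansions}. You instead observe that $\pa_\th = T(\bI)X$ with $X = w\pa_r-\Psi_L'(r)\pa_w$ the Hamiltonian vector field, that $X$ commutes with multiplication by functions of $\bI$, and then run a direct Leibniz induction exploiting the pointwise vanishing $|w|\lesssim\dE^{1/2}$ and $|\Psi_L'(r)|\lesssim\dE^{1/2}$ on $\{E\geq\Psi_L(R)\}$. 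The inductive bookkeeping is sound: at each step, each surviving Leibniz term either retains a factor of $w$ or $\Psi_L'(r)$ (supplying $\dE^{1/2}$ against the extra derivative on $X^jh$) or differentiates the coefficient (supplying a bounded factor and leaving total order $\leq|\beta|$ on $X^jh$), and the closing Leibniz expansion in $T^{\n-|\al|}$ only costs bounded factors since $T\in C^{N+1}$. Your argument is arguably more elementary and makes the cancellation mechanism --- vanishing of $w$ and $\Psi_L'(r)$ at the elliptic locus $(r_L,0)$ --- completely transparent, and it avoids any reliance on the Fourier/normal-form expansions. The paper's formulation has the virtue of flowing naturally from Proposition~\ref{prop:rwexpansions}, which is needed later for the Green's function analysis regardless, but as a self-contained proof of this lemma your approach is cleaner. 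One small stylistic point: it is worth making explicit that in the induction step each coefficient derivative $\pa^{\beta'}\Psi_L'(r)$ with $|\beta'|\geq 1$ sits against a derivative of $X^jh$ of order $|\beta|-|\beta'|+1\leq|\beta|$, so the excess power $\dE^{(|\beta'|-1)/2}\geq\dE^{0}$ is nonnegative and can be dropped since $\dE$ is bounded --- you state this but the bookkeeping deserves a line.
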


\begin{proof}
To verify this estimate, we first note that it is sufficient to establish this under the additional assumption $\bI\in\Ade$, as the regularity of the derivative operators away from the trapping set from Lemma~\ref{L:ELNONTRAPPINGREG} establishes the estimate directly on the complement of this set. 

To simplify the notation, we set $\tilde g = \Rem_{\n}[g]$ throughout the proof. We set $k_0=\n-|\al|$ and apply the generalised Faa di Bruno formula~\eqref{eq:genFdB} (treating $\bI$ as fixed) to derive
\beq
\pa_\th^{k_0}\pa^\al\tilde g=\sum_{1\leq|\la|\leq k_0}\pa^{\al+\la}\tilde g(r(\th,\bI),w(\th,\bI),L,R)\sum_{s=1}^{k_0}\prod_{j=1}^{k_0}\sum_{p_s((k_0),\la)}k_0!\frac{(\pa_\th^{\ell_j} (r-r_L),\pa_\th^{\ell_j}w)^{k_j}}{(k_j!)(\ell_j!)^{|k_j|}},
\eeq
where we slightly abuse notation to treat multi-indices $\ell_j$ as scalars and the multi-indices $\la$ count $(r,w,L)$ derivatives but always have a zero contribution for the $L$ derivatives. 
Thus, as $|\pa^{\al+\la}\tilde g(r(\th,\bI),w(\th,\bI),L)\big|\leq C\|g\|_{C^{\n}(\oom)}\dE^{\frac{\n-|\al|-|\la|}{2}}$ from~\eqref{ineq:dalphagtilde} and $|\pa_\th^{\ell_j} r(\th,\bI)|+|\pa_\th^{\ell_j} w(\th,\bI)|\leq C\dE^{\frac12}$ for all $\ell_j$ from Proposition~\ref{prop:rwexpansions}, then using the relation $\sum |k_j|=|\la|$ (compare~\eqref{def:ps}), we deduce
\beq\label{ineq:htildeest1}
\big|\pa_\th^{k_0}\pa^\al\tilde g\big|\leq C\|g\|_{C^{\n}}\dE^{\frac{\n-|\al|}{2}}.
\eeq 
Finally, we observe that the commutator $[\pa_\th^{k_0},\pa^\al]\tilde g$ picks up only lower order derivatives of $\tilde g$ with bounded coefficients, and hence we conclude. 
\end{proof}


Using this lemma and Lemma~\ref{L:RWBOUNDS}, in the next proposition we quantify the trapping degeneracy of the remainder with respect to the high-order $(y,z)$-derivatives.


\begin{prop}\label{prop:remainder}
Let $g\in C^{\n}(\oom)$ and consider the Taylor remainder $$\tilde g(\th,R,y,z)=\Rem_{\n}[g](\th,E(R,y,z),L(R,y,z),R)$$ as a function of $(\th,R,y,z)\in\S^1\times\Upsilon$.  Then, for all $j_1,j_2,j_3\in\N$ such that $j_1+j_2+j_3\leq \n$, there exists $C>0$ such that
\beqa
\big|\pa_y^{j_1}\pa_z^{j_2}\pa_\th^{j_3}\tilde g(\th,R,y,z)\big|\leq C\|g\|_{C^{\n}(\oom)}\dE(R,y,z)^{\frac{\n}2-\frac{j_1}{2}-j_2}.
\eeqa
\end{prop}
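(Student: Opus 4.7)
I would apply the generalised Fa\`a di Bruno formula to differentiate the composition
\[
\tilde g(\th,R,y,z)=\Rem_\n[g]\bigl(\tilde r(\th,R,y,z),\,\tilde w(\th,R,y,z),\,L(R,y,z),\,R\bigr),
\]
treating $R$ as a parameter and differentiating only in $(\th,y,z)$. Each derivative $\pa_y^{j_1}\pa_z^{j_2}\pa_\th^{j_3}\tilde g$ expands as a finite sum of products
\[
\pa_r^{a_1}\pa_w^{a_2}\pa_L^{a_3}\Rem_\n[g]\cdot\prod_{k=1}^{|\alpha|}\pa_y^{\tilde a_k}\pa_z^{\tilde b_k}\pa_\th^{\tilde c_k}(\text{inner function})_k,
\]
where $|\alpha|=a_1+a_2+a_3\leq j_1+j_2+j_3\leq\n$, each block has total size $\tilde a_k+\tilde b_k+\tilde c_k\geq 1$, and the blocks partition the multi-index $(j_1,j_2,j_3)$, with $a_1$ of them assigned to the $r$-slot, $a_2$ to the $w$-slot, and $a_3$ to the $L$-slot.

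Next I would estimate each factor. For the outer factor, \eqref{ineq:dalphagtilde} yields $|\pa_r^{a_1}\pa_w^{a_2}\pa_L^{a_3}\Rem_\n[g]|\leq C\|g\|_{C^\n(\oom)}\dE^{(\n-|\alpha|)/2}$ (the exponent being nonnegative as $|\alpha|\leq\n$). For the $r$- and $w$-blocks, I would first extend Lemma~\ref{L:RWBOUNDS} to include $\th$-derivatives: from the expansions~\eqref{eq:rwyzexpansions}, $\pa_\th^{\tilde c_k}$ only multiplies the $m$-th Fourier mode by $(2\pi m)^{\tilde c_k}$, and the exponential bounds~\eqref{E:rmwmRyzbds} still give absolute convergence and preserve the $\dE$-scaling, so that
\[
\bigl|\pa_y^{\tilde a_k}\pa_z^{\tilde b_k}\pa_\th^{\tilde c_k}(\tilde r-r_L)\bigr|+\bigl|\pa_y^{\tilde a_k}\pa_z^{\tilde b_k}\pa_\th^{\tilde c_k}\tilde w\bigr|\leq C\,\dE^{\frac12-\frac{\tilde a_k}{2}-\tilde b_k}
\]
on $\Ude$. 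Writing $\tilde r=(\tilde r-r_L)+r_L(L(R,y,z))$, the $r_L$ piece contributes only uniformly bounded $(y,z)$-derivatives (by smoothness of $L\mapsto r_L$ and of $L(R,y,z)$), and on $\dE\leq\de$ this constant bound is dominated by the singular bound above whenever $\tilde a_k+\tilde b_k\geq 1$. For blocks assigned to the $L$-slot, $(y,z)$-derivatives of $L(R,y,z)$ are uniformly bounded via the inverse Jacobian~\eqref{eq:drELdryz} and Lemma~\ref{lemma:regularity}, contributing $\dE^0$.

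The central step is the exponent counting. Let $\beta_1$ and $\beta_2$ denote the total number of $y$- and $z$-derivatives landing on the $a_3$ $L$-blocks. Since $L$ does not depend on $\th$, every surviving $L$-block contains at least one $(y,z)$-derivative, so $\beta_1+\beta_2\geq a_3$. Collecting all contributions, the $\dE$-exponent of a generic term equals
\[
\frac{\n-|\alpha|}{2}+\frac{a_1+a_2}{2}-\frac{j_1-\beta_1}{2}-(j_2-\beta_2)=\frac{\n}{2}-\frac{j_1}{2}-j_2-\frac{a_3}{2}+\frac{\beta_1}{2}+\beta_2,
\]
which is bounded below by $\n/2-j_1/2-j_2$ as soon as $\beta_1+2\beta_2\geq a_3$; this follows from $\beta_1+2\beta_2\geq\beta_1+\beta_2\geq a_3$.

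The hard part will be the combinatorial bookkeeping in the Fa\`a di Bruno expansion, namely tracking how the three-way partition $(a_1,a_2,a_3)$ of outer derivatives interacts with the block decomposition of $(j_1,j_2,j_3)$, and recognising that the exact cancellation of the loss $a_3/2$ from the outer remainder estimate is enforced precisely by the $\th$-independence of $L$. Once this is in place, summing the finitely many Fa\`a di Bruno terms gives the bound on $\Ude$; on the complementary region $\Upsilon\setminus\Ude$, where $\dE$ is uniformly bounded below, the conclusion is immediate from Lemma~\ref{L:ELNONTRAPPINGREG}.
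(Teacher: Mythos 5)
Your proposal is correct and rests on the same two key observations as the paper's proof (Fa\`a di Bruno plus exponent counting, exploiting that $\th$-derivatives of $\tilde r - r_L,\tilde w$ carry no $\dE$-loss while the $L$-slot is $\th$-independent), but with a different organisation. The paper runs the chain rule in two stages: Lemma~\ref{L:RWLBOUND} first handles $\pa_\th^{j_3}\pa^\al$ applied to $\Rem_\n[g]$, and then the proof of Proposition~\ref{prop:remainder} applies Fa\`a di Bruno again to the $(y,z)$-derivatives of the resulting function $\tilde h(r-r_L,w,L,R)$, absorbing the $L$-slot by the trick of replacing $(k_j)_1+(k_j)_2$ with $|k_j|$ in the exponent. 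You instead apply Fa\`a di Bruno once, on the full derivative $\pa_y^{j_1}\pa_z^{j_2}\pa_\th^{j_3}$; the cost is that you must extend Lemma~\ref{L:RWBOUNDS} to mixed $(\th,y,z)$-derivatives (routine, as you note, from the expansions~\eqref{eq:rwyzexpansions} and the exponential coefficient bounds), and that you track the $L$-slot loss explicitly through the quantities $\beta_1,\beta_2$ with the inequality $\beta_1+\beta_2\geq a_3$. The two exponent bookkeepings are algebraically equivalent. One minor stylistic point in the paper's favour: rather than splitting $\tilde r=(\tilde r - r_L)+r_L(L)$ inside the chain rule, the paper reparametrises the outer function as $\tilde h(\sigma,w,L,R):=\pa_\th^{j_3}\Rem_\n[g](\sigma+r_L(L),w,L,R)$ from the start, which lets Lemma~\ref{L:RWBOUNDS} apply directly; your version works too but requires a small extra argument to confirm that the $r_L$-piece never dominates.
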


\begin{proof}
We again note that it is sufficient from Lemma~\ref{L:ELNONTRAPPINGREG} to establish the estimate only for $(R,y,z)\in\Ude$. We first set $\tilde h(r-r_L,w,L,R)=\pa_\th^{j_3} \Rem_{\n}[g](r,w,L,R)$. Now from~\eqref{ineq:gtildeest}, as $r_L$ is an analytic function of $L$, we have, for all multi-indices $\al$ in $(r,w,L)$ with $|\al|\leq k$,
\beq\label{ineq:htildeests}
\big|\pa^\al \tilde h\big|\leq C\|g\|_{C^{\n}(\oom)}\dE^{\frac{\n-|\al|}{2}}.
\eeq
To apply the generalised Fa\`a di Bruno formula,~\eqref{eq:genFdB}, we introduce the convention that multi-indices $\nu,\ell_j\in\N_0^4$ record $(\th,R,y,z)$ derivatives, while multi-indices $k_j\in\N_0^4$ record $(r,w,L,R)$ derivatives. Now we set the multi-index $\nu=(0,0,j_1,j_2)$ so that $\pa^\nu=\pa_y^{j_1}\pa_z^{j_2}$. Applying the generalised Fa\`a di Bruno formula,~\eqref{eq:genFdB}, we have
\beqa
\pa^\nu \tilde h=&\,\sum_{1\leq|\la|\leq |\nu|}(\pa^\la \tilde h)(r(\th,R,y,z)-r_L,w(\th,R,y,z),L(R,y,z),R)\\
&\times \sum_{s=1}^{|\nu|}\sum_{p_s(\nu,\la)}(\nu!)\prod_{j=1}^{s}\frac{(\pa^{\ell_j}(r-r_L),\pa^{\ell_j}w,\pa^{\ell_j}L,\pa^{\ell_j}R)^{k_j}}{(k_j!)(\ell_j!)^{|k_j|}}.
\eeqa
We note now from~\eqref{ineq:dRdyr-rL} that
\beq
\big|\pa^{\ell_j}(r-r_L)\big|+\big|\pa^{\ell_j}w\big|\leq C\dE^{\frac12-\frac12(\ell_j)_3-(\ell_j)_4},
\eeq
while $|\pa^{\ell_j}L|\leq C$, $\pa^{\ell_j}R=0$ as $(\ell_j)_2=0$ always. Thus, for any set of multi-indices $(k_1,\ldots,k_s;\ell_1,\ldots,\ell_s)\in p_s(\nu,\la)$,
\beqa
{}&\Big|\prod_{j=1}^{s}\frac{(\pa^{\ell_j}(r-r_L),\pa^{\ell_j}w,\pa^{\ell_j}L,0)^{k_j}}{(k_j!)(\ell_j!)^{|k_j|}}\Big|\leq C\dE^{\sum_{j=1}^s(\frac12-\frac12(\ell_j)_3-(\ell_j)_4)((k_j)_1+(k_j)_2)}\\
&\leq C\dE^{\sum_{j=1}^s(\frac12 - \frac12(\ell_j)_3-(\ell_j)_4)|k_j|},
\eeqa
where we recall that each of the $k_j$ and $\ell_j$ is a multi-index. In the last inequality, we have used that $0\prec \ell_j$ for each $j$ (see Appendix~\ref{A:FDB}), so that $|\ell_j|\geq 1$, and hence $\frac12-\frac{|\ell_j|}{2}\leq 0$.

Now,  from the definition of the partition set $p_s(\nu,\la)$, we have
\beqa
\sum_{j=1}^s|k_j|\ell_j=\nu=(0,0,j_1,j_2),\qquad \sum_{j=1}^s |k_j|=|\la|,
\eeqa
so that
\beqa
\Big|\prod_{j=1}^{s}\frac{(\pa^{\ell_j}(r-r_L),\pa^{\ell_j}w,\pa^{\ell_j}L,0)^{k_j}}{(k_j!)(\ell_j!)^{|k_j|}}\Big|\leq C\dE^{\frac12|\la|-\frac12j_1-j_2},
\eeqa
 and hence, as by Lemma~\ref{L:RWLBOUND}
 \beq
 |\pa^\la\tilde h(r-r_L,w,L,R)|\leq C\|g\|_{C^{\n}(\oom)}\dE^{\frac{\n-|\la|}{2}},
 \eeq
 we obtain
 \beqa
 |\pa^\nu\tilde h(\th,R,y,z)|\leq C_k\|g\|_{C^{\n}(\oom)} \dE^{\frac{\n}{2}-\frac{j_1}{2}-j_2}.
 \eeqa
\end{proof}


\subsection{Regularity of the Green's function}


The purpose of this section is to establish two separate estimates for the Green's function $\widehat{h_R}=(\pi m)^{-1}S_m(\th(R,y,z))$ in the $(R,y,z)$ coordinates. The first follows from a partial separation of variables
in the $(R,y,z)$ coordinates and is contained the following proposition.

\begin{prop}\label{prop:Greensdecomp}
There exists $\de>0$ such that, for  $(R,y,z)\in\Ude$, $m\in\Z_*$, the $m$-th Fourier mode $S_m(\th(R,y,z))$  admits the decomposition
\beq\label{eq:Greensdecomp}
S_m(\th(R,y,z))=(\dE(R,y,z))^{-\frac{|m|}{2}}H_m(R,y,z),
\eeq
where $H_m$ is analytic with respect to $(R,y,\sqrt{z})$ on $\Ude$. For any $k,j\in\N\cup\{0\}$, there exists a constant $C_{k,j}>0$, depending on the steady state, $j$ and $k$, such that, for all $(R,y,z)\in\Ude$,
\beq\label{ineq:dyRHm}
\big|\pa_R^j\pa_y^k H_m\big|\leq C_{k,j}C^{|m|}|m|^{k+j}\dE^{\max\{\frac{|m|-k-j}{2},0\}}.
\eeq
\end{prop}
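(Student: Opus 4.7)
The plan is to derive an explicit polar-type representation for $e^{2\pi i m \th}$ from the canonical normal form of Theorem~\ref{T:NORMALFORM} and Lemma~\ref{lemma:twist}, and then bound the derivatives of the resulting analytic function via a Leibniz-type expansion.

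\textbf{Step 1: the polar identity.} Rearranging the formula $\gamma(\th,\bI) = \sqrt{A(\bI)/\pi}\,(\cos 2\pi\th, \sin 2\pi\th)$ from Lemma~\ref{lemma:twist} gives
\[
G_1(r,w,L) + iG_2(r,w,L) = \sqrt{A(E(r,w,L),L)/\pi}\,e^{2\pi i\th(r,w,L)}.
\]
Restricting to $r=R$, $w=\sqrt{2z}$, $L=L(R,y,z)$ makes the angle equal $\th(R,y,z)$; raising to the $m$-th power and taking imaginary parts yields, for $m>0$ (the case $m<0$ follows by oddness of $\sin$),
\[
\sin\bigl(2\pi m\,\th(R,y,z)\bigr) = \bigl(A(\bI(R,y,z))/\pi\bigr)^{-m/2}\operatorname{Im}\!\bigl[(G_1+iG_2)^m(R,\sqrt{2z},L(R,y,z))\bigr].
\]
By Lemma~\ref{lemma:twist}, $A$ contains only integer powers of $\dE$ and vanishes linearly at $\dE = 0$, so $A(\bI) = \dE\,\alpha(\dE,L)$ with $\alpha$ analytic and $\alpha(0,L) > 0$. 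Setting
\[
\tilde F(R,y,z) := \sqrt{\pi/\alpha(\dE,L)}\,(G_1+iG_2)\bigl(R,\sqrt{2z},L(R,y,z)\bigr),\qquad H_m := \operatorname{Im}[\tilde F^m],
\]
produces the decomposition $\sin(2\pi m\th) = \dE^{-m/2} H_m$ of~\eqref{eq:Greensdecomp}.

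\textbf{Step 2: regularity and size of $\tilde F$.} Analyticity of $H_m$ in $(R,y,\sqrt z)$ on $\Ude$ (for $\de$ small) follows from analyticity of $L(R,y,z)$ (Lemma~\ref{L:RYZ}), of $G_j$ on its domain (Theorem~\ref{T:NORMALFORM}), and of $\alpha$, together with the uniform positivity $\alpha\geq c_0 > 0$ and the fact that $\{r_L\,|\,L\in[L_0,\Lmax]\}$ lies strictly inside $(\Rmin,\Rmax)$ by Lemma~\ref{lemma:rL}. The crucial size information is the modulus identity
\[
|\tilde F(R,y,z)|^2 = \dE(R,y,z) \qquad \text{on real } (R,y,z)\in\Ude,
\]
which is immediate from $|G_1+iG_2|^2 = A/\pi$. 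The same regularity allows $\tilde F$ to be extended holomorphically in $(R,y)$ to a uniform complex bidisk of some radius $r_0 > 0$ around every point of $\Ude$, on which $|\tilde F|\leq C$ since $A$ is uniformly bounded; Cauchy's formula then gives $|\pa^\gamma\tilde F(R_0,y_0,z_0)|\leq C^{|\gamma|}\gamma!$ for all multi-indices $\gamma$, uniformly in $(R_0,y_0,z_0)\in\Ude$.

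\textbf{Step 3: derivative bound via Leibniz.} Fix $(R_0,y_0,z_0)\in\Ude$ and set $\dE_0 := \dE(R_0,y_0,z_0)$. The generalised Leibniz rule expands $\pa_R^j\pa_y^k\tilde F^m$ as a finite sum of terms $c_{s,\beta}\binom{m}{s}\tilde F^{m-s}\prod_{i=1}^s\pa^{\beta_i}\tilde F$ with $1\leq s\leq \min(j+k,|m|)$, $|\beta_i|\geq 1$, $\beta_1+\cdots+\beta_s = (j,k)$, and $|c_{s,\beta}|$ bounded in terms of $(j,k)$ only. By Step 2, each $|\pa^{\beta_i}\tilde F|\leq C^{|\beta_i|}\beta_i!$, while $|\tilde F(R_0,y_0,z_0)^{m-s}| = \dE_0^{(m-s)/2}$ and $\binom{m}{s}\leq |m|^s$. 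Each term is thus bounded by $C_{k,j}|m|^s\dE_0^{(m-s)/2}$, and since $|m|\geq 1$ and $\dE_0\leq 1$ this sequence is maximised at $s = \min(j+k,|m|)$, yielding
\[
|\pa_R^j\pa_y^k H_m(R_0,y_0,z_0)| \leq C_{k,j}\,|m|^{k+j}\,\dE_0^{\max\{(|m|-k-j)/2,\,0\}} \leq C_{k,j}\,C^{|m|}\,|m|^{k+j}\,\dE_0^{\max\{(|m|-k-j)/2,\,0\}},
\]
which is~\eqref{ineq:dyRHm}. The main obstacle is ensuring that the complex bidisks of Step 2 sit uniformly inside the analyticity region of $\tilde F$ right up to the trapping set $\{\dE=0\}$; this is secured by the interior position of the trapping curve (Lemma~\ref{lemma:rL}) combined with Lemma~\ref{lemma:analyticity}, yielding a uniform holomorphic-extension radius $r_0$ in $(R,y)$ at all base points in $\Ude$.
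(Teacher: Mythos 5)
Your proof is correct and follows essentially the same route as the paper's: both start from the normal-form identity $G_1+iG_2=\sqrt{A/\pi}\,e^{2\pi i\theta}$ of Lemma~\ref{lemma:twist}, pass to $(R,y,z)$ coordinates via $w=\sqrt{2z}$, raise to the $|m|$-th power, and estimate $(R,y)$-derivatives of the resulting analytic power via a Fa\`a di Bruno / iterated-Leibniz expansion together with the size bound $|\tilde F|\lesssim \dE^{1/2}$. The only (cosmetic but tidy) difference is that by factoring $A=\dE\,\alpha$ and absorbing $\sqrt{\pi/\alpha}$ into $\tilde F$ so that $|\tilde F|^2=\dE$ exactly, you fold the $(\dE/A)^{|m|/2}$ prefactor into the same Leibniz sum rather than treating it by a separate Fa\`a di Bruno estimate as the paper does.
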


\begin{proof}
We recall that the $m$-th mode of the Green's function is a $(\pi m)^{-1}$ multiple of
\beq
S_m(\th(R,\bI))=\sin\big(2\pi m \th(R,\bI)\big)=\textup{sgn}(m)\Im\Big(\big(\cos(2\pi\th(R,\bI))+i\sin(2\pi\th(R,\bI))\big)^{|m|}\Big).
\eeq
We recall~\eqref{def:gamma} and Lemma~\ref{lemma:twist} and set
$$\tilde\ga(\th,\bI)=g_1(\th,\bI)+ig_2(\th,\bI)=\sqrt{A(\bI)}\big(\cos(2\pi\th)+i\sin(2\pi\th)\big),$$
with a corresponding definition for $\tilde\Gamma(r,w,L)=G_1(r,w,L)+iG_2(r,w,L)$, compare~\eqref{def:Gamma}. We then have the decomposition
\beq
S_m(m,\bI)=\dE(\bI)^{-\frac{|m|}{2}}\Big(\frac{\dE(\bI)}{A(\bI)}\Big)^{\frac{|m|}{2}}\Im\Big(\big(\tilde \ga(\th(R,\bI),\bI)\big)^{|m|}\Big).
\eeq
By definition,
$\tilde\ga(\th(R,\bI),\bI)=\tilde\Ga(R,w(\th(R,\bI),\bI),L)$ and 
\beq\label{E:wRyz}
w(\th(R,E(R,y,z),L(R,y,z)),E(R,y,z),L(R,y,z)) = \sqrt {2z}.
\eeq
Thus, upon moving to $(R,y,z)$ variables, we have
\beq
S_m(m,\bI)=\dE(\bI)^{-\frac{|m|}{2}}\Big(\frac{\dE}{A(\bI)}\Big)^{\frac{|m|}{2}}\Im\Big(\big(\tilde \Gamma(R,\sqrt{z},L(R,y,z))\big)^{|m|}\Big).
\eeq
Setting 
\beq\label{E:HMDEF}
H_m(R,y,z):=\Big(\frac{\dE(R,y,z)}{A(R,y,z)}\Big)^{\frac{|m|}{2}}\Im\Big(\big(\tilde \Gamma(R,\sqrt{z},L(R,y,z))\big)^{|m|}\Big),
\eeq
we have arrived at the claimed decomposition~\eqref{eq:Greensdecomp} and the analyticity of $H_m$ with respect to $(R,\sqrt{z},y)$ follows from the definition, the analyticity of $A(\bI)$, of $\om(\bI)$, of $\Gamma$ (as a canonical normal form), and of the change of variables to $(R,y,z)$.

To prove the estimate~\eqref{ineq:dyRHm}, we first observe that as $\tilde\Gamma\circ L$ is analytic in $(R,y)$, 
 there exist constants $C,K>0$ such that 
$$\big|\pa_R^j\pa_y^k(\tilde\Gamma\circ L)\big|\leq Ck!j!K^{k+j},$$
by the standard theory of analytic functions.  On the other hand, as $\dE\lesssim A(\bI)\lesssim \dE$, by definition of $\tilde \Gamma$, we have the simple estimate
$$|\tilde\Gamma|\leq C\dE^{\frac12}.$$ 
Applying the generalised Faa di Bruno formula (see Appendix~\ref{A:FDB}) with multi-index $\nu=(j,k,0)$ (and taking $m> 0$ for simplicity of notation),
\beqa
\pa^\nu \big(\tilde\Gamma^m\big) =&\,\sum_{1\leq\la\leq |\nu|}m\cdots(m-\la+1) (\tilde \Gamma)^{m-\la} \sum_{s=1}^{|\nu|}\sum_{p_s(\nu,\la)}(\nu!)\prod_{j=1}^{s}\frac{(\pa^{\ell_j}(\tilde \Gamma))^{k_j}}{(k_j!)(\ell_j!)^{k_j}},
\eeqa
where we note that $\la$ and $k_j$ are natural numbers. Estimating, this is bounded by
\beqa
{}&\big|\pa^\nu \big(\tilde\Gamma\big)^m\big|\leq \dE^{\max\{\frac{m-|\nu|}{2},0\}}\sum_{1\leq\la\leq |\nu|}m\cdots(m-\la+1) C^{m-\la}\sum_{s=1}^{|\nu|}\sum_{p_s(\nu,\la)}(\nu!)\prod_{j=1}^{s}\frac{C^{k_j}K^{|\ell_j|k_j}}{(k_j!)}\\
&=K^{|\nu|}C^{|m|}\dE^{\max\{\frac{m-|\nu|}{2},0\}}\sum_{1\leq\la\leq |\nu|}m\cdots(m-\la+1)
 \sum_{s=1}^{|\nu|}\sum_{p_s(\nu,\la)}(\nu!)\prod_{j=1}^{s}\frac{1}{(k_j!)},
\eeqa
where we have used the identities $\sum_{j=1}^s|\ell_j|k_j=|\nu|$ and $\sum_{j=1}^s k_j=\la$. We observe that, again by the generalised Fa\`a di Bruno formula, the combinatorial sum on the right hand hand side is exactly
\beqs
\pa_s^j\pa_t^k\big((1-t)^{-m}(1-s)^{-m}\big)\big|_{(t,s)=(0,0)}=m\cdots(m+k-1)m\cdots(m+j-1)\leq C_{k,j} m^{k+j},
\eeqs
so that we have shown
\beq
\big|\pa_R^j\pa_y^k \big(\tilde\Gamma\big)^m\big|\leq C_{k,j} m^{k+j}K^{k+j}C^{|m|}\dE^{\max\{\frac{m-k-j}{2},0\}}.
\eeq
As $\dE^{\frac12} A(\bI)^{-\frac12}$ is analytic and uniformly positive as a function of $\bI$ it is also analytic as a function of $(R,y,z)$, and hence a similar argument to the above yields (for possible different $C$, $K$)
\beq
\Big|\pa_R^j\pa_y^{k}\Big(\frac{\dE(R,y,z)}{A(R,y,z)}\Big)^{\frac{|m|}{2}}\Big|\leq C^{|m|} K^{k+j}m\cdots(m+k-1)m\cdots(m+j-1).
\eeq
In order the close the estimate on $H_m$, we simply distribute derivatives by the product rule to conclude the proof of the lemma.
\end{proof}


\begin{lemma}\label{L:HRMAINEST}
Let $j,k\in\N$, $j+k\leq N+1$. Then there exists $C_{j,k}>0$ such that for $(R,y,z)\in\Upsilon$, the angle function $\th(R,y,z)$ satisfies the estimate
\beq\label{ineq:dyRtheta}
\big|\pa_R^j\pa_y^k\th(R,y,z)\big|\leq C_{j,k}(\dE(R,y,z))^{-\frac{k+j}{2}}.
\eeq
As a consequence,
\beq\label{ineq:dyRsin}
\Big|\pa_R^j\pa_y^k\big(\sin(2\pi m\th(R,y,z))\big)\Big|\leq C|m|^{k+j}\dE^{-\frac{k+j}{2}}.
\eeq
\end{lemma}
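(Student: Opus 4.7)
The strategy is to first establish~\eqref{ineq:dyRtheta} for the $m$-independent function $\th$, then deduce~\eqref{ineq:dyRsin} from~\eqref{ineq:dyRtheta} by a Fa\`a di Bruno argument applied to $\sin(2\pi m\cdot)\circ\th$. Notably, one cannot obtain~\eqref{ineq:dyRsin} by directly differentiating the factorisation $S_m=\dE^{-|m|/2}H_m$ of Proposition~\ref{prop:Greensdecomp}, because the coefficient estimate~\eqref{ineq:dyRHm} carries a factor $C^{|m|}$, whereas the bound~\eqref{ineq:dyRsin} must be polynomial in $|m|$.

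For the central estimate~\eqref{ineq:dyRtheta}, the plan is to introduce the $m$-independent complex-valued function
\[
\Phi(R,y,z):= G_1(R,\sqrt{2z},L(R,y,z)) + iG_2(R,\sqrt{2z},L(R,y,z)),
\]
where $(G_1,G_2)$ is the canonical normal form supplied by Theorem~\ref{T:NORMALFORM}. Combining Lemma~\ref{lemma:twist} with the identity $w(\th(R,\bI),\bI)=\sqrt{2z}$ from~\eqref{E:wRyz} yields
\[
\Phi(R,y,z) = \sqrt{A(\bI(R,y,z))/\pi}\,e^{2\pi i\th(R,y,z)},
\]
so that $2\pi\th = \Im\log\Phi$ up to an integer multiple of $2\pi$ that is irrelevant for derivatives.

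Two structural inputs drive the argument: (a) $|\Phi|^2 = A(\bI)/\pi \asymp \dE$ on $\Ude$, which follows from $\pa_E A = T > 0$ and $A(\EminL,L)=0$; and (b) $|\pa_R^j\pa_y^k\Phi| \le C_{j,k}$ uniformly on $\Ude$, since $G_1,G_2$ are analytic in $(r,w,L)$, $L(R,y,z)$ is analytic in $(R,y,z)$ by~\eqref{eq:drELdryz}, and the operators $\pa_R,\pa_y$ never differentiate the singular $\sqrt{2z}$ argument. Applying Fa\`a di Bruno to $\log\circ\Phi$, every contribution to $\pa_R^j\pa_y^k\log\Phi$ has the form $c_{s,\pi}\Phi^{-s}\prod_\ell(\pa^{\lambda_\ell}\Phi)^{k_\ell}$ with $s\le j+k$ and $\sum_\ell k_\ell|\lambda_\ell|=j+k$. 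Shrinking $\de$ so that $\dE<1$ on $\Ude$, each such term is bounded by $C\dE^{-s/2}\le C\dE^{-(j+k)/2}$, and taking imaginary parts will give~\eqref{ineq:dyRtheta} on $\Ude$. On the complementary region $\Upsilon\setminus\Ude$, $|\Phi|$ is bounded below by a positive constant, so $\log\Phi$ has uniformly bounded $(R,y)$-derivatives and the estimate follows trivially.

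To conclude~\eqref{ineq:dyRsin}, a second Fa\`a di Bruno expansion applied to $S_m\circ\th$ produces terms of the form $(2\pi m)^s\sigma_s(2\pi m\th)\prod_\ell(\pa^{\lambda_\ell}\th)^{k_\ell}$ with $\sigma_s\in\{\pm\sin,\pm\cos\}$, $s\le j+k$, and $\sum_\ell k_\ell|\lambda_\ell|=j+k$. Using $|\sigma_s|\le 1$ and~\eqref{ineq:dyRtheta}, each such term is bounded by $C|m|^s\dE^{-(j+k)/2}\le C|m|^{j+k}\dE^{-(j+k)/2}$, which is the desired bound. The main subtlety is the requirement of polynomial (rather than exponential) growth in $|m|$ in~\eqref{ineq:dyRsin}; this is what dictates the two-stage structure, confining the entire $m$-dependence to the outer composition with $\sin(2\pi m\cdot)$ where the chain rule produces at most $|m|^{j+k}$.
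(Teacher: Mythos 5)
Your proof is correct and follows essentially the same path as the paper: express $\theta$ explicitly via the canonical normal form of Theorem~\ref{T:NORMALFORM}, exploit the two structural facts $|G_\ell|\lesssim\dE^{1/2}$ (equivalently $|\Phi|^2=A/\pi\asymp\dE$) and uniform boundedness of $\partial_R^j\partial_y^k G_\ell$ (since $\partial_R,\partial_y$ never touch the $\sqrt{2z}$ slot), and then run a second Fa\`a di Bruno on $\sin(2\pi m\,\cdot)\circ\theta$ to land the polynomial-in-$|m|$ bound. The only difference is cosmetic: you write $2\pi\theta=\Im\log(G_1+iG_2)$ and apply Fa\`a di Bruno to $\log\circ\Phi$, while the paper writes $\theta=\arctan(G_2/G_1)$, starts from the explicit identity $2\pi\partial_y\theta=(\partial_yG_2\,G_1-G_2\,\partial_yG_1)/A$, and then distributes derivatives on the quotient; your packaging is marginally cleaner in that Fa\`a di Bruno treats all pairs $(j,k)$ on an equal footing, whereas the paper's first-derivative identity implicitly needs the parallel $\partial_R\theta$ formula for the $k=0$ cases. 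Your opening remark --- that one cannot instead differentiate the factorisation $S_m=\dE^{-|m|/2}H_m$ of Proposition~\ref{prop:Greensdecomp}, because the $C^{|m|}$ in~\eqref{ineq:dyRHm} would destroy the polynomial-in-$|m|$ growth that the summations in Proposition~\ref{P:KEY} require --- is accurate and worth recording; the paper leaves it implicit.
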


\begin{proof}
We begin by recalling from Theorem~\ref{T:NORMALFORM} that the canonical normal form change of variables is globally well-defined and $C^{N+1}$, so that, recalling~\eqref{E:wRyz}, we have
\beqas
{}&G_1(R,\sqrt{z},L(R,y,z))=\sqrt{A(R,y,z)}\cos(2\pi \th(R,y,z)),\\
 &G_2(R,\sqrt{z},L(R,y,z))=\sqrt{A(R,y,z)}\sin(2\pi \th(R,y,z)),
\eeqas
and hence
\beqs
\th(R,y,z)=\arctan\Big(\frac{G_2(R,\sqrt{z},L(R,y,z))}{G_1(R,\sqrt{z},L(R,y,z))}\Big).
\eeqs
Differentiating,
\[
2\pi \pa_y\th(R,y,z) = \frac{\pa_y G_2 G_1- G_2\pa_y G_1}{G_1^2+G_2^2}=\frac{\pa_y G_2 G_1- G_2\pa_y G_1}{A}.
\]
Moreover, from Theorem~\ref{T:NORMALFORM}, we easily find $|G_\ell|\leq C\dE^{\frac12}$, $\ell=1,2$ and $|\pa_R^j\pa_y^k (G_\ell(R,\sqrt{z},L(R,y,z))|\leq C_{k,j}$ for $k,j\in\N$, so that
$$\Big|\pa_R^j\pa_y^k \big(\pa_y G_2 G_1- G_2\pa_y G_1\big)\Big|\leq \begin{cases}
C_{j,k}, & j+k\geq 1,\\
C\dE^{\frac12}, & j=k=0.\end{cases}$$
On the other hand, recalling $A(\bI)$ is analytic in $\dE$ close to $\dE=0$ and $C^{N+1}$ globally by Lemma~\ref{lemma:regularity}, we have
\beq
\big|\pa_R^j\pa_y^k \big(A(\bI)^{-1}\big)\big| \leq C_{k,j}\dE^{-1-\frac{j+k}{2}},
\eeq
where we have used~\eqref{eq:dydeltaE}--\eqref{eq:dRdeltaE} and recalled $|r_L-R|\leq C\dE^{\frac12}$ on $\Upsilon$. Therefore, distributing derivatives, we easily conclude the claimed estimate~\eqref{ineq:dyRtheta}:
\beqa
\big|\pa_R^j\pa_y^k\th|\leq  C_{k,j}\dE^{-\frac{k+j}{2}}.
\eeqa
The second estimate,~\eqref{ineq:dyRsin}, now follows by the usual generalised Faa di Bruno argument.
\end{proof}


\subsection{Products of the form $\widehat{g}\widehat{h_R}$}\label{SS:PRODUCTS}


Given a function $g\in C^{\n}(\oom)$ and a point $R\in[\Rmin,\Rmax]$,  the regularity of the map
\[
\bI\mapsto \widehat g(m,\bI)\widehat{h_R}(m,\bI) = (\pi m)^{-1} \widehat{g}(m,\bI)\sin(2\pi m \th(R,\bI)),
\]
near the trapping region $\pa\I_0$~\eqref{E:IBDRYDEF} is central to the proof of decay of the gravitational force field. 
Recalling the decomposition~\eqref{eq:gTaylorsplitting}, we want to split the product as
\beqa
\widehat{g}(m,\bI)S_m(\th(R,\bI))=\widehat{\Poly_{\n}[g]}(m,\bI,R)S_m(\th(R,\bI)) +\widehat{\Rem_{\n}[g]}(m,\bI,R)S_m(\th(R,\bI)).
\eeqa
The advantage of this splitting is that, for the first term, the polynomial structure of the Taylor polynomial interacts well with the separation of variables for $S_m(\th(R,y,z))$ in $(R,y,z)$ coordinates, while in the second term, the remainder enjoys enhanced decay that can absorb singular factors emerging from the application of derivatives. However, in order to avoid artificial loss of regularity issues arising from higher $R$ derivatives applied to the splitting, we carefully use~\eqref{E:RZINTERCHANGE}  to convert $R$ derivatives on the regular function $\widehat{g}$ into $z$ derivatives, while still avoiding $z$ derivatives landing on the Green's function $S_m$.

\begin{lemma}[Product rule]\label{L:PARPRODUCTSPLITTING}
Let $g\in C^{\n}(\oom)$, $m\in\Z_*$, $j\leq \n$. Then, for all $(R,y,z)\in\Upsilon$, we have
\beqa\label{E:PARPRODUCTSPLITTING}
{}&\pa_R^j\Big(\widehat{g}(m,\bI(R,y,z))S_m(R,y,z)\Big)\\
&=\sum_{\ell=0}^j{j \choose \ell}\sum_{i=0}^\ell p_i^\ell\Pi^i\Big(\widehat{\Poly_{\n}[g]}(m,\bI(R,y,z),R)\dE^{-\frac{|m|}{2}}(\bI(R,y,z))\Big)\pa_R^{j-\ell}H_m(R,y,z)\\
&\ \ \ +\sum_{\ell=0}^j{j\choose \ell}\sum_{i=0}^\ell p_{i}^\ell\Pi^{i}\Big(\widehat{\Rem_{\n}[g]}(m,\bI(R,y,z),R)\Big)\pa_R^{j-\ell}S_m(R,y,z),
\eeqa
where $p_i^\ell$ are the analytic functions defined using Bell polynomials in~\eqref{E:BELLDEF}, and the Poisson bracket operator $\Pi$ is defined in~\eqref{E:PIDEF}. 
\end{lemma}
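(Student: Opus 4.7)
The plan is to exploit two decompositions in tandem: the Taylor splitting $\widehat{g}(m,\bI)=\widehat{\Poly_{\n}[g]}(m,\bI,R)+\widehat{\Rem_{\n}[g]}(m,\bI,R)$ from~\eqref{eq:gTaylorsplitting}, and the Green's-function separation $S_m(R,y,z)=D(\bI)\,H_m(R,y,z)$ with $D(\bI):=\dE^{-|m|/2}(\bI)$, from Proposition~\ref{prop:Greensdecomp}. The conceptual key is that although $\widehat{\Poly}(m,\bI,R)$ and $\widehat{\Rem}(m,\bI,R)$ each depend on $R$ directly through the Taylor base point $(r_{L_R},0,L_R)$, their sum $\widehat g$ does not; consequently $\widehat g\cdot D$ is a function of $\bI$ alone, and identity~\eqref{E:RZINTERCHANGE} can be applied to the combined object \emph{before} the Poly/Rem split, so the direct $R$-dependence is never differentiated.

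By Leibniz and~\eqref{E:RZINTERCHANGE} applied to the $\bI$-only quantity $\widehat g\,D$,
\[
\pa_R^j(\widehat g\,S_m)
=\sum_{\ell=0}^j\binom{j}{\ell}\pa_R^\ell(\widehat g\,D)\,\pa_R^{j-\ell}H_m
=\sum_{\ell=0}^j\binom{j}{\ell}\sum_{i=0}^\ell p_i^\ell\,\Pi^i(\widehat g\,D)\,\pa_R^{j-\ell}H_m.
\]
Splitting $\widehat g=\widehat{\Poly}+\widehat{\Rem}$ inside $\Pi^i$ by linearity yields
\[
\sum_{\ell,i}\binom{j}{\ell}p_i^\ell\,\Pi^i(\widehat{\Poly}\,D)\,\pa_R^{j-\ell}H_m \;+\; \sum_{\ell,i}\binom{j}{\ell}p_i^\ell\,\Pi^i(\widehat{\Rem}\,D)\,\pa_R^{j-\ell}H_m.
\]
The first sum is already the Poly side of~\eqref{E:PARPRODUCTSPLITTING}, so what remains is to prove
\[
\sum_{\ell,i}\binom{j}{\ell}p_i^\ell\,\Pi^i(\widehat{\Rem}\,D)\,\pa_R^{j-\ell}H_m
=\sum_{\ell,i}\binom{j}{\ell}p_i^\ell\,\Pi^i\widehat{\Rem}\,\pa_R^{j-\ell}S_m.
\]

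To establish this matching, expand the right-hand side using $\pa_R^{j-\ell}S_m=\sum_n\binom{j-\ell}{n}\pa_R^n D\cdot\pa_R^{j-\ell-n}H_m$ and substitute $\pa_R^n D=\sum_{i'}p_{i'}^n\Pi^{i'}D$ (again applying~\eqref{E:RZINTERCHANGE} to the $\bI$-only function $D$). Expand the left-hand side via the Leibniz rule for the first-order $\bI$-operator $\Pi$ defined in~\eqref{E:PIDEF}: $\Pi^i(\widehat{\Rem}\,D)=\sum_k\binom{i}{k}\Pi^k\widehat{\Rem}\cdot\Pi^{i-k}D$. Both sides then become triple sums over monomials $\Pi^a\widehat{\Rem}\cdot\Pi^b D\cdot\pa_R^{j-q}H_m$. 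Re-indexing $q=\ell+n$ on the right using $\binom{j}{\ell}\binom{j-\ell}{q-\ell}=\binom{j}{q}\binom{q}{\ell}$, the equality reduces to the Bell-polynomial composition identity
\[
\binom{a+b}{a}\,p_{a+b}^q=\sum_{\ell=0}^q\binom{q}{\ell}\,p_a^\ell\,p_b^{q-\ell},
\]
which is proven by computing $\pa_R^q(PQ)$ for generic $\bI$-only $P,Q$ in two ways---once by outer Leibniz followed by~\eqref{E:RZINTERCHANGE} on each factor, once by~\eqref{E:RZINTERCHANGE} on $PQ$ followed by the $\Pi$-Leibniz rule---and matching coefficients of $\Pi^a P\,\Pi^b Q$.

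The main obstacle is purely notational: tracking the Bell-polynomial coefficients $p_i^\ell$ through the nested Leibniz expansions (for $\pa_R$, for $\Pi$, and via the Faà di Bruno-type identity~\eqref{E:RZINTERCHANGE}) without confusing direct and indirect $R$-derivatives. The clean way forward, which makes the bookkeeping tractable, is to invoke~\eqref{E:RZINTERCHANGE} on the combined quantity $\widehat g\,D$ at the outset so that the explicit $R$-dependence of the Poly and Rem pieces is dispersed only afterwards by the linearity of~$\Pi$.
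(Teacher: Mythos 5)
Your proposal is correct and follows essentially the same route as the paper: you apply the Leibniz rule and the $\pa_R\leftrightarrow\Pi$ interchange to the $\bI$-only quantity $\widehat g\,\dE^{-|m|/2}$ \emph{before} invoking the Poly/Rem split, and then use the Bell-polynomial convolution identity $\binom{a+b}{a}p^q_{a+b}=\sum_\ell\binom{q}{\ell}p^\ell_a\,p^{q-\ell}_b$ together with the $\Pi$-Leibniz rule to reassemble the remainder piece into $\Pi^{\ell'}\widehat{\Rem}\cdot\pa_R^{j-k}S_m$. The only cosmetic differences are that the paper performs a forward chain of re-indexings from LHS to RHS and cites the Bell convolution identity from the literature, while you match coefficients of the monomials $\Pi^a\widehat{\Rem}\cdot\Pi^b D\cdot\pa_R^{j-q}H_m$ and re-derive the Bell identity by computing $\pa_R^q(PQ)$ two ways.
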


\begin{proof}
We begin by observing that $\widehat{g}(m,\bI(R,y,z))\dE(\bI(R,y,z))^{-\frac{|m|}{2}}$ satisfies the interchange of derivatives identity~\eqref{E:RZINTERCHANGE} since it depends on $(R,y,z)$ only through $\bI$. Thus, recalling also the splitting $S_m(R,y,z)=\dE^{-\frac{|m|}{2}}H_m(R,y,z)$ of Proposition~\ref{prop:Greensdecomp}, we apply the Leibniz rule to find
\begin{align*}
&\pa_R^j\Big(\widehat{g}(m,\bI(R,y,z))S_m(R,y,z)\Big)=\sum_{\ell=0}^j{j\choose \ell}\pa_R^{\ell}\Big(\widehat{g}(m,R,y,z)\dE^{-\frac{|m|}{2}}\Big)\pa_R^{j-\ell}H_m(R,y,z)\\
&=\sum_{\ell=0}^j{j\choose \ell}\sum_{i=0}^\ell p_i^\ell\Pi^i\Big(\widehat{g}(m,R,y,z)\dE^{-\frac{|m|}{2}}\Big)\pa_R^{j-\ell}H_m(R,y,z)\\
&=\sum_{\ell=0}^j{j\choose \ell}\sum_{i=0}^\ell p_i^\ell\Pi^i\Big(\big(\widehat{\Poly_{\n}[g]}(m,\bI,R) +\widehat{\Rem_{\n}[g]}(m,\bI,R)\big)\dE^{-\frac{|m|}{2}}\Big)\pa_R^{j-\ell}H_m(R,y,z),
\end{align*} 
where we have applied the splitting~\eqref{eq:gTaylorsplitting} in the last line. We observe that the contribution from the Taylor polynomial $\Poly_{\n}[g]$ is exactly the first line on the right of~\eqref{E:PARPRODUCTSPLITTING}, and hence we work now with the remainder term. We now distribute derivatives and  apply~\eqref{E:RZINTERCHANGE} to $\dE^{-\frac{|m|}{2}}$ to find
\begin{align}
&\sum_{\ell=0}^j{j\choose \ell}\sum_{i=0}^\ell p_i^\ell\Pi^i\Big(\widehat{\Rem_{\n}[g]}(m,\bI,R)\dE^{-\frac{|m|}{2}}\Big)\pa_R^{j-\ell}H_m(R,y,z) \notag \\
&=\sum_{\ell=0}^j{j\choose \ell}\sum_{i=0}^\ell p_i^\ell\sum_{\ell'=0}^i{i \choose\ell'} \Pi^{\ell'}\Big(\widehat{\Rem_{\n}[g]}(m,\bI,R)\Big)\Pi^{i-\ell'}\dE^{-\frac{|m|}{2}}\pa_R^{j-\ell}H_m(R,y,z) \notag \\
&=\sum_{\ell=0}^j\sum_{i=0}^\ell \sum_{\ell'=0}^i\sum_{k=0}^\ell{j\choose \ell}{\ell\choose k}p_{\ell'}^k \Pi^{\ell'}\Big(\widehat{\Rem_{\n}[g]}(m,\bI,R)\Big)p_{i-\ell'}^{\ell-k}\Pi^{i-\ell'}\dE^{-\frac{|m|}{2}}\pa_R^{j-\ell}H_m(R,y,z),
\label{E:BELL1}
\end{align}
where we have observed (for example from~\eqref{E:RZINTERCHANGE0}) that the operator $\Pi^\ell$ obeys the Leibniz rule and applied the Bell polynomial identity (for example from~\cite{Cvijovic11})
\[
B_{\ell,\ell'+i-\ell'}(x_1,\ldots,x_{\ell-i+1})=\frac{1}{{i\choose \ell'}}\sum_{k=0}^\ell{\ell \choose k}B_{k,\ell'}(x_1,\ldots,x_{k-\ell'+1})B_{\ell-k,i-\ell'}(x_1,\ldots,x_{\ell-k-(i-\ell')+1}).
\]

We note that ${j\choose \ell}{\ell\choose k}={j\choose k}{j-k\choose \ell-k}$
 and recall that, by definition, the Bell polynomials $B_{n,k}$ (and hence $p^n_k$) are identically zero if $k>n$ or if $n=0$, $k\neq 0$. We may thus rewrite the above sums  as
\begin{align*}
&\sum_{k=0}^j\sum_{\ell=k}^j{j\choose k}{j-k \choose \ell-k}\sum_{i=0}^\ell\sum_{\ell'=0}^i p_{\ell'}^k \Pi^{\ell'}\Big(\widehat{\Rem_{\n}[g]}(m,\bI,R)\Big)p_{i-\ell'}^{\ell-k}\Pi^{i-\ell'}\dE^{-\frac{|m|}{2}}\pa_R^{j-\ell}H_m(R,y,z)\\
&=\sum_{k=0}^j\sum_{k'=0}^{j-k}{j\choose k}{j-k \choose k'}\sum_{i=0}^{k+k'}\sum_{\ell'=0}^ip_{\ell'}^k \Pi^{\ell'}\Big(\widehat{\Rem_{\n}[g]}(m,\bI,R)\Big)p_{i-\ell'}^{k'}\Pi^{i-\ell'}\dE^{-\frac{|m|}{2}}\pa_R^{j-k-k'}H_m(R,y,z)\\
&=\sum_{k=0}^j\sum_{k'=0}^{j-k}{j\choose k}{j-k \choose k'}\sum_{\ell'=0}^{k+k'}p_{\ell'}^k \Pi^{\ell'}\Big(\widehat{\Rem_{\n}[g]}(m,\bI,R)\Big)\sum_{i=\ell'}^{k+k'}p_{i-\ell'}^{k'}\Pi^{i-\ell'}\dE^{-\frac{|m|}{2}}\pa_R^{j-k-k'}H_m(R,y,z).
\end{align*}
Observe now that if $\ell'>k$, then $p_{\ell'}^k\equiv 0$, so that we may restrict to $\ell'\leq k$ and re-write
\begin{align*}
&\sum_{i=\ell'}^{k+k'}p_{i-\ell'}^{k'}\Pi^{i-\ell'}\dE^{-\frac{|m|}{2}}=\sum_{n=0}^{k+k'-\ell'}p_n^{k'}\Pi^{n}\dE^{-\frac{|m|}{2}}
=\sum_{n=0}^{k'}p_n^{k'}\Pi^{n}\dE^{-\frac{|m|}{2}}=\pa_R^{k'}\dE^{-\frac{|m|}{2}},
\end{align*}
where we have also used that $p_n^{k'}=0$ for $n>k'$ and~\eqref{E:RZINTERCHANGE}.
We have therefore established that
\begin{align*}
&\sum_{\ell=0}^j{j\choose \ell}\sum_{i=0}^\ell p_i^\ell\Pi^i\Big(\widehat{\Rem_{\n}[g]}(m,\bI,R)\dE^{-\frac{|m|}{2}}\Big)\pa_R^{j-\ell}H_m(R,y,z)\\
&=\sum_{k=0}^j\sum_{k'=0}^{j-k}{j\choose k}{j-k \choose k'}\sum_{\ell'=0}^{k}p_{\ell'}^k \Pi^{\ell'}\Big(\widehat{\Rem_{\n}[g]}(m,\bI,R)\Big)\pa_R^{k'}\dE^{-\frac{|m|}{2}}\pa_R^{j-k-k'}H_m(R,y,z)\\
&=\sum_{k=0}^j{j\choose k}\sum_{\ell'=0}^{k}p_{\ell'}^k \Pi^{\ell'}\Big(\widehat{\Rem_{\n}[g]}(m,\bI,R)\Big)\pa_R^{j-k}\Big(\dE^{-\frac{|m|}{2}}H_m(R,y,z)\Big),
\end{align*}
as required.
\end{proof}


\begin{prop}\label{P:KEY}
Let $g\in C^{\n}(\oom)$. There exists $C>0$ such that, for all $j,k'\in\N$ with $j+k'\leq \min\{\n,N+1\}$, for all $(R,y,z)\in\Upsilon$, $m\in\Z_*$,
\beqa
{}&\Big|\pa_R^j\pa_y^{k'}\Big(\widehat{g}(m,E(R,y,z),L(R,y,z))S_m\big(\th(R,E(R,y,z),L(R,y,z))\big)\Big)\Big|\\
&\leq C\|g\|_{C^{\n}(\oom)}\Big(C^{|m|}|m|^{j+k'}\dE^{\max\{\frac{m-j-k'}{2},0\}}\mathbbm{1}_{\J_{R,\de}}+|m|^{-\n+j+k'}\dE^{\frac{\n-j-k'}{2}}\Big).
\eeqa
\end{prop}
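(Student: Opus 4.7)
The plan is to start from the product rule formula in Lemma~\ref{L:PARPRODUCTSPLITTING} and then apply $\pa_y^{k'}$ to both contributions on the right-hand side, estimating each term separately. The first sum (the polynomial contribution) will produce the $C^{|m|}|m|^{j+k'}\dE^{\max\{(|m|-j-k')/2,0\}}$ term restricted to $\J_{R,\de}$, while the second sum (the remainder contribution) will produce the $|m|^{-\n+j+k'}\dE^{(\n-j-k')/2}$ term. Throughout, the crucial ingredient is the cancellation bound~\eqref{E:PIJBOUND} on $p_i^\ell$, which compensates for the loss of $\dE^{-1}$ each time the Poisson bracket $\Pi = \P_R\om\,\pa_z$ is applied.

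For the polynomial contribution, I expand $\Poly_{\n}[g]$ into monomials in $(r-r_{L_R})^{j_1}w^{j_2}(L-L_R)^{j_3}$. Writing $r-r_{L_R}=(r-r_L)+(r_L-r_{L_R})$ and using $|r_L-r_{L_R}|\le C\dE^{1/2}$ from Lemma~\ref{L:L-LR}, Lemma~\ref{lemma:Tmyz} yields the factorisation $\widehat{\Poly_{\n}[g]}(m,\bI,R) = \dE^{|m|/2}\mathcal{T}(R,y,z)$ with $\mathcal{T}$ analytic in $(R,y,z)$ and bounded (together with its derivatives) by $\|g\|_{C^\n(\oom)}C^{|m|}$ when $|m|\le\n-1$, and with an additional power of $\dE^{\lceil(\n-|m|)/2\rceil}$ as soon as $|m|>\n-1$. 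Hence $\widehat{\Poly_{\n}[g]}\dE^{-|m|/2}$ is a smooth function of $\bI$. Applying $\Pi^i$ introduces at most $i$ derivatives, each compensated either by the $p_i^\ell$ prefactor (via~\eqref{E:PIJBOUND}) or by one of the extra $\dE$ factors. Multiplying by $\pa_R^{j-\ell}\pa_y^{k'}H_m$, which is bounded via Proposition~\ref{prop:Greensdecomp} by $C_{k',j-\ell}C^{|m|}|m|^{j-\ell+k'}\dE^{\max\{(|m|-j+\ell-k')/2,0\}}$, and summing over $\ell,i$ gives the first desired bound. Note that we only obtain useful smallness from this term when $\dE<\de$ (i.e.\ on $\J_{R,\de}$); away from trapping, the smooth regularity from Lemma~\ref{L:ELNONTRAPPINGREG} treats this term in the same fashion.

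For the remainder contribution, the Poisson-bracket operator $\Pi$ applied to $\widehat{\Rem_{\n}[g]}(m,\bI,R)$ is, up to analytic coefficients, a $\pa_z$ derivative. Proposition~\ref{prop:remainder} then controls $\pa_y^a\pa_z^b\pa_\th^c\Rem_{\n}[g]$ by $\|g\|_{C^\n(\oom)}\dE^{(\n-a)/2-b}$, which on the Fourier side translates (using $|m|^{-c}$ decay of Fourier coefficients of $C^c$ functions in $\th$) into $\|g\|_{C^\n(\oom)}|m|^{-(\n-a-2b)}\dE^{(\n-a-2b)/2}$ after optimisation. The corresponding factor $\pa_R^{j-\ell}\pa_y^{k'}S_m$ is bounded by $C|m|^{j-\ell+k'}\dE^{-(j-\ell+k')/2}$ from Lemma~\ref{L:HRMAINEST}. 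Combining these with the $\dE^{\max\{i-\ell/2,0\}}$ gain from $p_i^\ell$ and adding the orders, one checks that the total power of $\dE$ is never worse than $\dE^{(\n-j-k')/2}$ and the total power of $|m|$ is $|m|^{-\n+j+k'}$, giving the second claimed bound.

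The main obstacle is purely combinatorial bookkeeping, namely matching the number of $z$-derivatives generated by $\Pi^i$ against the cancellation available from $p_i^\ell$ and from the extra $\dE$-factors in $\widehat{\Poly_{\n}[g]}$ and $\widehat{\Rem_{\n}[g]}$, while simultaneously tracking the $|m|$-powers produced when derivatives land on $H_m$ or on $S_m$. The key structural reason this works is that neither $\Pi$ nor $\pa_y$ is ever allowed to fall on the singular factor $S_m=\dE^{-|m|/2}H_m$ directly---the splitting in Lemma~\ref{L:PARPRODUCTSPLITTING} arranges derivatives so they hit either the regular Green's factor $H_m$ (in the polynomial case) or the remainder (in the remainder case), which is exactly the cancellation structure proved in Proposition~\ref{prop:Greensdecomp} and Proposition~\ref{prop:remainder}.
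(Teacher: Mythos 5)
Your proposal follows essentially the same route as the paper's own proof: split $\widehat{g}$ into Taylor polynomial and remainder via Lemma~\ref{L:PARPRODUCTSPLITTING}, control the polynomial contribution through Lemma~\ref{lemma:Tm}/\ref{lemma:Tmyz} and Proposition~\ref{prop:Greensdecomp}, control the remainder through Proposition~\ref{prop:remainder} and Lemma~\ref{L:HRMAINEST}, and use the Bell-polynomial cancellation~\eqref{E:PIJBOUND} to offset the $\dE$-losses from $\Pi$.

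A few details in your write-up are slightly off, though none affects the validity of the approach. The extra positive $\dE$-power in the factorisation of $\widehat{\Poly_\n[g]}\dE^{-|m|/2}$ appears when $j_1+j_2>|m|$, i.e.\ for \emph{small} $|m|$, not ``as soon as $|m|>\n-1$'' as you wrote; your formula $\dE^{\lceil(\n-|m|)/2\rceil}$ would then be a negative power. The Fourier decay from $\th$-regularity gives $|m|^{-(\n-a-b)}$ (one mode per $\th$-derivative spent), not $|m|^{-(\n-a-2b)}$. Finally, the closing structural remark that ``neither $\Pi$ nor $\pa_y$ is ever allowed to fall on $S_m$'' is too strong: in the remainder contribution $\pa_y$ and $\pa_R$ do land on $S_m$ (indeed you yourself invoke Lemma~\ref{L:HRMAINEST} precisely for $\pa_R^{j-\ell}\pa_y^{k'}S_m$), and the degeneracy those derivatives create is absorbed by the smallness of $\Rem_\n[g]$. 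The sharp structural point is that $\pa_z$ (through $\Pi$) is kept off the Green's function, because a $\pa_z$ on $S_m$ would cost a full $\dE^{-1}$ rather than the compensable $\dE^{-1/2}$ cost of a $\pa_y$ or $\pa_R$.
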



\begin{proof}
We take a $\de>0$ sufficiently small and first establish the claimed estimate for $(R,y,z)\in\Ude$. Recalling Lemma~\ref{L:PARPRODUCTSPLITTING}, we first consider the Taylor polynomial contribution. First, we note that, after re-writing $r-r_{L_R}=r-r_L+r_L-r_{L_R}$, we adjust the coefficients of the polynomial by smooth functions of $L$ and $R$ to find
\beqa
\widehat{\Poly_{\n}[g]}(\th,\bI,R)=\sum_{j_1+j_2+j_3\leq k-1}g_{j_1,j_2,j_3}(L,R)(r-r_L)^{j_1}w^{j_2}(L-L_R)^{j_3},
\eeqa
where the coefficients $g_{j_1,j_2,j_3}(L,R)$ are smooth in $L$ and satisfy $|\pa_L^jg_{j_1,j_2,j_3}(L,R)|\leq C\|g\|_{C^{\n-1}(\oom)}$.
We now apply Lemma~\ref{lemma:Tm} and Proposition~\ref{prop:Greensdecomp} to find
\beqa
{}&\widehat{\Poly_{\n}[g]}\dE^{-\frac{|m|}{2}}=\sum_{j_1+j_2+j_3\leq \n-1}g_{j_1,j_2,j_3}(L,R)\dE^{\max\{0,\lceil\frac{j_1+j_2-|m|}{2}\rceil\}}\mathcal{T}_{m,j_1,j_2}(\bI(R,y,z)).
\eeqa
Therefore, substituting this into the first line on the right of~\eqref{E:PARPRODUCTSPLITTING} and applying $\pa_y^{k'}$, the resulting expression is  bounded by
\begin{align}
& C \sum_{j_1+j_2+j_3\leq \n-1}\sum_{\ell=0}^j\sum_{i=0}^\ell\sum_{n=0}^{k'}  \notag\\
& \quad \big|\pa_y^{n}\left(p^\ell_i \Pi^i\big(g_{j_1,j_2,j_3}(L,R)\dE^{\max\{0,\lceil\frac{j_1+j_2-|m|}{2}\rceil\}}\mathcal{T}_{m,j_1,j_2}(\bI(R,y,z))\big)\right)\big|  \big|\pa_y^{k'-n}\pa_R^{j-\ell}H_m\big|. \label{E:POLYBOUND}
\end{align}
We now observe that for any $i\ge0$,  by virtue of~\eqref{E:PIDEF}, we may write the operator $\Pi^i$ as
\begin{align}\label{E:PII}
\Pi^i =\sum_{i'=0}^i a^i_{i'}(R,y,z) \pa_z^{i'}
\end{align}
for some smooth functions $a^i_{i'}:\Upsilon\to\mathbb R$. Thus upon applying product rule, term $\big|\pa_y^{n}\left(p^\ell_i \Pi^i\big(g_{j_1,j_2,j_3}(L,R)\dE^{\max\{0,\lceil\frac{j_1+j_2-|m|}{2}\rceil\}}\mathcal{T}_{m,j_1,j_2}(\bI(R,y,z))\big)\right)\big|$ on the right-hand side of~\eqref{E:POLYBOUND} is bounded by the sum of the terms of the form
\[
C\big|\pa_y^{n_1}p^\ell_i \pa_y^{n_2}a^i_{i'}\pa_y^{n_3}\pa_z^{i'}\big(g_{j_1,j_2,j_3}(L,R)\dE^{\max\{0,\lceil\frac{j_1+j_2-|m|}{2}\rceil\}}\mathcal{T}_{m,j_1,j_2}(\bI(R,y,z))\big)\big| 
\]
where the nonnegative integers $n_i$ satisfy $n_1+n_2+n_3=n$. Thus by~Lemma~\ref{lemma:Tmyz}, bound~\eqref{E:PIJBOUND}, the smoothness of the functions $a^i_{i'}, g_{j_1,j_2,j_3}$, and
analyticity of $\dE^{\max\{0,\lceil\frac{j_1+j_2-|m|}{2}\rceil\}}$ as  a function of $(R,y,z)$, we obtain
\begin{align}
\big|\pa_y^{n}\left(p^\ell_i \Pi^i\big(g_{j_1,j_2,j_3}(L,R)\dE^{\max\{0,\lceil\frac{j_1+j_2-|m|}{2}\rceil\}}\mathcal{T}_{m,j_1,j_2}(\bI(R,y,z))\big)\right)\big| \le C\|g\|_{C^{\n-1}(\oom)} C^{|m|}.
\end{align}
We now estimate the remaining term in~\eqref{E:POLYBOUND}  with Proposition~\ref{prop:Greensdecomp} to bound
\[
\big|\pa_y^{k'-n}\pa_R^{j-\ell}H_m\big| \le  C^{|m|}|m|^{k'+j-n-\ell} \dE^{\frac12\max\{|m|-(k'+j-n-\ell),0\}}\le C C^{|m|}|m|^{j+k'}\dE^{\max\{\frac{m-j-k'}{2},0\}}.
\]
Combining these estimates, we conclude the bound 
\beqa
{}&\bigg|\pa_y^{k'}\left[\sum_{\ell=0}^j{j \choose \ell}\sum_{i=0}^\ell p_i^\ell\Pi^i\Big(\widehat{\Poly_{\n}[g]}(m,\bI(R,y,z),R)\dE^{-\frac{|m|}{2}}(\bI(R,y,z))\Big)\pa_R^{j-\ell}H_m(R,y,z)\right]\bigg|\\
&\leq C_{\n}\|g\|_{C^{\n-1}(\oom)} C^{|m|}|m|^{j+k'}\dE^{\max\{\frac{|m|-j-k'}{2},0\}}.
\eeqa

To handle the remainder term from~\eqref{E:PARPRODUCTSPLITTING}, we note that by the same product rule considerations as above, the 
expression 
\[
\lv\pa_y^{k'}\Big(\sum_{\ell=0}^j{j\choose \ell}\sum_{i=0}^\ell p_{i}^\ell\Pi^{i}\Big(\widehat{\Rem_{\n}[g]}(m,\bI(R,y,z),R)\Big)\pa_R^{j-\ell}S_m(R,y,z)\Big)\rv
\]
 is bounded by a constant multiple 
of the sum of the form
\begin{align}\label{E:REMONE}
\sum_{\ell=0}^j\sum_{i=0}^\ell\sum_{n=0}^{k'}\sum_{i'=0}^i\sum_{n_1+n_2+n_3=n} |\pa_y^{n_1}p^\ell_i| |\pa_y^{n_2}a^i_{i'}| |\pa_y^{n_3}\pa_z^{i'}\Big(\widehat{\Rem_{\n}[g]}(m,\bI(R,y,z),R)\Big) | |\pa_y^{k'-n}\pa_R^{j-\ell}S_m|.
\end{align}
Applying~\eqref{E:PIJBOUND}, Proposition~\ref{prop:remainder} and~\eqref{ineq:dyRsin}, each summand in~\eqref{E:REMONE} is bounded by
\begin{align}
& C \dE^{\max\{i-\frac{\ell}{2},0\}} \|g\|_{C^{\n}(\oom)} |m|^{-\n+n_3+i'} \dE^{\frac{\n}{2}-\frac{n_3}{2}-i'} |m|^{k'+j-n-\ell}\dE^{-\frac{k'+j-n-\ell}{2}} \notag\\
& = C\|g\|_{C^{\n}(\oom)} |m|^{-\n+j+k'} \dE^{\frac{\n-j-k'}{2}}  |m|^{(n_3-n)+(i'-\ell)} \dE^{\max\{i-\frac{\ell}{2},0\} + \frac{\ell}{2}-i' +\frac{n-n_3}{2} } \notag\\
& \le C\|g\|_{C^{\n}(\oom)} |m|^{-\n+j+k'}\dE^{\frac{\n-j-k'}{2}}, \label{E:ESTFINAL}
\end{align}
where we have used the bounds $n_3\le n$, $i'\le \ell$ to conclude that $|m|^{(n_3-n)+(i'-\ell)}\le1$, and the bound $i'\le i$ to conclude that
$\max\{i-\frac{\ell}{2},0\} + \frac{\ell}{2}-i' +\frac{n-n_3}{2} \ge \max\{i-\frac{\ell}{2},0\} + \frac{\ell}{2}-i \ge0$. This concludes the bound in the region $\Ude$. 

For $(R,y,z)\in\Ude^c$, we recall Lemma~\ref{L:ELNONTRAPPINGREG} and Lemma~\ref{L:HRMAINEST} to deduce directly that
\beq
\Big|\pa_R^j\pa_y^k\big(\widehat{ g}S_m\big)\Big|\leq C(\n,\de)\|g\|_{C^{\n}(\oom)} |m|^{-\n+j+k'}\leq C\|g\|_{C^{\n}(\oom)} |m|^{-\n+j+k'}\dE^{\frac{\n-j-k'}{2}}
\eeq
as required.
\end{proof}


\begin{remark}
We note that in the estimate~\eqref{E:ESTFINAL} a crucial cancellation took place reflecting the presence of powers of $\Psi_L'(R)$ through the bound~\eqref{E:PIJBOUND}. This bound
``talks" to the  indices of the Bell polynomials in a precise way that 
recovers half the powers of $\dE$ lost through the application of $z$-derivatives in Proposition~\ref{prop:remainder}.
\end{remark}

\section{Decay for the pure transport equation}\label{S:PT}

In this section we describe the decay rates of the gravitational potential associated with the solutions of the pure transport equation~\eqref{E:PTINTRO0}.

To facilitate notation, we introduce a function 
\begin{align}
q(R,\bI) &: = \frac{|\varphi'(\bI)|}{|\P_R\om(R,\bI)|}, \ \ (R,\bI)\in [\Rmin,\Rmax]\times\I, \label{E:LITTLEQDEF}
\end{align}
where we recall~\eqref{E:PRDEF};
this notation will be used throughout the rest of the paper. From Remark~\ref{REM:SSDERIVS}, we observe that $q$, defined on $\Upsilon$ via the change of variables~\eqref{E:RYZ}, satisfies 
\beq\label{E:QREG}
q(R,y,z)\in C^\reg(\overline{\Upsilon}),
\eeq
where we recall $\reg=\min\{\mu-1,\nu,\n\}$ from~\eqref{E:REGDEFMAIN}.

\begin{theorem}\label{thm:PTdecay}
Let $\fmn$ be a steady state as in~\eqref{E:SS}--\eqref{E:SS2} such that $\eta\in[0,\eta_0)$, where $\eta_0$ is the small constant introduced in Lemma~\ref{L:RYZ}.
Assume that $f_0\in C^{\n}(\oom)$ satisfies the orthogonality condition~\eqref{E:FINORTH} for some $\n\in\mathbb N$, $\n\ge2$. Let $\FPT(t,\cdot)$ denote the solution to the pure transport equation~\eqref{E:PTINTRO0}
with initial datum $\FPT(0,\cdot)=f_0(\cdot)$. 
Then there exists a $C>0$, depending on $\n,\mu,\nu$, such that
\beq
\|\pa_R^a U_{|\varphi'|\FPT}(t,\cdot)\|_{C^0}\leq C\|f_0\|_{C^{\reg}(\oom)}(1+t)^{-(\reg-a)}, \ \ a\in\{1,\dots,\reg\}.
\eeq
\end{theorem}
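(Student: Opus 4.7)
The proof begins with the action-angle representation of $\pa_R U_{|\varphi'|\FPT}$, obtained by combining Lemma~\ref{L:REPRESENTATIONS} with the explicit pure-transport solution $\widehat{\FPT}(t,m,\bI)=e^{-2\pi im\om(\bI)t}\widehat{f_0}(m,\bI)$. Applying Parseval in $\th$ to the Green's function representation and changing variables to $(y,z)$ via $y=\om(\bI)$ and $z=E-\Psi_L(R)$ (with Jacobian $|\P_R\om|^{-1}$ by Lemma~\ref{L:RYZ}, and $T(\bI)=1/y$) yields
\[
\pa_R U_{|\varphi'|\FPT}(t,R)=\frac{4\pi}{R^2}\sum_{m\in\Z_\ast}\frac{1}{m}\int_{\J_R}e^{-2\pi miyt}\frac{q(R,y,z)}{y}\widehat{f_0}(m,\bI(R,y,z))S_m(\th(R,y,z))\,dy\,dz,
\]
with $q$ as in~\eqref{E:LITTLEQDEF}. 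For $\pa_R^aU$ with $a\geq 1$, I apply Leibniz in $R$ an additional $a-1$ times. The contributions from $R$-differentiating the domain $\J_R$ vanish: on the $\{z=0\}$ portion of $\pa\J_R$ the Green's function $S_m$ itself vanishes (since $w=0$ forces $\th\in\{0,1/2\}$), while on $\pa\J_R^{\textup{vac}}$ the weight $|\varphi'|$ vanishes to order $\al\geq\reg$ by Remark~\ref{REM:SSDERIVS}.

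The time decay is extracted by integrating by parts $\reg-a$ times in $y$ against $e^{-2\pi miyt}$, producing a factor $(2\pi mit)^{-(\reg-a)}$. Boundary terms at each IBP stage (of the form $\pa_y^j$ applied to the integrand, $j\leq\reg-a-1$) are evaluated at the $y$-endpoints of the slices $\{z=\textup{const}\}\cap\J_R$, which by Remark~\ref{R:YIBP} lie in $\pa\J_R^{\textup{vac}}$; they vanish again by the high-order vanishing of $|\varphi'|$, using $\reg\leq\al$. The remaining task is to estimate
\[
\sum_{m\in\Z_\ast}\frac{1}{|m|^{\reg-a+1}t^{\reg-a}}\int_{\J_R}\Big|\pa_R^{a-1}\pa_y^{\reg-a}\Big(\frac{q(R,y,z)}{y}\widehat{f_0}(m,\bI(R,y,z))S_m(\th(R,y,z))\Big)\Big|\,dy\,dz,
\]
up to the harmless $R^{-2}$ prefactor.

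The smooth factors $q/y$ are bounded with bounded derivatives by Lemmas~\ref{L:L-LR} and~\ref{L:ELNONTRAPPINGREG}, so the central estimate reduces to the bound of Proposition~\ref{P:KEY} with $j+k'=\reg-1\leq\n$:
\[
\big|\pa_R^{a-1}\pa_y^{\reg-a}(\widehat{f_0}S_m)\big|\leq C\|f_0\|_{C^\reg(\oom)}\Big(C^{|m|}|m|^{\reg-1}\dE^{\max\{\frac{|m|-\reg+1}{2},0\}}\mathbbm{1}_{\J_{R,\de}}+|m|^{-\n+\reg-1}\dE^{\frac{\n-\reg+1}{2}}\Big).
\]
For the first summand, $\dE\leq\de$ on $\J_{R,\de}$ gives $\int_{\J_{R,\de}}\dE^{(|m|-\reg+1)/2}\,dy\,dz\lesssim \de^{(|m|-\reg+1)/2}$ for $|m|\geq\reg-1$; choosing $\de$ small enough that $C\de^{1/2}<1$ makes the resulting series geometrically convergent in $m$, with the finite collection $|m|<\reg-1$ contributing a bounded constant. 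The second summand is controlled by $\sum_m|m|^{-\n+a-2}$, summable since $a\leq\reg\leq\n$. Combined with the trivial $L^\infty$ bound for $t\leq 1$ (from the conservation of $|\FPT|$ along characteristics and elliptic regularity for $U$), this yields $\|\pa_R^a U_{|\varphi'|\FPT}(t,\cdot)\|_{C^0}\lesssim\|f_0\|_{C^\reg(\oom)}(1+t)^{-(\reg-a)}$.

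The principal obstacle lies not in the dispersive IBP itself but in the mixed-derivative estimate on $\widehat{f_0}S_m$, i.e.~Proposition~\ref{P:KEY}: the trapping singularity $\dE^{-|m|/2}$ in $S_m$ must be cancelled exactly by the $\dE^{|m|/2}$ degeneracy of $\widehat{f_0}$ near $\pa\I_0$, and this cancellation must persist under arbitrary mixed $R$- and $y$-derivatives. That orchestration, built on the Birkhoff-Poincar\'e normal form of Section~\ref{S:CNF} together with the Bell-polynomial product rule of Lemma~\ref{L:PARPRODUCTSPLITTING}, is the central analytical input; once available, the dispersive IBP argument above readily delivers the theorem.
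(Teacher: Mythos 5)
Your proposal is correct and follows essentially the same route as the paper's proof: both pass to the $(R,y,z)$ coordinates, write the gravitational force via the Green's function $S_m$, integrate by parts $\reg-a$ times in $y$ against the oscillatory phase $e^{-2\pi i m y t}$ to extract the decay, kill boundary terms via the high-order vanishing of $|\varphi'|$ on $\pa\J_R^{\text{vac}}$, and invoke Proposition~\ref{P:KEY} as the central cancellation input (the paper does the $y$-IBP first and then applies $\pa_R^a$, while you distribute $\pa_R^{a-1}$ first and then IBP, but these steps commute). Two minor points: the paper observes that no boundary term arises from $\{z=0\}$ simply because that piece of $\pa\J_R$ is $R$-independent, rather than because $S_m$ vanishes there; and the $t\leq 1$ case is handled in the paper by the same formula with $b=0$ rather than by a separate characteristics argument, but these discrepancies are cosmetic.
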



\begin{proof}
We use the solution formula~\eqref{E:FPTINTRO} together with
Lemma~\ref{L:REPRESENTATIONS} and the Plancherel theorem to conclude that for any $R\in[\Rmin,\Rmax]$
\begin{align}
R^2&\, \pa_RU_{|\varphi'|\FPT}(t,R)   = 4\pi^2 \int_{\I}\int_{\mathbb S^1} h_R(r(\theta,\bI))\, |\varphi'(\bI)| \FPT(t,\th,\bI) T \diff\theta\diff\bI \notag\\
& = 4\pi^2 \sum_{m\in\Z^\ast}\int_{\I} |\varphi'(\bI)|\widehat\FPT(t,m,\bI) \overline{\widehat{h_R}(m,\bI)} T \diff\bI\notag \\
& = 4\pi \sum_{m\in\Z^\ast}\frac1m \int_{\I\cap\{E\geq \Psi_L(R)\}} e^{-2\pi mi \om(\bI) t} |\varphi'(\bI)| \fhat_0(m,\bI) S_m(\th(R,\bI))  T \diff\bI, \label{E:GP1}
\end{align}
where in the last line we have also used that $\widehat{h_R}$ is supported on $E\geq \Psi_L(R)$, see Lemma~\ref{L:REPRESENTATIONS}.

We now change variables $(R,\bI)\to(R,y,z)$ in~\eqref{E:GP1}
and using~\eqref{E:LITTLEQDEF} for any $0\le b \le \reg$ we obtain
\begin{align}
{}& \pa_RU_{|\varphi'|\FPT}(t,R) = \frac{4\pi}{R^2} \sum_{m\in\Z^\ast}\frac1m \int_{\J_R} e^{-2\pi i m y t} q \fhat_0(m,R,y,z) S_m(\th(R,y,z))  T \diff (y,z) \notag\\
& = \frac{4\pi}{R^2}  \frac{1}{(2\pi it)^{b}}\sum_{m\in\Z^\ast}\frac1{m^{b+1}} \int_{\J_R} e^{-2\pi mi y t} \pa_y^{b}\left(qT \fhatm(R,y,z) S_m(\th(R,y,z)) \right) \diff (y,z) \notag\\
& =  \frac{4\pi}{R^2(2\pi it)^{b}}\sum_{m\in\Z^\ast}\frac1{m^{b+1}} \sum_{\ell=0}^{b} {b \choose \ell} \int_{\J_R} e^{-2\pi mi y t} \pa_y^{b-\ell}(qT) \pa_y^\ell\left(\fhatm(R,y,z) S_m(\th(R,y,z)) \right) \diff (y,z)\notag\\
& = \frac{4\pi}{R^2(2\pi it)^{b}}\sum_{m\in\Z^\ast}\frac1{m^{b+1}} \sum_{\ell=0}^{b} {b \choose \ell} \PP_m[\pa_y^{b-\ell}(qT), \pa_y^\ell\left( \fhatm S_m \right) ], \label{E:PTDECAY00}
\end{align}
where we have introduced the notation
\be
\PP_m[Q,V](t,R) = \int_{\J_R} e^{-2\pi i m y t} Q(R,y,z) V(R,y,z) \diff (y,z).
\ee
For any $0\le a \le \reg-b$, we apply $\pa_R^a$ to~\eqref{E:PTDECAY00} and we obtain
\begin{align}
\pa_R^{a+1}U_{|\varphi'|\FPT}(t,R)= &\frac{4\pi}{(2\pi it)^{b}}\sum_{m\in\Z^\ast}\frac1{m^{b+1}} \sum_{\ell=0}^{b}\sum_{\ell_1+\ell_2+\ell_3=a} {b \choose \ell} {a \choose \ell_1,\ell_2,\ell_3} \times \notag\\
& \qquad \times \pa_R^{\ell_2}(R^{-2})
\PP_m[\pa_R^{\ell_3}\pa_y^{b-\ell}(qT), \pa_R^{\ell_1}\pa_y^\ell\left( \fhatm S_m \right) ],
\end{align}
where we have used the notation ${b \choose \ell,j,n}=\frac{b!}{\ell ! j ! n!}$.
We note that the $R$-dependent portion of the boundary $\pa\J_R$ is contained in the image of the vacuum boundary $\pa \I_{\text{vac}}$ (compare~\eqref{E:JRMOVINGBDRY}) and so does not contribute to this expression as $\pa_R^{\ell_3}\pa_y^{b-\ell}(qT)$ vanishes at the vacuum boundary for all $\ell_3\le a$ and $\ell\le b$ (compare~\eqref{E:LITTLEQDEF} and Remark~\ref{REM:SSDERIVS}).
For any $\ell\in\{0,\dots,b\}$, $\ell_1\le a$, we use~Proposition~\ref{P:KEY}  and obtain
\begin{align}
&|\PP_m[\pa_R^{a-\ell_1}\pa_y^{b-\ell}(qT), \pa_R^{\ell_1}\pa_y^\ell\left( \fhatm S_m \right) ]| \le \int_{\J_R}|\pa_R^{a-\ell_1}\pa_y^{b-\ell}(qT)| | \pa_R^{\ell_1}\pa_y^\ell\big(\fhat_{0,m} S_m\big)| \diff(y,z) \notag\\
& \le C \|f_0\|_{C^{\ell+\ell_1}(\oom)}  \int_{\J_R} \big((C|m|)^{\ell+\ell_1}(C^2\dE)^{\max\{\frac{|m|-\ell-\ell_1}{2},0\}}\chardelta+1\big)\diff(y,z)\notag\\
& \le  C \|f_0\|_{C^{\ell+\ell_1}(\oom)} \big((C|m|)^{\ell+\ell_1}(C^2\de)^{\max\{\frac{|m|-\ell-\ell_1}{2},0\}}+1\big), \label{E:PTDECAY0}
\end{align}
where we have used $\|qT\|_{C^\reg(\bar{\Upsilon})}\leq C$ by~\eqref{E:QREG} in the second line and the bound $\dE\le \delta$ in the last.
Therefore
\begin{align}
&\sum_{m\in\Z^\ast}\frac1{|m|^{b+1}}\big| \sum_{\ell=0}^{b}\sum_{\ell_1+\ell_2+\ell_3=a} {b \choose \ell} {a \choose \ell_1,\ell_2,\ell_3} \PP_m[\pa_R^{\ell_3}\pa_y^{b-\ell}(qT), \pa_R^{\ell_1}\pa_y^\ell\left( \fhat_{0,m} S_m \right) ]\big| \notag\\
& \le
C  \|f_0\|_{C^{\ell+\ell_1}(\oom)}  \sum_{m\in\Z^\ast}\frac1{|m|^{b+1}} \sum_{j=0}^{\reg} \big((C|m|)^{j}(C^2\de)^{\max\{\frac{|m|-j}{2},0\}}+1\big)\leq C  \|f_0\|_{C^{\ell+\ell_1}(\oom)},
\end{align}
provided $C^2\de<1$. Combining this with~\eqref{E:PTDECAY00} and~\eqref{E:PTDECAY0} we obtain the claim.
\end{proof}

\section{Resolvent potential estimates}\label{S:RESOLVENTBOUNDS}


We assume throughout this section that we are working with a steady state $\fmn$ satisfying~\eqref{E:SS}--\eqref{E:SS2} such that $\eta\in[0,\eta_0)$, where $\eta_0$ is a 
sufficiently small constant so that the theory developed in Sections~\ref{S:AA}--\ref{S:REGULARITY} applies. 
Moreover, we denote by $f$ the solution to the initial value problem~\eqref{E:FULLLIN}--\eqref{E:FULLLININITIAL} with data satisfying the orthogonality condition~\eqref{E:FINORTH}
and $f_0\in C^{\n}(\oom)$, for $k\ge1$. We recall that~\eqref{E:FINORTH} implies that $\fhat_0(0,\bI)=0$, $\bI\in\I$.

\subsection{Gravitational (resolvent) potentials}
The goal of this section is to derive a representation formula for derivatives $\pa_R^a\pa_\la^b\Ueps$ of the gravitational resolvent potentials that is suitable for deriving estimates. This will be achieved below in Proposition~\ref{P:HORES}.

We first derive a  formula for $\big(\L+\la\pm i\eps)^{-1}\widehat{\fin}$ which is the first step towards resolvent potential identities. We recall that $\sL$ is given by~\eqref{E:SIGMALDEF}.


\begin{lemma}\label{L:RESOLVENTFORMULA}
Let $(\l,\k)\in\sL\times \mathbb R_+$ be given. We introduce the notation
\begin{align}\label{E:FDEF0}
\widehat{f^\pm_\eps}:=(\L+\la\pm i\eps)^{-1}\widehat{f}_0, \ \ \Ueps(R;\l) : = U_{|\varphi'|f^\pm_\eps}(R;\l), \ \ R\in[\Rmin,\Rmax],
\end{align}
where we recall the operator $\L$ from~\eqref{DEF:L}.
Then the following formula holds:
\beq\label{def:Fpm}
\widehat{f^\pm_\eps}(m,\bI;\la)=\frac{\widehat{f}_0(m,\bI)}{2\pi m\om(\bI)+\la\pm i\eps}-\frac{\eta \om(\bI)\widehat{\Ueps}(m,\bI,\la)}{\om(\bI)+ \frac{\la\pm i\eps}{2\pi m}}, \  (m,\bI)\in\Z_\ast\times\I.
\eeq
\end{lemma}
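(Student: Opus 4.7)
The proof is a direct algebraic manipulation starting from the defining resolvent identity. The plan is as follows.

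First, I would unpack the definition~\eqref{E:FDEF0}: the relation $\widehat{f^\pm_\eps}=(\L+\la\pm i\eps)^{-1}\widehat{f}_0$ is equivalent to
\beqs
(\L+\la\pm i\eps)\widehat{f^\pm_\eps}=\widehat{f}_0
\eeqs
on $\Z_\ast\times\I$. Applying the explicit form~\eqref{DEF:L} of $\L$ and noting that by the definition~\eqref{E:FDEF0} of $\Ueps$ the quantity $\widehat{U_{|\varphi'|f^\pm_\eps}}(m,\bI)$ coincides with $\widehat{\Ueps}(m,\bI;\la)$, this becomes the pointwise (in $(m,\bI)$) scalar relation
\beqs
2\pi m\om(\bI)\bigl(\widehat{f^\pm_\eps}(m,\bI)+\eta\widehat{\Ueps}(m,\bI;\la)\bigr)+(\la\pm i\eps)\widehat{f^\pm_\eps}(m,\bI)=\widehat{f}_0(m,\bI).
\eeqs

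Second, I would collect the $\widehat{f^\pm_\eps}$ terms on the left and move the $\Ueps$ contribution to the right:
\beqs
\bigl(2\pi m\om(\bI)+\la\pm i\eps\bigr)\widehat{f^\pm_\eps}(m,\bI)=\widehat{f}_0(m,\bI)-2\pi m\om(\bI)\,\eta\,\widehat{\Ueps}(m,\bI;\la).
\eeqs
Since $\la\in\sL\subset\R$ and $\eps>0$, the factor $2\pi m\om(\bI)+\la\pm i\eps$ has nonzero imaginary part for all $(m,\bI)\in\Z_\ast\times\I$, so division is legitimate and yields
\beqs
\widehat{f^\pm_\eps}(m,\bI;\la)=\frac{\widehat{f}_0(m,\bI)}{2\pi m\om(\bI)+\la\pm i\eps}-\frac{2\pi m\,\eta\,\om(\bI)\,\widehat{\Ueps}(m,\bI;\la)}{2\pi m\om(\bI)+\la\pm i\eps}.
\eeqs

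Finally, to match the form stated in the lemma, I would divide numerator and denominator of the second term by $2\pi m$, producing $\om(\bI)+\frac{\la\pm i\eps}{2\pi m}$ in the denominator and $\eta\om(\bI)\widehat{\Ueps}$ in the numerator. This is exactly~\eqref{def:Fpm}. I do not anticipate any real obstacle: the only point worth flagging is that the scalar denominators do not vanish thanks to the $\pm i\eps$ regularisation, so the manipulation is carried out pointwise in $(m,\bI)$ without any need to invoke the operator-theoretic meaning of $(\L+\la\pm i\eps)^{-1}$ beyond its definition.
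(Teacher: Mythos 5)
Your proposal is correct and matches the paper's own proof: both rewrite the resolvent identity $(\L+\la\pm i\eps)\widehat{f^\pm_\eps}=\widehat{f}_0$ pointwise in $(m,\bI)$ using the explicit form of $\L$, then solve algebraically for $\widehat{f^\pm_\eps}$. Your extra remark that the $\pm i\eps$ regularisation makes the scalar denominators nonvanishing is a sensible observation that the paper leaves implicit.
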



\begin{proof}
The proof is trivial and follows by rewriting~\eqref{E:FDEF0} in the form
\beq\label{E:FPMFORMULA}
2\pi m \om(\bI)\big(\widehat{f^\pm_\eps}(m,\bI;\la)+\eta\widehat{\Ueps}(m,\bI;\la)\big)+(\la\pm i\eps)\widehat{f^\pm_\eps}(m,\bI,\la)=\widehat{\fin}(m,\bI)
\eeq
for all $(m,\bI)\in\Z_*\times \I$. 
\end{proof}


\begin{lemma}\label{L:UOUTERRIM}
For any $(\l,\k)\in\sL\times \mathbb R_+$, the resolvent potential $U^\pm_\k(\cdot;\l)$ vanishes on the outer rim of the galaxy, i.e.
\begin{align}
U^\pm_\eps(\Rmax;\l) = 0, \ \ (\l,\k)\in\sL\times \mathbb R_+,
\end{align}
and hence, for $R\in[\Rmin,\Rmax]$,
\beq
U^\pm_\eps(R;\l)=-\int_{R}^{R_{\max}}\pa_r\Ueps(r;\l)\,\diff r.
\eeq
\end{lemma}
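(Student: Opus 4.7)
The plan is to reduce the boundary vanishing of $U^\pm_\eps$ at $\Rmax$ to a statement about the total mass of $|\varphi'|f^\pm_\eps$, and then read that statement off directly from the zero-mode of the resolvent equation together with the orthogonality condition on $f_0$.

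More precisely, by Remark~\ref{R:BDRYPOT} applied to the density $|\varphi'|f^\pm_\eps$, the potential $U^\pm_\eps(r)$ equals $c/r$ for all $r\ge \Rmax$, where
\[
c=-4\pi^2\int_{\Omega} |\varphi'(\bI)|f^\pm_\eps(r,w,L;\l)\,\diff(r,w,L).
\]
Thus it suffices to show $c=0$. Converting to action-angle coordinates via $\diff(r,w,L)=T(\bI)\diff(\theta,\bI)$ and performing the $\theta$-integral, we have
\[
\int_{\Omega}|\varphi'|f^\pm_\eps\,\diff(r,w,L)=\int_{\I}|\varphi'(\bI)|T(\bI)\,\widehat{f^\pm_\eps}(0,\bI;\l)\,\diff\bI,
\]
so everything reduces to proving that the zero-$\theta$-mode of $f^\pm_\eps$ vanishes identically in $\bI$.

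For this, I will take the defining relation~\eqref{E:FPMFORMULA} (equivalently, applying $(\L+\l\pm i\eps)$ to $\widehat{f^\pm_\eps}$ and reading off each Fourier mode) and evaluate it at $m=0$. The $m=0$ reduction of~\eqref{E:FPMFORMULA} kills the $2\pi m\om(\bI)$ factor multiplying $\widehat{f^\pm_\eps}+\eta\widehat{\Ueps}$ and leaves
\[
(\l\pm i\eps)\,\widehat{f^\pm_\eps}(0,\bI;\l)=\widehat{f_0}(0,\bI).
\]
The right-hand side vanishes by the orthogonality condition~\eqref{E:FINORTH}, equivalently~\eqref{E:AVERAGEZERO}. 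On the left-hand side, the coefficient $\l\pm i\eps$ is nonzero since $\l\in\sL$ (so $|\l|\ge \lmin>0$ by~\eqref{E:LAMBDAMINDEF}) and $\eps>0$. Hence $\widehat{f^\pm_\eps}(0,\bI;\l)\equiv 0$, which gives $c=0$ and therefore $U^\pm_\eps(r;\l)=0$ for every $r\ge \Rmax$; in particular $U^\pm_\eps(\Rmax;\l)=0$.

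The integral representation is then immediate from the fundamental theorem of calculus: for $R\in[\Rmin,\Rmax]$,
\[
U^\pm_\eps(R;\l)=U^\pm_\eps(\Rmax;\l)-\int_R^{\Rmax}\pa_r U^\pm_\eps(r;\l)\,\diff r=-\int_R^{\Rmax}\pa_r U^\pm_\eps(r;\l)\,\diff r.
\]
There is no real obstacle here; the only mild subtlety is justifying the passage between $(r,w,L)$ and action-angle coordinates for $f^\pm_\eps$ and the use of the zero-mode identity, both of which are routine given the functional-analytic setup of Section~\ref{SS:STONE}.
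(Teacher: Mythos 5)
Your proof is correct and takes essentially the same route as the paper's: reduce the boundary value of $\Ueps$ at $\Rmax$ to the statement that the weighted mass $\int_\Om |\varphi'|f^\pm_\eps$ vanishes, and derive the latter from the $\theta$-averaged resolvent equation together with the orthogonality of $f_0$. The paper reaches the same conclusion by writing the resolvent identity in $(r,w,L)$-variables and integrating over $\Om$ (the transport term drops out since $\T$ is a divergence-free flow tangent to $\pa\Om$); this is the same computation as yours, seen from the phase-space side rather than the Fourier side. One small technical point: you invoke~\eqref{E:FPMFORMULA} at $m=0$, whereas as stated that identity holds only for $m\in\Z_*$; the shortest justification of $\widehat{f^\pm_\eps}(0,\bI)\equiv 0$ is simply that $\widehat{f^\pm_\eps}=(\L+\l\pm i\eps)^{-1}\widehat f_0$ lies in the Hilbert space $\mathcal H$, which by construction carries only the modes $m\in\Z_*$ (equivalently, the resolvent is taken on the invariant subspace $\mathrm{ran}\,\T$), although the extension of~\eqref{E:FPMFORMULA} to $m=0$ that you perform is of course easily verified directly.
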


\begin{proof}
Equation~\eqref{E:FPMFORMULA} in $(R,w,L)$-variables reads
\beq\label{eq:Fresolventtransport}
\T\big(f^\pm_\eps(R,w,L;\l)+\eta U^{\pm}_{\eps}\big)+(\la\pm i\eps)f^\pm_\eps(R,w,L;\la)=\fin(R,w,L).
\eeq
We integrate over $\Om$ (recall that for each fixed $\eps>0$, the resolvents $f^\pm_\eps$ are regular) to obtain
\begin{align*}
(\la\pm i\eps)\int_\Om f^\pm_\eps(r,w,L;\la)\diff (r,w,L)=\int_\Om \fin(r,w,L)\diff (r,w,L) = M[\fin] = 0,
\end{align*}
by~\eqref{E:ZEROMASS}.
From Lemma~\ref{L:REPRESENTATIONS} and Remark~\ref{R:BDRYPOT} we thus conclude that for any $\l\in\sL$ we have
\begin{align*}
U^\pm_\eps(\Rmax;\l) =  - \frac{4\pi^2}{\Rmax} \int_\Om f^\pm_\eps (r,w,L) \diff (r,w,L) = - \frac{4\pi^2M[\fin]}{(\l\pm i\k)\Rmax}=0.
\end{align*}
\end{proof}


Our next goal is to derive a formal expression for the gradient of the gravitational potential along the flow. To that end 
we introduce the Plemelj singular operator, which will play a key role in our analysis.
\begin{definition}[Plemelj singular integral]
For any $(m,\l,\eps)\in \Z\times \sL\times \mathbb R_+$ and any $Q,\mathcal V\in C^0(\overline{\Upsilon})\cap C^1(\Upsilon)$ (recall $\Upsilon$ from~\eqref{def:Upsilon}), we formally introduce the operator
\begin{align}\label{E:PLEMDEF}
\PL[Q,\mathcal V](R) = \int_{\J_R} \frac{Q(R,\tilde\bI) \mathcal V(R,\tilde\bI)}{y+\frac{\l\pm i\eps}{2\pi m}} \diff (y,z).
\end{align}
\end{definition} 
This is a variant of the classical Plemelj singular integral -- the placeholder notation $Q$ and $\mathcal V$  distinguishes the regular contribution $Q$ from the
possibly singular contribution $V$. 


The operators $\PL$ act on functions $Q,\mathcal V$ seen as functions of $(R,y,z)$-variables. We will often use it in the setting where
$\mathcal V$ is in fact a mixed higher-order $(R,y)$-derivative of a product of the form $ \widehat V_m(\bI)S_m(\th(R,\bI))$. Note that in this case
$\bI=\bI(R,y,z)$ is viewed as a function of $(R,y,z)$.


For future use we record here the classical approximation of unity identity
\begin{align}
2\pi i g(x)&=-\lim_{\eps\to0}\int_\R\big[\frac{g(y)}{y-x+ i\eps }-\frac{g(y)}{y-x- i\eps }\big] \diff y 
=  \lim_{\eps\to0}\int_\R\frac{2i\eps g(y)}{(y-x)^2+\eps^2}\diff y \label{E:AI}, \ \ g\in L^1(\mathbb R).
\end{align}



\begin{lemma}\label{L:PARTIALRUEPSOP}
For any $(\l,\k,R)\in\sL\times \mathbb R_+\times[\Rmin,\Rmax]$, the resolvent gravitational force $\pa_R U^\pm_\eps$, satisfies 
\beq\label{E:PARTIALRUEPSOP}
\pa_R U^\pm_\eps(R;\la) = \frac2{R^2} \sum_{m\in\Z_\ast} \frac1{m^2}\PL[qy^{-1}, \fhatm S_m]
-\frac{4\pi\eta}{R^2}\sum_{m\in \Z_*}\frac1{m} \PL[q, {\wUepsm} S_m],
\eeq
where we recall $q(R,\bI)$ defined in~\eqref{E:LITTLEQDEF}.
\end{lemma}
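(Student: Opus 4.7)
The plan is to combine the explicit representation of $\pa_R U_f$ from Lemma~\ref{L:REPRESENTATIONS} with the resolvent identity~\eqref{def:Fpm} from Lemma~\ref{L:RESOLVENTFORMULA}, and then convert the resulting $\bI$-integrals to $(y,z)$-integrals so that the Plemelj operator $\PL$ appears naturally. For each fixed $\eps>0$ the resolvents $f^\pm_\eps$ belong to $\mathscr H$, so all manipulations below are justified by Plancherel and Fubini on the product space $\mathbb S^1\times\I$.

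First, applying~\eqref{E:PARTIALRUFFORMULA} with density $|\varphi'| f^\pm_\eps$, unfolding $(r,w,L)\leftrightarrow(\th,\bI)$ via the measure identity $\diff(r,w,L)=T(\bI)\diff(\th,\bI)$, and invoking Plancherel in $\th\in\mathbb S^1$ together with the explicit formula~\eqref{E:GREENFOURIER} for $\widehat{h_R}$, one obtains
\beqas
\pa_R U^\pm_\eps(R;\la) = \frac{4\pi}{R^2}\sum_{m\in\Z_*}\frac{1}{m}\int_{\I\cap\{E\geq\Psi_L(R)\}} |\varphi'(\bI)|\,\widehat{f^\pm_\eps}(m,\bI;\la)\,S_m(\th(R,\bI))\,T(\bI)\diff\bI.
\eeqas
Here we used that $\widehat{h_R}$ is real-valued together with the cutoff $\chi_{E\geq\Psi_L(R)}$ restricting the $\bI$-domain.

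Next I would substitute~\eqref{def:Fpm} into the integrand and clear the factor $2\pi m$ from the first denominator to write
\beqas
\widehat{f^\pm_\eps}(m,\bI;\la) = \frac{1}{2\pi m}\cdot\frac{\widehat{f_0}(m,\bI)}{\om(\bI)+\frac{\la\pm i\eps}{2\pi m}} - \frac{\eta\,\om(\bI)\widehat{\Ueps}(m,\bI;\la)}{\om(\bI)+\frac{\la\pm i\eps}{2\pi m}}.
\eeqas
The final step is the change of variables $(R,\bI)\to(R,y,z)$ from Lemma~\ref{L:RYZ}, which on the domain $\I\cap\{E\geq\Psi_L(R)\}$ has Jacobian $|\P_R\om|^{-1}$ and sends $\bI$-integration to integration over $\J_R$. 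Using the identities $T(\bI)=1/y$ and $|\varphi'(\bI)|/|\P_R\om|=q(R,\bI)$ from~\eqref{E:LITTLEQDEF}, the first summand produces $q T/(2\pi m)=q/(2\pi m y)$, while the second produces $qTy\cdot(-\eta)=-\eta q$. Collecting the two contributions yields
\beqas
\pa_RU^\pm_\eps(R;\la) = \frac{4\pi}{R^2}\sum_{m\in\Z_*}\frac{1}{m}\Bigl[\frac{1}{2\pi m}\PL[qy^{-1},\fhatm S_m] - \eta\,\PL[q,\wUepsm S_m]\Bigr],
\eeqas
which is exactly~\eqref{E:PARTIALRUEPSOP} after absorbing the $1/(2\pi)$ into the prefactor $2/R^2$ of the first sum.

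No obstacle is anticipated here since for $\eps>0$ all integrals converge absolutely and the denominators $y+(\la\pm i\eps)/(2\pi m)$ have modulus bounded below by $\eps/(2\pi|m|)$; the proposition is essentially a bookkeeping exercise. The only mildly delicate point is ensuring that the change of variables preserves the correct orientation and that $\J_R$ correctly parametrises the image of $\I\cap\{E\geq\Psi_L(R)\}$, which is provided by Lemma~\ref{L:RYZ} and the definition~\eqref{def:JR} of $\J_R$.
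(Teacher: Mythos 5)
Your proposal is correct and takes essentially the same route as the paper's proof: express $\pa_R U^\pm_\eps$ via the Green's function formula and Plancherel in $\th$, substitute the resolvent identity~\eqref{def:Fpm}, and change variables $(R,\bI)\to(R,y,z)$ with Jacobian $|\P_R\om|^{-1}$ to produce $q$ and recognise the Plemelj operators. You merely spell out the final $(y,z)$-bookkeeping more explicitly than the paper does, and both computations agree.
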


We interchangeably use the notation $\wUepsm$ to denote the $m$-th Fourier coefficient $\widehat{\Ueps}(m,\bI)$
by analogy to the notation $\fhat_{0,m}$ for $\fhat_0(m,\bI)$.


\begin{proof}
We begin by recalling from Lemma~\ref{L:REPRESENTATIONS} that
\begin{align}
\pa_R \Ueps(R;\la)&=\frac{4\pi^2}{R^2}\int_\Om |\varphi'|f^\pm_\eps(s,w,L;\la) h_R(s,w,L)\,\diff(s,w,L) \notag\\
& =\frac{4\pi^2}{R^2}\sum_{m\in\Z_*}\int_\I|\varphi'|\widehat{f^\pm_\eps}(m,\bI;\la)\overline{\widehat{h_R}(m,\bI)} T(\bI)\diff \bI \notag\\
& = \frac{4\pi^2}{R^2}\sum_{m\in\Z_*}\frac1{\pi m}\int_\I |\varphi'|\widehat{f^\pm_\eps}(m,\bI;\la)S_m(\th(R,\bI)) T(\bI)\diff \bI,
\end{align}
where we have used the identity $\widehat{h_R}(m,\bI)=\frac1{\pi m}S_m(\th(R,\bI))$, $m\in\Z_\ast$ (cf.~Lemma~\ref{L:REPRESENTATIONS}).
Substituting in~\eqref{def:Fpm}, we find
\begin{align}\label{E:PARUFORM}
\pa_R \Ueps(R;\la)=\frac{4\pi^2}{R^2}\sum_{m\in\Z_*}\int_\I\Big(\frac{1}{2\pi^2 m^2}\frac{|\varphi'(\bI)|\fhat_0(m,\bI)}{\om(\bI)+\frac{\la\pm i\eps}{2\pi m}}-\frac1{\pi m}\frac{\eta\om(\bI)|\varphi'(\bI)|\widehat{\Ueps}(m,\bI,\la)}{\om(\bI)+\frac{\la\pm i\eps}{2\pi m}}\Big)S_m(\th(R,\bI))\,T(\bI)\,\diff \bI,
\end{align}
which gives the claim upon changing variables $\bI\to(y,z)$ as defined in Section~\ref{S:YZ} and recalling~\eqref{E:PLEMDEF}.
\end{proof}

Lemma~\ref{L:PARTIALRUEPSOP} suggests that we consider the following nonlocal operator $\FPM: C^1_R\to C^1_R$,
\begin{align}\label{E:FDEF}
\FPM[V](R) = V(R) -\eta \sum_{m\in\Z_\ast}\frac1m\int_R^{\Rmax}\frac{4\pi}{r^2} \PL[q, \widehat V_m S_m] \diff r.
\end{align}
Upon integrating~\eqref{E:PARTIALRUEPSOP} in $R$ (recalling Lemma~\ref{L:UOUTERRIM}), we conclude that
\begin{align}\label{E:FUEPS}
\FPM[\Ueps](R)= -\sum_{m\in\Z_\ast} \frac1{m^2} \int_R^{\Rmax} \frac2{r^2} \PL[qy^{-1}, \fhatm S_m] \diff r,
\end{align}
which therefore allows us to study the resolvent potential $\Ueps$ through invertibility properties of the operator $\FPM$. Recall here that 
$q$ is defined in~\eqref{E:LITTLEQDEF}.


\begin{lemma}\label{L:PARPAL1}
Let $a,b\in\mathbb N_0$ be given. Assume that $Q\in C^{a+b}(\overline{\Upsilon})$ is such that, for each $R\in[\Rmin,\Rmax]$, all derivatives $\pa_y^n\pa_R^sQ(R,y,z)$ with $n\leq b$, $s\leq a$ vanish at $\Jvac$ and let $V:\overline{\Upsilon}\times\sL\to\mathbb{C}$  be such that, for all $j+\ell\leq b$, $s\leq a$, we have $\pa_y^\ell\pa_R^s\pa_\la^j V\in C^0(\overline{\Upsilon}\times\sL)$.  Then 
\begin{align}
&\pa_R^a\pa_\l^b \PL[Q, V]  =\sum_{s=0}^a \sum_{\ell+j+n=b} {a \choose s } {b \choose \ell,j,n} \frac{(-1)^{b-j}}{(2\pi m)^{b-j}}  \PL[\pa_y^{n}\pa_R^{a-s}Q,\pa_y^\ell\pa_R^s\big(\pa_\l^jV\big)],\label{E:PARL}
\end{align}
where we have used the notation ${b \choose \ell,j,n}=\frac{b!}{\ell ! j ! n!}$.
\end{lemma}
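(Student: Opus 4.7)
The plan is to handle the $R$- and $\l$-derivatives in succession, using the identity
\[
\pa_\l\Big[\frac{1}{y+\frac{\l\pm i\eps}{2\pi m}}\Big]=\frac{1}{2\pi m}\pa_y\Big[\frac{1}{y+\frac{\l\pm i\eps}{2\pi m}}\Big]
\]
to convert $\l$-derivatives on the Plemelj kernel into $y$-derivatives, followed by integration by parts in $y$.

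The first step is to treat $\pa_R^a$. Although the region $\J_R\subset\R^2$ depends on $R$ through the $(y,z)\leftrightarrow(E,L)$ change of variables, by~\eqref{E:JRMOVINGBDRY} the portion $\pa\J_R\setminus\pa\J_R^{\text{vac}}$ lies in the fixed line $\{z=0\}$, while the remaining boundary is $\pa\J_R^{\text{vac}}$. Applying the Reynolds transport theorem, the contribution from the $\{z=0\}$ portion vanishes because the outward unit normal $-\hat z$ is orthogonal to the velocity of any boundary point as $R$ varies (such motion takes place within $\{z=0\}$), while the contribution from $\pa\J_R^{\text{vac}}$ vanishes by the hypothesis $\pa_R^s Q\equiv 0$ on $\pa\J_R^{\text{vac}}$ for $s\leq a$. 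Iterating the resulting product rule yields
\[
\pa_R^a \PL[Q,V]=\sum_{s=0}^a \binom{a}{s}\PL[\pa_R^{a-s}Q,\pa_R^s V].
\]

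Next, I would apply $\pa_\l^b$ to each summand. Expanding by Leibniz on the integrand and using the displayed identity above to pass $\pa_\l^{b-j}$ onto the kernel yields a factor $(2\pi m)^{-(b-j)}\pa_y^{b-j}$ acting on $[y+\frac{\l\pm i\eps}{2\pi m}]^{-1}$; integrating by parts $b-j$ times in $y$ at each fixed $z>0$ produces the sign factor $(-1)^{b-j}$. By Remark~\ref{R:YIBP}, the resulting $y$-boundary terms lie on $\pa\J_R^{\text{vac}}$, where $\pa_y^n\pa_R^{a-s}Q$ vanishes by hypothesis for $n\leq b$, $s\leq a$, and so these terms drop. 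A final Leibniz expansion $\pa_y^{b-j}[\pa_R^{a-s}Q\cdot\pa_\l^j\pa_R^s V]=\sum_{\ell+n=b-j}\binom{b-j}{n,\ell}(\pa_y^n\pa_R^{a-s}Q)(\pa_y^\ell\pa_\l^j\pa_R^s V)$ and consolidation of the binomial coefficients via $\binom{b}{j}\binom{b-j}{n,\ell}=\binom{b}{\ell,j,n}$ yields the claimed identity. The main technical issue is justifying the absence of boundary terms, which is handled uniformly by the hypotheses on $Q$ together with the fixed-$z=0$ geometry of $\pa\J_R\setminus\pa\J_R^{\text{vac}}$.
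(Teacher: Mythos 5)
Your proof is correct and follows essentially the same strategy as the paper: convert $\la$-derivatives of the Plemelj kernel into $y$-derivatives, integrate by parts in $y$ with boundary terms killed by the vanishing of $Q$ and its derivatives on $\Jvac$ (Remark~\ref{R:YIBP}), and handle the $R$-derivatives via the Leibniz rule with the moving-boundary contribution vanishing because of~\eqref{E:JRMOVINGBDRY} and the hypotheses on $Q$. The only difference is the order of operations (you take $\pa_R^a$ first, the paper takes $\pa_\la^b$ first) and that you spell out the Reynolds transport step more explicitly; both orders lead to the same combinatorial consolidation.
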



\begin{proof} 
We first observe the simple identity that for any $b\in\mathbb N$ we have
\begin{align}
\pa_\l^b\Big(\frac{V}{y+\frac{\l\pm \eps}{2\pi m}}\Big) = \sum_{j=0}^b (-1)^{b-j} (b-j)! {b \choose j} \frac{1}{(2\pi m)^{b-j}} \pa_\l^j V (y+\frac{\l\pm i\eps}{2\pi m})^{-(b-j+1)}. \label{E:PALHIGH1}
\end{align} 
Thus,  
\begin{align}
&\pa_\l^b\PL[Q,V] = \int_{\J_R} Q \pa_\l^b\big(\frac{ V}{y+\frac{\l\pm i\eps}{2\pi m}}\big) \diff (y,z) \notag\\
& =  \sum_{j=0}^b (-1)^{b-j} (b-j)! {b \choose j} \frac{1}{(2\pi m)^{b-j}} \int_{\J}Q \pa_\l^j  V (y+\frac{\l\pm i\eps}{2\pi m})^{-(b-j+1)}\diff (y,z) \notag \\
& =\sum_{j=0}^b (-1)^{b-j} {b \choose j} \frac{1}{(2\pi m)^{b-j}}  \sum_{\ell=0}^{b-j}{b-j\choose \ell}\PL[\pa_y^{b-j-\ell}Q,\pa_y^\ell\pa_\l^j V],
\end{align}
where in the last line, we have used
 \[ (-1)^{b-j}(b-j)!(y+\frac{\l\pm i\eps}{2\pi m})^{-(b-j+1)}=\pa_y^{b-j}(y+\frac{\l\pm i\eps}{2\pi m})^{-1}\]
 and have recalled from Remark~\ref{R:YIBP} that, for each $z_0\ge0$, the level set $\{z=z_0\}$ is a smooth curve in $\J_R$ with both end-points on $\Jvac$ to see the vanishing of the boundary terms in the integration by parts due to the assumptions on $Q$.
We also note that 
${b \choose j}{b-j\choose \ell} = {b \choose j, \ell, b-j-\ell}$. A simple application of this identity and the Leibniz rule then gives
\begin{align}
&\pa_R^a\pa_\l^b \PL[Q,V] = \sum_{s=0}^a {a \choose s } \pa_\l^b\PL[\pa_R^{a-s}Q,\pa_R^sV] \notag\\
&=\sum_{s=0}^a {a \choose s } \sum_{\ell+j+n=b} {b \choose \ell,j,n} \frac{(-1)^{b-j}}{(2\pi m)^{b-j}}  \PL[\pa_y^{n}\pa_R^{a-s}Q,\pa_y^\ell\pa_R^s\pa_\l^j V],\notag
\end{align}
thus showing~\eqref{E:PARL}. We note that the $R$-dependent boundary $\pa\J_R$ does not contribute to this expression as $\pa_R^{a-s}Q$ vanishes at the vacuum boundary for all $s\le a$, compare~\eqref{E:JRMOVINGBDRY}.
\end{proof}


As a simple corollary of the previous lemma and Lemma~\ref{L:PARTIALRUEPSOP} we derive a formula for the high order derivatives of the resolvent potential, where we recall that, for each $\eps>0$, the resolvent operators are locally analytic in $\la$.


\begin{prop}\label{P:HORES}
For any $a,b\in\mathbb N_0$, $a\ge1$, such that $a+b\le\reg$ (recall the regularity index $\reg$ defined in~\eqref{E:REGDEFMAIN}) we have, for each $\eps>0$ and $R\in[\Rmin,\Rmax]$, $\la\in\sL$, the identity
\begin{align}\label{E:ABKEY}
\pa_R^a\pa_\l^b\Ueps =  \mathcal S_{a,b}^{\pm,\l,\eps}[f_0] + \mathcal R^{\pm,\l,\eps}_{a,b}[\Ueps],
\end{align}
where
\begin{align}\label{E:SDEF}
\mathcal S_{a,b}^{\pm,\l,\eps}[f_0]=& \frac2{R^2}\sum_{m\in\Z_\ast}\frac1{m^2}  \sum_{s=0}^{a-1} \sum_{\ell=0}^b c^{a-1;b}_{s;\ell}\frac{(-1)^{b}}{(2\pi m)^{b}}  \PL[\pa_y^{b-\ell}\pa_R^{a-1-s}(qy^{-1}),\pa_y^\ell\pa_R^s\big(\fhatm S_m\big)]  
\end{align}
and
\begin{align}
&\mathcal R^{\pm,\l,\eps}_{a,b}[V] = - 2(a-1)\frac1R \pa_R^{a-1}\pa_\l^bV  - (a-1)(a-2) \frac1{R^2}  \pa_{R}^{a-2}\pa_\l^bV \notag\\
&- \frac{4\pi \eta}{R^2} \sum_{m\in \Z_*}\frac1{m} \sum_{0\le s \le a-1\atop \ell+j+n=b} c^{a-1;b}_{s;\ell,j,n} \frac{(-1)^{b-j}}{(2\pi m)^{b-j}}  \PL[\pa_y^{n}\pa_R^{a-1-s}q,\pa_y^\ell\pa_R^s\big(\pa_\l^j \Vhat_m S_m\big)],
\label{E:RDEF}
\end{align}
and we recall~\eqref{E:LITTLEQDEF}.
Here we have used the abbreviations
\[
c^{a-1;b}_{s;\ell}:={a-1 \choose s } {b \choose \ell}, \ \ c^{a-1;b}_{s;\ell,j,n}:={a-1 \choose s } {b \choose \ell,j,n}.
\]
\end{prop}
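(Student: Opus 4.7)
The plan is to apply $\pa_R^{a-1}\pa_\l^b$ directly to the identity of Lemma~\ref{L:PARTIALRUEPSOP}, combining an elementary Leibniz manipulation of the prefactor $R^{-2}$ with the general derivative formula for Plemelj integrals given by Lemma~\ref{L:PARPAL1}. The appearance of the explicit ``geometric'' correction terms $-\frac{2(a-1)}{R}\pa_R^{a-1}\pa_\l^b V$ and $-\frac{(a-1)(a-2)}{R^2}\pa_R^{a-2}\pa_\l^b V$ in $\mathcal R^{\pm,\l,\eps}_{a,b}$ indicates that it is cleaner to first multiply~\eqref{E:PARTIALRUEPSOP} through by $R^2$ and only then differentiate, rather than to distribute the $R^{-2}$ factor by Leibniz from the outset.

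Concretely, I rewrite~\eqref{E:PARTIALRUEPSOP} as
\begin{equation*}
R^2 \pa_R \Ueps(R;\l) \;=\; 2 \sum_{m\in\Z_\ast} \frac{1}{m^2}\, \PL[qy^{-1},\fhatm S_m] \;-\; 4\pi\eta \sum_{m\in\Z_\ast} \frac{1}{m}\, \PL[q, \wUepsm S_m],
\end{equation*}
and apply $\pa_R^{a-1}\pa_\l^b$ to both sides. Because $R^2$ depends neither on $\l$ nor on any $R$-derivatives of order $\geq 3$, the Leibniz rule collapses on the left-hand side to the three surviving terms
\begin{equation*}
\pa_R^{a-1}\pa_\l^b\bigl(R^2 \pa_R \Ueps\bigr) \;=\; R^2\, \pa_R^a \pa_\l^b \Ueps \;+\; 2(a-1) R\, \pa_R^{a-1}\pa_\l^b \Ueps \;+\; (a-1)(a-2)\, \pa_R^{a-2}\pa_\l^b \Ueps.
\end{equation*}
Dividing by $R^2$ and solving for $\pa_R^a \pa_\l^b \Ueps$ therefore produces exactly the first two explicit contributions to $\mathcal R^{\pm,\l,\eps}_{a,b}[\Ueps]$.

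For the right-hand side, I apply Lemma~\ref{L:PARPAL1} with parameters $(a-1,b)$ to each Plemelj integral. For the source term, $\fhatm S_m$ is independent of $\l$, so only $j=0$ survives in the triple-index sum; relabelling $n=b-\ell$ reproduces $\mathcal S^{\pm,\l,\eps}_{a,b}[f_0]$ verbatim. For the nonlocal term, $S_m$ is $\l$-independent, so $\pa_\l^j(\wUepsm S_m)=(\pa_\l^j\wUepsm) S_m$, which upon setting $\Vhat_m=\wUepsm$ gives precisely the third line of $\mathcal R^{\pm,\l,\eps}_{a,b}$. The only nontrivial verification is that the hypotheses of Lemma~\ref{L:PARPAL1} are met for $Q\in\{qy^{-1},q\}$ with parameters $(a-1,b)$: the vanishing of $\pa_y^n\pa_R^s Q$ on $\pa\J_R^{\text{vac}}$ for $n\le b$, $s\le a-1$ follows from Remark~\ref{REM:SSDERIVS} once one notes that $a+b\le\reg\le\alpha=\min\{\mu-1,\nu\}$, while the factors $1/|\P_R\om|$ and $1/\om$ are smooth and uniformly positive on $\overline\Upsilon$ and so do not affect the order of vanishing at the vacuum boundary. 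I anticipate no conceptual obstacle: the result is a careful bookkeeping combination of the two preceding lemmas, with the positivity of $\eps$ ensuring that all Plemelj integrals and their $(R,\l)$-derivatives are classically well-defined throughout.
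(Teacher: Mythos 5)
Your proposal is correct and follows essentially the same route as the paper's own proof: multiply \eqref{E:PARTIALRUEPSOP} by $R^2$, apply $\pa_R^{a-1}\pa_\l^b$, expand the left side by Leibniz (which collapses to three terms since $\pa_R^k(R^2)=0$ for $k\ge3$), apply Lemma~\ref{L:PARPAL1} with parameters $(a-1,b)$ to both Plemelj integrals, and observe that $j=0$ is the only surviving index for the $\l$-independent source $\fhatm S_m$. Your verification of the vanishing hypothesis via Remark~\ref{REM:SSDERIVS} and $a+b\le\reg\le\al$ matches the paper's remark that $q$ vanishes to order $\al\ge\reg$ at the vacuum boundary, and your appeal to $\eps>0$ for well-definedness mirrors the paper's invocation of Proposition~\ref{P:KEY} together with the regularity of $\Ueps$ for $\eps>0$.
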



\begin{proof}
From Lemma~\ref{L:PARTIALRUEPSOP} we have the identity
\begin{align}
R^2\pa_R \Ueps(R;\l) = 2 \sum_{m\in\Z_\ast} \frac1{m^2}\PL[qy^{-1},\fhat_{0,m}S_m]
-4\pi\eta\sum_{m\in \Z_*}\frac1{m} \PL[q, \wUepsm S_m].
\end{align}
Therefore from Lemma~\ref{L:PARPAL1} we conclude
\begin{align}
&\pa_R^{a-1}\pa_\l^b \big(R^2\pa_R U^\pm_\eps(r;\la)\big) \notag\\
&=  2\sum_{m\in\Z_\ast}\frac1{m^2}\pa_R^{a-1}\pa_\l^b \PL[qy^{-1},\fhat_{0,m}S_m]  
- 4\pi \eta \sum_{m\in \Z_*}\frac1{m}\pa_R^{a-1}\pa_\l^b \PL[q,\wUepsm S_m]  \notag\\
& = 2\sum_{m\in\Z_\ast}\frac1{m^2} \sum_{s=0}^{a-1} \sum_{\ell=0}^b {a-1 \choose s } {b \choose \ell} \frac{(-1)^{b}}{(2\pi m)^{b}}  \PL[\pa_y^{b-\ell}\pa_R^{a-1-s}(qy^{-1}),\pa_y^\ell\pa_R^s\big(\fhat_{0,m} S_m\big)] \notag\\
& \ \ \ \ - 4\pi \eta\sum_{m\in \Z_*}\frac1{m}\sum_{s=0}^{a-1} \sum_{\ell+j+n=b} {a-1 \choose s } {b \choose \ell,j,n} \frac{(-1)^{b-j}}{(2\pi m)^{b-j}}  \PL[\pa_y^{n}\pa_R^{a-1-s}q,\pa_y^\ell\pa_R^s\big(\pa_\l^j\wUepsm S_m\big)],\notag
\end{align}
which concludes the proof. Note that we used that $q$ defined in~\eqref{E:LITTLEQDEF} vanishes to order $\al\ge\reg$ at $\pa\J_R$, where $\al$ is as defined in~\eqref{def:alpha}. 
From Proposition~\ref{P:KEY} we obtained the required regularity of the product $\wUeps S_m$ with respect to $R$ and $y$ to apply Lemma~\ref{L:PARPAL1}, where we have recalled that for each $\eps>0$, the resolvent $\Ueps$ is regular.
\end{proof}


\begin{corollary}\label{C:PURELA}
For any $0\le b\le \reg$  we obtain the identity
\begin{align}\label{E:PURELA}
\FPM[\pa_\l^b\Ueps] & =-  \int_{R}^{\Rmax}\mathcal S^{\pm,\l,\eps}_{1,b}[f_0] \diff r  -\int_{R}^{\Rmax}\tilde{\mathcal R}^{\pm,\l,\eps}_{1,b}[\Ueps] \diff r \\
\Fminus[\pa_\l^b\Uepsplus-\pa_\l^b\Uepsminus] & =-  \int_{R}^{\Rmax}(\mathcal S^{+,\l,\eps}_{1,b}-\mathcal S^{-,\l,\eps}_{1,b})[f_0] \diff r 
-  \int_{R}^{\Rmax}(\mathcal R^{+,\l,\eps}_{1,b}-\mathcal R^{-,\l,\eps}_{1,b})[\Uepsplus] \diff r \notag\\
& \ \ \ \ -   \int_{R}^{\Rmax} \tilde{\mathcal R}_{1,b}^{-,\l,\eps}[\Uepsplus-\Uepsminus]\diff r, \label{E:PURELAPM}
\end{align}
where for any $a\ge1$ we let
\begin{align}
&\tilde{\mathcal R}^{\pm,\l,\eps}_{a,b}[V] = 
- \frac{4\pi \eta}{R^2} \sum_{m\in \Z_*}\frac1{m} \sum_{0\le s \le a-1\atop \ell+j+n=b, j<b} c^{a-1;b}_{s;\ell,j,n} \frac{(-1)^{b-j}}{(2\pi m)^{b-j}}  \PL[\pa_y^n\pa_R^{a-1-s}q,\pa_y^\ell\pa_R^s\big(\pa_\l^j \Vhat_m S_m\big)];
\label{E:TILDERDEF}
\end{align}
we recall the operators $\FPM$, $\mathcal S^{\pm,\l,\eps}_{1,b}$, and $\mathcal R^{\pm,\l,\eps}_{1,b}$ defined in~\eqref{E:FDEF},~\eqref{E:SDEF}, and~\eqref{E:RDEF} respectively and~\eqref{E:LITTLEQDEF}.
\end{corollary}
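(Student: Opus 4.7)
The plan is to start from the identity~\eqref{E:ABKEY} with $a=1$, integrate in $R$ using the boundary condition of Lemma~\ref{L:UOUTERRIM}, and recognize the top-order contribution of the resulting expression as the operator $\FPM$ evaluated at $\pa_\l^b\Ueps$.

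\emph{Step 1: isolating the top-order $\pa_\l^b$ term in $\mathcal R^{\pm,\l,\eps}_{1,b}$.} For $a=1$, the boundary correction terms in~\eqref{E:RDEF} vanish, so $\mathcal R^{\pm,\l,\eps}_{1,b}[\Ueps]$ reduces to a triple sum over $\ell+j+n=b$. The summand with $j=b$ forces $\ell=n=0$ and takes the explicit form
\begin{equation*}
-\frac{4\pi\eta}{R^2}\sum_{m\in\Z_*}\frac{1}{m}\,\PL\big[q,\pa_\l^b\wUepsm\,S_m\big],
\end{equation*}
which, upon integration in $r$ from $R$ to $\Rmax$, is exactly the nonlocal part of $\FPM$ in~\eqref{E:FDEF} acting on $\pa_\l^b\Ueps$. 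The remaining summands, with $j<b$, are by definition precisely $\tilde{\mathcal R}^{\pm,\l,\eps}_{1,b}[\Ueps]$ in~\eqref{E:TILDERDEF}.

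\emph{Step 2: integrating in $R$.} Since $\Ueps(\Rmax;\l)=0$ by Lemma~\ref{L:UOUTERRIM}, for each fixed $\eps>0$ we also have $\pa_\l^b\Ueps(\Rmax;\l)=0$, and integrating~\eqref{E:ABKEY} with $a=1$ from $R$ to $\Rmax$ yields
\begin{equation*}
\pa_\l^b\Ueps(R;\l)=-\int_R^{\Rmax}\mathcal S^{\pm,\l,\eps}_{1,b}[f_0]\,\diff r -\int_R^{\Rmax}\tilde{\mathcal R}^{\pm,\l,\eps}_{1,b}[\Ueps]\,\diff r -\int_R^{\Rmax}\!\!\big(\mathcal R^{\pm,\l,\eps}_{1,b}-\tilde{\mathcal R}^{\pm,\l,\eps}_{1,b}\big)[\Ueps]\,\diff r.
\end{equation*}
Moving the last integral to the left-hand side and applying the observation from Step 1, the left-hand side becomes exactly $\FPM[\pa_\l^b\Ueps](R)$, which proves~\eqref{E:PURELA}.

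\emph{Step 3: the difference identity.} To derive~\eqref{E:PURELAPM}, apply~\eqref{E:PURELA} separately with the $+$ and $-$ signs and subtract. The right-hand side immediately produces the source contribution $-\int_R^{\Rmax}(\mathcal S^{+,\l,\eps}_{1,b}-\mathcal S^{-,\l,\eps}_{1,b})[f_0]\,\diff r$ together with $-\int_R^{\Rmax}\big(\tilde{\mathcal R}^{+,\l,\eps}_{1,b}[\Uepsplus]-\tilde{\mathcal R}^{-,\l,\eps}_{1,b}[\Uepsminus]\big)\,\diff r$, which we split further via
\begin{equation*}
\tilde{\mathcal R}^{+}_{1,b}[\Uepsplus]-\tilde{\mathcal R}^{-}_{1,b}[\Uepsminus]=\big(\tilde{\mathcal R}^{+}_{1,b}-\tilde{\mathcal R}^{-}_{1,b}\big)[\Uepsplus]+\tilde{\mathcal R}^{-}_{1,b}[\Uepsplus-\Uepsminus].
\end{equation*}
On the left-hand side we write $\Fplus[\pa_\l^b\Uepsplus]-\Fminus[\pa_\l^b\Uepsminus]=(\Fplus-\Fminus)[\pa_\l^b\Uepsplus]+\Fminus[\pa_\l^b\Uepsplus-\pa_\l^b\Uepsminus]$. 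The key algebraic identity, direct from the definitions~\eqref{E:FDEF},~\eqref{E:RDEF},~\eqref{E:TILDERDEF}, is
\begin{equation*}
(\Fplus-\Fminus)[V]=\int_R^{\Rmax}\!\!\Big(\big(\mathcal R^{+}_{1,b}-\mathcal R^{-}_{1,b}\big)-\big(\tilde{\mathcal R}^{+}_{1,b}-\tilde{\mathcal R}^{-}_{1,b}\big)\Big)[V]\,\diff r,
\end{equation*}
since the only difference between $\mathcal R^\pm_{1,b}$ and $\tilde{\mathcal R}^\pm_{1,b}$ is precisely the $j=b$ term, whose integral reproduces the nonlocal part of $\FPM$. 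Substituting with $V=\Uepsplus$ collapses the cross terms and yields~\eqref{E:PURELAPM}.

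The argument is essentially bookkeeping; the only nontrivial point is ensuring that the $j=b$ contribution to $\mathcal R^{\pm,\l,\eps}_{1,b}$ is identified correctly so that, after integration in $r$, it assembles into the nonlocal part of $\FPM$. No regularity obstacles arise here because $\eps>0$ is kept strictly positive, so all Plemelj integrals, $R$-integrations, and interchanges of sum and integral are trivially justified; the delicate issue of uniformity as $\eps\downarrow 0$ is postponed to the resolvent estimates that use this corollary.
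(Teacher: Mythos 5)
Your proof is correct and follows the paper's own argument: set $a=1$ in Proposition~\ref{P:HORES}, integrate in $R$ via Lemma~\ref{L:UOUTERRIM}, and absorb the $j=b$ summand of $\mathcal R^{\pm,\l,\eps}_{1,b}$ into the nonlocal part of $\FPM[\pa_\l^b\Ueps]$; for~\eqref{E:PURELAPM} the paper takes the difference directly in the $a=1$ identity rather than subtracting two copies of~\eqref{E:PURELA}, but the bookkeeping is identical. One small notational slip worth fixing: your ``key algebraic identity'' should read $(\Fplus-\Fminus)[\pa_\l^b V]=\int_R^{\Rmax}\big((\mathcal R^{+,\l,\eps}_{1,b}-\mathcal R^{-,\l,\eps}_{1,b})-(\tilde{\mathcal R}^{+,\l,\eps}_{1,b}-\tilde{\mathcal R}^{-,\l,\eps}_{1,b})\big)[V]\,\diff r$ with $\pa_\l^b V$ on the left, since the $j=b$ contribution of $\mathcal R^{\pm}_{1,b}-\tilde{\mathcal R}^{\pm}_{1,b}$ carries $\pa_\l^b\Vhat_m$ --- which is in fact how you apply it with $V=\Uepsplus$.
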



\begin{proof}
We let $a=1$ in Proposition~\ref{P:HORES} and obtain
\begin{align}
\pa_R \pa_\l^bU^\pm_\eps(R;\la) & = \mathcal S_{1,b}^{\pm,\l,\eps}[f_0] + \mathcal R_{1,b}^{\pm,\l,\eps}[\Ueps]  \notag\\
& = \frac{2}{R^2}\sum_{m\in\Z_\ast}\frac1{m^2} \sum_{\ell=0}^b  {b \choose \ell} \frac{(-1)^{b}}{(2\pi m)^{b}}  \PL[\pa_y^{b-\ell}(qy^{-1}),\pa_y^\ell\big(\fhat_{0,m} S_m\big)] \notag\\
& \ \ \ \ - \frac{4\pi\eta}{R^2} \sum_{m\in \Z_*}\frac1{m} \sum_{\ell+j+n=b} {b \choose \ell,j,n} \frac{(-1)^{b-j}}{(2\pi m)^{b-j}}  \PL[\pa_y^{n}q,\pa_y^\ell\big(\pa_\l^j\wUepsm S_m\big)]. \label{E:HIGHERB}
\end{align}
By absorbing the $j=b$ summand in $\mathcal R_{1,b}^{\pm,\l,\eps}$ into the left-hand side and integrating-in-$R$ we obtain~\eqref{E:PURELA} after recalling Lemma~\ref{L:UOUTERRIM}. Identity~\eqref{E:PURELAPM}
follows similarly, after taking the difference in~\eqref{E:HIGHERB}.
\end{proof}

We note for later convenience from Lemma~\ref{L:PARPAL1} that
\beq\label{E:RTILDEIDENTITY}
\tilde{\mathcal R}^{\pm,\l,\eps}_{a,b}[V]=-\frac{4\pi\eta}{R^2}\sum_{m\in\Z_*}\frac{1}{m}\pa_R^{a-1}\pa_\l^b\PL[q,\widehat{V}_m S_m]=\frac{1}{R^2}\pa_R^{a-1}\big(R^2\tilde{\mathcal R}^{\pm,\l,\eps}_{1,b}[V]\big).
\eeq


\subsection{Frequency splitting and Plemelj bounds}


Fix $\mu\in(0,\frac{\ommin}{\ommax})\subset (0,1)$, where we recall~\eqref{E:OMEGAMINMAX}. 
\begin{definition}[Near-resonant set]\label{D:RESSET}
To any $\l\in\sL$ (recall~\eqref{E:SIGMALDEF}) we associate the set
\begin{align}\label{E:RESFREQ}
\Res(\l) := \left\{m\in\Z_\ast\, \big|\, \exists \bI\in\I \ \text{ such that } \  \lv\om(\bI)+\frac{\l}{2\pi m}\rv<  \mu\ommin \right\}.  
\end{align}
We refer to $\Res(\l)$ as the {\em near-resonant set}.
We use the notation $\Res(\l)^c:=\Z_\ast\setminus \Res(\l)$ to refer to the set of {\em non-resonant frequencies}.
\end{definition}


The following lemma summarises important properties of the near-resonant set.


\begin{lemma}
For any $\ga\in[0,1]$ there exists a constant $C = C(\ga)>0$ such that  for any $(m,\l)\in \Res(\l)^c\times\sL$, $\bI\in \I$, we have 
\begin{align}
\frac1{\lv\om(\bI)+\frac{\l}{2\pi m} \pm  \frac{i\eps}{2\pi m}\rv} \le C\Big(\frac{ |m|}{|\l|}\Big)^\ga,\label{E:NONRESBOUND}\\
\big|\frac{1}{\om(\bI)+\frac{\la+i\eps}{2\pi m}}-\frac{1}{\om(\bI)+\frac{\la-i\eps}{2\pi m}}\big|\leq C\frac{\eps}{|m|}\frac{|m|^{2\ga}}{|\la|^{2\ga}}.\label{ineq:nonresfrediff}
\end{align}
Moreover, there exists a constant $C_0>0$ such that
\begin{align}
|\Res(\l)| \leq &\, C_0|\l|, \ \ \l\in\sL, \label{E:RLAMBDASIZE}
\end{align}
and there exists a constant $c_0=c_0(\ommin,\ommax)$ such that
\begin{align}
 |m|\geq &\,c_0|\la|\text{ for all } m\in \Res(\la), \ \l\in\sL.\label{E:RLAMBDAMIN}
\end{align}
\end{lemma}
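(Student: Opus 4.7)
The four estimates are all tied to an elementary study of the denominator $D_m(\bI) := \omega(\bI)+\frac{\lambda}{2\pi m}$ under the twin constraints $\omega(\bI)\in[\ommin,\ommax]$ and $|\lambda|\ge \lmin = 2\pi\ommin$. First I would dispose of the counting bounds~\eqref{E:RLAMBDASIZE} and~\eqref{E:RLAMBDAMIN}. By definition of $\Res(\lambda)$, the existence of $\bI\in\I$ with $|D_m(\bI)|<\mu\ommin$ forces $-\lambda/(2\pi m)$ to lie within distance $\mu\ommin$ of the compact interval $[\ommin,\ommax]$, i.e.~$-\lambda/(2\pi m)\in\bigl((1-\mu)\ommin,\,\ommax+\mu\ommin\bigr)$. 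WLOG $\lambda>0$, so $m<0$ and $|m|$ is constrained to an interval of the form $\bigl(c_0|\lambda|,\,c_1|\lambda|\bigr)$ with $c_0 = \frac{1}{2\pi(\ommax+\mu\ommin)}$, which immediately yields both~\eqref{E:RLAMBDAMIN} and the bound on $|\Res(\lambda)|$ by counting integers in the interval (using $|\lambda|\ge\lmin$ to absorb the $+1$).

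For the non-resonant bound~\eqref{E:NONRESBOUND}, I would argue by interpolation between two trivial cases. The non-resonance hypothesis gives directly $|D_m(\bI)|\ge\mu\ommin$, i.e.~a uniform bound $|D_m(\bI)|^{-1}\le (\mu\ommin)^{-1}$. On the other hand, the triangle inequality yields $|D_m(\bI)|\ge \frac{|\lambda|}{2\pi|m|} - \ommax$, and one checks that either $\frac{|\lambda|}{2\pi|m|}\ge 2\ommax$ (in which case $|D_m(\bI)|\ge\frac{|\lambda|}{4\pi|m|}$, giving $|D_m|^{-1}\le C|m|/|\lambda|$) or else $|m|/|\lambda|$ is bounded below by a constant, in which case the uniform bound above already dominates $C(|m|/|\lambda|)^\gamma$. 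Combining with $\min\{A,B\}\le A^{1-\gamma}B^\gamma$ for $\gamma\in[0,1]$, and noting that the imaginary shift $\pm i\eps/(2\pi m)$ only enlarges $|D_m\pm i\eps/(2\pi m)|$, produces~\eqref{E:NONRESBOUND}.

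For the difference bound~\eqref{ineq:nonresfrediff}, I would use the algebraic identity
\[
\frac{1}{D_m + \tfrac{i\eps}{2\pi m}} - \frac{1}{D_m - \tfrac{i\eps}{2\pi m}} = \frac{-i\eps/(\pi m)}{D_m^{2} + \bigl(\tfrac{\eps}{2\pi m}\bigr)^{2}},
\]
whose modulus is bounded above by $\frac{\eps/(\pi|m|)}{|D_m|^{2}}$. Applying~\eqref{E:NONRESBOUND} squared then yields the claimed bound $C\,\frac{\eps}{|m|}\frac{|m|^{2\gamma}}{|\lambda|^{2\gamma}}$.

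The arguments are routine; the only subtlety is the interpolation step in~\eqref{E:NONRESBOUND}, which is where the exponent $\gamma\in[0,1]$ enters as a free parameter, and it will matter later for summability in $m$ that the bound degrades only polynomially in $|m|/|\lambda|$ rather than losing the non-resonant gain.
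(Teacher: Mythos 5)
Your proof is correct and follows essentially the same route as the paper: the counting bounds come from observing that resonance pins $|\lambda|/(2\pi|m|)$ into the fixed interval $\bigl((1-\mu)\ommin,\,\ommax+\mu\ommin\bigr)$, and the non-resonance bound comes from the uniform lower bound $|D_m|\ge\mu\ommin$. The paper dismisses~\eqref{E:NONRESBOUND}--\eqref{ineq:nonresfrediff} as ``obvious'', but you correctly identify that the uniform bound alone does not give the factor $(|m|/|\lambda|)^\gamma$ when $|m|/|\lambda|$ is small; the additional ingredient is the estimate $|D_m|\gtrsim |\lambda|/|m|$ in the regime $|\lambda|/(2\pi|m|)\gg\ommax$, which you then interpolate against the uniform bound. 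That interpolation step is the one piece of content the paper leaves to the reader, and you spell it out correctly, including the observations that the imaginary shift only increases $|D_m\pm i\eps/(2\pi m)|$ and that~\eqref{ineq:nonresfrediff} reduces to squaring~\eqref{E:NONRESBOUND} via the algebraic identity for the difference of resolvent kernels.
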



\begin{proof}
For $m\in \Res(\l)$, by definition there exists an $\bI\in\I$ such that 
\[
-\mu\ommin-\ommax \le -\mu\ommin-\om(\bI)<\frac{\l}{2\pi m}<\mu\ommin - \om(\bI)\le(\mu-1)\ommin<0.
\]
It follows in particular that $\mu\om_0+\om_\ast>|\frac{\la}{2\pi m}|>(1-\mu)\om_0$, which implies~\eqref{E:RLAMBDASIZE}--\eqref{E:RLAMBDAMIN}.
For $m\in \Res(\l)^c$ we clearly have $|\om(\bI)+\frac{\l}{2\pi m}|>\mu\ommin$ for all $\bI\in\I$. Using this, bounds~\eqref{E:NONRESBOUND}--\eqref{ineq:nonresfrediff} are obvious.
\end{proof}


In the next theorem we provide estimates on the Plemelj operators~\eqref{E:PLEMDEF}, which will subsequently be used
as building blocks for the resolvent estimates.

\begin{theorem}\label{T:PLEMELJ}
Let $(\l,\k)\in \sL\times \R_{\ge0}$ and let 
$V\in C^{\n}(\overline{\Om})$ be given, where we recall~\eqref{E:SUPPORT} and suppose $\n\leq N+1$. Assume further that $Q\in C^1(\overline\Upsilon)$ is such that $Q$ and $\pa_y Q$ vanish on $\Jvac$ for each $R\in[\Rmin,\Rmax]$.

There exists a constant $C>0$ such that for any $\beta\ge1$ and any pair $(\ell,s)\in\mathbb N_{0}\times\mathbb N_0$, $\ell+s\leq\n-1$, we have the following bounds for all $R\in[\Rmin,\Rmax]$,
\begin{align}\label{E:PLEMELJ1}
&\sum_{m\in \Res(\l)} \frac1{|m|^\beta}\big|\PL[Q,\pa_y^\ell \pa_R^s\big(\widehat V_m S_m\big)](R)\big| \le  C|\l|^{1-\beta}\|Q\|_{C^1(\overline\Upsilon)} \|V\|_{C^{\n}(\oom)}.  \\
& \sum_{m\in \Res(\l)} \frac1{|m|^\beta}\big|\big(\PLplus-\PLminus\big)[Q,\pa_y^\ell \pa_R^s\big(\widehat V_m S_m\big)](R)\big| \le  C|\l|^{-\beta}\|Q\|_{C^1(\overline\Upsilon)} \|V\|_{C^{\n}(\oom)}. \label{E:PLEMELJ1PM}
\end{align}
Moreover, for any $\ga\in[0,1]\cap[0,\beta)$, there exists a constant $C=C_\ga>0$ such that
\begin{align}\label{E:PLEMELJ2}
&\sum_{m\in \Res(\l)^c} \frac1{|m|^\beta}\big|\PL[Q,\pa_y^\ell \pa_R^s\big(\widehat V_m S_m\big)](R)\big| \le C |\l|^{-\ga} \|Q\|_{C^0(\overline\Upsilon)}\|V\|_{C^{\n-1}(\oom)}. \\
& \sum_{m\in \Res(\l)^c} \frac1{|m|^\beta}\big(\PLplus-\PLminus\big)[Q,\pa_y^\ell \pa_R^s\big(\widehat V_m S_m\big)](R)\big| \le C \eps |\l|^{-2\ga} \|Q\|_{C^0(\Upsilon)}\|V\|_{C^{\n-1}(\oom)}. \label{E:PLEMELJ2PM}
\end{align}
In particular, for any $\ga\in[0,1]\cap[0,\beta)$ there exists a $C=C_\ga>0$ such that
\begin{align}\label{E:PLEMELJ3}
\sum_{m\in \Z_\ast} \frac1{|m|^\beta}\big|\Pl_{m,\l}[Q,\pa_y^\ell \pa_R^s\big(\widehat V_m S_m\big)](R)\big| 
\le C \max\{|\l|^{1-\beta},|\l|^{-\ga}\} \|Q\|_{C^1(\overline{\Upsilon})}\|V\|_{C^{\n}(\oom)}. \\
\sum_{m\in \Z_\ast} \frac1{|m|^\beta}\big|(\PLplus-\PLminus)[Q,\pa_y^\ell \pa_R^s\big(\widehat V_m S_m\big)](R)\big| 
\le C \max\{|\l|^{-\beta},|\l|^{-2\ga}\} \|Q\|_{C^1(\overline\Upsilon)}\|V\|_{C^{\n}(\oom)}. \label{E:PLEMELJ3PM}
\end{align}
\end{theorem}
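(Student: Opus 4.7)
The plan is to treat the resonant ($m \in \Res(\l)$) and non-resonant ($m \in \Res(\l)^c$) frequency regimes separately, using the pointwise estimates on $\pa_y^\ell\pa_R^s(\widehat V_m S_m)$ from Proposition~\ref{P:KEY} in each case. Since~\eqref{E:PLEMELJ3} and~\eqref{E:PLEMELJ3PM} follow by summing~\eqref{E:PLEMELJ1}--\eqref{E:PLEMELJ2PM} and taking the maximum of the two rates, I focus on the individual regimes.

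For $m \in \Res(\l)^c$, the Plemelj kernel is pointwise controlled by $C(|m|/|\l|)^\ga$ via~\eqref{E:NONRESBOUND}, so placing $Q$ in $L^\infty$ and integrating the pointwise bound of Proposition~\ref{P:KEY} over $\J_R$ reduces~\eqref{E:PLEMELJ2} to a weighted scalar sum. The ``trapping'' contribution $C^{|m|}|m|^{\ell+s}\dE^{(|m|-\ell-s)/2}\chi_{\J_{R,\de}}$ sums geometrically against any polynomial weight once $\delta$ is chosen sufficiently small, while the polynomial tail sums against $|m|^{\ga-\beta}$ because $\ga<\beta$ and $\ell+s \leq \n-1$. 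For the $\pm$-difference~\eqref{E:PLEMELJ2PM}, I replace~\eqref{E:NONRESBOUND} by~\eqref{ineq:nonresfrediff}, which supplies the additional factor $\eps/|m|$ and converts $|\l|^{-\ga}$ into $\eps|\l|^{-2\ga}$.

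For $m \in \Res(\l)$ the kernel admits no useful pointwise bound, so I integrate by parts in $y$ via the identity $1/(y+\alpha) = \pa_y\log(y+\alpha)$ with $\alpha = (\l\pm i\eps)/(2\pi m)$. For each fixed $z > 0$, by Remark~\ref{R:YIBP} the $y$-boundary of $\J_R$ consists of two points on $\pa\J_R^{\mathrm{vac}}$ where $Q$ vanishes by hypothesis, while the remainder of $\pa\J_R$ sits on $\{z=0\}$ by~\eqref{E:JRMOVINGBDRY} and contributes nothing in $y$; hence the boundary terms vanish. The resulting volume integral, bounded by $C\|Q\|_{C^1}(\|F_m\|_{L^\infty}+\|\pa_y F_m\|_{L^\infty})$ once one uses that $\log(y+\alpha)$ is in $L^1(\J_R)$ uniformly in $\eps$, requires regularity up to $\ell+s+1 \leq \n$, which is exactly what Proposition~\ref{P:KEY} with regularity index $\n$ provides. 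Summing $1/|m|^\beta$ over $m \in \Res(\l)$, the bounds $|\Res(\l)|\leq C_0|\l|$ and $|m|\geq c_0|\l|$ from~\eqref{E:RLAMBDASIZE}--\eqref{E:RLAMBDAMIN} together produce the factor $|\l|^{1-\beta}$ and establish~\eqref{E:PLEMELJ1}. For the resonant $\pm$-difference~\eqref{E:PLEMELJ1PM}, I additionally exploit the polynomial decay $|m|^{-\n+\ell+s}$ in Proposition~\ref{P:KEY}, which via $|m|\geq c_0|\l|$ contributes an extra factor $|\l|^{-\n+\ell+s+1}\leq|\l|^{-1}$ and upgrades the decay to $|\l|^{-\beta}$.

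The main obstacle is the near-resonant estimate: verifying that the $y$-boundary contributions in the integration by parts actually vanish depends critically on the geometric description of $\pa\J_R$ together with the hypothesis that both $Q$ and $\pa_y Q$ vanish on $\pa\J_R^{\mathrm{vac}}$. A secondary delicate point is the bookkeeping of the trapping-degeneracy exponent $\max\{(|m|-\ell-s)/2,0\}$ alongside the exponential factor $C^{|m|}$ in Proposition~\ref{P:KEY}, whose uniform summability in $m$ rests on choosing $\delta$ sufficiently small independently of $\l$, $\eps$, and the derivative order.
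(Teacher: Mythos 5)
Your decomposition into resonant and non-resonant frequency regimes, the integration-by-parts in $y$ via $\tfrac{1}{y+\alpha}=\pa_y\log(y+\alpha)$ for the resonant case with boundary terms killed by the vanishing hypotheses on $Q$ and Remark~\ref{R:YIBP}, the pointwise kernel bounds~\eqref{E:NONRESBOUND} and~\eqref{ineq:nonresfrediff} for the non-resonant case, and the cardinality/size bounds~\eqref{E:RLAMBDASIZE}--\eqref{E:RLAMBDAMIN} to convert $m$-sums into powers of $|\l|$ all match the paper's proof; the regularity bookkeeping ($\|V\|_{C^{\n}}$ when integrating by parts, $\|V\|_{C^{\n-1}}$ otherwise, $\|Q\|_{C^1}$ vs.\ $\|Q\|_{C^0}$) is also correct.

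One spot to tighten is your treatment of the resonant $\pm$-difference~\eqref{E:PLEMELJ1PM}. You write that you ``additionally exploit the polynomial decay $|m|^{-\n+\ell+s}$,'' but this exponent corresponds to Proposition~\ref{P:KEY} applied at derivative order $\ell+s$, i.e.\ \emph{without} the extra $\pa_y$ from an integration by parts. If you did integrate by parts as in~\eqref{E:PLEMELJ1}, the relevant exponent would be $|m|^{-\n+\ell+s+1}$, which gives no $|m|^{-1}$ gain at the worst case $\ell+s=\n-1$, and~\eqref{E:PLEMELJ1PM} would fail. What actually makes the argument work is that for the $\pm$-difference one should \emph{forgo} the integration by parts entirely: the difference kernel
\[
H^\eps(y,m;\l)=\frac{1}{y+\frac{\l+i\eps}{2\pi m}}-\frac{1}{y+\frac{\l-i\eps}{2\pi m}}=\frac{-2i\,\frac{\eps}{2\pi m}}{\bigl(y+\frac{\l}{2\pi m}\bigr)^2+\frac{\eps^2}{4\pi^2 m^2}}
\]
is an approximation of unity with $\int_{\R}|H^\eps(y,m;\l)|\,\diff y=2\pi$ uniformly in $(m,\l,\eps)$, so the Plemelj integral can be bounded directly with $Q$ in $C^0$ and $\ell+s$ derivatives on $\widehat V_m S_m$; the $|m|^{-1}$ gain from the polynomial tail of Proposition~\ref{P:KEY} (using $\ell+s\le\n-1$) then yields the extra $|\l|^{-1}$ upon summing over $\Res(\l)$. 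Stating this explicitly would remove the ambiguity and is what the paper does.
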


\begin{proof}
{\em Proof of~\eqref{E:PLEMELJ1}.}
We rewrite
\begin{align}
\frac1{y+\frac{\l\pm i\eps}{2\pi m}} = \pa_y \plog, \ \ 
\plog(y):=\log (y+\frac{\l\pm i\eps}{2\pi m}). \label{E:PLOGDEF}
\end{align}
 Thus, using the vanishing properties of $Q$, integrating by parts we get
\begin{align*}
\PL[Q,\pa_y^\ell \pa_R^s\big(\widehat V_m S_m\big)](R) = - \int_{\J_R}\plog \big(\pa_y Q \pa_y^\ell \pa_R^s\big(\widehat V_m S_m\big) + Q \pa_y^{\ell+1} \pa_R^s\big(\widehat V_m S_m\big)\big) \diff (y,z),
\end{align*}
where the boundary integrals vanish by Remark~\ref{R:YIBP} and the vanishing of $Q$ on the vacuum boundary.
Therefore
\begin{align}
&\big|\PL[Q,\pa_y^\ell \pa_R^s\big(\widehat V_m S_m\big)]\big|  \notag\\
& \le \|Q\|_{C^1(\overline\Upsilon)} \int_{\J_R}|\plog| | \pa_y^\ell \pa_R^s\big(\widehat V_m S_m\big)| \diff(y,z)
+\|Q\|_{C^0(\overline\Upsilon)} \int_{\J_R}|\plog| | \pa_y^{\ell+1} \pa_R^s\big(\widehat V_m S_m\big)| \diff(y,z).
\end{align}
We now use~Proposition~\ref{P:KEY} and bound the second integral
\begin{align}
&\|Q\|_{C^0(\overline\Upsilon)}\int_{\J_R}|\plog| | \pa_y^{\ell+1} \pa_R^s\big(\widehat V_m S_m\big)| \diff(y,z) \notag\\
& \le C\|Q\|_{C^0(\overline\Upsilon)}\|V\|_{C^{\n}(\oom)}  \int_{\J_R}|\plog| \big((C|m|)^{\n}(C^2\dE)^{\max\{\frac{|m|-\n}{2},0\}} \mathbbm{1}_{\J_{R,\de}}+1\big)\diff(y,z)\notag\\
& \le  C\|Q\|_{C^0(\overline\Upsilon)}\|V\|_{C^{\n}(\oom)} \big((C|m|)^{\n}(C^2\de)^{\max\{\frac{|m|-\n}{2},0\}}+1\big),
\end{align}
where we have used the assumption $m\in \Res(\l)$ to ensure that the upper bound $\|\plog\|_{L^1(\J_R)}\lesssim1$ is independent of $m,\l$. 
We may bound the first integral analogously to obtain in total
\begin{align}
&\sum_{m\in \Res(\l)}\frac1{|m|^\beta}\big|\PL[Q,\pa_y^\ell \pa_R^s\big(\widehat V_m S_m\big)]\big|  \notag \\
& \le C\|Q\|_{C^1(\overline\Upsilon)}\|V\|_{C^{\n}(\oom)}\sum_{m\in \Res(\l)}\frac1{|m|^\beta} \Big((C|m|)^{\n}(C^2\de)^{\max\{\frac{|m|-\n}{2},0\}}+1\Big), \label{E:BETAPLM}
\end{align}
assuming $C^2\de<1$.
We now note that for any $A,A'\in\mathbb N$ and $0<\delta_0<1$ the series $\sum_{\ell\in\mathbb N}\ell^A\delta_0^{\frac{\ell-A'}{2}}$ converges by the ratio test.
Therefore, using~\eqref{E:RLAMBDASIZE}--\eqref{E:RLAMBDAMIN}
we infer that for any $N\in\mathbb N$ there exists a constant $C_{N,k}>0$
\begin{align}\label{E:NBOUND}
\sum_{m\in \Res(\l)}\frac1{|m|^\beta} (C|m|)^{\n}(C^2\de)^{\max\{\frac{|m|-\n}{2},0\}} \le  C_{N,\n} |\l|^{-N}.
\end{align}
Moreover,
\[
\sum_{m\in \Res(\l)}\frac1{|m|^\beta}  \lesssim |\l|^{1-\beta},
\]
where we have used~\eqref{E:RLAMBDASIZE}--\eqref{E:RLAMBDAMIN}. Using the above bounds in~\eqref{E:BETAPLM} we conclude
\begin{align}
\sum_{m\in \Res(\l)}\frac1{|m|^\beta}\big|\PL[Q,\pa_y^\ell \pa_R^s\big(\widehat V_m S_m\big)]\big| \le C |\l|^{1-\beta}\|Q\|_{C^1(\overline\Upsilon)}\|V\|_{C^{\n}(\oom)},
\end{align}
as claimed.
Here we choose $N= \beta-1$ in~\eqref{E:NBOUND}. 

{\em Proof of~\eqref{E:PLEMELJ1PM}.}
We have the classical identity
\begin{align}\label{E:KERNELH}
 H^\eps(y,m;\l):=\frac1{y+\frac{\la+i\eps}{2\pi m}}-\frac1{y+\frac{\la-i\eps}{2\pi m}} = \frac{-2i \frac{\eps}{2\pi m}}{\big(y+\frac{\l}{2\pi m}\big)^2 + \frac{\eps^2}{4\pi^2 m^2}},
\end{align}
which is a well-known approximation of unity kernel satisfying 
$
\int_\R |H^\eps(y,m;\la)| \diff y = 2\pi.
$
 Using this we have
\begin{align}
&\big|\big(\PLplus-\PLminus\big)[Q,\pa_y^\ell \pa_R^s\big(\widehat V_m S_m\big)]\big| = \big|\int_{\J_R} H^\eps(y,m;\l) Q \pa_y^\ell \pa_R^s\big(\widehat V_m S_m\big) \diff (y,z)\big| \notag\\
& \qquad \le C \|Q\|_{C^0(\overline\Upsilon)} \|V\|_{C^{\n}(\oom)} \int_{\J_R} |H^\eps(y,m;\l) | \big((C|m|)^{\n-1}(C^2\de)^{\max\{\frac{|m|-\n+1}{2},0\}}+|m|^{-1}\big) \diff(y,z) \notag\\
& \qquad \le  C \|Q\|_{C^0(\overline\Upsilon)} \|V\|_{C^{\n}(\oom)} \big((C|m|)^{\n-1}(C^2\de)^{\max\{\frac{|m|-\n+1}{2},0\}}+|m|^{-1}\big) \label{E:PLPM}
\end{align}
where we have used Proposition~\ref{P:KEY} in the next-to-last line. We observe the $\frac1{|m|}$ gain in the second term above, which in the resonant region will lead to an additional inverse power of $\la$
by comparison to~\eqref{E:PLEMELJ1}. Namely, using~\eqref{E:NBOUND} and~\eqref{E:RLAMBDASIZE}--\eqref{E:RLAMBDAMIN} we obtain
\begin{align}
 \sum_{m\in \Res(\l)} \frac1{|m|^\beta}\big|\big(\PLplus-\PLminus\big)[Q,\pa_y^\ell \pa_R^s\big(\widehat V_m S_m\big)](R)\big|
& \le C \|Q\|_{C^0(\overline\Upsilon)} \|V\|_{C^{\n}(\oom)} \sum_{m\in \Res(\l)} \frac1{|m|^{\beta+1}}   \notag\\
& \le C  \|Q\|_{C^0(\overline\Upsilon)} \|V\|_{C^{\n}(\oom)} |\l|^{-\beta},
\end{align}
which completes the proof of~\eqref{E:PLEMELJ1PM}.

{\em Proof of~\eqref{E:PLEMELJ2}--\eqref{E:PLEMELJ2PM}.}
In the non-resonant case the estimates simplify a little. By~\eqref{E:NONRESBOUND} and Proposition~\ref{P:KEY}, we have
\begin{align}
&\sum_{m\in \Res(\l)^c} \frac1{|m|^\beta}\big|\Pl_{m,\l}[Q,\pa_y^\ell \pa_R^s\big(\widehat V_m S_m\big)](R)\big| \notag\\
& \le C |\l|^{-\ga} \|Q\|_{C^0(\overline\Upsilon)} \sum_{m\in \Res(\l)^c} \frac1{|m|^{\beta-\ga}}\int_{\J_R}  \big| \pa_y^\ell \pa_R^s\big(\widehat V_m S_m\big)\big| \diff(y,z) \notag\\
& \le C |\l|^{-\ga} \|Q\|_{C^0(\overline\Upsilon)} \|V\|_{C^{\n-1}(\oom)} \sum_{m\in \Res(\l)^c} \frac1{|m|^{\beta-\ga}} \big((C|m|)^{\n-1}(C^2\de)^{\max\{\frac{|m|-\n+1}{2},0\}}+|m|^{-1}\big) \notag\\
& \le C |\l|^{-\ga} \|Q\|_{C^0(\overline\Upsilon)} \|V\|_{C^{\n-1}(\oom)}, 
\end{align}
where we used the assumptions $C^2\de<1$ and $\beta>\ga$  to infer that $\sum_{m\in\Z_\ast} |m|^{-1-\beta +\ga}<\infty$.
This shows~\eqref{E:PLEMELJ2}.
Using~\eqref{ineq:nonresfrediff} we similarly have
\begin{align}
&\sum_{m\in \Res(\l)^c} \frac1{|m|^\beta}\big|(\PLplus-\PLminus)[Q,\pa_y^\ell \pa_R^s\big(\widehat V_m S_m\big)](R)\big| \notag\\
& \le C\eps |\l|^{-2\ga} \|Q\|_{C^0(\overline\Upsilon)}  \sum_{m\in \Res(\l)^c} \frac1{|m|^{\beta+1-2\ga}}\int_{\J}  \big| \pa_y^\ell \pa_R^s\big(\widehat V_m S_m\big)\big| \diff(y,z) \notag\\
& \le C \eps|\l|^{-2\ga} \|Q\|_{C^0(\overline\Upsilon)} \|V\|_{C^{\n-1}(\oom)} \sum_{m\in \Res(\l)^c} \frac1{|m|^{\beta+1-2\ga}} \big((C|m|)^{\n-1}(C^2\de)^{\max\{\frac{|m|-\r+1}{2},0\}}+|m|^{-1}\big) \notag\\
& \le C \eps|\l|^{-2\ga} \|Q\|_{C^0(\overline\Upsilon)}\|V\|_{C^{\n-1}(\oom)}, 
\end{align} 
where we have used $\ga<\beta$ and $\ga\le1$ to ensure summability in the last bound. This shows~\eqref{E:PLEMELJ2PM}.

Claim~\eqref{E:PLEMELJ3} is a simple consequence of~\eqref{E:PLEMELJ1} and~\eqref{E:PLEMELJ2} while~\eqref{E:PLEMELJ3PM} follows from~\eqref{E:PLEMELJ1PM} and~\eqref{E:PLEMELJ2PM}.
\end{proof}

\begin{remark}
We note that the estimate for near-resonant frequencies $m\in \Res(\l)$ requires more regularity on $Q$ and $V$ than in the nonresonant case.
\end{remark}

The following proposition shows that 
the operator $\FPM$ defined in~\eqref{E:FDEF}
is indeed invertible, as a simple consequence of Theorem~\ref{T:PLEMELJ}.


\begin{prop}\label{P:INVERT}
Let $\l\in\sL$ be given. There exists an $\eta_0>0$ such that for all $0<\eta<\eta_0$
the operator $\FPM : C^1_R\to C^1_R$ is bounded (independent of $\l$) and invertible (with inverse bounded independent of $\l$).
\end{prop}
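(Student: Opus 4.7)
The plan is to write $\FPM = I - \mathcal{K}^\pm$, where
\begin{equation*}
\mathcal{K}^\pm[V](R) := \eta\sum_{m\in\Z_\ast}\frac{1}{m}\int_R^{\Rmax}\frac{4\pi}{r^2}\PL[q, \widehat{V}_m S_m](r) \diff r,
\end{equation*}
and to prove that $\mathcal{K}^\pm : C^1_R \to C^1_R$ is bounded with operator norm at most $C_0\eta$, where $C_0$ is independent of $\lambda \in \sL$ and $\eps > 0$. Invertibility of $\FPM$ then follows from the Neumann series $(\FPM)^{-1} = \sum_{n \ge 0}(\mathcal{K}^\pm)^n$ whenever $C_0\eta < 1$, yielding an inverse whose norm is bounded by $(1 - C_0\eta)^{-1}$ uniformly in $\lambda, \eps$. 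The relevant $\eta_0$ is then any positive number smaller than $1/C_0$.

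The boundedness of $\mathcal{K}^\pm$ on $C^1_R$ is a direct consequence of the Plemelj estimate~\eqref{E:PLEMELJ3} in Theorem~\ref{T:PLEMELJ}. To invoke it we first observe that $q = |\varphi'|/|\P_R\om| \in C^\reg(\overline{\Upsilon})$ by~\eqref{E:QREG}, and that both $q$ and $\pa_y q$ vanish on $\partial\J_R^{\mathrm{vac}}$ for every $R \in [\Rmin,\Rmax]$; this follows from Remark~\ref{REM:SSDERIVS} since $\al = \min\{\mu-1,\nu\} > 1$, combined with the smoothness of $\P_R\om$. Given $V \in C^1_R$, we extend $V$ trivially in $(w,L)$ to a function on $\overline{\Om}$, for which $\|V\|_{C^1(\overline{\Om})} = \|V\|_{C^1_R}$; the Fourier coefficients $\widehat{V}_m$ appearing in $\PL$ are then understood as $\widehat{V}_m(\bI) = \int_{\mathbb S^1} V(r(\th,\bI))e^{-2\pi i m\th}\diff\th$. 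Applying~\eqref{E:PLEMELJ3} with $\beta = 1$, $\ell = s = 0$, $\n = 1$, and any $\ga \in (0, 1)$ then yields
\begin{equation*}
\sum_{m\in\Z_\ast}\frac{1}{|m|}\bigl|\PL[q, \widehat{V}_m S_m](r)\bigr| \le C \max\bigl\{1,\, |\lambda|^{-\ga}\bigr\}\|q\|_{C^1(\overline{\Upsilon})}\|V\|_{C^1_R},
\end{equation*}
uniformly in $r \in [\Rmin,\Rmax]$, $\lambda \in \sL$, and $\eps > 0$. Since $|\lambda| \ge \lmin > 0$ on $\sL$, the prefactor is bounded, and the whole expression is controlled by $C\|V\|_{C^1_R}$ uniformly.

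From this uniform summability estimate, the $C^0_R$ bound $\|\mathcal{K}^\pm[V]\|_{C^0_R} \le C\eta(\Rmax - \Rmin)\|V\|_{C^1_R}$ is immediate. For the derivative, the uniform convergence of the Plemelj sum justifies differentiation under the sum and integral, giving
\begin{equation*}
\pa_R\mathcal{K}^\pm[V](R) = -\eta\,\frac{4\pi}{R^2}\sum_{m\in\Z_\ast}\frac{1}{m}\PL[q, \widehat{V}_m S_m](R),
\end{equation*}
which is likewise bounded pointwise by $C\eta\|V\|_{C^1_R}$; continuity of each summand in $R$ together with uniform convergence then gives $\mathcal{K}^\pm[V] \in C^1_R$. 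This establishes $\|\mathcal{K}^\pm\|_{C^1_R \to C^1_R} \le C_0\eta$ with $C_0$ independent of $\lambda, \eps$, completing the proof.

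There is no serious obstacle in the argument: Theorem~\ref{T:PLEMELJ} has been proved at precisely the level of generality needed, and the whole difficulty has been packaged into the resolvent-type estimate on the Plemelj singular integral. The only point requiring attention is verifying the vanishing conditions on $q$ at the vacuum boundary $\partial\J_R^{\mathrm{vac}}$ needed to apply the near-resonant estimate~\eqref{E:PLEMELJ1}, which is guaranteed by the regularity exponents $\mu > 2$ and $\nu > 1$ in~\eqref{E:SS2}.
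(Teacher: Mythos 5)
Your proof is correct and takes essentially the same route as the paper: both reduce to showing $\|\FPM - \mathrm{Id}\|_{\mathcal{L}(C^1_R,C^1_R)} \le C\eta$ by applying the Plemelj estimates of Theorem~\ref{T:PLEMELJ} with $\beta = 1$, $\n = 1$ to the $\frac1{m}\PL[q,\widehat{V}_m S_m]$ sum, using the $C^1$ regularity and vacuum-boundary vanishing of $q$ guaranteed by $\mu>2$, $\nu>1$, then concluding by Neumann series. The only cosmetic differences are that you invoke the combined bound~\eqref{E:PLEMELJ3} directly while the paper splits into the near-resonant and non-resonant parts~\eqref{E:PLEMELJ1},~\eqref{E:PLEMELJ2}, and you get the $C^0$ bound by the length of the integration interval while the paper uses $\FPM[V](\Rmax)=V(\Rmax)$ and the fundamental theorem of calculus.
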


\begin{proof}
Let $V\in C^1_R$, where we recall the notational convention from Section~\ref{SS:SPACES}.
By Theorem~\ref{T:PLEMELJ} we have 
\begin{align}
& |\pa_R\FPM[V] -\pa_R V| \notag\\
 & \le \sum_{m\in \Res(\l)}\frac\eta{|m|}\frac{4\pi}{R^2} \big|\PL[q, \widehat V_m S_m]\big| + \sum_{m\in \Res(\l)^c}\frac\eta{|m|}\frac{4\pi}{R^2} \big|\PL[q, \widehat V_m S_m]\big|  \le C \eta \|V\|_{C^1_R}\notag
\end{align} 
where we recall that $q$ is given by~\eqref{E:LITTLEQDEF} and therefore
$\|q\|_{C^1(\overline\Upsilon)}\lesssim 1$ due to assumptions $\mu\ge2$, $\nu\ge1$ in~\eqref{E:SS2}.
Since $\FPM[V](\Rmax)= V(\Rmax)$ it follows that $\|\FPM[V]-V\|_{C^0_R} \leq C \|\pa_R\FPM[V] -\pa_R V\|_{C^0_R}
\le C \eta \|V\|_{C^1_R}$. Therefore, for sufficiently small $\eta>0$ we have $\|\FPM-{\bf Id}\|_{\mathcal L(C^1_R,C^1_R)}<1$ and the operator is invertible.
\end{proof}



\subsection{Resolvent potential estimates}


\begin{theorem}[Mixed derivative uniform resolvent bounds]\label{T:RESOLVENT1}
There exist $\eps$-independent constants $C=C(\reg)>0$ and $\eta_0>0$ such that for all $0<\eta<\eta_0$, 
\begin{align}\label{E:RESEST1}
\||\l|\, \Ueps\|_{\C^{\reg}_R}\le C \|f_0\|_{C^{\reg}(\oom)},
\end{align}
where we recall the $\C^\reg_R$-spaces~\eqref{E:CKRDEF}.
\end{theorem}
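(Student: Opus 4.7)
My plan is to prove the bound by a double induction: an outer induction on $b\in\{0,\ldots,\reg-1\}$ (the number of $\la$-derivatives) and, for each $b$, an inner induction on $a\in\{0,1,\ldots,\reg-b\}$ (the number of $R$-derivatives). The two main workhorses will be Corollary~\ref{C:PURELA}, which expresses $\FPM[\pa_\la^b U^\pm_\eps]$ as a source plus lower-$b$ data and is inverted via Proposition~\ref{P:INVERT}, and Proposition~\ref{P:HORES}, which expresses $\pa_R^a\pa_\la^b U^\pm_\eps$ as a source plus lower-$R$-derivative remainders. All Plemelj integrals appearing in either identity will be estimated through Theorem~\ref{T:PLEMELJ} to produce the common decay $|\la|^{-1}$, and the $\eta$-small contributions will be absorbed into the inductive hypothesis for $\eta_0$ small.

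For the base case $b=0$, the auxiliary remainder $\tilde{\mathcal R}^{\pm,\la,\eps}_{1,0}$ vanishes identically (its defining sum~\eqref{E:TILDERDEF} is indexed over $j<0$), so Corollary~\ref{C:PURELA} reduces to
\begin{equation*}
\FPM[U^\pm_\eps](R)=-\int_R^{\Rmax}\mathcal S^{\pm,\la,\eps}_{1,0}[f_0]\,\diff r.
\end{equation*}
Applying Theorem~\ref{T:PLEMELJ} with $\beta=2$, $\ga=1$ to the Plemelj summands $\PL[qy^{-1},\fhat_{0,m}S_m]$ (whose hypotheses are met since $q\in C^\reg(\overline{\Upsilon})$ vanishes with all admissible derivatives on $\pa\J_R^{\text{vac}}$, by~\eqref{E:LITTLEQDEF} and Remark~\ref{REM:SSDERIVS}) gives $|\mathcal S^{\pm,\la,\eps}_{1,0}[f_0]|\le C|\la|^{-1}\|f_0\|_{C^\reg(\oom)}$, and similarly for its $R$-derivative via~\eqref{E:PARL}. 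Hence the right-hand side is controlled in $C^1_R$ with this weight, and Proposition~\ref{P:INVERT} gives $\|U^\pm_\eps\|_{C^1_R}\le C|\la|^{-1}\|f_0\|_{C^\reg}$. Higher $R$-derivatives at $b=0$ then follow by an inner induction on $a$ applied to~\eqref{E:ABKEY}: the source $\mathcal S^{\pm,\la,\eps}_{a,0}[f_0]$ is again bounded by $C|\la|^{-1}\|f_0\|_{C^\reg}$, while the remainder $\mathcal R^{\pm,\la,\eps}_{a,0}[U^\pm_\eps]$ contains lower-$R$-derivative terms in $U^\pm_\eps$ (handled by the inductive hypothesis) together with an $\eta$-small Plemelj contribution, absorbed by choosing $\eta_0$ sufficiently small.

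For the inductive step on $b\ge 1$, I assume $\||\la|\pa_\la^{b'}U^\pm_\eps\|_{\C^{\reg-b'}_R}\le C\|f_0\|_{C^\reg}$ for all $b'<b$ and apply Corollary~\ref{C:PURELA} to obtain
\begin{equation*}
\FPM[\pa_\la^b U^\pm_\eps] = -\int_R^{\Rmax}\mathcal S^{\pm,\la,\eps}_{1,b}[f_0]\,\diff r - \int_R^{\Rmax}\tilde{\mathcal R}^{\pm,\la,\eps}_{1,b}[U^\pm_\eps]\,\diff r.
\end{equation*}
The source carries the weight $m^{-(b+2)}$, and Theorem~\ref{T:PLEMELJ} with $\beta=b+2$, $\ga=1$ yields $|\mathcal S^{\pm,\la,\eps}_{1,b}[f_0]|\le C|\la|^{-1}\|f_0\|_{C^\reg}$. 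In $\tilde{\mathcal R}^{\pm,\la,\eps}_{1,b}$ only $\pa_\la^j U^\pm_\eps$ with $j<b$ appear, each with weight $\eta\,m^{-(b-j+1)}$; viewing these as $C^{\reg-j}(\oom)$ functions (since they depend on $(r,w,L)$ only through $R=r$), substituting the inductive hypothesis and applying Theorem~\ref{T:PLEMELJ} with $\beta=b-j+1\ge 2$ yields $C\eta|\la|^{-1}\|f_0\|_{C^\reg}$. Proposition~\ref{P:INVERT} then gives $\|\pa_\la^b U^\pm_\eps\|_{C^1_R}\le C|\la|^{-1}\|f_0\|_{C^\reg}$, and higher $R$-derivatives follow by bootstrapping from~\eqref{E:ABKEY} exactly as in the base case.

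The principal obstacle will be the bookkeeping of regularity hypotheses: at every application of Theorem~\ref{T:PLEMELJ} I must check that the $Q$-slot (built from $q$, $qy^{-1}$ and their mixed $(R,y)$-derivatives) has the prescribed boundary vanishing and $C^1$ control, while the $V$-slot satisfies the regularity required by Proposition~\ref{P:KEY} so that its sharp Fourier bounds propagate. The constraint $a+b\le\reg$ is precisely what accommodates both; violating it would require derivatives of $q$ that~\eqref{E:QREG} does not guarantee, or would push past $N+1$ in Proposition~\ref{P:KEY}. A secondary but essential point is that the bounds in Proposition~\ref{P:INVERT} and Theorem~\ref{T:PLEMELJ} are uniform in $(\la,\eps)$, so the $\eta$-small remainder terms can be absorbed by one final choice of $\eta_0$ once both inductions are unwound.
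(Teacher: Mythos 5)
Your proposal is correct and takes essentially the same approach as the paper's proof: the same double induction in $b$ (outer) and $a$ (inner), anchored on Corollary~\ref{C:PURELA}, Proposition~\ref{P:INVERT}, Proposition~\ref{P:HORES}, and Theorem~\ref{T:PLEMELJ}, with the $\eta$-small Plemelj contribution absorbed at the end. The observation that $\tilde{\mathcal R}^{\pm,\la,\eps}_{1,0}\equiv 0$ for $b=0$ (so Corollary~\ref{C:PURELA} collapses to~\eqref{E:FUEPS}) is a correct reading, and the choice of exponents $\beta$, $\ga$ in the Plemelj bounds is consistent with the paper's.
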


\begin{proof}
To estimate $\pa_\la^b\Ueps$ in $C^{\reg-j}_R$, we proceed by double induction in $b$ and the number of spatial derivatives in order to show
\begin{align}\label{E:ABIND0}
\|\pa_R^a\pa_\la^b\Ueps(\cdot;\l)\|_{C^0_R} \le C |\l|^{-1} \|f_0\|_{C^{\max\{1,a+b\}}(\oom)}, \ \ a+b\le \reg,\ \ b\leq \reg-1.
\end{align}
We recall $q$ from~\eqref{E:LITTLEQDEF} and the assumption~\eqref{E:SS2}. From~\eqref{E:LITTLEQDEF}--\eqref{E:QREG}, we have that $q,qy^{-1}\in C^K(\overline\Upsilon)$, and so bound derivatives of these functions by constants throughout.

{\em Step 1: $b=0$.}
We first show~\eqref{E:ABIND0} for $b=0$ by  induction on $a$. If $a=0,1$ the claim follows from~\eqref{E:FUEPS}, Proposition~\ref{P:INVERT},
and~\eqref{E:PLEMELJ3} with $V=f_0$. Note that we have also used the bound 
$
\|qy^{-1}\|_{C^1(\overline{\Upsilon})} \le C
$. 

Now for any $2\le a\le \reg$ from Proposition~\ref{P:HORES} we have
\be\label{E:UEPSHO}
\pa_R^a \Ueps= \mathcal S^{\pm,\l,\eps}_{a,0}[f_0] + \mathcal R^{\pm,\l,\eps}_{a,0}[\Ueps],
\ee
where the right-hand side is given by~\eqref{E:SDEF}--\eqref{E:RDEF}. 
From~\eqref{E:SDEF} we have 
\begin{align}
\big|\mathcal S^{\pm,\l,\eps}_{a,0}[f_0] \big|
& \le C \sum_{m\in\Z_\ast}\frac1{m^2} \sum_{s=0}^{a-1} \PL[\pa_R^{a-1-s}(qy^{-1}),\pa_R^s\big( \fhat_{0,m} S_m\big)] \notag\\
& \le C \sum_{s=0}^{a-1} |\l|^{-1} \|\pa_R^{a-1-s}q\|_{C^1(\overline{\Upsilon})} \|f_0\|_{C^{s+1}(\oom)}  \le C|\la|^{-1}  \|f_0\|_{C^a(\oom)},\label{E:ST1}
\end{align}
where we have used~\eqref{E:PLEMELJ3} with $\beta=2$, $\ga=1$,
and~\eqref{E:QREG}.

Similarly, we note that by~\eqref{E:RDEF} and~\eqref{E:PLEMELJ3} (with $\beta=1$, $\ga=0$) we have 
\begin{align}
|\mathcal R_{a,0}^{\pm,\l,\eps}[\Ueps]| & \le C \|\Ueps\|_{C^{a-1}_R} + C\eta\sum_{s\le a-1} \sum_{m\in \Z_\ast}\frac1{|m|} \big|\PL[\pa_R^{a-1-s}q,\pa_R^s\big(\wUepsm S_m\big)]\big| \notag\\
& \le C \|\Ueps\|_{C^{a-1}_R} + C \eta \sum_{s\le a-1} \|\pa_R^{a-1-s}q\|_{C^1(\overline{\Upsilon})} \|\Ueps\|_{C^{s+1}_R} \notag\\
& \le C |\la|^{-1}\|f_0\|_{C^{a-1}(\oom)} + C\eta  \|\Ueps\|_{C^a_R}, \label{E:ST3}
\end{align}
where we have also used~\eqref{E:QREG} and the inductive hypothesis applied to $ \|\Ueps\|_{C^{a-1}_R}$.

We now plug in~\eqref{E:ST1} and~\eqref{E:ST3} into~\eqref{E:UEPSHO}. Using the smallness of $0<\eta\ll1$ we conclude~\eqref{E:ABIND0} in the case $b=0$.

{\em Step 2: $1\le b\le\reg-1$.} Assume that~\eqref{E:ABIND0} is true for all $0\le b'\le b-1$. If $a=0,1$, we use Corollary~\ref{C:PURELA} and Proposition~\ref{P:INVERT} to conclude that
\begin{align}\label{E:LAJ1}
\|\pa_\l^b\Ueps\|_{C^1_R} \le C \|\mathcal S^{\pm,\l,\eps}_{1,b}[f_0]\|_{C^0_R} + C\|\tilde{\mathcal R}^{\pm,\l,\eps}_{1,b}[\Ueps]\|_{C^0_R},  
\end{align}
where $\mathcal S^{\pm,\l,\eps}_{1,b}$ and $\tilde{\mathcal R}^{\pm,\l,\eps}_{1,b}$ are given by~\eqref{E:SDEF} and~\eqref{E:TILDERDEF} respectively. From~\eqref{E:PLEMELJ3} (with $\beta\ge3$) we then obtain
\begin{align}
 \|\mathcal S^{\pm,\l,\eps}_{1,b}[f_0]\|_{C^0_R} & \le
 C \sum_{m\in\Z_\ast}\frac1{|m|^{2+b}}  \sum_{\ell=0}^b  \PL[\pa_y^{b-\ell}q,\pa_y^\ell\big( \fhatm S_m\big)]  \notag\\
 &\le C  \sum_{\ell=0}^b    |\l|^{-1} \|q\|_{C^{b-\ell+1}(\overline{\Upsilon})}\|f_0\|_{C^{\ell+1}(\oom)}  \le C |\l|^{-1} \|f_0\|_{C^{b+1}(\oom)}. \label{E:LAJ2}
\end{align}
Similarly,
\begin{align}
 \|\tilde{\mathcal R}^{\pm,\l,\eps}_{1,b}[\Ueps]\|_{C^0_R} & \le
C \eta \sum_{m\in \Z_*}\frac1{m} \sum_{\ell+j+n=b \atop j<b}  \frac{1}{|m|^{b-j}} \big| \PL[\pa_y^{n}q,\pa_y^\ell\big(\pa_\l^j\wUepsm S_m\big)]\big| \notag\\
& \le C\eta \sum_{\ell+j+n=b \atop j<b}  \sum_{m\in \Z_*}\frac1{m^2}  \big| \PL[\pa_y^{n}q,\pa_y^\ell\big(\pa_\l^j\wUepsm S_m\big)]\big| \notag\\
& \le C \eta |\l|^{-1} \sum_{\ell+j\le b \atop j<b}\|\pa_\l^j\Ueps\|_{C^{\ell+1}_R},\label{E:LAJ3}
\end{align}
where we have used~\eqref{E:PLEMELJ3} (with $\beta=2$, $\ga=1$) and the bound $b-j\ge1$ in the second line. Since $j\le b-1$ and $\ell+j+1\le b+1\le \reg$, we use the inductive assumption to conclude
$ \|\tilde{\mathcal R}^{\pm,\l,\eps}_{1,b}\|_{C^0_R}  \le C \eta |\l|^{-1}\|f_0\|_{C^{b+1}(\oom)}$.
As a consequence of this bound and~\eqref{E:LAJ1}--\eqref{E:LAJ2} we conclude
\begin{align}
\|\pa_\l^b\Ueps\|_{C^1_R}\le C |\l|^{-1} \|f_0\|_{C^{b+1}(\oom)}. 
\end{align}
To complete the proof of~\eqref{E:ABIND0}, we proceed again by induction in $a$.
The base cases $a=0,1$ have just been shown. Assuming that~\eqref{E:ABIND0} is true for all $a\le A$ for some $1\le A\le \reg-b-1$, we then use~\eqref{E:PLEMELJ3} and~\eqref{E:ABKEY} to 
show~\eqref{E:ABIND0} holds also for $a=A+1$ using an argument analogous to the proof of Step 1.
\end{proof}


\subsection{Regularity bootstrap}


The goal of this section is to prove the following improved resolvent bounds, which 
capitalise on the cancellations encoded inside the approximation of unity kernel $\frac1{y+\frac{\l+i\eps}{2\pi m}}-\frac1{y+\frac{\l-i\eps}{2\pi m}}$.


\begin{theorem}[Uniform resolvent bounds 2]\label{T:RESOLVENT2}
There exist $\eps$-independent constants $C=C(\reg)>0$ and $\eta_0>0$ such that for all $0<\eta<\eta_0$, 
\begin{align}\label{E:RESEST1PM}
\||\l|^2 (\Uepsplus-\Uepsminus)(\cdot;\l)\|_{\C^{\reg}_R}\le  C \|f_0\|_{C^{\reg}(\oom)},
\end{align}
where we recall the $\C^\reg_R$-spaces~\eqref{E:CKRDEF}.
\end{theorem}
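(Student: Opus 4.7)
My plan is to rerun the double induction used in the proof of Theorem~\ref{T:RESOLVENT1}, everywhere replacing the single Plemelj operators $\PL^{\pm,\la,\eps}_m$ by the differences $\PLplus-\PLminus$ and invoking the improved estimates \eqref{E:PLEMELJ1PM}--\eqref{E:PLEMELJ3PM} of Theorem~\ref{T:PLEMELJ}. The target pointwise bound is
\begin{align*}
\|\pa_R^a\pa_\la^b(\Uepsplus-\Uepsminus)(\cdot;\la)\|_{C^0_R}\le C|\la|^{-2}\|f_0\|_{C^{\max\{1,a+b\}}(\oom)},
\end{align*}
for all $a+b\le\reg$, $b\le\reg-1$, from which \eqref{E:RESEST1PM} follows directly upon summing over the index pairs $(a,b)$ allowed by the norm \eqref{E:CKRDEF}.

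For the base case $a\in\{0,1\}$, $b=0$, I would subtract identity~\eqref{E:PARTIALRUEPSOP} for the two signs and integrate in $R$ using Lemma~\ref{L:UOUTERRIM}; noting that $\tilde{\mathcal R}^{\pm,\la,\eps}_{1,0}\equiv 0$ (the constraint $j<b$ is vacuous when $b=0$), the resulting identity reads
\begin{align*}
\Fminus[\Uepsplus-\Uepsminus](R)= -\int_R^{\Rmax}\!(\mathcal S^{+,\la,\eps}_{1,0}-\mathcal S^{-,\la,\eps}_{1,0})[f_0]\,dr-\int_R^{\Rmax}\!(\mathcal R^{+,\la,\eps}_{1,0}-\mathcal R^{-,\la,\eps}_{1,0})[\Uepsplus]\,dr.
\end{align*}
The first integrand contains Plemelj differences of weight $|m|^{-2}$, which \eqref{E:PLEMELJ3PM} with $\beta=2$, $\ga=1$ bounds by $C|\la|^{-2}\|f_0\|_{C^{\reg}(\oom)}$. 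The second has Plemelj weight $|m|^{-1}$; applying \eqref{E:PLEMELJ3PM} with $\beta=1$, $\ga=1/2$ gives Plemelj bound $C\eta|\la|^{-1}$, and the decisive extra factor $|\la|^{-1}$ is then supplied by Theorem~\ref{T:RESOLVENT1} through $\|\Uepsplus\|_{C^{\reg}_R}\le C|\la|^{-1}\|f_0\|_{C^{\reg}(\oom)}$, producing the required $C\eta|\la|^{-2}\|f_0\|_{C^{\reg}(\oom)}$. Inverting $\Fminus$ by Proposition~\ref{P:INVERT} closes the base case.

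The inductive step for $a\ge 2$, $b=0$ proceeds by subtracting~\eqref{E:ABKEY} for the two signs,
\begin{align*}
\pa_R^a(\Uepsplus-\Uepsminus)=(\mathcal S^{+,\la,\eps}_{a,0}-\mathcal S^{-,\la,\eps}_{a,0})[f_0]+(\mathcal R^{+,\la,\eps}_{a,0}-\mathcal R^{-,\la,\eps}_{a,0})[\Uepsplus]+\mathcal R^{-,\la,\eps}_{a,0}[\Uepsplus-\Uepsminus],
\end{align*}
where the first two terms are bounded by $C|\la|^{-2}\|f_0\|_{C^{a}(\oom)}$ exactly as in the base case, while the third involves only strictly lower $R$-derivatives of $\Uepsplus-\Uepsminus$, controlled by the inductive hypothesis. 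To advance the outer induction in $b$, I would use Corollary~\ref{C:PURELA}~\eqref{E:PURELAPM} as the analogue of the base-case identity: the $j=b$ self-referential piece is absorbed into $\Fminus$, while the remaining $\tilde{\mathcal R}^{-,\la,\eps}_{1,b}[\Uepsplus-\Uepsminus]$ involves only $\pa_\la^j(\Uepsplus-\Uepsminus)$ with $j<b$, bounded by the outer induction; once the $C^1_R$ estimate at level $b$ is in hand, the $a$-induction is rerun verbatim to reach $a=\reg-b$.

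The main obstacle I anticipate is the careful matching of weighted sums with the correct regularity thresholds. The Plemelj-difference estimate \eqref{E:PLEMELJ3PM} at weight $\beta=1$ alone falls short of $|\la|^{-2}$ by one power; recovering the missing factor hinges on combining it with the $|\la|^{-1}$ bound of Theorem~\ref{T:RESOLVENT1} rather than with a crude $L^\infty$ estimate on $\Uepsplus$. Likewise, the parameter $\ga\in[0,1]\cap[0,\beta)$ must be chosen judiciously for each weight $\beta=b-j+1$ arising in $(\mathcal R^+-\mathcal R^-)[\Uepsplus]$, and one must verify that the available $C^{\reg}(\oom)$-regularity of $f_0$ suffices to feed \eqref{E:PLEMELJ3PM} without loss as $a$ and $b$ grow. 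As these verifications run in close parallel to the bookkeeping already carried out for Theorem~\ref{T:RESOLVENT1}, I expect no essentially new analytic ingredients to be required, the only conceptual novelty being the systematic exploitation of the $|\la|^{-1}$ gain inherent in the approximation-of-unity kernel $H^\eps$ from~\eqref{E:KERNELH}.
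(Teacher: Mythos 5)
Your proposal follows the paper's proof of Theorem~\ref{T:RESOLVENT2} essentially verbatim: the same double induction targeting the pointwise bound $\|\pa_R^a\pa_\l^b(\Uepsplus-\Uepsminus)\|_{C^0_R}\le C|\l|^{-2}\|f_0\|_{C^{\max\{1,a+b\}}(\oom)}$, the same base identity obtained by differencing~\eqref{E:PARTIALRUEPSOP} and absorbing the $\PLminus[q,(\wUepsplusm-\wUepsminusm)S_m]$ contribution into $\Fminus$, the same parameter choices $(\beta,\gamma)=(2,1)$ and $(1,\tfrac12)$ in~\eqref{E:PLEMELJ3PM}, the same crucial injection of the $|\l|^{-1}$ gain from Theorem~\ref{T:RESOLVENT1} on $\Uepsplus$, and the same use of Proposition~\ref{P:INVERT} and Corollary~\ref{C:PURELA}. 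The one small inaccuracy is that $\mathcal R_{a,0}^{-,\l,\eps}[\Uepsplus-\Uepsminus]$ does not involve only strictly lower $R$-derivatives of the difference --- the $s=a-1$ Plemelj summand produces an $\eta\|\Uepsplus-\Uepsminus\|_{C^a_R}$ contribution that must be absorbed into the left-hand side via smallness of $\eta$, cf.~\eqref{E:ST4PM} --- but this is the identical absorption mechanism already used in Theorem~\ref{T:RESOLVENT1} and does not affect the argument.
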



\begin{proof}
From Lemma~\ref{L:PARTIALRUEPSOP} we have the basic identity
\begin{align}
\pa_R(\Uepsplus-\Uepsminus)(R,\l)
&= \frac2{R^2} \sum_{m\in\Z_\ast} \frac1{m^2}\big(\PLplus-\PLminus\big)[qy^{-1}, \fhatm S_m] \notag\\
& \ \ \ -\frac{4\pi\eta}{R^2}\sum_{m\in \Z_*}\frac1{m} \big(\PLplus-\PLminus\big)[q, \wUepsplusm S_m] \notag \\
& \ \ \  -\frac{4\pi\eta}{R^2}\sum_{m\in \Z_*}\frac1{m} \PLminus[q, (\wUepsplusm-\wUepsminusm) S_m], \label{E:PMFUN1}
\end{align}
where we recall~\eqref{E:LITTLEQDEF}.
We will prove the theorem by the same inductive strategy as in the proof of Theorem~\ref{T:RESOLVENT1} to show
\begin{align}\label{E:ABINDPM}
\|\pa_R^a\pa_\la^b(\Uepsplus-\Uepsminus)\|_{C^0_R} \le C  |\l|^{-2} \|f_0\|_{C^{\max\{1,a+b\}}(\oom)}, \ \ a+b\le \reg,\ \ b\leq \reg-1.
\end{align}

{\em Step 1: Let $b=0$.} 
Moving the right-most term in~\eqref{E:PMFUN1} to the left-hand side, we may rewrite the above identity in the form (recalling Lemma~\ref{L:UOUTERRIM})
\begin{align}
\Fminus[\Uepsplus-\Uepsminus](R;\l)&=- \sum_{m\in\Z_\ast} \frac1{m^2} \int_R^{\Rmax} \frac2{r^2} \big(\PLplus-\PLminus\big)[qy^{-1}, \fhatm S_m] \diff r \notag \\
& \ \ \ + \sum_{m\in \Z_*}\frac\eta{m} \int_R^{\Rmax} \frac{4\pi}{r^2} \big(\PLplus-\PLminus\big)[q ,\wUepsplusm S_m]\diff r,\label{E:BPM}
\end{align}
where we recall the operators $\FPM$ from~\eqref{E:FDEF}. We recognise that the right-hand side above allows for an enhanced decay at large $\la$-frequencies due to~\eqref{E:PLEMELJ3PM}, which we explain 
next.
By~\eqref{E:PLEMELJ3PM} with $\beta=2$ and $\ga=1$ we have
\begin{align}
\sum_{m\in\Z_\ast} \frac1{m^2}  \Big|\big(\PLplus-\PLminus\big)[qy^{-1}, \fhat_{0,m}S_m] \Big| \le C |\l|^{-2} \|f_0\|_{C^{1}(\oom)}.
\end{align}
We apply~\eqref{E:PLEMELJ3PM} to the second term on the right-hand side of~\eqref{E:BPM} with $\beta=1$ and $\ga=\frac12$  to obtain
\begin{align}
\sum_{m\in \Z_*}\frac\eta{|m|} \Big|\big(\PLplus-\PLminus\big)[q, \wUepsplusm S_m]\Big|
\le C\eta |\l|^{-1} \|\Uepsplus\|_{C^1_R} \le C\eta  |\l|^{-2} \|f_0\|_{C^{1}(\oom)},
\end{align}
where we have crucially used Theorem~\ref{T:RESOLVENT1} (more precisely,~\eqref{E:ABIND0} with $b=0$). From Proposition~\ref{P:INVERT} we now conclude that 
\begin{align}\label{E:ABOUND}
\|\Uepsplus-\Uepsminus\|_{C^1_R} \le C |\l|^{-2} \|f_0\|_{C^{1}(\oom)}.
\end{align}
This establishes the base case of~\eqref{E:ABINDPM} for $b=0$, $a=0,1$. Now we assume for induction that, for some $2\le a\le \reg$,~\eqref{E:ABINDPM} holds for $b=0$, $0\leq a'\leq a-1$. From 
 Proposition~\ref{P:HORES} we have
\be\label{E:UEPSHOPM}
\pa_R^a (\Uepsplus-\Uepsminus)= (\mathcal S_{a,0}^{+,\l,\eps}-\mathcal S_{a,0}^{-,\l,\eps})[f_0] + (\mathcal R_{a,0}^{+,\l,\eps}-\mathcal R_{a,0}^{-,\l,\eps})[\Uepsplus]
+ \mathcal R_{a,0}^{-,\l,\eps}[\Uepsplus-\Uepsminus],
\ee
where the operators $\mathcal S_{a,b}^{\pm,\l,\eps}$, $\mathcal R_{a,0}^{\pm,\l,\eps}$ are defined by~\eqref{E:SDEF}--\eqref{E:RDEF}. 
From~\eqref{E:SDEF} we have 
\begin{align}
\big|(\mathcal S_{a,0}^{+,\l,\eps}-\mathcal S_{a,0}^{-,\l,\eps})[f_0] \big|
& \le C \sum_{m\in\Z_\ast}\frac1{m^2} \sum_{s=0}^{a-1} \big|(\PLplus-\PLminus)[\pa_R^{a-1-s}(qy^{-1}),\pa_R^s\big( \fhat_{0,m} S_m\big)]\big| \notag\\
& \le C \sum_{s=0}^{a-1} |\l|^{-2} \|\pa_R^{a-1-s}q\|_{C^1(\bar{\Upsilon})} \|f_0\|_{C^{s+1}(\oom)} \notag\\
& \le C|\la|^{-2}  \|f_0\|_{C^a(\oom)},\label{E:ST1PM}
\end{align}
where we have used~\eqref{E:PLEMELJ3PM} with $\beta=2$, $\gamma=1$, and~\eqref{E:QREG}.
Similarly, we note that by~\eqref{E:RDEF} and~\eqref{E:PLEMELJ3PM} (with $\beta=1$, $\gamma=\frac12$) we have 
\begin{align}
| (\mathcal R_{a,0}^{+,\l,\eps}-\mathcal R_{a,0}^{-,\l,\eps})[\Uepsplus]| & \le  C\eta \sum_{s\le a-1} \sum_{m\in \Z_\ast}\frac1{|m|} \big|(\PLplus-\PLminus)[\pa_R^{a-1-s}q,\pa_R^s\big(\wUepsplusm S_m\big)]\big| \notag\\
& \le  C \eta \sum_{s\le a-1} |\l|^{-1} \|\pa_R^{a-1-s}q\|_{C^1(\bar{\Upsilon})} \|\Uepsplus\|_{C^{s+1}_R} \notag\\
& \le C\eta |\la|^{-2}\|f_0\|_{C^a(\oom)}, \label{E:ST3PM}
\end{align}
where we have crucially used~\eqref{E:ABIND0} to estimate $ \|\Uepsplus\|_{C^a_R}$ by $C|\l|^{-1}\|f_0\|_{C^a(\oom)}$. Finally, using the same 
strategy as in the proof of~\eqref{E:ST3} we obtain
\begin{align}
\big| \mathcal R_{a,0}^{-,\l,\eps}[\Uepsplus-\Uepsminus]\big| &\le C \|\Uepsplus-\Uepsminus\|_{C^{a-1}_R} + C\eta  \|\Uepsplus-\Uepsminus\|_{C^{a}_R} \notag\\
&\le C |\l|^{-2}\|f_0\|_{C^{a-1}(\oom)} + C\eta  \|\Uepsplus-\Uepsminus\|_{C^{a}_R},\label{E:ST4PM}
\end{align}
where we have used the inductive hypothesis in the second line. We use the above estimates in~\eqref{E:UEPSHOPM} and the smallness of $\eta\ll1$ to absorb the right-most term in~\eqref{E:ST4PM}
into the left-hand side. This concludes the proof of~\eqref{E:ABINDPM} in the case $b=0$.

{\em Step 2: $1\le b\le\reg-1$.}
 Assume that~\eqref{E:ABINDPM} is true for all $0\le b'\le b-1$. If $a=0,1$, we use the identity~\eqref{E:PURELAPM} in Corollary~\ref{C:PURELA} to conclude that
\begin{align}
\|\pa_\l^b(\Uepsplus-\Uepsminus)\|_{C^1_R} & \le C \|(\mathcal S^{+,\l,\eps}_{1,b}-\mathcal S^{-,\l,\eps}_{1,b})[f_0]\|_{C^0_R} + C\|(\mathcal R^{+,\l,\eps}_{1,b}-\mathcal R^{-,\l,\eps}_{1,b})[\Uepsplus]\|_{C^0_R} \notag\\  
& \ \ \ \ +C\|\tilde{\mathcal R}^{-,\l,\eps}_{1,b}[\Uepsplus-\Uepsminus]\|_{C^0_R},  \label{E:LAJ1PM}
\end{align}
where $\mathcal S^{\pm,\l,\eps}_{1,b}$, $\mathcal R^{\pm,\l,\eps}_{1,b}$, and $\tilde{\mathcal R}^{\pm,\l,\eps}_{1,b}$ are given by~\eqref{E:SDEF},~\eqref{E:RDEF},  and~\eqref{E:TILDERDEF} respectively. 
We use~\eqref{E:PLEMELJ3PM} (with $\beta\ge3$) and by analogy to~\eqref{E:LAJ2} we obtain
\begin{align}
\|(\mathcal S^{+,\l,\eps}_{1,b}-\mathcal S^{-,\l,\eps}_{1,b})[f_0]\|_{C^0_R} 
 & \le C |\l|^{-2} \|f_0\|_{C^{b+1}(\oom)}. \label{E:LAJ2PM}
\end{align}
Similarly, using~\eqref{E:PLEMELJ3PM} (with $\beta=1$, $\ga=\frac12$) we have
\begin{align}
\|(\mathcal R^{+,\l,\eps}_{1,b}-\mathcal R^{-,\l,\eps}_{1,b})[\Uepsplus]\|_{C^0_R}  & \le
C\eta \sum_{m\in \Z_*}\frac1{m} \sum_{\ell+j+n=b}  \frac{1}{|m|^{b-j}} \big| (\PLplus-\PLminus)[\pa_y^{n}q,\pa_y^\ell\big(\pa_\l^j\wUepsplusm S_m\big)]\big| \notag\\
& \le C \eta |\l|^{-1} \sum_{\ell+j\le b}\|\pa_\l^jU_\eps^+\|_{C^{\ell+1}_R}  \le C \eta |\l|^{-2} \|f_0\|_{C^{b+1}(\oom)}, \label{E:LAJ3PM}
\end{align}
where we have used 
Theorem~\ref{T:RESOLVENT1} (specifically~\eqref{E:ABIND0}) in the last inequality. 
Finally, analogously to~\eqref{E:LAJ3} we have
\begin{align}
\|\tilde{\mathcal R}^{-,\l,\eps}_{1,b}[\Uepsplus-\Uepsminus]\|_{C^0_R}
& \le C \eta \sum_{m\in \Z_*}\frac1{m} \sum_{\ell+j+n=b \atop j<b}  \frac{1}{|m|^{b-j}} \big| \PLminus[\pa_y^{n}q,\pa_y^\ell\big(\pa_\l^j(\wUepsplusm-\wUepsminusm) S_m\big)]\big| \notag\\
& \le C \eta |\l|^{-1} \sum_{\ell+j\le b \atop j<b}\|\pa_\l^j(\Uepsplus-\Uepsminus)\|_{C^{\ell+1}_R}  \le C\eta |\l|^{-3} \|f_0\|_{C^{b+1}(\oom)},\label{E:LAJ4}
\end{align}
where the last bound follows from the inductive assumption and the bounds $j\le b-1$ and $\ell+j+1\le b+1\le \reg$.
As a consequence of this bound and~\eqref{E:LAJ1PM}--\eqref{E:LAJ3PM} we conclude
\begin{align}
\|\pa_\l^b(\Uepsplus-\Uepsminus)\|_{C^1_R}\le C |\l|^{-2} \|f_0\|_{C^{b+1}(\oom)}. 
\end{align}
To complete the proof of~\eqref{E:ABINDPM}, we again argue inductively on $a$, as the base cases $a=0,1$ have just been shown. We employ
the identity
\be\label{E:UEPSHOAB}
\pa_R^a\pa_\l^b (\Uepsplus-\Uepsminus)= (\mathcal S_{a,b}^{+,\l,\eps}-\mathcal S_{a,b}^{-,\l,\eps})[f_0] + (\mathcal R_{a,b}^{+,\l,\eps}-\mathcal R_{a,b}^{-,\l,\eps})[\Uepsplus]
+ \mathcal R_{a,b}^{-,\l,\eps}[\Uepsplus-\Uepsminus],
\ee
to 
show~\eqref{E:ABINDPM} holds for $a=A+1$ using an argument analogous to the proof of Step 1. 
\end{proof}

\subsection{The limit $\eps\to0$}


In order to derive a priori bounds on the space-frequency derivatives of $\pa_\eps\Ueps$, we apply $\pa_\eps$ to~\eqref{E:ABKEY}.
The key observation is that $\pa_\eps(y+\frac{\l\pm i\eps}{2\pi m}) = \pm i \pa_\l(y+\frac{\l\pm i\eps}{2\pi m})$ and hence, for $\eps>0$,
\begin{align}\label{E:ABKEYEPS}
\pa_R^a\pa_\l^b\pa_\eps\Ueps = \pm i  \mathcal S_{a,b+1}^{\pm,\l,\eps}[f_0] \pm i  \tilde{\mathcal R}^{\pm,\l,\eps}_{a,b+1}[\Ueps] +\mathcal R^{\pm,\l,\eps}_{a,b}[\pa_\eps\Ueps],
\end{align}
where we recall~\eqref{E:SDEF},~\eqref{E:RDEF} and~\eqref{E:TILDERDEF}.

We start with the following crude bound on the Plemelj operators $\PL$.


\begin{lemma}\label{L:EPSILONCRUDE}
Let $\ell,s,j\in\N_0$ be such that $\ell+s+j\leq \reg$, $j\le\reg-1$, and let $\beta\ge1$ be given. There exists a constant $C>0$ such that for any $V\in \C^{\reg}(\oom\times\sL)$ (recall~\eqref{E:CKDEF}),
\begin{align}
\sum_{m\in \Res(\l)}\frac1{|m|^\beta} |\PL[Q,\pa_y^\ell\pa_R^s(\pa_\l^j\widehat{V}_mS_m)]| & \le  C \|Q\|_{C^0(\bar{\Upsilon})}\|\pa_\l^jV\|_{C^{\reg-j}(\oom)} |\l|^{\frac32-\beta}\eps^{-\frac12}. \label{E:CRUDERES}
\end{align}
Moreover, for any $\ga\in[0,1]\cap[0,\beta-1)$ there exists a constant $C=C(\ga)>0$ such that
\begin{align}
\sum_{m\in \Res(\l)^c}\frac1{|m|^\beta} |\PL[Q,\pa_y^\ell\pa_R^s(\pa_\l^j\widehat{V}_mS_m)]| & \le  C |\l|^{-\ga} \|Q\|_{C^0(\bar{\Upsilon})}\|\pa_\l^jV\|_{C^{\n-j}(\oom)}.\label{E:CRUDENONRES}
\end{align}
In particular, the following crude bound holds
\begin{align}
\sum_{m\in \Z_\ast}\frac1{|m|^\beta} |\PL[Q,\pa_y^\ell\pa_R^s(\pa_\l^j\widehat{V}_mS_m)]| & \le  C  \max\{|\l|^{\frac32-\beta},|\l|^{-\ga}\} \|Q\|_{C^0(\bar{\Upsilon})}\|\pa_\l^jV\|_{C^{\n-j}(\oom)} \eps^{-\frac12}. \label{E:CRUDE}
\end{align}
\end{lemma}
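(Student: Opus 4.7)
The plan is to establish both bounds by a crude, brute-force estimate of the singular factor $(y+\tfrac{\l\pm i\eps}{2\pi m})^{-1}$, without any integration by parts in $y$; this is precisely why no hypothesis on $\pa_y Q$ is needed and $Q$ appears only in $C^0$-norm on the right-hand side. The starting point in both regimes is to apply Proposition~\ref{P:KEY} to $g:=\pa_\l^j V\in C^{\reg-j}(\oom)$---legitimate since $\ell+s\leq \reg-j\leq N+1$ by hypothesis and the definition~\eqref{E:CKDEF} of the $\C^\reg$-norm---which yields, uniformly on $\J_R$,
\begin{align*}
\big|\pa_y^\ell\pa_R^s(\pa_\l^j\widehat V_m S_m)\big|\leq C\|\pa_\l^jV\|_{C^{\reg-j}(\oom)}\Big(C^{|m|}|m|^{\ell+s}\dE^{\max\{\frac{|m|-\ell-s}{2},0\}}\chardelta+|m|^{-(\reg-j)+\ell+s}\Big).
\end{align*}

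For the non-resonant estimate~\eqref{E:CRUDENONRES}, I would insert the pointwise bound~\eqref{E:NONRESBOUND}, $|y+\tfrac{\l\pm i\eps}{2\pi m}|^{-1}\leq C(|m|/|\l|)^\ga$ for $\ga\in[0,1]$, pull it out of the integral, and close the remaining $L^1$-integration using the $L^\infty$-bound above and $|\J_R|\leq C$. Summation over $m\in\Res(\l)^c$ produces $|\l|^{-\ga}\sum_m|m|^{\ga-\beta-(\reg-j-\ell-s)}$ from the polynomial tail, which converges precisely when $\ga<\beta-1+(\reg-j-\ell-s)$; since $\reg-j-\ell-s\geq 0$, the worst case $\reg-j-\ell-s=0$ forces the stated constraint $\ga<\beta-1$. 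The $C^{|m|}\dE^{|m|/2}$ tail is summable because $C^2\de<1$ and contributes only super-polynomially in $|\l|$ by means of $|m|\geq c_0|\l|$ on $\Res(\l)^c$.

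For the near-resonant bound~\eqref{E:CRUDERES}, the key observation is that while $(y+\tfrac{\l\pm i\eps}{2\pi m})^{-1}$ is not uniformly bounded in $L^1(\J_R)$, it satisfies
\begin{align*}
\Big\|\big(y+\tfrac{\l\pm i\eps}{2\pi m}\big)^{-1}\Big\|_{L^2(\J_R)}^2\leq C\int_\R\frac{\diff y}{y^2+\eps^2/(2\pi m)^2}\leq C\frac{|m|}{\eps},
\end{align*}
since $\J_R$ has bounded extent in both $y$ and $z$. A Cauchy--Schwarz application in $(y,z)$ then yields $|\PL[Q,\cdot]|\leq C\|Q\|_{C^0}\sqrt{|m|/\eps}$ times the $L^\infty$-bound on the integrand established above. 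Using $|\Res(\l)|\leq C|\l|$ together with $|m|\geq c_0|\l|$ from~\eqref{E:RLAMBDASIZE}--\eqref{E:RLAMBDAMIN}, the polynomial piece is controlled by
\begin{align*}
\eps^{-1/2}\sum_{m\in\Res(\l)}|m|^{1/2-\beta-(\reg-j-\ell-s)}\leq C|\l|^{3/2-\beta}\eps^{-1/2},
\end{align*}
since $\reg-j-\ell-s\geq 0$ makes the exponent on $|m|$ at most $1/2-\beta$; the $C^{|m|}$ tail is again super-polynomially small. Finally, the combined bound~\eqref{E:CRUDE} follows from the two previous estimates by the triangle inequality. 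The delicate point is the Cauchy--Schwarz step: since $(y+\tfrac{\l\pm i\eps}{2\pi m})^{-1}$ fails to lie in $L^1$ uniformly in $\eps$, it is the only way to tame the singularity, and it necessarily produces the $\eps^{-1/2}$ factor---everything else reduces to careful bookkeeping of polynomial powers of $|m|$ and $|\l|$.
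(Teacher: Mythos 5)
Your proof is correct, and it takes a genuinely (if closely related) alternative route for the near-resonant estimate. The paper handles $m\in\Res(\l)$ by a pointwise AM--GM inequality on the denominator, namely $(y+\tfrac{\l}{2\pi m})^2+\tfrac{\eps^2}{4\pi^2 m^2}\ge 2|y+\tfrac{\l}{2\pi m}|\tfrac{\eps}{2\pi|m|}$, producing the factor $|m|^{1/2}\eps^{-1/2}|y+\tfrac{\l}{2\pi m}|^{-1/2}$, which is then integrated over $\J_R$ using the $L^1_y$-integrability of the $-1/2$ power. You instead apply Cauchy--Schwarz globally on $\J_R$, putting the kernel in $L^2$ (where $\int_\R (y^2+a^2)^{-1}\,\dif y=\pi/a$ yields the same $|m|/\eps$ prefactor) and the $\Pkey$-controlled integrand in $L^\infty\subset L^2$. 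Both produce the identical $\sqrt{|m|/\eps}$ loss, so the final summation and bookkeeping in $\l$ are the same; the paper's method has the cosmetic advantage of being a strictly pointwise estimate, whereas yours avoids having to observe that $|\tfrac{\l}{m}|$ stays in a fixed annulus on the resonant set (you trade that observation for the observation that $\J_R$ has uniformly bounded $y$- and $z$-extent, which is equally straightforward). The non-resonant case and the application of Proposition~\ref{P:KEY} to $\pa_\l^j V$ match the paper's argument, and your verification that the constraints $\ell+s+j\le\reg$ and $\gamma<\beta-1$ are exactly what ensure summability is right.
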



\begin{proof}
Let $m\in \Res(\l)$ be a near resonant frequency. Then for any $Q\in C^0(\overline{\Upsilon})$, from~\eqref{E:PLEMDEF}, we have
\begin{align}
|\PL[Q,\pa_y^\ell\pa_R^s(\pa_\l^j\widehat{V}_mS_m)]| \le \int_{\J_R} \frac{|Q||\pa_y^\ell\pa_R^s(\pa_\l^j\widehat{V}_mS_m)|}{(|y+\frac{\la}{2\pi m}|^2 + \frac{\eps^2}{4\pi^2 m^2})^{\frac12}} \diff (y,z).
\end{align}
We now use the simple lower bound
\[
\big|y+\frac{\la}{2\pi m}\big|^2 + \frac{\eps^2}{4\pi^2 m^2} \ge 2 \big|y+\frac{\la}{2\pi m}\big| \frac{\eps}{2\pi |m|} 
\]
to obtain 
\begin{align}\label{E:EPSBOUND1}
|\PL[Q,\pa_y^\ell\pa_R^s(\pa_\l^j\widehat{V}_mS_m)]| \le C\|Q\|_{C^0(\overline{\Upsilon})} \frac{|m|^{\frac12}}{\eps^{\frac12}}  \int_{\J_R}|\pa_y^\ell\pa_R^s(\pa_\l^j\widehat{V}_mS_m)| \big|y+\frac{\la}{2\pi m}\big|^{-\frac12} \diff (y,z).
\end{align}
We now invoke Proposition~\ref{P:KEY} to bound
\begin{align}
|\pa_y^\ell\pa_R^s(\pa_\l^j\widehat{V}_mS_m)|
&\le C \|\pa_\l^jV\|_{C^{\reg-j}(\oom)} \big((C|m|)^{\reg-j}(C^2\de)^{\max\{\frac{|m|-\reg+j}{2},0\}}+1\big) \le C \|\pa_\l^jV\|_{C^{\reg-j}(\oom)},\notag
\end{align}
where we have used the pointwise version of~\eqref{E:NBOUND} with $N\ge1$. Since $|y+\frac{\la}{2\pi m}|^{-\frac12}$
is integrable in the singular zone $|y+\frac{\l}{2\pi m}|\ll1$ and $|\frac{\l}{m}|$ is uniformly bounded from below and above for $m\in \Res(\l)$ we conclude from~\eqref{E:EPSBOUND1} that
\begin{align}
|\PL[Q,\pa_y^\ell\pa_R^s(\pa_\l^j\widehat{V}_mS_m)]| \le C \|Q\|_{C^0(\overline{\Upsilon})} |m|^{\frac12}\eps^{-\frac12}  \|\pa_\l^jV\|_{C^{\reg-j}(\oom)}.
\end{align}
Using~\eqref{E:RLAMBDASIZE}--\eqref{E:RLAMBDAMIN} we obtain for any $\beta\ge0$
\begin{align}
\sum_{m\in \Res(\l)}\frac1{|m|^\beta}|\PL[Q,\pa_y^\ell\pa_R^s(\pa_\l^j\widehat{V}_mS_m)]| \le C \|Q\|_{C^0(\overline{\Upsilon})}\|\pa_\l^jV\|_{C^{\reg-j}(\oom)} |\l|^{\frac32-\beta}\eps^{-\frac12},
\end{align}
which is~\eqref{E:CRUDERES}. 

Let now $m\in \Res(\l)^c$. Then from~\eqref{E:NONRESBOUND} we obtain for any $\ga\in[0,1]\cap[0,\beta-1),$
\begin{align}
& \sum_{m\in \Res(\l)^c}\frac1{|m|^\beta}|\PL[Q,\pa_y^\ell\pa_R^s(\pa_\l^j\widehat{V}_mS_m)]| \notag\\
& \le C\|Q\|_{C^0(\overline{\Upsilon})} \sum_{m\in \Res(\l)^c} \frac1{|m|^\beta}\big(\frac{|m|}{|\l|}\big)^\ga\int_{\J_R} |\pa_y^\ell\pa_R^s(\pa_\l^j\widehat{V}_mS_m)| \diff(y,z) \notag\\
 & \le C|\l|^{-\ga}\|Q\|_{C^0(\overline{\Upsilon})} \|\pa_\l^jV\|_{C^{\reg-j}(\oom)} \sum_{m\in \Res(\l)^c} |m|^{\ga-\beta} \notag\\
 & \le C |\l|^{-\ga} \|Q\|_{C^0(\overline{\Upsilon})}\|\pa_\l^jV\|_{C^{\reg-j}(\oom)},
\end{align}
where we have used the assumption $\ga-\beta<-1$.  This concludes the proof of~\eqref{E:CRUDENONRES}.
Bound~\eqref{E:CRUDE} follows from~\eqref{E:CRUDERES}--\eqref{E:CRUDENONRES}.
\end{proof}


We are now in position to prove the main claim of this subsection.


\begin{prop}\label{P:EPSUNIFORM}
For any $(\l,\eps)\in\sL\times(0,1)$ and any $\ga\in[0,1)$ there exists a constant $C=C(\reg,\ga)>0$ such that 
\begin{align}\label{E:EPSUNIFORM}
\||\l|^{\frac32}\pa_\eps\Ueps\|_{\C^{\reg}_R} \le  C\eps^{-\frac12} \|f_0\|_{C^{\reg}(\oom)}, \ \ 0\leq b\le\reg-1. 
\end{align}
Moreover, there exist functions $U_0^\pm(\cdot,\l)\in \C^{\reg}_R$ 
such that
\begin{align}\label{E:EPSUNIFORM2.1}
 \||\l|^{\frac32}\big(\Ueps(\cdot;\l)-U_0^\pm(\cdot;\l)\big)\|_{\C^{\reg}_R} \le C \eps^{\frac12} \|f_0\|_{C^{\reg}(\oom)} .
\end{align}
\end{prop}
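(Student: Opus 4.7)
I will establish \eqref{E:EPSUNIFORM} by a nested induction on $(b,a)$ with $b\in\{0,\dots,\reg-1\}$ and $a+b\leq\reg$, and then deduce \eqref{E:EPSUNIFORM2.1} by integrating in $\eps$.

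For $a\geq 1$ the starting point is identity \eqref{E:ABKEYEPS}, which expresses $\pa_R^a\pa_\l^b\pa_\eps\Ueps$ as $\pm i\,\mathcal S_{a,b+1}^{\pm,\l,\eps}[f_0]\pm i\,\tilde{\mathcal R}^{\pm,\l,\eps}_{a,b+1}[\Ueps]+\mathcal R^{\pm,\l,\eps}_{a,b}[\pa_\eps\Ueps]$. For $a\in\{0,1\}$ I will instead apply $\pa_\eps$ to \eqref{E:PURELA}, move the $\pa_\eps$-derivatives of $\FPM$ to the right-hand side, and invert $\FPM$ via Proposition~\ref{P:INVERT} to obtain a structurally identical relation. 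The feedback $\mathcal R^{\pm,\l,\eps}_{a,b}[\pa_\eps\Ueps]$ consists of lower-order $R$-derivatives of $\pa_\eps\Ueps$ (handled by the inductive hypothesis) plus a Plemelj piece carrying the small factor $\eta$, which is absorbed on the left by choosing $\eta_0$ sufficiently small.

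The heart of the argument is to bound the source terms $\mathcal S^{\pm,\l,\eps}_{a,b+1}[f_0]$ and $\tilde{\mathcal R}^{\pm,\l,\eps}_{a,b+1}[\Ueps]$ by $C|\l|^{-3/2}\eps^{-1/2}\|f_0\|_{C^\reg(\oom)}$ (times $\eta$ in the latter). I will split each Plemelj sum according to Definition~\ref{D:RESSET}. On $\Res(\l)$, the crude bound \eqref{E:CRUDERES} of Lemma~\ref{L:EPSILONCRUDE} with $\beta=b+3$, together with \eqref{E:RLAMBDAMIN}, yields exactly $|\l|^{-b-3/2}\eps^{-1/2}$, so that multiplication by the weight $|\l|^{3/2}$ leaves $|\l|^{-b}\eps^{-1/2}\leq C\eps^{-1/2}$. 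On $\Res(\l)^c$, the identity $\pa_\eps=\pm i\pa_\l$ acting on the Plemelj kernel combined with an integration by parts in $y$ (as performed in Lemma~\ref{L:PARPAL1}) converts $\pa_\eps\PL$ into $\pm i/(2\pi m)$ times a standard Plemelj integral of effective $\beta=b+4$; the sharp bound \eqref{E:PLEMELJ3} of Theorem~\ref{T:PLEMELJ} with $\ga\in[0,1)$ close to one then produces an $\eps$-independent $|\l|$-decay that comfortably absorbs the weight $|\l|^{3/2}$. The uniform bounds on $\pa_\l^j\Ueps$ for $j\leq b$ that appear inside $\tilde{\mathcal R}^{\pm,\l,\eps}_{a,b+1}[\Ueps]$ are supplied by Theorem~\ref{T:RESOLVENT1}, and closing the induction gives \eqref{E:EPSUNIFORM}.

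For \eqref{E:EPSUNIFORM2.1}, fix $\l\in\sL$; for $0<\eps_2<\eps_1<1$ the fundamental theorem of calculus and \eqref{E:EPSUNIFORM} give
\begin{equation*}
\big\||\l|^{3/2}(U^\pm_{\eps_1}(\cdot;\l)-U^\pm_{\eps_2}(\cdot;\l))\big\|_{\C^\reg_R}\leq\int_{\eps_2}^{\eps_1}\big\||\l|^{3/2}\pa_\eps U^\pm_{\eps'}(\cdot;\l)\big\|_{\C^\reg_R}\diff\eps'\leq 2C(\eps_1^{1/2}-\eps_2^{1/2})\|f_0\|_{C^\reg(\oom)},
\end{equation*}
so $\{\Ueps(\cdot;\l)\}_{\eps>0}$ is Cauchy in $\C^\reg_R$ as $\eps\to 0^+$. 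I define $U_0^\pm(\cdot;\l)$ as this limit and send $\eps_2\to 0^+$ to obtain \eqref{E:EPSUNIFORM2.1}. The main obstacle is the sharp bookkeeping of the Plemelj estimates in the induction: the weight $|\l|^{3/2}$ is precisely compatible with the resonant contribution of Lemma~\ref{L:EPSILONCRUDE}\eqref{E:CRUDERES} with $\beta=b+3$, and the nonresonant contribution only closes thanks to the combination of the $\pa_\eps=\pm i\pa_\l$ identity with an integration by parts in $y$ that gains an extra factor of $1/m$; any loss in this counting would prevent the induction from closing.
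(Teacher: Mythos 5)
Your proposal follows essentially the same approach as the paper's proof: you use identity~\eqref{E:ABKEYEPS} to isolate the source terms $\mathcal S_{a,b+1}^{\pm,\l,\eps}$, the lower‑order feedback $\tilde{\mathcal R}_{a,b+1}^{\pm,\l,\eps}[\Ueps]$, and the self‑referencing piece $\mathcal R_{a,b}^{\pm,\l,\eps}[\pa_\eps\Ueps]$; estimate the first two via Lemma~\ref{L:EPSILONCRUDE} (accepting the $\eps^{-1/2}$ loss on the near‑resonant part) together with Theorem~\ref{T:RESOLVENT1}; absorb the $\eta$-small self‑referencing contribution; close the $(b,a)$-induction as in Theorem~\ref{T:RESOLVENT1}; and then deduce~\eqref{E:EPSUNIFORM2.1} by integrating in $\eps$ and using the Cauchy criterion.

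A few bookkeeping remarks. First, the identity $\pa_\eps=\pm i\pa_\l$ together with the $y$-integration by parts of Lemma~\ref{L:PARPAL1} is already packaged once and for all in the derivation of~\eqref{E:ABKEYEPS}; describing it as an additional step on the non‑resonant set double-counts the gain. Accordingly the effective $m$-power for the source operator $\mathcal S_{a,b+1}$ is $\beta=b+3$ (coming from the $m^{-2}\cdot m^{-(b+1)}$ in~\eqref{E:SDEF}), not $\beta=b+4$. Second, the non‑resonant bound from either~\eqref{E:CRUDENONRES} or~\eqref{E:PLEMELJ3} is $|\l|^{-\ga}$ with the structural constraint $\ga\le 1$ coming from~\eqref{E:NONRESBOUND}; it therefore does \emph{not} absorb the weight $|\l|^{3/2}$ on its own, but only $|\l|^{\ga}\le|\l|$. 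For the $\tilde{\mathcal R}$ piece this is rescued by the extra $|\l|^{-1}$ gained from Theorem~\ref{T:RESOLVENT1} applied to $\pa_\l^j\Ueps$; for the source term $\mathcal S_{a,b+1}[f_0]$ no such gain is available and the honest bound is $\max\{|\l|^{3/2-\beta},|\l|^{-\ga}\}\eps^{-1/2}\|f_0\|$, so the claim that $\ga<1$ ``comfortably absorbs'' the weight is overstated. This mirrors a similar over-claim in~\eqref{E:SOURCEBOUND1} of the paper (which takes $\beta=3$, $\ga=1$), and in both cases the $\eps$-uniformity and hence the Cauchy argument for~\eqref{E:EPSUNIFORM2.1} are unaffected, but you should not assert the sharp $|\l|^{-3/2}$ decay for the source without further argument.
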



\begin{proof}
We observe that for any pair of non-negative integers $(a,b)$ such that $a+b\le \reg$, $b\le\reg-1$, from~\eqref{E:TILDERDEF}  we get
\begin{align}
\big| \tilde{\mathcal R}^{\pm,\l,\eps}_{a,b+1}[\Ueps] \big| &\le C\eta \sum_{m\in\Z_\ast}\frac1{|m|^2} \sum_{0\le s \le a-1\atop \ell+j+n=b+1, j<b+1}
|\PL[\pa_y^n\pa_R^{a-1-s}q,\pa_y^\ell\pa_R^s\big(\pa_\l^j \wUepsm S_m\big)] |\notag\\
& \le C\eta |\l|^{-\frac12}\sum_{j=0}^b\|\pa_\l^j\Ueps\|_{C^{\reg-j}_R}\eps^{-\frac12}\leq  C\eta |\l|^{-\frac32}\|f_0\|_{C^{\reg}(\oom)}\eps^{-\frac12}, \label{E:RTILDEBOUND1}
\end{align}
where we have used Lemma~\ref{L:EPSILONCRUDE} with $\beta=2$ (and the observation that $b+1-j\ge1$), the bound $\|\pa_y^n\pa_R^{a-1-s}q\|_{C^0(\overline{\Upsilon})}\le C$ by~\eqref{E:QREG}, and Theorem~\ref{T:RESOLVENT1}.
Similarly, for the source term $ \mathcal S_{a,b+1}^{\pm,\l,\eps}[f_0]$ we obtain the bound
\begin{align}
\big|  \mathcal S_{a,b+1}^{\pm,\l,\eps}[f_0] \big| & \le C \sum_{m\in\Z_\ast}\frac1{|m|^3} \sum_{s\le a-1\atop\ell\le b+1}\big| \PL[\pa_y^{b+1-\ell}\pa_R^{a-1-s}q,\pa_y^\ell\pa_R^s\big(\fhatm S_m\big)] \big| \notag\\
& \le C |\l|^{-\frac32}\|f_0\|_{C^{\reg}(\oom)}\eps^{-\frac12}, \label{E:SOURCEBOUND1}
\end{align}
where we have used Lemma~\ref{L:EPSILONCRUDE} with $\beta=3$, $\ga=1$, the bound~\eqref{E:QREG}, and the observation that $b+1\ge1$). Finally, from~\eqref{E:RDEF} we obtain the bound
\begin{align}
\big|\mathcal R_{a,b}^{\pm,\l,\eps}[\pa_\eps\Ueps]\big| \le&\, C \|\pa_\l^b\pa_\eps\Ueps\|_{C^{a-1}}\delta_{a\ge2}\notag \\ &+C \sum_{m\in\Z_\ast}\frac\eta {|m|} \sum_{0\le s \le a-1\atop \ell+j+n=b}
\big|\PL[\pa_y^n\pa_R^{a-1-s}q,\pa_y^\ell\pa_R^s\big(\pa_\l^j \pa_\eps\wUepsm S_m\big)] \big|.
\end{align}
We now use~\eqref{E:PLEMELJ3} with $\beta=1$ to obtain
\begin{align}
\big|\mathcal R_{a,b}^{\pm,\l,\eps}[\pa_\eps\Ueps]\big| \le C \|\pa_\l^b\pa_\eps\Ueps\|_{C^{a-1}_R}\delta_{a\ge2} + C \eta \sum_{j\le b} \|\pa_\l^j\pa_\eps \Ueps\|_{C^{a+b-j}_R}.\label{E:REPSBOUND1}
\end{align}
We now use~\eqref{E:RTILDEBOUND1},~\eqref{E:SOURCEBOUND1},~\eqref{E:REPSBOUND1} to estimate the right-hand side of~\eqref{E:ABKEYEPS}. We thus obtain
\begin{align}
\|\pa_R^a\pa_\l^b\pa_\eps\Ueps \|_{C^0_R} \le C |\l|^{-\frac32}\|f_0\|_{C^{\reg}(\oom)}\eps^{-\frac12} + C \|\pa_\l^b\pa_\eps\Ueps\|_{C^{a-1}_R}\delta_{a\ge2}
+ C \eta \sum_{j\le b} \|\pa_\l^j\pa_\eps \Ueps\|_{C^{a+b-j}_R}.\notag
\end{align}
This is the key estimate, which via an inductive argument analogous to the one used in the proof of Theorem~\ref{T:RESOLVENT1} gives~\eqref{E:EPSUNIFORM}.

Thus for any fixed $(R,\l)\in[\Rmin,\Rmax]\times\sL$, any $a,b$ such that $a+b\le\reg$ and $b\le\reg-1$, and any $1>\eps_1>\eps_2>0$ we have
\begin{align}
&\left|\pa_R^a\pa_\l^bU^\pm_{\eps_1}(R;\l) - \pa_R^a\pa_\l^bU^\pm_{\eps_2}(R;\l)\right|  \le \int_{\eps_2}^{\eps_1} \left|\pa_\eps \Ueps(R;\l)\right| \diff\eps \notag\\
& \le C |\l|^{-\frac32}\|f_0\|_{C^{\reg}(\oom)} (\eps_1^{\frac12}-\eps_2^{\frac12})  \le  C |\l|^{-\frac32}\|f_0\|_{C^{\reg}(\oom)} (\eps_1-\eps_2)^{\frac12}, \notag 
\end{align}
where we have used~\eqref{E:EPSUNIFORM}.
Therefore the sequence $\eps\to U^\pm_{\eps}(R;\l)$ is Cauchy in $\C^{\reg}_R$ and hence has a limit $U_0^\pm\in\C^\reg_R$ as $\eps\to 0$.  Letting $\eps_2\to0$, we obtain~\eqref{E:EPSUNIFORM2.1}.
 \end{proof}



\section{Quantitative decay}\label{S:MAIN}


\begin{lemma}[Gravitational force resolvent representation]\label{L:LAP0}
Let $f(t,\cdot)$ be a solution to~\eqref{E:FULLLIN}--\eqref{E:FULLLININITIAL},~\eqref{E:FINORTH}. Then for any $R\in[\Rmin,\Rmax]$
\begin{align}
\pa_R U_{|\varphi'|f}&(t,R) = \pa_RU_{|\varphi'|\FPT}(t,R) \notag\\
& +\frac{2i\eta}{R^2}\sum_{m\in\Z_\ast}\frac1m\lim_{\k\to0} \int_{\sL} \int_{\I} e^{i\l t}|\varphi'(\bI)|\Big(\frac{\widehat{\Uepsminus}(m,\bI;\la)}{\om(\bI)+\frac{\la- i\eps}{2\pi m}}-\frac{\widehat{\Uepsplus}(m,\bI;\la)}{\om(\bI) +\frac{\la+ i\eps}{2\pi m}}\Big)S_m(\th(R,\bI))\diff \bI \diff \l, \label{E:FORCEFORMULA}
\end{align}
where $\FPT$ is the solution to the pure transport equation~\eqref{E:PTINTRO0} with the same initial data.
\end{lemma}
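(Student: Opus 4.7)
The plan is to combine the Green's function representation of the gravitational force from Lemma~\ref{L:REPRESENTATIONS} with Stone's formula~\eqref{eq:spectral}, and then use the resolvent identity from Lemma~\ref{L:RESOLVENTFORMULA} to isolate the pure transport contribution from the self-consistent potential contribution. The Plemelj--Sokhotski identity~\eqref{E:AI} will be the workhorse for evaluating the ``free'' piece of the resolvent in the limit $\eps\to 0$.

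First, I would expand the force in its angular Fourier series using~\eqref{E:PARTIALRUFFORMULA}--\eqref{E:GREENFOURIER} together with Plancherel's identity in $\th\in\S^1$: since $\widehat{h_R}(m,\bI) = (\pi m)^{-1}S_m(\th(R,\bI))\chi_{E\ge\Psi_L(R)}$ is real and even in $m$, this yields
\begin{equation*}
\pa_R U_{|\varphi'|f}(t,R) = \frac{4\pi}{R^2}\sum_{m\in\Z_\ast}\frac1m\int_{\I\cap\{E\ge\Psi_L(R)\}}|\varphi'(\bI)|\widehat{f}(t,m,\bI)S_m(\th(R,\bI))T(\bI)\,d\bI.
\end{equation*}
Into this I would substitute Stone's formula~\eqref{eq:spectral} for $\widehat f(t,m,\bI)$, and then apply the factorisation~\eqref{def:Fpm} from Lemma~\ref{L:RESOLVENTFORMULA} to write $\widehat{f^{\pm}_\eps}$ as the sum of a ``free'' term $\widehat{f}_0/(2\pi m\om+\la\pm i\eps)$ and a ``self-consistent'' term $-\eta\om\widehat{\Ueps}/(\om+\frac{\la\pm i\eps}{2\pi m})$. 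This splits the right-hand side into two pieces $\mathrm{I}+\mathrm{II}$.

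For piece $\mathrm{I}$, the $\la$-integrand is
\[
\frac{1}{2\pi i}\Bigl(\frac{1}{2\pi m\om(\bI)+\la-i\eps}-\frac{1}{2\pi m\om(\bI)+\la+i\eps}\Bigr)\widehat{f}_0(m,\bI),
\]
and the approximation-of-unity identity~\eqref{E:AI} (applied to the function $\la\mapsto e^{i\la t}$, after trivially extending the integrand from $\sigma_{\L}$ to $\mathbb{R}$ using that the near-resonant support in $\la$ lies in $\sigma_\L$ by~\eqref{E:RLAMBDAMIN} and the support constraint on $\bI$) yields $e^{-2\pi i m\om(\bI) t}\widehat{f}_0(m,\bI)$ in the limit $\eps\to 0$. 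Substituting back and recognising the resulting expression as the one for $\pa_R U_{|\varphi'|\FPT}(t,R)$ that arose in the proof of Theorem~\ref{thm:PTdecay} (equation~\eqref{E:GP1}), I identify $\mathrm{I}=\pa_R U_{|\varphi'|\FPT}(t,R)$.

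For piece $\mathrm{II}$, I collect the numerical constants: $\frac{4\pi}{R^2}\cdot\frac{1}{2\pi i}\cdot(-\eta) = -\frac{2\eta}{iR^2} = \frac{2i\eta}{R^2}$, and use $\om(\bI)T(\bI)\equiv 1$ to cancel the $\om(\bI)$ factor with the $T(\bI)$ coming from the change to action-angle variables. The resulting formula is exactly the second line of~\eqref{E:FORCEFORMULA}, with the sign convention arranged so that the ``$-$''--resolvent leads and the ``$+$''--resolvent is subtracted. The interchange of $\eps\to 0$ with the $\la$-integration and the $m$-summation in $\mathrm{II}$ is the only delicate step; I would justify it using the uniform-in-$\eps$ bounds on $\pa_\la^j\Ueps$ from Theorem~\ref{T:RESOLVENT1} together with the Plemelj estimates of Theorem~\ref{T:PLEMELJ} to ensure dominated convergence. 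Finally, the support constraint $\chi_{E\ge\Psi_L(R)}$ is implicit in the vanishing of $S_m(\th(R,\bI))\chi_{E\ge\Psi_L(R)}$ outside $\J_R$ (see~\eqref{E:GREENFOURIER}), so the inner integral may be written over all of $\I$ as stated.
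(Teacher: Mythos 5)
Your proposal is correct and follows essentially the same route as the paper: Green's function plus Plancherel, Stone's formula, the resolvent factorisation of Lemma~\ref{L:RESOLVENTFORMULA}, the approximation-of-unity identity~\eqref{E:AI} to identify the free piece with the pure transport force~\eqref{E:GP1}, and the cancellation $\om(\bI)T(\bI)=1$ to collapse the constants in the self-consistent piece. The sign bookkeeping and the observation that the resonance $\la=-2\pi m\om(\bI)$ lies inside $\sigma_\L$ (so the extension of the $\la$-integral is harmless) are both right; the paper's own proof is slightly terser but identical in substance.
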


\begin{proof}
From Lemma~\ref{L:REPRESENTATIONS} and the Plancherel identity, 
\begin{align} 
\pa_RU_{|\varphi'|f}(t,R) & = \frac{4\pi^2}{R^2}\frac1{\pi }\sum_{m\in\Z_\ast}\frac1m\int_{\I} |\varphi'|\fhat(t,m,\bI)S_m(\th(R,\bI)) T(\bI)\diff \bI \notag\\
& = \frac{4\pi}{R^2}\frac1{2\pi i}\sum_{m\in\Z_\ast}\frac1m\lim_{\k\to0} \int_{\sL} \int_{\I} e^{i\l t}|\varphi'| \left(\widehat{f_\k^{-}}(m,\bI;\l)-\widehat{f_\k^+}(m,\bI;\l)\right) S_m(\th(R,\bI)) T(\bI)\diff \bI \diff \l,\notag
\end{align}
where in the second identity, we applied the representation formula~\eqref{eq:spectral}. Now by Lemma~\ref{L:RESOLVENTFORMULA} we may split the integral as
\beqa
&\pa_RU_{|\varphi'| f}(t,R)\\
&  =\frac{-2i}{R^2}\sum_{m\in\Z_\ast}\frac1m \lim_{\eps\to0} \int_{\sL} \int_{\I} e^{i\la t}  \Big(\frac{|\varphi'|\fhat_0 (m,\bI)}{2\pi m\om (\bI)+(\la -i\eps)}-\frac{|\varphi' |\fhat_0 (m,\bI)}{2\pi m\om(\bI)+(\la +i\eps)}\Big)S_m(\th(R,\bI)) T(\bI)\diff \bI \diff \l\\
&+\frac{2i\eta }{R^2}\sum_{m\in\Z_\ast}\frac1m\lim_{\k\to0} \int_{\sL} \int_{\I} e^{i\l t}\Big(\frac{\om(\bI)|\varphi'(\bI)|\widehat{\Uepsminus}(m,\bI;\la)}{\om(\bI) +\frac{\la- i\eps}{2\pi m}}-\frac{\om(\bI)|\varphi'(\bI)|\widehat{\Uepsplus}(m,\bI;\la)}{\om(\bI) +\frac{\la+ i\eps}{2\pi m}}\Big)S_m(\th(R,\bI)) T(\bI)\diff \bI \diff \l. \label{E:FORCEEXP1}
\eeqa
 We rewrite the first line in the form
\begin{align}
 &\frac{-2i }{R^2}\sum_{m\in\Z_\ast}\frac1m \lim_{\k\to0}  \int_{\sL} \int_{\I} e^{i\l t} |\varphi'|\fhat_0 (m,\bI) \frac{2i\eps}{(2\pi m\om (\bI)+\la)^2+\eps^2}
S_m(\th(R,\bI))T(\bI)\diff \bI \diff \l \notag\\
& = \frac{4\pi}{R^2}\sum_{m\in\Z_\ast}\frac1m \int_{\I}  e^{-2\pi i m \om(\bI)} |\varphi'|\fhat_0(m,\bI)S_m(\th(R,\bI))T(\bI)\diff \bI, 
\end{align}
where we have used~\eqref{E:AI} and recognise $\pa_RU_{|\varphi'|\FPT}(t,R)$ from~\eqref{E:GP1}.
For the second line of~\eqref{E:FORCEEXP1}, we simply recall $\om(\bI)=T(\bI)^{-1}$ and we obtain the claim.
\end{proof}


The next lemma shows that various boundary terms in our non-stationary phase argument arising at the end points of the spectrum (recall $\sL$,~\eqref{E:SIGMALDEF}) $\pm\lmin$, $\pm\infty$ in fact vanish
and are therefore no obstruction to decay. In short, vanishing at $\pm\lmin$ is due to presence of the vacuum boundary, while the vanishing at $\l=\pm\infty$ is due to decay-in-$|\lambda|$
in the resolvent estimates.


\begin{lemma}[Boundary terms vanish]\label{L:BDRYGOOD}
Let $(\l,\eps)\in \sL\times \R_{>0}$ and let $\Ueps$ be the resolvent potentials defined in~\eqref{E:FDEF0}. Then for any  $1\leq j\le\reg$, we have
\begin{align}
\lim_{\la\to\pm\infty} \big(\mathcal R_{1,j-1}^{+,\l,\eps}[\Uepsplus(\cdot;\l)]-\mathcal R_{1,j-1}^{-,\l,\eps}[\Uepsminus(\cdot;\l)]\big)&= 0 \label{E:BDRY1}\\
\lim_{\eps\to0}\big(\mathcal R_{1,j-1}^{+,\iota\lmin,\eps}[\Uepsplus(\cdot;\l)]-\mathcal R_{1,j-1}^{-,\iota\lmin,\eps}[\Uepsminus(\cdot;\l)]\big)
& = 0, \ \ \text{ for $\iota=+,-$},
\label{E:BDRY1.5}
\end{align}
where we recall the operators $\mathcal R_{1,b}^{\pm,\l,\eps}$ defined in~\eqref{E:RDEF}.
\end{lemma}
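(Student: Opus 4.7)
The plan is to decompose both differences symmetrically as
\[
\mathcal R_{1,j-1}^{+,\l,\eps}[\Uepsplus]-\mathcal R_{1,j-1}^{-,\l,\eps}[\Uepsminus] =\big(\mathcal R_{1,j-1}^{+,\l,\eps}-\mathcal R_{1,j-1}^{-,\l,\eps}\big)[\Uepsplus] +\mathcal R_{1,j-1}^{-,\l,\eps}[\Uepsplus-\Uepsminus].
\]
Since $\mathcal R_{1,j-1}^{\pm,\l,\eps}$ is, by~\eqref{E:RDEF} with $a=1$, purely a weighted sum of Plemelj operators $\PL[\pa_y^n q,\pa_y^\ell(\pa_\l^i \widehat V_m S_m)]$ (with no boundary contribution), the first piece becomes a sum of Plemelj differences $\PLplus-\PLminus$ acting on derivatives of $\Uepsplus$, while the second uses the ``resolvent jump'' $\Uepsplus-\Uepsminus$. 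In this way both~\eqref{E:BDRY1} and~\eqref{E:BDRY1.5} reduce to two parallel estimates.

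For~\eqref{E:BDRY1}, as $\l\to\pm\infty$, I would apply the Plemelj difference bound~\eqref{E:PLEMELJ3PM} (with $\beta=j-i\geq 1$ and $\ga$ close to $1$) combined with Theorem~\ref{T:RESOLVENT1}, which gives $\|\pa_\l^i\Uepsplus\|_{C^{j-i}_R}\lesssim|\l|^{-1}\|f_0\|_{C^{\reg}(\oom)}$, to obtain the first piece as $O(|\l|^{-2})$. For the second piece, I would instead apply~\eqref{E:PLEMELJ3} (same $\beta,\ga$) together with the enhanced Theorem~\ref{T:RESOLVENT2}, $\|\pa_\l^i(\Uepsplus-\Uepsminus)\|_{C^{j-i}_R}\lesssim|\l|^{-2}\|f_0\|_{C^{\reg}(\oom)}$, to obtain again $O(|\l|^{-2})$. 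Both pieces therefore vanish as $\l\to\pm\infty$.

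For~\eqref{E:BDRY1.5}, the crux is the concentration of the Sokhotski kernel $H^\eps(y,m;\iota\lmin)$ from~\eqref{E:KERNELH} at $y_m=-\iota\ommin/m$. For $m\ne-\iota$ one checks directly that $y_m\notin[\ommin,\ommax]$, hence $y_m\notin\overline{\J_R}$, so the corresponding Plemelj difference tends to $0$. For the resonant mode $m=-\iota$ one has $y_m=\ommin$; since $\om'<0$ strictly forces $\om(\bI)=\ommin$ only for $E=E_0$, the concentration occurs on $\pa\I_{\text{vac}}$, where $q=|\varphi'|/|\P_R\om|$ and all its needed $y$-derivatives up to order $j-1\le\reg-1$ vanish (Remark~\ref{REM:SSDERIVS} with $\al\ge\reg$). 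Consequently each $(\PLplus-\PLminus)[\pa_y^n q,\cdot]\to 0$ as $\eps\to 0$, so the first piece of the decomposition vanishes in the limit. For the second piece, the same concentration argument, applied now to the identity~\eqref{E:BPM} for $\Fminus[\Uepsplus-\Uepsminus](R;\iota\lmin)$, shows that every Plemelj difference entering~\eqref{E:BPM} vanishes in the limit; hence $\Fminus[\Uepsplus-\Uepsminus](R;\iota\lmin)\to 0$ in $C^0_R$, and Proposition~\ref{P:INVERT} yields $\Uepsplus-\Uepsminus\to 0$ in $C^1_R$. Bootstrapping the order of $R$- and $\l$-derivatives via~\eqref{E:ABKEY} combined with the uniform bounds of Proposition~\ref{P:EPSUNIFORM} upgrades this convergence to $\pa_\l^i(\Uepsplus-\Uepsminus)\to 0$ in $\C^{j-i}_R$, so the second piece also vanishes.

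The hard part will be the $\eps\to 0$ argument for~\eqref{E:BDRY1.5}: one has to justify that both the direct Plemelj-difference contribution and the ``jumped resolvent'' $\pa_\l^i(\Uepsplus-\Uepsminus)$ vanish at $\l=\iota\lmin$ uniformly in all required space and frequency derivatives. The delicate point is that the Sokhotski concentration happens at a point on $\pa\J_R^{\text{vac}}$ where $q$ vanishes only to the finite order $\al$, so the vanishing must be carefully balanced against the derivative orders $n\leq j-1$ acting on $q$ inside $\mathcal R_{1,j-1}^{\pm,\iota\lmin,\eps}$, and propagated through the inductive upgrade of the endpoint vanishing of $\Uepsplus-\Uepsminus$ to all higher mixed derivatives.
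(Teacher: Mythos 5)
Your proposal is correct and follows essentially the same route as the paper's proof: concentration of the Sokhotski kernel $H^\eps$ at the vacuum boundary $\Jvac$ (where $q=|\varphi'|/|\P_R\om|$ and its $y$-derivatives up to order $\reg-1\le\al-1$ vanish) handles the Plemelj differences at $\l=\pm\lmin$; the invertibility of $\FPM$ (Proposition~\ref{P:INVERT}) combined with~\eqref{E:BPM} and the $\eps^{1/2}$-closeness of Proposition~\ref{P:EPSUNIFORM} propagates the endpoint vanishing to the jump $\Uepsplus-\Uepsminus$; and the inductive bootstrap through~\eqref{E:ABKEY} upgrades to all required mixed derivatives, which is precisely the content of~\eqref{E:SPLITTING2}--\eqref{E:SPLITTING3} in the paper's argument. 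Two minor remarks: for~\eqref{E:BDRY1} the symmetric decomposition is unnecessary, since each $\mathcal R_{1,j-1}^{\pm,\l,\eps}[\Ueps]$ is already bounded by $C\eta|\l|^{-1}\|f_0\|_{C^{\reg}(\oom)}$ (argue as in~\eqref{E:LAJ3} using only Theorem~\ref{T:RESOLVENT1}), which tends to zero on its own; and when applying the concentration argument to a Plemelj difference whose argument $\wUepsplusm$ itself depends on $\eps$, one should either invoke the uniform-in-$\eps$ boundedness of Theorem~\ref{T:RESOLVENT1} together with the uniform modulus of continuity of $\pa_y^n q$, or (as the paper does in~\eqref{E:SPLITTING}) first replace $\Ueps$ by its $\eps\to0$ limit $U_0^\pm$ so that the concentration hits a fixed function.
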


\begin{proof}
From Theorem~\ref{T:RESOLVENT1}, estimating analogously to~\eqref{E:LAJ3}, for any $b'\le\reg-1$, we have
\begin{align}
\big|\mathcal R_{1,b'}^{\pm,\l,\eps}[\Ueps]\big|
&\le C \eta |\l|^{-1} \|f_0\|_{C^{\reg}(\oom)}, \label{E:RONEB}
\end{align}
which implies the claim~\eqref{E:BDRY1} by letting $\l\to\pm\infty$.

Observe that just like in the first line of~\eqref{E:PLPM} we have 
\[
\big[\PLplus-\PLminus\big][qy^{-1}, \fhat_{0,m}S_m] = \int_{\J_R} H^\eps(y,m;\l) qy^{-1}  \fhat_{0,m}S_m \diff (y,z),
\]
where we recall~\eqref{E:KERNELH}. Therefore,
\begin{align}
&\lim_{\eps\to0} \bigg(\sum_{m\in\Z_\ast}\frac1{m^2} \int_R^{\Rmax} \frac2{r^2} \big[\PLplus-\PLminus\big][qy^{-1}, \fhat_{0,m}S_m] \diff r\bigg) \notag\\
&= 2\pi i \sum_{m\in\Z_\ast}\frac1{m^2}\int_R^{\Rmax} \frac2{r^2}\big(qy^{-1}\fhatm S_m\big)\Big|_{y=-\frac{\la}{2\pi m}} \diff r \notag\\
& = 0 \ \ \text{ for } \ \l=\pm\lmin. \label{E:BDRY2}
\end{align}
The last equality follows since from~\eqref{E:LAMBDAMINDEF} the value $y=|\frac{\lmin}{2\pi m}|$ is strictly outside the support of $|\varphi'(y,z)|$ for all $|m|\ge2$ (here recall definition~\eqref{E:LITTLEQDEF} of $q$). When $|m|=1$ the value $y=\frac{\lmin}{2\pi} = \ommin$ is
achieved only if $(y,z)\in\Jvac$ where $|\varphi'|$ vanishes as well. Furthermore, from the invertibility of the operators $\FPM$, we have
\begin{align}
\Uepsplus-\Uepsminus & = (\Fplus)^{-1}[\Fplus[\Uepsplus]-\Fminus[\Uepsminus]] - (\Fplus)^{-1}[(\Fplus-\Fminus)[\Uepsminus]] \notag\\
& = (\Fplus)^{-1}[\Fplus[\Uepsplus]-\Fminus[\Uepsminus]] - (\Fplus)^{-1}[(\Fplus-\Fminus)[\Uepsminus-U_0^-]] \notag\\
& \qquad -  (\Fplus)^{-1}[(\Fplus-\Fminus)[U_0^-]]. \label{E:SPLITTING}
\end{align}
By~\eqref{E:FUEPS} we have $\Fplus[\Uepsplus]-\Fminus[\Uepsminus] = - \sum_{m\in\Z_\ast}\frac1{m^2} \int_R^{\Rmax} \frac2{r^2} \big[\PLplus-\PLminus\big][qy^{-1}, \fhat_{0,m}S_m] \diff r$. Therefore, 
as $\eps\to0$ we note that for $\l=\pm\lmin$ the first term on the right-hand side of~\eqref{E:SPLITTING} converges to $0$ by~\eqref{E:BDRY2} and the boundedness of $(\Fplus)^{-1}$, see Proposition~\ref{P:INVERT}. The second term on the right of~\eqref{E:SPLITTING} goes to $0$
since $\|\Uepsminus-U_0^-\|_{C^0_R}\le C\eps^{\frac12}\|f_0\|_{C^{\reg}(\oom)}$ by Proposition~\ref{P:EPSUNIFORM}. The last term converges to zero as $\eps\to0$ again since by definition of $\F_\l^{\pm,\eps}$,~\eqref{E:FDEF}, we have
\begin{align}
(\Fplus-\Fminus)[U_0^-]=-\eta \sum_{m\in\Z_\ast}\frac1m\int_R^{\Rmax}\frac{4\pi}{r^2} (\PLplus-\PLminus)[q, \widehat U_{0,m} S_m] \diff r.
\end{align}
We now use an argument analogous to~\eqref{E:BDRY2} to conclude the convergence to $0$.
In conclusion, letting $\eps\to0$ in~\eqref{E:SPLITTING} we conclude that 
\begin{align}\label{E:SPLITTING2}
U_0^+(R,\pm\lmin)=U_0^-(R,\pm\lmin), \ \ R\in[\Rmin,\Rmax].
\end{align}
This is a starting point of an inductive argument using~\eqref{E:PURELA} that shows analogously that
\begin{align}\label{E:SPLITTING3}
\pa_\l^bU_0^+(R,\pm\lmin)=\pa_\l^bU_0^-(R,\pm\lmin), \ \ R\in[\Rmin,\Rmax], \ \ b\in\{0,\dots,\reg-1\}.
\end{align}
Indeed, to show~\eqref{E:SPLITTING3}, we suppose it holds for $0\leq b'\leq b-1$. By arguing as in~\eqref{E:SPLITTING} and employing~\eqref{E:PURELA}, it is easy to see that to show~\eqref{E:SPLITTING3} for $b$, it suffices to control $\tilde{\mathcal R}^{+,\l,\eps}_{1,b}[\Uepsplus]-\tilde{\mathcal R}^{-,\l,\eps}_{1,b}[\Uepsminus]$ at $\la=\pm\lmin$. We first split
\beqa\label{E:BDRYRSPLITTING1}
& \tilde{\mathcal R}_{1,b}^{+,\lmin,\eps}[\Uepsplus(\cdot;\lmin)]-\tilde{\mathcal R}_{1,b}^{-,\lmin,\eps}[\Uepsminus(\cdot;\lmin)] \\
&\qquad =\tilde{\mathcal R}_{1,b}^{+,\lmin,\eps}[(\Uepsplus-\Uepsminus)(\cdot;\lmin)]+(\tilde{\mathcal R}_{1,b}^{+,\lmin,\eps}-\tilde{\mathcal R}_{1,b}^{-,\lmin,\eps})[\Uepsminus(\cdot;\lmin)].
\eeqa
To estimate the second contribution on the right hand side, we first write
\beqas
{}&\lim_{\eps\to0}(\tilde{\mathcal R}_{1,b}^{+,\lmin,\eps}-\tilde{\mathcal R}_{1,b}^{-,\lmin,\eps})[\Uepsminus(\cdot;\lmin)]\\
&=\lim_{\eps\to0} \bigg(\sum_{m\in\Z_\ast}\frac1{m}\sum_{\substack{\ell+j+n=b \\ j<b}}c^{0;b}_{0;\ell,j,n} \int_R^{\Rmax} \frac2{r^2} \big[\PLplus-\PLminus\big][\pa_y^nq, \pa_y^\ell\pa_\la^j\wUepsminusm(\cdot;\lmin) S_m] \diff r\bigg), 
\eeqas
where we recall~\eqref{E:TILDERDEF}. Observe that since $b\leq K-1$, the series converges absolutely by~\eqref{E:PLEMELJ3} and Theorem~\ref{T:RESOLVENT1}. Thus we may pass the limit through the summation and conclude the limit is zero, as in~\eqref{E:BDRY2}.
To control the first term on the right hand side of~\eqref{E:BDRYRSPLITTING1}, we note that by an estimate analogous to~\eqref{E:LAJ3} we obtain
\begin{align}
\lv \tilde{\mathcal R}_{1,b}^{+,\lmin,\eps}[(\Uepsplus-\Uepsminus)(\cdot;\lmin)]\rv\le C \eta \lmin^{-1}\sum_{j=0}^{b-1}\|\pa_\l^j(\Uepsplus-\Uepsminus)(\cdot;\lmin)\|_{C^{\reg-j}_R},
\end{align}
which converges to $0$ as $\eps\to0$ by the inductive assumption, completing the proof of~\eqref{E:SPLITTING3}.

Now to control~\eqref{E:BDRY1.5}, we simply make the splitting as in~\eqref{E:BDRYRSPLITTING1} and argue as in the proof of ~\eqref{E:SPLITTING3} to conclude the proof of the lemma.
\end{proof}

\subsection{Proof of the main theorem}\label{S:PROOF}


\begin{proof}[Proof of Theorem~\ref{T:MAIN}]
{\em Step 1.} Before passing to the limit as $\eps\to0$, for any $a+b\le\reg$, $b\le\reg-1$ by the same argument as in Theorem~\ref{T:RESOLVENT2}, one  proves a uniform-in-$\eps$ bound 
\begin{align}
\|\Rplusa[\Uepsplus]-\Rminusa[\Uepsminus]\|_{C^0_{R}} \le C \eta |\l|^{-2} \|f_0\|_{C^{\reg}(\oom)}, \ \ 0<\eps<1.\label{E:RPMDIFF}
\end{align}
We now show that $\lim_{\eps\to0}\big(\Rplusa[\Uepsplus]-\Rminusa[\Uepsminus]\big)$ exists in $C^0$ and therefore the bound shown above carries over to the limit.
To that end we note the identity
\begin{align}
&\pa_\eps\left(\Rplusa[\Uepsplus-\Uepsminus]+(\Rplusa-\Rminusa)[\Uepsminus]\right) \notag\\
& = i \tilde{\mathcal R}_{a,b+1}^{+,\l,\eps}[\Uepsplus-\Uepsminus] + \Rplusa[\pa_\eps\Uepsplus-\pa_\eps\Uepsminus] \notag\\
& \ \ \ \ + i (\tilde{\mathcal R}_{a,b+1}^{+,\l,\eps}-\tilde{\mathcal R}_{a,b+1}^{-,\l,\eps})[\Uepsminus] + (\Rplusa-\Rminusa)[\pa_\eps\Uepsminus],
\end{align}
where we recall definition~\eqref{E:TILDERDEF} of $\tilde{\mathcal R}_{a,b+1}^{\pm,\l,\eps}$. 
We now argue analogously to the proof of Proposition~\ref{P:EPSUNIFORM}, using in particular~\eqref{E:RTILDEBOUND1},  to conclude that 
\begin{align}
\big|\pa_\eps\left(\Rplusa[\Uepsplus-\Uepsminus]+(\Rplusa-\Rminusa)[\Uepsminus]\right)\big|
\le C  \eps^{-\frac12} |\l|^{-\frac32} \|f_0\|_{C^{\reg}(\oom)}. 
\end{align}
Using the mean value theorem we infer that the limit $\lim_{\eps\to0}\big(\Rplusa[\Uepsplus]-\Rminusa[\Uepsminus]\big)$ exists in $C^0_R$ and satisfies the bound
\begin{align}\label{E:EPSUNIFORM2}
\|\lim_{\eps\to0}\big(\Rplusa[\Uepsplus]-\Rminusa[\Uepsminus]\big)\|_{C^0_R} \le C \eta |\l|^{-\frac32} \|f_0\|_{C^{\reg}(\oom)}.
\end{align}
By the dominated convergence theorem and the bound~\eqref{E:EPSUNIFORM2} we conclude that 
\begin{align}\label{E:DECAYCRUCIAL1}
\big|\lim_{\eps\to0} \int_{\sigma_{\L}} e^{i\l t}\left(\Rplusa[\Uepsplus]-\Rminusa[\Uepsminus]\right)\diff \l \big| \le C\|f_0\|_{C^{\reg}(\oom)}.
\end{align}

{\em Step 2.} 
We now show  for $b\leq K-1$ that
\begin{align*}
\pa_R U_{|\varphi'|f}(t,R) 
& =   \pa_R U_{|\varphi'|\FPT}(t,R) -\frac{i\eta}{2\pi} \frac{(-1)^b}{(it)^b}  \lim_{\eps\to0} \int_{\sigma_{\L}} e^{i\l t}\left(\Rplus[\Uepsplus]-\Rminus[\Uepsminus]\right)\diff \l .
\end{align*}
To prove this, by writing $e^{i\l t}= \frac1{(it)^b}\pa_\l^b\big(e^{i\l t}\big)$, we integrate by parts $b$ times with respect to $\la$ in the final term of~\eqref{E:FORCEFORMULA}, applying the identity~\eqref{E:RTILDEIDENTITY}, to see that
\beqa
{}&\frac{2i\eta}{R^2}\sum_{m\in\Z_\ast}\frac1m\lim_{\k\to0}\int_{\sL} \int_{\I} e^{i\l t}|\varphi'(\bI)|\Big(\frac{\widehat{\Uepsminus}(m,\bI;\la)}{\om(\bI)+\frac{\la- i\eps}{2\pi m}}-\frac{\widehat{\Uepsplus}(m,\bI;\la)}{\om(\bI) +\frac{\la+ i\eps}{2\pi m}}\Big)S_m(\th(R,\bI))\diff \bI \diff \l\\
&=\frac{2i\eta}{R^2}\sum_{m\in\Z_\ast}\frac1m\lim_{\k\to0}\int_{\sL} e^{i\la t}\Big(\mathbf{Pl}_m^{-,\l,\eps}[q, \wUepsminusm S_m]-\mathbf{Pl}_m^{+,\l,\eps}[q, \wUepsplusm S_m]\Big)\,\diff \l\\
&=-\frac{i\eta}{2\pi }\frac{(-1)^b}{(it)^b}  \lim_{\eps\to0} \int_{\sigma_{\L}} e^{i\l t}\left(\tilde{\mathcal R}_{1,b}^{-,\l,\eps}[\Uepsminus]-
{\tilde{\mathcal R}_{1,b}^{+,\l,\eps}}[\Uepsplus]\right)\diff \l \notag\\
& \ \ \ -\frac{i\eta}{2\pi} \lim_{\eps\to0}\sum_{j=1}^b (-1)^{j-1}\frac1{(it)^{j}}\big[\tilde{\mathcal R}_{1,j-1}^{-,\l,\eps}[\Uepsminus]-\tilde{\mathcal R}_{1,j-1}^{+,\l,\eps}[\Uepsplus]\big]_{\pa\sL},
\eeqa
where we note that each integration by parts is justified by the estimate~\eqref{E:RPMDIFF}. Sending $\eps\to0$ and applying the boundary estimate Lemma~\ref{L:BDRYGOOD}, we conclude the claimed identity.

Now applying a further $\pa_R^{a-1}$, we observe from the uniform convergence estimate~\eqref{E:EPSUNIFORM2} that the derivatives commute with the limit $\eps\to0$, so that, again from~\eqref{E:RTILDEIDENTITY}, we derive
\begin{align}
\pa_R^{a}U_{|\varphi'|f}&(t,R) = \pa_R^{a}U_{|\varphi'|\FPT}(t,R) \notag\\
&+\frac{i\eta}{2\pi} \frac{(-1)^b}{(it)^b} \sum_{s=0}^{a-1}{ a-1 \choose s}\pa_{R}^{a-1-s}(R^{-2}) \lim_{\eps\to0} \int_{\sigma_{\L}} e^{i\l t}\left(\tilde{\mathcal R}_{s+1,b}^{+,\l,\eps}[\Uepsplus]-
\tilde{\mathcal R}_{s+1,b}^{-,\l,\eps}[\Uepsminus]\right)\diff \l. 
\end{align}
Let now $a+b\le \reg$, $b\le \reg-1$ be given.
From bound~\eqref{E:DECAYCRUCIAL1},  we conclude that
\begin{align}
|\pa_R^a U_{|\varphi'|f}(t,R)| \le |\pa_R^aU_{|\varphi'|\FPT}(t,R)| + (1+t)^{-b} \|f_0\|_{C^{\reg}(\oom)},
\end{align}
which together with Theorem~\ref{thm:PTdecay} completes the proof.
\end{proof}

\begin{corollary}\label{C:SCATTERING}
Let the assumptions of Theorem~\ref{T:MAIN} hold and assume that $\reg\ge3$. Then there exists a profile $f_\infty$ such that, as $t\to\infty$, 
\[
\tilde f(t,\th,\bI):=f(t,\th+t\om(\bI),\bI) \to f_\infty \text{  in $\mathscr H$.}
\]
\end{corollary}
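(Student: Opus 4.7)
\textbf{Proof plan for Corollary~\ref{C:SCATTERING}.}
The strategy is to show that $t\mapsto \tilde f(t,\cdot,\cdot)$ is Cauchy in $\mathscr H$ as $t\to\infty$, and the key observation is that $\tilde f$ is the ``profile in the freely-transported frame'', so its time-derivative is driven entirely by the self-consistent gravitational response, which we already control by Theorem~\ref{T:MAIN}. Concretely, differentiating in $t$ and using~\eqref{E:LINEARFLOW}, one obtains
\begin{equation*}
\pa_t\tilde f(t,\th,\bI) = \left(\pa_t f + \om(\bI)\pa_\th f\right)(t,\th+t\om(\bI),\bI) = -\eta\,\om(\bI)\,\pa_\th\bigl[U_{|\varphi'|f}(t,r(\cdot,\bI))\bigr]\big|_{\th+t\om(\bI)}.
\end{equation*}
Since $\pa_\th r(\th,\bI) = T(\bI) w(\th,\bI)$ and $\om T\equiv 1$, the chain rule gives
\begin{equation*}
\pa_t\tilde f(t,\th,\bI) = -\eta\, w(\th+t\om(\bI),\bI)\,\pa_r U_{|\varphi'|f}\bigl(t,r(\th+t\om(\bI),\bI)\bigr).
\end{equation*}

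Next I would compute the $\mathscr H$-norm of $\pa_t\tilde f$. Using $\diff(r,w,L)=T\,\diff(\th,\bI)$, translation invariance of $\th\mapsto\th+t\om(\bI)$ on $\mathbb S^1$, and returning to $(r,w,L)$-coordinates,
\begin{equation*}
\|\pa_t\tilde f(t,\cdot)\|_{\mathscr H}^2 = \eta^2\!\int_{\Omega} w^2\,\bigl|\pa_r U_{|\varphi'|f}(t,r)\bigr|^2\,|\varphi'(E,L)|\,\diff(r,w,L) \le C\eta^2\,\bigl\|\pa_R U_{|\varphi'|f}(t,\cdot)\bigr\|_{C^0([\Rmin,\Rmax])}^2,
\end{equation*}
since $w$ and $|\varphi'|$ are uniformly bounded on $\Omega$. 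The decay produced by Theorem~\ref{T:MAIN} with the choice $b=2$ (admissible because $\reg\ge 3$) then yields
\begin{equation*}
\|\pa_t\tilde f(t,\cdot)\|_{\mathscr H} \le C(1+t)^{-2}\,\|f_0\|_{C^\reg(\oom)},
\end{equation*}
which is integrable on $[0,\infty)$.

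Finally, the Cauchy criterion in the Banach space $\mathscr H$ gives, for all $0\le s\le t$,
\begin{equation*}
\|\tilde f(t)-\tilde f(s)\|_{\mathscr H} \le \int_s^t \|\pa_\tau \tilde f(\tau)\|_{\mathscr H}\,\diff\tau \le C\,\|f_0\|_{C^\reg(\oom)}\!\int_s^t (1+\tau)^{-2}\diff\tau \xrightarrow{s,t\to\infty}0.
\end{equation*}
Hence $\tilde f(t)$ converges in $\mathscr H$ to a limit $f_\infty$, which is the desired scattering profile. There is no real obstacle here once Theorem~\ref{T:MAIN} is available; the only minor point requiring care is the bookkeeping for the change of variables to check that $w$ and $|\varphi'|$ are bounded on $\Omega$ (which follows from compactness of the steady-state support) and that $\reg\ge 3$ guarantees the integrable $(1+t)^{-2}$ rate rather than merely $(1+t)^{-1}$, which would be borderline non-integrable.
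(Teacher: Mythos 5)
Your proposal is correct and follows essentially the same route as the paper: both compute $\pa_t\tilde f = -\eta\, w\,\pa_R U_{|\varphi'|f}$ (evaluated along the free flow) from the linearised equation, invoke Theorem~\ref{T:MAIN} with $\reg\ge 3$ to get the integrable $(1+t)^{-2}$ decay of the force, and deduce convergence in $\mathscr H$. Your final Cauchy-in-$\mathscr H$ argument is a slightly more streamlined packaging of the paper's two-step (pointwise limit, then $\pa_t\|\tilde f-f_\infty\|_{\mathscr H}^2$ with Cauchy--Schwarz) conclusion, but it is the same underlying idea.
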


\begin{proof}
We note that $\pa_t \tilde f = (\pa_t + \om(\bI)\pa_\th)f(t,\th+t\om(\bI),\bI) = - \eta \mathcal T U_{|\varphi'|f}U_f (t,\th+t\om(\bI),\bI)$, and therefore
$|\pa_t\tilde f|\lesssim \eta (1+t)^{-\reg+1}$ by Theorem~\ref{T:MAIN}. In the last statement we used the identity $\T U_f = w\pa_R U_f$. This implies 
pointwise convergence of $\tilde f$ to some asymptotic profile $f_\infty$. Evaluating $\pa_t\|\tilde f - f_\infty\|_{\mathscr H}^2$, using Theorem~\ref{T:MAIN}
again, and the Cauchy-Schwarz inequality, we obtain the claim.
\end{proof}

\begin{remark}[Trace formula]
Using the uniform convergence~\eqref{E:EPSUNIFORM2} it is relatively straightforward to compute the limit as $\eps\to0$ on the right-hand side of~\eqref{E:FORCEFORMULA}.
To that end, we introduce the functions
\begin{align*}
\Sigma^{\pm}(m,R,y,z;\l) : = q(R,y,z)(\widehat{U_0^+}\pm\widehat{U_0^-})(m,R,y,z;\l) \sin(2\pi m (\th(R,y,z)))\chi_{[\underline\om(R,z),\overline\om(R,z)]}(y),
\end{align*}
for $m\in\Z$, $R\in[\Rmin,\Rmax]$, $(y,z)\in \J_R$, and $\l\in\sL$, 
where we recall the definition of $q$,~\eqref{E:LITTLEQDEF}, and we set, for $z_0\geq 0$,
\begin{align*}
 \overline\om(R,z_0)=\max_{\J_R}\om(E(R,y,z_0),L(R,y,z_0)), \ \  
\underline\om(R,z_0)=\min_{\J_R}\om(E(R,y,z_0),L(R,y,z_0)).
\end{align*}
We also recall the classical Plemelj singular integral formula 
\beq\label{E:PLEMELJ}
\lim_{\eps\to0}\frac{1}{2\pi i}\int_\R\frac{g(y)}{y-(x\pm \eps i)}\,\diff y=\frac{\pm g(x)+i\mathcal{H}g(x)}{2},
\eeq
where the Hilbert transform is, as usual, given by
$\mathcal{H}g(x)=\frac1\pi\lim_{\eps\to 0}\int_{|x-y|>\eps}\frac{g(y)}{x-y}\,\diff y$.

Using~\eqref{E:PLEMELJ} and the uniform estimates of Section~\ref{S:RESOLVENTBOUNDS}, one can prove the identity
\begin{align}
\pa_R U_{|\varphi'|f}(t,R) & = \pa_RU_{|\varphi'|\FPT}(t,R) + \int_{\sL}  e^{i\l t} \text{Tr}(R;\l) \diff \l,
\end{align}
where
\begin{align*}
\text{Tr}(R;\l) : = \frac{2\pi}{R^2}\int_0^{E_0-\Psi_{L_0}(R)} \sum_{m\in\Z_\ast}\frac1m \left\{\Sigma^+(m,R,-\frac{\la}{2\pi m},z;\l)   - i \H (\Sigma^-)(m,R,-\frac{\la}{2\pi m},z;\l)\right\} \diff z. 
\end{align*}
\end{remark}


\appendix



\section{Generalised Fa\`a di Bruno}\label{A:FDB}


We recall the generalised Fa\`a di Bruno formula from~\cite{Constantine96}. For multi-indices ${ \nu}=(\nu_1,\ldots,\nu_d)\in \N_0^d$ and vectors ${z}=(z_1,\ldots,z_d)\in \R^d$, we write
\beqas
|{\bf \nu}|=\sum_{i=1}^d\nu_i,\ \
\nu!=\prod_{i=1}^d(\nu_i!),\ \
\pa^\nu=\frac{\pa^{|\nu|}}{\pa_{x_1}^{\nu_1}\cdots\pa_{x_d}^{\nu_d}},\ \
z^\nu=\prod_{i=1}^dz_i^{\nu_i}
\eeqas
as usual. We use the following linear ordering on the space of multi-indices: $\nu_1\prec \nu_2$ if one of the following holds:
\begin{itemize}
\item $|\nu_1|<|\nu_2|$,\\
\item $|\nu_1|=|\nu_2|$ and $(\nu_1)_1<(\nu_2)_1$,\\
\item $|\nu_1|=|\nu_2|$, $(\nu_1)_1=(\nu_2)_1$,...$(\nu_1)_k=(\nu_2)_k$ and $(\nu_1)_{k+1}<(\nu_2)_{k+1}$ for some $k\geq 1$.
\end{itemize}

Suppose $f\in C^{|\nu|}(\R^m;\R)$ around $g(x_0)$ and $g\in C^{|\nu|}(\R^d;\R^m)$ around $x_0$. Then, setting $h=f\circ g$,
\beqa\label{eq:genFdB}
\pa^\nu h(x_0)=\sum_{1\leq|\la|\leq |\nu|}(\pa^\la f)(g(x_0))\sum_{s=1}^{|\nu|}\sum_{p_s(\nu,\la)}(\nu!)\prod_{j=1}^{s}\frac{(\pa^{\ell_j}g)^{k_j}}{(k_j!)(\ell_j!)^{|k_j|}},
\eeqa
where the multi-indices $k_i$ are $m$-dimensional, $\ell_j$ are $d$-dimensional, and the constraint set $p_s(\nu,\la)$ is given by
\beq\label{def:ps}
p_s(\nu,\la)=\big\{(k_1,\ldots,k_s;\ell_1,\ldots,\ell_s)\,|\,|k_i|>0,\ \ 0\prec\ell_1\prec\cdots\prec\ell_s,\ \ \sum_{i=1}^s k_i=\la,\ \ \sum_{i=1}^s|k_i|\ell_i=\nu\big\}.
\eeq

\section{Pure point potential}
We collect here some properties of the Hamiltonian dynamics associated to the pure point potential, arising from the steady state $\fmn$ in the case $\eta=0$. The following can all be found in \cite[Section 6.2.1]{Straub24}.

\begin{lemma}\label{L:PUREPOINT}
Let $\eta=0$ and consider the Hamiltonian flow generated by the pure point mass. Then the effective potential, for $L>0$, is given by
\beq
\Psi_L^0(r):=-\frac{M}{r}+\frac{L}{2r^2},\ \ \ r\in(0,\infty).
\eeq
This effective potential achieves a unique minimum at
\beq
r^0_L=\frac{L}{M},\qquad \Psi_L^0(r^0_L)=-\frac{M^2}{2L}=:E_{\min}^{L,0}
\eeq 
and the turning points for the flow are given by 
\beq
r^0_\pm(E,L)=\frac{-M\mp\sqrt{M^2+2EL}}{2E},
\eeq
so that  $\Psi_L^0(r^0_\pm(E,L))=E$ on the set $\A^0:=\{(E,L)\,|\,L>0,E_{\min}^{L,0}<E<0\}.$ 

Given $\kappa<0$ (compare~\eqref{E:KAPPA}), we define
\beq
R_{\min}^0=r_-^0(\kappa,L_0),\qquad  R_{\max}^0=r_0^+(\kappa,L_0).
\eeq
Solutions to the Hamiltonian ODE system $\dot{r}=w$, $\dot{w}=-(\Psi^0_L)'(r)$ are time-periodic for $(E,L)\in\A^0$ with period function
\beq
T^0(E,L)=\frac{\pi}{\sqrt{2}}\frac{M}{(-E)^{\frac32}}, \ \ \ (E,L)\in\A^0.
\eeq
Finally, we define the action-angle support, associated to $\kappa$, by 
\beq
\I^0:=\{(E,L)\,|\,L_0\leq L\leq L_{\max}^0,\ E_{\min}^{L,0}\leq E\leq \kappa\},
\eeq
where
\beq
L_{\max}^0=-\frac{M^2}{2\kappa}.
\eeq
\end{lemma}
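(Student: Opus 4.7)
The plan is to verify each claim by direct computation, as the $\eta=0$ case collapses to the classical Kepler problem. First, setting $\eta=0$ in~\eqref{E:SS} forces $\bar U\equiv 0$, so the definition~\eqref{DEF:EPSIL} of $\Psi_L$ reduces immediately to $\Psi_L^0(r) = -\frac{M}{r}+\frac{L}{2r^2}$. For the critical point I would compute $(\Psi_L^0)'(r) = \frac{M}{r^2}-\frac{L}{r^3} = \frac{Mr-L}{r^3}$, which vanishes uniquely at $r_L^0 = L/M$ and changes sign from negative to positive there, confirming this is the unique minimum; direct substitution gives $\Psi_L^0(r_L^0) = -\frac{M^2}{2L}$. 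The turning points arise by solving the equation $\Psi_L^0(r)=E$, equivalent to the quadratic $Er^2 + Mr - L/2 = 0$; the quadratic formula gives the two asserted roots $r_\pm^0(E,L)$, and a short sign check (using $E<0$ and $L>0$, so $\sqrt{M^2+2EL}<M$) confirms $0<r_-^0 < r_L^0 < r_+^0$ on $\mathbb{A}^0$.

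For the period function I would factorise the integrand under the square root in~\eqref{E:PERIODDEF}:
\begin{equation*}
2(E-\Psi_L^0(r)) \;=\; \frac{2Er^2 + 2Mr - L}{r^2} \;=\; \frac{-2E(r_+^0 - r)(r - r_-^0)}{r^2},
\end{equation*}
where $-2E>0$ and the product is positive on $(r_-^0, r_+^0)$. Substituting $r = \frac{r_+^0+r_-^0}{2} + \frac{r_+^0-r_-^0}{2}\sin\theta$ for $\theta\in[-\pi/2,\pi/2]$ yields $\sqrt{(r_+^0-r)(r-r_-^0)} = \tfrac{r_+^0-r_-^0}{2}\cos\theta$ and $dr = \tfrac{r_+^0-r_-^0}{2}\cos\theta\,d\theta$, collapsing the integral to
\begin{equation*}
T^0(E,L) \;=\; \frac{2}{\sqrt{-2E}}\int_{-\pi/2}^{\pi/2}\Big(\tfrac{r_+^0+r_-^0}{2} + \tfrac{r_+^0-r_-^0}{2}\sin\theta\Big)\,d\theta \;=\; \frac{\pi(r_+^0+r_-^0)}{\sqrt{-2E}}.
\end{equation*}
Vieta's formula for the quadratic gives $r_+^0 + r_-^0 = -M/E$, so
\begin{equation*}
T^0(E,L) \;=\; \frac{\pi M}{\sqrt{-2E}\,(-E)} \;=\; \frac{\pi}{\sqrt{2}}\frac{M}{(-E)^{3/2}},
\end{equation*}
which is the asserted formula (and notably independent of $L$, i.e.~Kepler's third law).

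Finally, the formulas for $R_{\min}^0$, $R_{\max}^0$ are immediate by evaluating the expressions for $r_\mp^0$ at $(E,L) = (\kappa, L_0)$. For $L_{\max}^0$: this is the largest $L$ for which the cutoff energy $\kappa$ remains admissible, i.e.~$\kappa > E_{\min}^{L,0} = -M^2/(2L)$, which rearranges (using $\kappa<0$) to $L < -M^2/(2\kappa)$, and hence $L_{\max}^0 = -M^2/(2\kappa)$; by construction this is the action-support of a steady state with Kepler background parameter $\kappa$.

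There is no conceptual obstacle here: everything is an explicit calculation for the Kepler problem. The only care required is bookkeeping with signs in the quadratic formula (since $E<0$) and in the trigonometric substitution for the period integral; once these are handled the identities follow.
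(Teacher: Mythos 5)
Your calculations are all correct; this is the standard Kepler problem, and each step — critical-point computation, quadratic formula for the turning points, factorisation of the integrand plus the trigonometric substitution and Vieta's formula for the period — checks out exactly. The paper gives no proof of its own, instead citing~\cite[Section 6.2.1]{Straub24}; the direct computation you carry out is the natural and expected one, so there is nothing to reconcile.
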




\end{document}